\documentclass[11pt,a4paper]{article}
\usepackage[T1]{fontenc}
\usepackage[margin=25mm]{geometry}
\usepackage{amsmath,amssymb,amsthm,mathtools}
\usepackage{microtype,enumitem,placeins,tikz}
\usetikzlibrary{calc}
\usepackage[hidelinks]{hyperref}
\hypersetup{pdftitle={Total eight-colorings without the Four-Color Theorem},
  pdfsubject={Complete proof revision using Hamiltonian duals}}
\setlist[enumerate]{itemsep=3pt,topsep=5pt}
\numberwithin{equation}{section}
\newtheorem{theorem}{Theorem}[section]
\newtheorem{lemma}[theorem]{Lemma}
\newtheorem{proposition}[theorem]{Proposition}
\newtheorem{corollary}[theorem]{Corollary}
\theoremstyle{definition}

\newtheorem{question}[theorem]{Question}
\theoremstyle{remark}
\newtheorem{remark}[theorem]{Remark}

\newcommand{\chit}{\chi^{\prime\prime}}
\title{Total eight-colorings of maximal planar graphs\protect\\
of maximum degree six\protect\\[6pt]}

\author{Enqiang Zhu \protect\\
\vspace{0.1cm}
{\footnotesize{The Institute of Computing Technology, Guangzhou University, Guangzhou, 510006, China}}
}

\date{}

\begin{document}
\maketitle

\begin{abstract}
We prove that every finite simple maximal planar graph of maximum
degree at most six admits a total eight-coloring.
We first show that every proper vertex four-coloring of a subcubic
graph extends to a total five-coloring. A curvature inequality for
boundary walks then gives a matching decomposition of four-connected
triangulations with degrees five and six, extending their prescribed
vertex four-colorings to total eight-colorings. In a minimum
counterexample, two local boundary colorings force the degree-four
vertices to have disjoint neighborhoods. A characterization of the
local extension obstructions permits these vertices to be restored
simultaneously. The proof uses Hamiltonian-connectedness of
four-connected planar graphs and Kardo\v{s}'s Hamiltonicity theorem for three-connected
cubic planar graphs.

\end{abstract}

\section{Introduction}
\label{intro:section}

\subsection{The problem}

For a positive integer $k$, a \emph{total $k$-coloring} of a graph $G$
is a map $\psi:V(G)\cup E(G)\to\{1,\ldots,k\}$ such that
adjacent vertices, adjacent edges, and a vertex and an incident edge
receive different colors. Not all $k$ colors need be used; the
color set may equivalently be any set of cardinality $k$.
The minimum number of colors in a total coloring is the
\emph{total chromatic number}, denoted by $\chit(G)$. For
nonempty $G$, a vertex of maximum degree together with its incident edges
forms a set of $\Delta(G)+1$ mutually conflicting elements.  Thus
$\chit(G)\geq\Delta(G)+1$. In his 1965 doctoral thesis,
Behzad~\cite{behzad1965} proposed the Total Coloring Conjecture,
which asserts the complementary upper bound
\begin{equation}\label{intro:tcc}
 \chit(G)\leq\Delta(G)+2
\end{equation}
for every finite simple graph.

The incidence constraints couple the vertex and edge colorings:
each edge must avoid the colors of its endpoints. Our main result
establishes the conjectured bound for maximal planar graphs of
maximum degree six.

\begin{theorem}\label{main:theorem}
Every finite simple maximal planar graph $G$ with $\Delta(G)\leq6$
satisfies \mbox{$\chit(G)\leq8$}.
\end{theorem}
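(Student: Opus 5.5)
The plan is to follow the route announced in the abstract and argue by a minimum counterexample, without invoking the Four-Color Theorem. Let $G$ be a counterexample with $|V(G)|$ minimum. Since $G$ is a triangulation, $\delta(G)\ge3$. We first establish three reductions.

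\emph{Reduction 1: $G$ is four-connected.} A separating triangle $T$ splits $G$ into two smaller triangulations $G_1,G_2$ glued along $T$. Each has maximum degree at most six, so by minimality each admits a total $8$-coloring. After permuting colors in $G_2$ we can make the two colorings agree on the six elements of $T$ (three vertices and three edges); this is possible because $T$ is a $K_3$ and all of its elements receive distinct colors. The union is then a total $8$-coloring of $G$, since any conflict involves only elements on one side or on $T$.

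\emph{Reduction 2: no degree-three vertices.} A degree-three vertex would give a separating triangle unless $G=K_4$, which is easily colored.

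\emph{Reduction 3: no degree-four vertex is removable.} If $v$ has degree four, delete $v$ and add a diagonal of its link $4$-cycle. This yields a smaller triangulation with maximum degree still at most six, unless both diagonal endpoints would reach degree seven; in that case we choose the other diagonal. Colour it by minimality, then remove the diagonal and restore $v$. Here $v$ has five elements to color, and each incident edge $vu$ sees at most $5+1$ colors at $u$ plus the constraints around $v$.

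The first genuine step is the subcubic lemma stated in the abstract: every proper vertex $4$-coloring of a subcubic graph extends to a total $5$-coloring. I would prove it by induction, using Kempe-type recolorings on edges, since three edge colors plus the two non-endpoint colors give slack.

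Next, treat four-connected triangulations with all degrees in $\{5,6\}$. Take a Hamiltonian cycle of the dual, which is a cubic three-connected planar graph, so Kardo\v{s}'s theorem applies. This two-colors the faces and splits $G$ into two trees of faces. From it I would extract a perfect-ish matching $M$ such that $G-M$ has pieces of maximum degree at most three; the curvature inequality for boundary walks is what guarantees this degree bound. We then need a proper vertex $4$-coloring of $G$. Since we cannot use the Four-Color Theorem, we take the one induced by the Hamiltonian dual structure, namely a Tait-type coloring from the Hamiltonian dual cycle. We extend it to a total $5$-coloring of each subcubic piece, and use the three remaining colors for $M$ together with the leftover edges.

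The main obstacle is the general case, where degree-four vertices remain. The aim is to show that in a minimum counterexample the degree-four vertices have pairwise disjoint closed neighborhoods. This comes from two explicit local boundary colorings: two adjacent or close degree-four vertices can be deleted together, and the octagonal boundary recolored. Then we contract or suppress all of them simultaneously to reach the $\{5,6\}$ case, or a case with fixed prescribed coloring. Finally we restore them. For this we need a characterization of exactly which boundary colorings of the link $4$-cycle block extension at a degree-four vertex, which is a finite Hall-type analysis on five elements with lists. Then we use Hamiltonian-connectedness of four-connected planar graphs, choosing the Hamiltonian path's endpoints, to route the decomposition so that no degree-four vertex sees an obstruction. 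I expect this simultaneous restoration, and ensuring the obstruction characterization is compatible with the global matching construction, to be the hardest part.
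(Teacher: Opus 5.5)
Your proposal has genuine gaps, starting with Reduction~1.

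\textbf{Gluing along a separating triangle fails.} Agreement on the triangle $T=abc$ does not glue two colorings. At $a$, an edge into one side and an edge into the other side share the endpoint $a$ and must get different colors, but they come from two independent colorings. Both sets of edges draw from the same five colors outside $\{\psi(a),\psi(ab),\psi(ac)\}$. After matching $T$, the only freedom is permuting the colors unused on $T$, and that permutation is shared by $a,b,c$. If both sides color an edge at $a$ with $\psi(b)$, nothing repairs the clash. Also, the six elements of $T$ need not have distinct colors, since $a$ and $bc$ do not conflict, so even agreement on $T$ may be unattainable. The paper explicitly warns about this. Reduction~2 rests on Reduction~1 and falls with it. The paper instead deletes a degree-three vertex and extends directly, recoloring at most one link edge (Lemma~\ref{red:degree-three}). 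Only then, with all degrees in $\{4,5,6\}$, does it show that a separating triangle forces $G\cong O_k$ (Proposition~\ref{red:separator}), which it colors explicitly (Proposition~\ref{red:chain-colouring}).

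\textbf{Reduction~3 is misdescribed.} Inserting a diagonal never raises a degree, since the diagonal endpoints lose $v$ and gain the diagonal. Restoration, however, can genuinely fail. When all four neighbors have degree six, a restoration recoloring only rim edges fails exactly in states (U) and (D) of Theorem~\ref{wheel:local}. So only a degree-four vertex with a neighbor of degree at most five is reducible outright (Lemma~\ref{red:low-neighbour}).

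\textbf{The core decomposition needs three matchings.} One matching $M$ cannot make $G-M$ subcubic when all degrees are five or six. In the paper, the dual Hamiltonian cycle supplies only the vertex four-coloring. Two matchings come from a Hamiltonian path of the core itself between two degree-five vertices (Theorem~\ref{core:hamiltonian-connected}). The third is a matching of $K=H-E(P)$ covering its degree-four vertices; this is where the curvature inequality enters (Lemma~\ref{core:curvature}, Theorem~\ref{core:third-matching}).

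\textbf{The restoration plan is the most serious gap.} Choosing Hamiltonian path endpoints gives no control over whether a center lands in (U) or (D), because that depends on the whole total coloring. The missing idea is the following. When all old vertices use one four-set $B_0$, two-color interchanges on colors of $\Omega\setminus B_0$ have path, cycle or singleton components in the total graph. An interchange on the component through one of the nonrim edges carrying a color absent from the spoke lists then breaks (D) without touching vertex colors. Each remaining (U) wheel is finished by recoloring one rim vertex (Proposition~\ref{wheel:uniform}, Theorem~\ref{restore:main}).

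\textbf{The distance-three separation comes from elsewhere.} It is not obtained by deleting nearby pairs of degree-four vertices. It comes from counting exterior witnesses in state (D) under the two rim colorings $\psi,\psi'$ of Lemma~\ref{shell:states}. This forces $d_G(x_i)=6$ and $d_G(y_i)\ge5$ (Theorem~\ref{shell:separation}).

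\textbf{The subcubic lemma sketch is unsupported.} Your Kempe-chain approach does not work as stated, because the three-element edge lists vary from edge to edge. A two-color swap along a chain therefore need not stay within the lists.
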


For $\Delta(G)=6$, the bound is exactly \eqref{intro:tcc}.
The cases $\Delta(G)\leq5$ already follow from the stronger
seven-color bound of Kostochka~\cite{kostochka1996}.

The proof uses both the local and global structure of a triangulation.
The cyclic link of a vertex describes the faces exposed by its
deletion. Euler's formula gives
$\sum_{v\in V(G)}(6-d_G(v))=12$, with nonnegative summands when
$\Delta(G)\leq6$. Applied to a connected region, the same count
relates its degree deficit to its boundary walk. This yields the
cut inequality in Lemma~\ref{core:curvature} and, ultimately,
the matching required to color the core.

The maximality assumption is substantive.  Completing a plane graph
to a triangulation can raise its maximum degree, so
Theorem~\ref{main:theorem} does not imply an eight-color theorem for
all planar graphs of maximum degree six.  The broader problem is
described as unresolved in recent work on additional forbidden
configurations~\cite{sufangzhu2025,zhuzhouxu2026}.

\subsection{Background and motivation}

The Total Coloring Conjecture has long been known for subcubic graphs;
Feng and Lin~\cite{fenglin2013} give a concise proof and references to
the original work.  Kostochka~\cite{kostochka1996} proved that every
multigraph of maximum degree five has total chromatic number at most
seven.  For planar graphs of maximum degree six, a number of results
impose restrictions on local configurations.  Zhu and Xu
\cite{zhuxu2017}, for example, prove the existence of total eight-colorings under
the exclusion of a specified fan configuration.  More recent
preprints impose other forbidden subgraphs or restrictions on
four-cycles~\cite{sufangzhu2025,zhuzhouxu2026}.  These hypotheses are
not assumed here.

Results for particular triangulation families also illustrate why the
induction class matters.  Zhou, Zhao, Ma and Xu~\cite{zhouetal2022}
establish the conjectured bound for recursive maximal planar graphs,
obtained by repeatedly inserting a vertex into a triangular face and
joining it to the three boundary vertices. That construction supplies a
degree-three deletion order.  A triangulation whose minimum degree is
four need not admit such an order.  In the present proof the
degree-three reduction is only the beginning; the principal difficulty
is the interaction between vertices of degrees four, five and six.

One general approach combines vertex and edge coloring:
every four-colorable Class~1 graph satisfies
$\chit(G)\leq\Delta(G)+2$; see Cranston and West
\cite[Theorem~4.7]{cranstonwest2013}.  Here Class~1 means that the
edges admit a proper coloring with $\Delta(G)$ colors. Our argument
requires only three edge-disjoint matchings whose deletion leaves a
subcubic graph. The extension theorem for that remainder preserves
every supplied proper vertex four-coloring, so its total coloring
can be combined with three new colors on the matchings. No
Class~1 hypothesis is needed.

Reconstruction creates a second difficulty. Deleting a degree-four
vertex and triangulating its hole gives a smaller graph of maximum
degree at most six, but an arbitrary total coloring of that graph
may fail to extend while the exterior is fixed.
Theorem~\ref{wheel:local} identifies the two local obstructions.
One can be removed by recoloring a neighbor of the deleted vertex;
the other requires a change farther outside the wheel. We control
these changes by first constructing a total coloring in which all
old vertices use a common four-element color set.

\subsection{Proof strategy}

Three extension results organize the proof.

Theorem~\ref{ext:main} extends any proper vertex four-coloring
of a subcubic graph to a total five-coloring. In a bridgeless
subcubic graph, a matching covering all degree-three vertices leaves
only paths, cycles and isolated vertices. At a bridge, the
color imposed by the already colored side must be accommodated.
The key local statement, Proposition~\ref{ext:single-port}, permits
every legal color on one outgoing bridge.  It combines a
near-perfect matching with a propagation rule for two-element
edge lists.

Theorem~\ref{core:main} extends any proper vertex
four-coloring of a four-connected sphere triangulation with degrees
five and six to a total eight-coloring.  Such a graph has twelve
vertices of degree five.  A spanning path between two of them gives
two matchings. After deleting the path, we find a third matching
covering every remaining degree-four vertex. The boundary inequality
in Lemma~\ref{core:curvature} and the two-edge-cut analysis in
Lemma~\ref{core:two-cut} establish the required matching condition.
Assigning a separate color to each matching leaves a subcubic graph
to which Theorem~\ref{ext:main} applies.

Theorem~\ref{restore:main} restores an independent family of
degree-four vertices whose induced rims are pairwise disjoint and
whose neighbors have original degree six. Its input is a
total eight-coloring of the deleted graph with all vertices in one
four-element color set. Two-color interchanges in the total graph
first remove the alternating obstructions. The restriction on old
vertex colors ensures that the relevant components are paths,
cycles or isolated vertices. The remaining uniform obstructions
are then removed one at a time, using the current exterior vertex
colors. This order keeps the restorations compatible.

The link between these statements is structural.  Elementary
reductions first show that a minimum counterexample would be
four-connected and that every degree-four vertex would have only
degree-six neighbors. Theorem~\ref{wheel:local} and
Lemma~\ref{shell:witnesses} then force eight exterior witness demands.
Distinctness of their exterior positions and two proper boundary
colorings yield degree bounds. Consequently the degree-four vertices are at
pairwise distance at least three.  Removing all of them and adding
one diagonal in each hole yields precisely the core required above.
The simplicity, degree bounds and four-connectivity of this core are
proved in Theorem~\ref{shell:core}.

\subsection{Dependencies and outline}

Hall's theorem (Theorem~\ref{prelim:hall}) is used for the small extension
problems.  Tutte's matching theorem~\cite{tutte1954} is used through
the required-vertex version in Lemma~\ref{core:required}, for which we include a proof.  The spanning
path in the core is supplied by Theorem~\ref{core:hamiltonian-connected},
the Hamiltonian-connectedness result of Thomassen~\cite{thomassen1983}
for four-connected planar graphs, with the correction to its proof
by Chiba and Nishizeki~\cite{chiba1986}. Finally, Kardo\v{s}'s
Hamiltonicity theorem~\cite{kardos2020} is applied to the cubic dual.
Lemma~\ref{dual:two-trees} then constructs a proper vertex four-coloring
by two-coloring two induced trees. This use of planar duality and tree
coloring does not invoke the Four-Color Theorem. Kardo\v{s}'s theorem
is itself computer assisted, and is the external computational dependency
of this revision. Its proof is cited rather than reproduced. Standard
polyhedral duality and the Jordan curve theorem are used in the new
section; neither requires the Four-Color Theorem. The extension and
structural arguments require no new enumeration or search certificate.

After the preliminaries, Sections~\ref{core:matching-tools}--\ref{core:section}
develop the matching and prescribed-color extension tools.
Sections~\ref{red:section}--\ref{shell:section} construct the core;
Sections~\ref{restore:section} and~\ref{main:section} restore the
deleted vertices and complete the proof.
Section~\ref{scope:section} discusses prescribed vertex colors,
the scope of the external Hamiltonicity input, simpler subclass proofs,
and extensions beyond triangulations.

\section{Preliminaries}
\label{prelim:section}

\subsection{Graphs, embeddings and cuts}
\label{prelim:graphs}

All graphs are finite and, unless explicitly stated otherwise, simple.
For a graph $F$, write $V(F)$ and
$E(F)$ for its vertex and edge sets, $N_F(v)$ for the neighbors of
$v$, $d_F(v)$ for its degree, $V_i(F)=\{v:d_F(v)=i\}$, and
$n_i(F)=|V_i(F)|$.
For an edge $e=xy$, write $V(e)=\{x,y\}$ for its endpoint set.
Two distinct edges are adjacent if they share an endpoint.
A vertex is incident with an edge if it is one of that edge's endpoints.
The induced subgraph on a vertex set $S$ is $F[S]$; $F-S$ deletes
the vertices of $S$ and their incident edges.  If $M\subseteq E(F)$,
$F-M$ deletes the edges of $M$ and retains all vertices. We write
$\delta_F(S)=\{xy\in E(F):x\in S,\ y\notin S\}$
for the \emph{edge cut} determined by $S$: it consists of all
edges with exactly one endpoint in $S$. If $S$ is nonempty and
proper, deleting all edges of $\delta_F(S)$ leaves no path
between $S$ and $V(F)\setminus S$. An individual edge belonging
to this set need not be a \emph{bridge}, that is, an edge whose
deletion increases the number of connected components.
For example, if $F$ is a triangle and $S$ consists of one
vertex, then $\delta_F(S)$ contains two edges, neither of which
is a bridge.
A matching is a set of edges no two of which share an endpoint.
An empty matching and isolated vertices are permitted throughout.

For an integer $k\geq1$, a graph $F$ is \emph{$k$-connected}
(or \emph{$k$-vertex-connected}) if $|V(F)|\geq k+1$ and $F-S$
is connected for every $S\subseteq V(F)$ with $|S|<k$.
In particular, a \emph{four-connected} graph has at least five
vertices and remains connected after deletion of any set of at most
three vertices. Connectivity is vertex connectivity unless stated
otherwise.

A planar graph is \emph{maximal planar} if no edge can be added
between two existing vertices while preserving planarity.  For order
at least three, its sphere embedding is a triangulation: each face
has boundary a three-cycle.  Conversely a sphere triangulation
is maximal planar.  We call a triangular embedding on at least
four vertices a \emph{sphere triangulation}, and treat $K_3$ separately.
Let $\mathcal T_6$ be the sphere triangulations of maximum degree at
most six together with $K_3$.  The graphs of order below three will
be handled directly. We keep the sphere embedding fixed and make
each local modification within the specified disk.

The \emph{link} of a vertex of a sphere triangulation is the cycle
of its neighbors in their cyclic order.  Its vertices are distinct,
and two consecutive link vertices form a face with the given vertex.
A subgraph is \emph{induced} if it contains every edge of the ambient
graph whose two endpoints lie in the subgraph.  Accordingly, the link
cycle is induced precisely when no edge joins two nonconsecutive link
neighbors; such an edge would be a chord of the link.

We fix the terminology for the local configurations used below.
Let $v$ have degree four, with distinct neighbors $u_0,u_1,u_2,u_3$.
The subgraph with these five vertices and the four edges $vu_i$
is a \emph{four-star}. Its \emph{center} is $v$, and its four
edges are called \emph{spokes}. If $u_0u_1u_2u_3u_0$ is a cycle,
the union of this cycle and the four-star is a \emph{wheel}.
The cycle is its \emph{rim}, and the vertices $u_i$ are called
\emph{rim vertices}. Relative to this specified wheel,
a \emph{nonrim edge} is an edge of $G-v$ other than the four
rim edges. Additional edges between nonconsecutive rim vertices
are allowed unless the rim is explicitly required to be induced.
For a degree-four vertex of a sphere triangulation, its cyclic
link supplies this rim.

Let $C=abca$ be a three-cycle in a fixed sphere embedding of $G$.
Its drawing is a simple closed curve, whose complement in the sphere
has two open connected regions, called its \emph{open sides}.
The cycle $C$ is a \emph{separating triangle} if each open side
contains at least one vertex of $G$ other than $a,b,c$.
In a plane drawing, this means that there is a vertex strictly
inside $C$ and another strictly outside $C$.
A triangle is \emph{facial} if it bounds a face. In a sphere
triangulation, a side of a three-cycle containing no vertices is a
face; hence a three-cycle is separating precisely when it is not
facial.

A \emph{triangulated disk} is a finite embedded
triangulation of a closed topological disk: its boundary is a specified
simple cycle (the boundary vertices are distinct, apart from the
usual identification of the first and last vertex), and every
two-dimensional cell in the interior of that boundary is a triangle.
The boundary itself is not counted as an additional face.  At a
boundary vertex its link inside the disk is a simple path between the
two boundary neighbors.

For a sphere triangulation $F$ with $n$ vertices, $m$ edges and
$f$ faces, Euler's formula and $3f=2m$ give $m=3n-6$.
Summing degrees therefore gives
\begin{equation}\label{prelim:defect}
 \sum_{v\in V(F)}(6-d_F(v))=12.
\end{equation}
If all degrees belong to $\{4,5,6\}$, this becomes
$2|V_4(F)|+|V_5(F)|=12$.  For an induced connected region, its
facial boundary may instead be a walk with repeated vertices or
edges.  In Section~\ref{core:section}, all lengths of such walks
count edge occurrences, including repeated occurrences.

\subsection{Coloring conventions}
\label{prelim:coloring}

We refer to a vertex or an edge as an \emph{element}.  Two elements
\emph{conflict} if a proper total coloring requires them to have
different colors.  Thus two vertex-elements conflict when adjacent,
two edge-elements when their edges are adjacent, and a vertex-element and an
edge-element when incident.  The \emph{total graph} has these elements
as its vertices and these conflicts as its edges.  This language
will only be needed to specify two-color interchanges precisely.

A partial total coloring is proper on its colored elements.
Deleting an edge discards its color but retains its endpoint colors.
Restoring a deleted vertex requires colors for both the vertex and
its incident edges, respecting every total-coloring constraint.
In particular, four distinct available spoke colors need not leave a
color for a degree-four center.

In the eight-color arguments, $\Omega$ denotes the set of eight
available colors.  For the core coloring we take
$\Omega=\{1,2,3,4,5,6,7,8\}$. The symbol $B$ denotes the fixed
vertex color set $\{1,2,3,4\}$, as in Theorem~\ref{ext:main}, where
$c$ is a fifth color. In a local wheel argument, we instead use
$\mathcal{H}=\{\psi(u_0),\psi(u_1),\psi(u_2),\psi(u_3)\}$
for the set of colors on the four rim vertices in the supplied
coloring $\psi$. This set depends on that wheel and coloring
and may have fewer than four elements. Its complement in the
eight-element color set is denoted by $R=\Omega\setminus\mathcal{H}$.
A \emph{spoke list} omits the color of the old
endpoint and its already colored incident edges, but does not yet
impose the color of the uncolored center.  Its exact meaning is
given in Section~\ref{wheel:lists}.

When a two-color list is regarded algebraically in
Section~\ref{ext:section}, identify the four colors with
$\mathbb F_2^2$.  For instance, one may identify $1,2,3,4$ with
$(0,0),(0,1),(1,0),(1,1)$, with addition in each coordinate modulo
two. These four vectors sum to $(0,0)$. Thus if an edge has endpoint
colors $a,b$ and its two remaining colors in $B$ are $d,e$, then
$a+b=d+e$. We use this identity to describe forced choices along
paths.

\subsection{Systems of distinct representatives}

Let $(A_\lambda)_{\lambda\in\Lambda}$ be a finite indexed family
of finite sets. A \emph{system of distinct representatives}
is a choice of one element $a_\lambda\in A_\lambda$ for each
$\lambda\in\Lambda$, with $a_\lambda\neq a_\mu$ whenever
$\lambda\neq\mu$. Different indices may specify equal sets;
they still require different representatives and are counted
separately when a subfamily is selected.
We use the following finite form of Hall's theorem~\cite{hall1935}.

\begin{theorem}\label{prelim:hall}
Let $\Lambda$ be a finite index set, and let
$(A_\lambda)_{\lambda\in\Lambda}$ be a family of finite sets.
This family has a system of distinct representatives if and only if
\[
 \left|\bigcup_{\lambda\in J}A_\lambda\right|\geq |J|
 \qquad\text{for every }J\subseteq\Lambda.
\]
The union indexed by the empty set is understood to be empty.
\end{theorem}

The inequalities in Theorem~\ref{prelim:hall} are called
\emph{Hall's conditions}. They are necessary because the $|J|$
representatives indexed by $J$ are distinct elements of the union;
the theorem asserts their sufficiency.

\section{Vertex colors from a Hamiltonian dual}
\label{dual:section}

This section supplies the initial vertex coloring independently of the
Four-Color Theorem. The only substantial new external input is the
following published theorem.

\begin{theorem}[Kardo\v{s}~\cite{kardos2020}]
\label{dual:kardos}
Every three-connected cubic planar graph whose faces have length at
most six has a Hamiltonian cycle.
\end{theorem}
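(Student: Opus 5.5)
This statement is imported rather than proved here. The paper cites Kardo\v{s}'s published, computer-assisted proof~\cite{kardos2020} and uses Theorem~\ref{dual:kardos} only as a black box. If I had to organize a proof, I would follow the dual reformulation that the rest of this section exploits, sketched below.

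\textbf{Reformulation.} Let $H$ be a three-connected cubic planar graph with all faces of length at most six, and let $T=H^*$ be its dual. Then $T$ is a sphere triangulation (or $K_4$) of maximum degree at most six. A Hamiltonian cycle of $H$ is a Jordan curve crossing the edges of $H$. It splits the faces of $H$, that is, the vertices of $T$, into two classes. Each class induces a tree in $T$, because the dual edges on each side of the cycle form a connected acyclic set. Conversely, any partition of $V(T)$ into two induced trees yields a Hamiltonian cycle of $H$: take the set of edges of $H$ dual to the crossing edges of $T$. So the goal becomes to show that every triangulation in $\mathcal T_6$ admits a partition of its vertex set into two induced trees.

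\textbf{Induction.} I would argue by minimal counterexample $T$.
\begin{enumerate}
\item \emph{Separating triangles.} If $T$ has a separating triangle, split along it into two smaller triangulations. Partitions of the two halves can be glued once they agree on the three common vertices, and the permutation freedom of the three colors makes this possible. Hence $T$ is four-connected.
\item \emph{Separating four-cycles.} Handle these similarly. Now there are more boundary patterns to match, and the induced-tree condition must be preserved across the cut; this is the dual form of cyclically five-edge-connected reductions.
\item \emph{Residual structure.} What remains is a four-connected triangulation with degrees in $\{4,5,6\}$ and $2n_4+n_5=12$, by \eqref{prelim:defect}. These are dual to fullerene-like patches with few defects.
\item \emph{Local configurations.} On this residual class, I would look for a finite list of reducible configurations, in the spirit of Kardo\v{s}'s method. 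Each is a small patch that can be deleted or contracted, with a rule for extending any two-induced-tree partition of the reduced graph back to $T$.
\end{enumerate}
Euler's formula forces most vertices to have degree six, so the graph is locally a hexagonal (triangular-lattice) region. One can then argue that every such $T$ contains one of the configurations.

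\textbf{Main obstacle.} The hard step is the unavoidability-plus-reducibility stage for the degree-six-rich, highly connected case. Checking that every boundary state of a patch extends requires exhaustive case analysis. This is exactly where Kardo\v{s} resorts to computer verification, and I would not expect to avoid it. For that reason the paper treats Theorem~\ref{dual:kardos} as its single external computational dependency and does not reproduce the argument.
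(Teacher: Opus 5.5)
Treating the statement as an imported black box is what the paper does. It gives no argument beyond identifying the result as Theorem~1 of~\cite{kardos2020}, proved with computer assistance, and uses it only through Corollary~\ref{dual:vertex-four}. Your reformulation is correct in both directions. For the converse, every facial triangle of $T$ meets both classes, so each vertex of $H$ is incident with exactly two edges dual to edges of $T$ joining the classes. That set of joining edges is a bond, because both classes induce connected graphs, so its dual is a single cycle, and this cycle passes through every vertex of $H$. The paper needs only the direction proved in Lemma~\ref{dual:two-trees}.

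If your added sketch is meant as a workable outline, its first step fails as stated. A partition into two induced trees has two classes, not three colors. Every triangle of $T$ is split $2+1$, and exchanging the classes on one side cannot change which vertex of the separating triangle is the singleton. Gluing needs that singleton to agree on both halves; agreement does suffice, since the two pairs glue along an edge and the two singletons along a vertex, yielding trees. In $H$ this condition says that the two Hamiltonian cycles skip the same edge of the $3$-edge-cut dual to the triangle, and mere Hamiltonicity of the halves gives no control over that choice. The permutation trick you have in mind is the one for proper vertex colorings, where a triangle carries three distinct colors; it does not transfer here.

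The obstruction is real, not a technicality. Tutte's fragment is a side of a $3$-edge-cut through which every Hamiltonian cycle must use one prescribed cut edge, and Tutte's non-Hamiltonian three-connected cubic planar graph is assembled from such fragments. A working induction must therefore carry a stronger hypothesis controlling which cut edge is skipped, and the face-length bound has to be used to prove it. Steps~2 and~4 likewise leave their boundary conditions unspecified. None of this affects the paper, which relies on the published theorem rather than on such an outline.
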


The statement is Theorem~1 of the cited work. Its published proof uses
finite computer verification; the theorem is not an assumption about
the total coloring under consideration.

\begin{lemma}\label{dual:two-trees}
Let $T$ be a finite simple sphere triangulation on at least four
vertices. If its planar dual $D=T^*$ has a Hamiltonian cycle, then
$V(T)$ admits a partition $A\sqcup B$ such that both $T[A]$ and $T[B]$
are trees. In particular, $T$ has a proper vertex coloring with the
four colors $1,2,3,4$.
\end{lemma}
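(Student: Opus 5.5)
The plan is to use the standard correspondence between a Hamiltonian cycle in the dual and a splitting of the faces of $T$ into two "sides". Let $H$ be a Hamiltonian cycle of $D=T^*$, drawn on the sphere so that each edge of $H$ crosses its dual edge of $T$ exactly once and meets no other part of $T$. By the Jordan curve theorem, $H$ is a simple closed curve that separates the sphere into two open disks $U_1,U_2$. No vertex of $T$ lies on $H$, since $H$ passes only through face-points and crosses only edges. So we obtain a partition $V(T)=A\sqcup B$ with $A=V(T)\cap U_1$ and $B=V(T)\cap U_2$.

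An edge $e$ of $T$ whose dual $e^*$ lies in $H$ is crossed once by $H$, so its endpoints lie on opposite sides. An edge whose dual is not in $H$ is disjoint from $H$, so both its endpoints lie on the same side. Hence $T[A]$ is exactly the graph formed by the edges of $T$ whose duals are chords of $H$ lying in $U_1$, and similarly for $T[B]$. Both sides are nonempty: every vertex of $T$ meets some edge dual to $H$, because not all faces around a vertex can avoid the dual path there.

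The main step is to show that $T[A]$ is a tree, and I would do this by counting and then checking acyclicity. Let $n=|V(T)|$ and $f=2n-4$. Then $H$ has $f$ edges, so $T$ has $3n-6-f=n-2$ edges not dual to $H$.

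For acyclicity: a cycle $C$ in $T[A]$ lies in $\overline{U_1}\setminus H$, which is disjoint from $H$. Every face inside $C$ is a vertex of $D$ lying on $H$. By Jordan, $C$ splits the sphere into two sides, and both sides contain faces of $T$. Since $H$ is connected and avoids $C$, all faces lie on one side of $C$, which is a contradiction. So $T[A]$ and $T[B]$ are forests.

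For connectivity: a forest on $k$ vertices with $c$ components has $k-c$ edges. Summing over the two sides gives $n-c_A-c_B=n-2$, so $c_A+c_B=2$. Since both sides are nonempty, $c_A=c_B=1$, and both induced subgraphs are trees.

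Finally, each tree is bipartite. I would color $T[A]$ properly with colors $1,2$ and $T[B]$ with colors $3,4$. Edges between $A$ and $B$ join vertices with colors from disjoint sets, so the result is a proper $4$-coloring.

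The delicate point is the topological claim about cycles: that $H$ lies entirely on one side of $C$, and that both sides of $C$ contain faces. I would argue this from the embedding. $C$ is a cycle in a triangulation with $n\ge 4$, and each open side of the closed curve $C$ is a union of faces and edges, so each side contains at least one face.
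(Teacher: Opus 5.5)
Your proof is correct, but it takes a different route from the paper's.

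\textbf{The paper's route.} The paper works entirely in the dual. Cubicity makes the edges of $D$ off the Hamiltonian cycle a family of disjoint, noncrossing chords of the two disks. Inserting the $m$ chords of one disk one at a time produces $m+1$ disk regions. Their adjacency graph is connected with $m$ edges, hence a tree, and this tree is then identified with $T[A]$.

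\textbf{Your route.} You work on the primal side instead, splitting $V(T)$ by the two sides of the curve $H$.
\begin{itemize}
\item Acyclicity comes from the Jordan curve theorem applied to a putative cycle $C$ of $T[A]$. Each side of $C$ contains a face of $T$ and hence a face point. The connected curve $H$ passes through every face point and avoids $C$, a contradiction. This is the cycle--bond duality.
\item Connectivity then follows from Euler's formula alone. There are exactly $n-2$ uncrossed edges of $T$, so $c_A+c_B=2$, and nonemptiness of both sides forces $c_A=c_B=1$.
\end{itemize}

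\textbf{What each approach buys.} Your argument needs neither cubicity of $D$ nor the inductive chord-splitting, and it uses only Euler's formula rather than the triangular structure. In exchange it invokes the global counts $|E(T)|=3n-6$ and $f=2n-4$, and a separation argument for cycles of $T$. The paper's route avoids the edge count and gives a more explicit picture: each tree is the adjacency tree of a chord dissection of a disk.

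\textbf{One small point.} Your justification that $A$ and $B$ are nonempty is vaguer than it needs to be. Any edge of $H$ is crossed by exactly one edge of $T$, and that edge has its two endpoints on opposite sides. This alone gives $A\neq\varnothing$ and $B\neq\varnothing$.
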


\begin{proof}
Let $C$ be a Hamiltonian cycle of $D$. Its embedded image is a simple
closed curve, and the Jordan curve theorem divides the sphere into
two open disks with common boundary $C$. Every vertex of $D$ lies on
$C$. The dual is cubic, because every face of $T$ is a triangle.
Consequently each vertex of $D$ is incident with exactly one edge
outside $C$, so $E(D)\setminus E(C)$ is a matching. Each such edge
has its two distinct endpoints on $C$ and its interior in exactly one
of the disks. Within either disk these edges are mutually disjoint
noncrossing chords.

Fix one disk and suppose that it contains $m$ chords. Insert its
chords one at a time. Before a chord is inserted, its interior lies
in one region of the current subdivision and its endpoints lie on
that region's boundary. That region is a disk, and the chord splits
it into two disks. This proves inductively that the final number of
regions is $m+1$ and that all regions are disks.

Form a graph $J$ with one vertex for each final region and one edge
for each chord, joining the two regions on its sides. This graph is
connected: starting with a single region, inserting a chord replaces
one region vertex by two joined vertices, distributing its former
adjacencies between them. Thus $J$ has $m+1$ vertices and $m$ edges
and is connected, so it is a tree. This also covers $m=0$, when $J$
is the one-vertex tree.

Every face of $D$ lies on exactly one side of $C$. Let $A$ and $B$
be the sets of vertices of $T$ corresponding, under duality, to the
faces on the respective sides. An edge of $D$ on $C$ separates a
face from each side; an edge outside $C$ is a chord and separates two
faces on the same side. Hence the two region-adjacency graphs just
constructed are exactly the induced graphs $T[A]$ and $T[B]$.
Both are trees. Two-color $T[A]$ with $1,2$ and two-color $T[B]$
with $3,4$. Edges within either part have differently colored ends,
and edges between the parts have ends in disjoint color sets.
This is the claimed proper vertex four-coloring.
\end{proof}

\begin{corollary}\label{dual:vertex-four}
Every finite simple sphere triangulation $T$ with $\Delta(T)\le6$
has a proper vertex four-coloring obtained without invoking the
Four-Color Theorem.
\end{corollary}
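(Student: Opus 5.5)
The plan is to combine Lemma~\ref{dual:two-trees} with Kardo\v{s}'s Theorem~\ref{dual:kardos}. The only work is to check that the dual $D=T^*$ satisfies the hypotheses of Theorem~\ref{dual:kardos}: it is cubic, planar, three-connected, and every face has length at most six.

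Planarity and cubicity are immediate. $D$ is embedded in the sphere, and each face of $T$ is a triangle, so each vertex of $D$ has degree three. The faces of $D$ correspond bijectively to the vertices of $T$, and the face dual to $v$ has length $d_T(v)\le\Delta(T)\le6$. The face-length bound is therefore exactly the degree hypothesis.

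The main step is three-connectivity of $D$. Here I would use the standard fact that a finite simple sphere triangulation on at least four vertices is three-connected. The argument: if $\{x,y\}$ separated $T$, the link cycle of $x$ would have to pass through $y$ twice, or a separating closed curve through $x,y$ would meet only two faces, contradicting simplicity. I would then invoke Whitney/Steinitz duality: the dual of a three-connected plane graph is three-connected. I would also check that $D$ is simple. No loops arise, since each edge of $T$ lies on two distinct faces. No multiple edges arise, since two faces sharing two edges would force, in a simple triangulation with at least four vertices, a vertex of degree two or a repeated edge. This simplicity and three-connectivity check is the only point requiring care, and it is classical polyhedral duality, which the paper states it uses without the Four-Color Theorem.

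With the hypotheses verified, Theorem~\ref{dual:kardos} gives a Hamiltonian cycle in $D$. Lemma~\ref{dual:two-trees} then partitions $V(T)$ into two induced trees and produces the proper four-coloring with colors $1,2$ on one tree and $3,4$ on the other. No step uses the Four-Color Theorem: the external inputs are only Kardo\v{s}'s theorem and standard duality.
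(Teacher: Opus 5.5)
Your proposal is correct and is essentially the paper's proof. Since $|V(T)|\ge4$ by convention, $T$ is simple and three-connected, so standard polyhedral duality makes $D=T^*$ simple, three-connected and cubic with the face dual to $v$ of length $d_T(v)\le6$, and Theorem~\ref{dual:kardos} followed by Lemma~\ref{dual:two-trees} gives the coloring; your extra remarks on the simplicity of $D$ and the three-connectivity of $T$ only add detail that the paper leaves to standard facts (the link-rerouting argument of Lemma~\ref{red:connectivity} is a cleaner version of your ``link passes through $y$ twice'' sketch).
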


\begin{proof}
By the convention for sphere triangulations, $|V(T)|\ge4$.
The graph $T$ is a simple three-connected plane graph. By standard
polyhedral duality, its dual $D$ is also simple and three-connected.
It is cubic, and the face of $D$ corresponding to $v\in V(T)$ has
length $d_T(v)\le6$. Thus all hypotheses of
Theorem~\ref{dual:kardos} hold. Apply that theorem and then
Lemma~\ref{dual:two-trees}.
\end{proof}

The corollary supplies vertex colors only. No total-color extension
is implicit in it. The next sections prove the independent matching,
extension, and restoration statements needed to obtain eight colors
for vertices and edges together.

\section{Matchings covering prescribed vertices}
\label{core:matching-tools}

A matching \emph{covers} a vertex if one of its edges is incident with
that vertex. We shall need to cover specified vertices while leaving
the other vertices unconstrained. For a graph $F$, a set
$U\subseteq V(F)$, and $X\subseteq V(F)$, let
\[
 q_U(F-X)=\#\{C:\ C\text{ is an odd-order component of }F-X,
                       \ V(C)\subseteq U\}.
\]
Thus an odd component containing even one vertex outside $U$ is not
counted. The following version of the matching criterion will serve
both the edge-coloring argument and the construction of the core.

\begin{lemma}\label{core:required}
Let $F$ be a graph and $U\subseteq V(F)$. There is a matching
covering $U$ if and only if
\begin{equation}\label{core:required-condition}
 q_U(F-X)\leq |X|\qquad\text{for every }X\subseteq V(F).
\end{equation}
\end{lemma}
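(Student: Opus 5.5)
Necessity is immediate. Suppose $M$ is a matching covering $U$ and $X\subseteq V(F)$. Let $C$ be an odd component of $F-X$ with $V(C)\subseteq U$. Every vertex of $C$ is covered by $M$. Since $|V(C)|$ is odd, some vertex of $C$ is matched by an edge leaving $C$. Such an edge cannot go to another component of $F-X$, so it goes to $X$. Distinct such components use distinct vertices of $X$, because $M$ is a matching. Hence $q_U(F-X)\le|X|$.

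For sufficiency I will reduce to Tutte's theorem~\cite{tutte1954} by an auxiliary construction. Form $F'$ from $F$ by adding a set $W$ of new vertices for the vertices outside $U$. Let $W$ be a clique of size $|V(F)\setminus U|$ or that size plus one, chosen so that $|V(F')|$ is even. Join every vertex of $W$ to every vertex of $V(F)\setminus U$. A perfect matching of $F'$ restricts to a matching of $F$ covering $U$: each vertex of $U$ is matched in $F'$ only to neighbours in $F$, since $U$ has no edges to $W$. It therefore suffices to verify Tutte's condition $o(F'-Y)\le|Y|$ for all $Y\subseteq V(F')$, where $o$ counts odd components.

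The main step, and the only real work, is this verification. Write $X=Y\cap V(F)$.

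\emph{Case 1: $W\not\subseteq Y$.} The surviving part of $W$ is a clique. It lies in a single component $K$ together with every vertex of $V(F)\setminus(U\cup X)$. Every other component of $F'-Y$ is a component of $F-X$ contained in $U$. So $o(F'-Y)\le q_U(F-X)+1\le |X|+1$. I must absorb the possible extra $+1$ by parity. Since $|V(F')|$ is even, $o(F'-Y)\equiv |V(F')|-|Y|\equiv |Y| \pmod 2$. If $Y\cap W\neq\emptyset$, then $|Y|\ge|X|+1$ and we are done. If $Y\cap W=\emptyset$, then $|Y|=|X|$, and the bound $o\le|X|+1$ together with the parity $o\equiv|X|$ gives $o\le|X|$.

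\emph{Case 2: $W\subseteq Y$.} Then $F'-Y=F-X$, and $o(F-X)\le q_U(F-X)+|V(F)\setminus(U\cup X)|$, because each odd component not contained in $U$ contains a vertex outside $U\cup X$. This is at most $|X|+|W|\le|Y|$, using $|W|\ge|V(F)\setminus U|$.

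I expect the parity bookkeeping in Case 1 to be the delicate point, together with the choice of $|W|$ ensuring $|V(F')|$ is even. The empty case $U=\emptyset$ is trivial, since the empty matching suffices.
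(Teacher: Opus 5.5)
Your proof is correct and takes essentially the same route as the paper: the same counting for necessity, and for sufficiency Tutte's theorem applied to an even-order auxiliary graph in which all surviving vertices outside $U$ fall into one component, with the extra $+1$ absorbed by parity. The only difference is the gadget: the paper makes $V(F)\setminus U$ itself a clique and adds at most one new vertex $z$, so the surviving vertices outside $U$ always lie in a single component and it needs no analogue of your Case~2 count (charging each odd component not contained in $U$ to a vertex of $W$).
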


\begin{proof}
Suppose first that a matching covers $U$. In an odd component of
$F-X$ lying wholly in $U$, an internal matching cannot cover every
vertex. A matching edge must therefore leave that component and end
in $X$. Distinct counted components require distinct matched vertices
of $X$, proving \eqref{core:required-condition}.

For the converse, write $W=V(F)\setminus U$. Join every pair of
nonadjacent vertices in $W$, so that $W$ induces a clique.
If $|V(F)|$ is odd, add a new vertex $z$ adjacent to every
vertex of $W$. Denote the resulting even-order graph by $F^+$. Every
added edge has both ends outside $U$, the set of vertices required
to be covered by the matching.

We verify Tutte's perfect-matching criterion for $F^+$
\cite{tutte1954}. Take $Y\subseteq V(F^+)$, put
$X=Y\cap V(F)$, and let $\varepsilon=1$ if $z$ was added and belongs
to $Y$, and $\varepsilon=0$ otherwise. A component of $F-X$ lying
wholly in $U$ remains a component of $F^+-Y$. Every other old
component contains a vertex of $W\setminus X$. Since these vertices
form a clique, all such components merge into at most one component.
If $z$ remains but $W\setminus X$ is empty, the isolated vertex $z$
is itself the only possible additional component. It follows that
\[
 o(F^+-Y)\leq q_U(F-X)+1\leq |X|+1,
\]
where $o$ denotes the number of odd components. If $\varepsilon=1$,
then $|X|+1=|Y|$. If $\varepsilon=0$, then $|X|=|Y|$; the order of
$F^+$ is even, so $o(F^+-Y)$ and $|Y|$ have the same parity. In this
case the bound $o(F^+-Y)\leq |Y|+1$ improves to
$o(F^+-Y)\leq |Y|$. Thus Tutte's condition holds for every $Y$.

Choose a perfect matching of $F^+$ and discard its added edges. All
discarded edges avoid $U$, so the remaining edges form a matching
of $F$ covering every vertex of $U$.
\end{proof}

\section{Four-coloring extensions in subcubic graphs}
\label{ext:section}

The principal coloring tool preserves the colors of \emph{all}
vertices. This is the form needed when an edge decomposition is
colored one part at a time: the vertex coloring must remain proper
also on the parts whose edges have already been removed.

\begin{theorem}\label{ext:main}
Let $Q$ be a graph with $\Delta(Q)\leq3$. Every proper
vertex coloring $\phi:V(Q)\longrightarrow B=\{1,2,3,4\}$ extends
to a total coloring of $Q$ with the five colors $B\cup\{c\}$,
where $c\notin B$.
\end{theorem}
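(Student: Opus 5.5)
We plan to color the edges with the color $c$ on a suitable matching and to color the remaining edges from $B$ using the $\mathbb F_2^2$ structure described in Section~\ref{prelim:coloring}. An edge $xy$ not receiving $c$ has list $L(xy)=B\setminus\{\phi(x),\phi(y)\}$, a two-element set $\{d,e\}$ with $d+e=\phi(x)+\phi(y)$. Our task is therefore to find a matching $M$ (colored $c$) such that the lists on $Q-M$ admit a proper edge coloring. We argue separately on each component of $Q$, so we may assume $Q$ is connected. We also induct on $|V(Q)|$, where the inductive step removes a bridge.

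\textbf{Bridgeless case.} By Lemma~\ref{core:required}, applied with $U=V_3(Q)$, we obtain a matching $M$ covering every degree-three vertex. Condition \eqref{core:required-condition} is checked by the standard Petersen-type count: every odd component of $Q-X$ consisting only of degree-three vertices sends at least three edges to $X$ when $Q$ is bridgeless, while $X$ receives at most $3|X|$ edges. After deleting $M$, the graph $Q-M$ has maximum degree two, so its components are paths and cycles, each carrying lists of size two. On a path we color greedily from one end; each edge conflicts only with its predecessor, so a two-element list always leaves a choice. On a cycle $x_0x_1\cdots x_{k-1}x_0$, choosing a color on one edge forces the colors along the rest of the cycle, since at each shared vertex the next edge must take the other color of its list. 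The $\mathbb F_2^2$ identity then tracks this propagation: the forced color changes by the sum of the endpoint colors. We will show that one of the two starting choices closes consistently, or else that the consistency can be repaired by rerouting $M$ along the cycle (exchanging which alternate edges receive $c$). We expect this closing step to be the main obstacle. Our first attempt will be to compute the holonomy $\sum\phi$ around the cycle in $\mathbb F_2^2$ and show that a bad case forces a structure in which $M$ can be locally modified.

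\textbf{Bridge case.} Suppose $e=xy$ is a bridge, with sides $Q_x\ni x$ and $Q_y\ni y$. We color $Q_x$ together with $e$ by induction, then color $Q_y$ so that it is compatible with the color already on $e$ at $y$. This compatibility requirement is precisely the single-port statement of Proposition~\ref{ext:single-port}: every legal color on one outgoing bridge, including $c$, can be accommodated. We prove that proposition by the same construction. We take a near-perfect matching avoiding $y$ (or covering $y$ by $e$ itself when $e$ is colored $c$), and then use the two-list propagation along the path that starts at $y$. Here the freedom at the free end of the path absorbs the prescribed color. Gluing the two sides along the bridge completes the induction.
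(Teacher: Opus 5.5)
Your overall architecture matches the paper's. You take a matching covering $V_3$ via the Tutte-type count, color the residual paths and cycles from two-element lists, and peel off bridges with a prescribed color on one outgoing bridge. However, you locate the difficulty in the wrong place, and the real one is left unproved.

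\textbf{The cycle step.} This step is easy, and your forcing claim is false: an edge is forced only when the previous edge's color lies in its list. Consecutive lists are two-subsets of the three-set $B\setminus\{\phi(x_i)\}$, where $x_i$ is their shared vertex. If all lists on the cycle equal one two-set $S$, the vertex colors alternate in $B\setminus S$, so the cycle is even and alternating the two colors of $S$ works. Otherwise, start at an edge $e_1$ whose list differs from that of its predecessor $e_k$, give it a color outside $A(e_k)$, and color greedily; the last edge cannot then clash with the first. No rerouting of $M$ is needed.

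\textbf{The bridge step.} This is where the genuine gap lies. First, the bridge must be chosen so that $Q_y$ is a bridgeless leaf block. For an arbitrary bridge, neither extension statement applies to $Q_y$.

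Second, and this is the heart of the theorem, you assert the single-port extension rather than prove it, and your sketch does not work. Suppose $y$ is the only degree-two vertex of $D=Q_y$ and $t\in B\setminus\{\phi(x),\phi(y)\}$.
\begin{itemize}
\item A near-perfect matching avoiding $y$ makes $D-M$ two-regular, so there is no path at $y$. Instead $y$ lies on a cycle whose missing color at $y$ must be $t$. But that missing color is forced to be $b$ when both neighbors of $y$ have color $b\neq t$.
\item If you instead expose some $u\neq y$, the residual component is a cycle through $u$ plus a path $u=v_0,\ldots,v_k=y$. The first path edge must carry the cycle's missing color at $u$, which may be forced. Along a forced path the quantity $x_i+\phi(v_i)\in\mathbb F_2^2$ is invariant, so the last color can be forced to equal $t$. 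For example, take $\phi(u)=3$ with both cycle neighbors colored $1$, $\phi(v_1)=2$, $\phi(y)=4$, $k=2$ and $t=3$. This forces $uv_1=1$ and then $v_1y=3=t$.
\end{itemize}
So the ``freedom at the free end'' need not absorb $t$.

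\textbf{The missing idea.} You must choose the exposed vertex depending on $t$. Put $\kappa=\phi(y)+t$, and choose $u\neq y$ whose three neighbors either have distinct colors, or have exactly two colors $b,d$ with $b+d\neq\kappa$. In the first case, either first color is available at $u$, so you can build the path backwards from $y$. In the second case, the transported forced value differs from $t$.

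Such a $u$ exists. Otherwise, each vertex other than $y$ would have all its edges crossing, or all noncrossing, for the partition of $B$ into the pairs $\{a,a+\kappa\}$. Connectivity of $D-y$ would then make $D$ bipartite, which the degree count $3|U|-1=3|W|$ forbids.

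You also need two further cases. When $D$ has a second degree-two vertex, use a matching covering $V_3(D)\cup\{y\}$, which needs a different Tutte count. When $t=c$, use a matching avoiding $y$.
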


Theorem~\ref{ext:main} preserves the supplied coloring and requires no
planar embedding. Its proof first uses a matching to reduce a
bridgeless graph to paths and cycles, then removes terminal blocks.
For a terminal block $D$, write $rz$ for its unique bridge to the
rest of the graph, with $r\in V(D)$ and $z\notin V(D)$, and call
$rz$ its \emph{outgoing bridge}. An \emph{outside coloring} is a
total coloring of the graph outside $D$. It fixes the color of $z$
and the colors of the edges already incident with $z$, restricting
the available colors on $rz$. Propositions~\ref{ext:marked-port}
and~\ref{ext:single-port} show that every remaining legal color
can be realized inside $D$.

Throughout this section, $\phi$ denotes the supplied proper
vertex coloring into $B=\{1,2,3,4\}$; it need not use all four
colors. For an edge $xy$ of a graph carrying this fixed
coloring, set
\begin{equation}\label{ext:lists}
 A_\phi(xy)=B\setminus\{\phi(x),\phi(y)\},\qquad
 L_\phi(xy)=A_\phi(xy)\cup\{c\}.
\end{equation}
Properness of $\phi$ implies $|A_\phi(xy)|=2$ and
$|L_\phi(xy)|=3$. A proper edge coloring from the lists $L_\phi$
is exactly a total coloring extending $\phi$. We abbreviate
$A_\phi(e)$ and $L_\phi(e)$ to $A(e)$ and $L(e)$ when the fixed
coloring is clear.

\subsection{Two-color lists on paths and cycles}

\begin{lemma}\label{ext:degree-two}
Let $F$ have maximum degree at most two, and let
$\phi:V(F)\to B=\{1,2,3,4\}$ be a proper vertex coloring.
Then $F$ has a proper edge coloring from the lists $A_\phi$
in \eqref{ext:lists}. On a path, either color in the list of its
first edge may be prescribed.
\end{lemma}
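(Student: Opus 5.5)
We treat each component of $F$ separately; since $\Delta(F)\le 2$, every component is an isolated vertex, a path, or a cycle. Isolated vertices carry no edges, so there is nothing to do. The whole argument rests on the $\mathbb F_2^2$ identity from Section~\ref{prelim:coloring}. Identify $B$ with $\mathbb F_2^2$ and set $s(xy)=\phi(x)+\phi(y)$. Because $\phi$ is proper, $s(xy)\ne 0$. Since the four elements of $B$ sum to $0$, the two colors $d,e$ of $A(xy)$ satisfy $d+e=s(xy)$. So each list is a coset $\{d,d+s(xy)\}$, and once one color of $A(xy)$ is chosen, the other is determined.

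For a path $x_0x_1\cdots x_k$, we color greedily from the first edge, which may be prescribed to either color of $A(x_0x_1)$. Suppose $e_{i-1}=x_{i-1}x_i$ has received a color $\alpha\in A(e_{i-1})$. The next edge $e_i=x_ix_{i+1}$ conflicts with $e_{i-1}$ only. Its list $A(e_i)=B\setminus\{\phi(x_i),\phi(x_{i+1})\}$ has two colors, so at least one of them differs from $\alpha$, and we choose it. This is just list coloring a path from lists of size two, so both the existence claim and the prescription claim for paths follow at once.

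For a cycle $x_0x_1\cdots x_{k-1}x_0$, we must also check that the last edge closes properly, and I expect this to be the main obstacle. We first color $e_0=x_0x_1$ with some $\alpha\in A(e_0)$ and propagate as above. At each step the choice is forced only when $\alpha\in A(e_i)$, and then $e_i$ gets the other color $\alpha+s(e_i)$. The danger is that $e_{k-1}$ has a list equal to $\{\beta,\alpha\}$, where $\beta$ is the color on $e_{k-2}$. We handle this in three steps.

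\begin{enumerate}
\item If some consecutive pair of edges has lists that are not equal, we start the propagation right after that pair, so that the closing edge has a list different from its predecessor's. With this choice of starting point, the goal is to find a starting color that avoids the clash at the closing edge.
\item Alternatively, we exploit the freedom of the starting color. The two starting colors $\alpha$ and $\alpha+s(e_0)$ give two propagations, and we argue that they cannot both fail at the closing edge.
\item If all lists on the cycle are equal to $\{d,e\}$, then every vertex color lies in the complementary pair. The vertex colors then alternate, so the cycle is even, and alternating $d,e$ on the edges works.
\end{enumerate}

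In the remaining case some two adjacent edges $e_{i-1},e_i$ have distinct lists. Cut the cycle at $x_i$ and view it as a path that starts with $e_i$ and ends with $e_{i-1}$. Color this path greedily, beginning with a color of $A(e_i)$ that is not in $A(e_{i-1})$. Such a color exists because the lists are distinct sets of size two. When the last edge $e_{i-1}$ is reached, it must avoid two colors: the color of its predecessor, and the starting color on $e_i$. The starting color is not in $A(e_{i-1})$, so it removes nothing from that list, and at most one of the two colors of $A(e_{i-1})$ is excluded. A color therefore remains, and the cycle is properly colored. This case split is the one we will carry out in the proof.
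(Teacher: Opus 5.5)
Your argument is correct and is essentially the paper's proof. You color paths greedily with the first color prescribed. On a cycle you split into two cases: if all lists are equal, the cycle is forced to be even and you alternate the two colors. Otherwise two consecutive lists are unequal; you start with a color of the first list that lies outside the last list, so the closing edge automatically avoids it. The $\mathbb F_2^2$ bookkeeping and the unproved alternative in your step~2 are not used and can be dropped.
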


\begin{proof}
Every nontrivial component is a path or a cycle. On a path, give the
first edge either permitted color. Each later edge has a two-element
list, so it has a color different from that of its predecessor.
This colors the whole path greedily.

Consider a cycle. If all its edge lists equal a fixed two-set $S$,
then the colors of the ends of every edge are the two elements of
$B\setminus S$. The vertex colors alternate, so the cycle is even.
Alternating the two members of $S$ colors its edges properly.

Otherwise, choose consecutive unequal edge lists and number the
edges cyclically as $e_1,\ldots,e_k$ so that
$A_\phi(e_1)\neq A_\phi(e_k)$. Give $e_1$ a color in
$A_\phi(e_1)\setminus A_\phi(e_k)$ and greedily color $e_2,\ldots,e_k$.
The color of $e_1$ does not belong to $A_\phi(e_k)$, so the last edge
automatically differs from the first. Isolated vertices need no
edge color.
\end{proof}

We next keep track of the only circumstance in which a prescribed
first edge color determines the last edge color. Fix any bijection
between $B$ and the additive group $\mathbb F_2^2$. All sums of colors
in this section refer to that identification. The four elements sum
to zero, and $x+x=0$ for every color $x$. The choice of bijection
does not impose any additional condition on $\phi$.

\begin{lemma}\label{ext:propagation}
Let $v_0v_1\cdots v_k$ be a path, where $k\geq1$, and let
$\phi$ be the fixed proper vertex four-coloring into $B$ from this
section.
Prescribe a color $x\in A_\phi(v_0v_1)$ on the first edge. For
$1\leq i\leq k$, let $R_i$ be the set of colors attainable on
$v_{i-1}v_i$ in a proper $A_\phi$-edge-coloring of the prefix path
$v_0v_1\cdots v_i$, with first color $x$. Each $R_i$ is nonempty.
If $|R_i|=2$, then $R_j=A_\phi(v_{j-1}v_j)$ for every $j\geq i$.
If $R_k=\{y\}$, then
\begin{equation}\label{ext:transport}
 y+\phi(v_k)=x+\phi(v_1).
\end{equation}
Consequently, if a color $t\in B$ satisfies
\begin{equation}\label{ext:avoidance}
 x+\phi(v_1)\neq t+\phi(v_k),
\end{equation}
there is such a path coloring whose last edge color differs from
$t$.
\end{lemma}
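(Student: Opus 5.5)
The plan is an induction on $i$ that tracks the attainable sets $R_i$ through an explicit recursion, followed by a single algebraic invariant for the forced case.

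\emph{Recursion.} Write $A_i=A_\phi(v_{i-1}v_i)$. Then $R_1=\{x\}$, and for $i\ge2$,
\[
 R_{i+1}=\{z\in A_{i+1}: z\neq y\text{ for some }y\in R_i\}.
\]
This holds because any proper coloring of the prefix $v_0\cdots v_{i+1}$ restricts to one of $v_0\cdots v_i$, and the only constraint on the new edge is that it differ from its predecessor. From the recursion:
\begin{itemize}
 \item[] If $|R_i|=2$, then every $z\in A_{i+1}$ differs from some element of $R_i$, so $R_{i+1}=A_{i+1}$, which again has size two. Induction gives $R_j=A_j$ for all $j\ge i$.
 \item[] If $R_i=\{y\}$, then $R_{i+1}=A_{i+1}\setminus\{y\}$, which is nonempty because $|A_{i+1}|=2$.
\end{itemize}
So every $R_i$ is nonempty. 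Moreover $R_{i+1}$ is a singleton exactly when $y\in A_{i+1}$, and then it is the other element of $A_{i+1}$.

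\emph{Invariant in the forced case.} Suppose $R_k=\{y\}$. By the monotonicity just proved, $R_i$ is a singleton $\{y_i\}$ for every $i\le k$, with $y_1=x$. For $i\ge1$, the edge $v_iv_{i+1}$ has $A_{i+1}=\{y_i,y_{i+1}\}$, and its complement in $B$ is $\{\phi(v_i),\phi(v_{i+1})\}$. The four elements of $B$ sum to zero in $\mathbb F_2^2$, so
\[
 y_i+y_{i+1}=\phi(v_i)+\phi(v_{i+1}),
\]
which rearranges to
\[
 y_{i+1}+\phi(v_{i+1})=y_i+\phi(v_i).
\]
Hence $y_i+\phi(v_i)$ is constant along the path. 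Comparing $i=1$ with $i=k$ gives \eqref{ext:transport}.

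\emph{Consequence.} Suppose $t$ satisfies \eqref{ext:avoidance}.
\begin{itemize}
 \item[] If $|R_k|=2$, choose the last edge color in $R_k\setminus\{t\}$.
 \item[] If $R_k=\{y\}$, then $y=t$ would give $t+\phi(v_k)=x+\phi(v_1)$, contradicting \eqref{ext:avoidance}. So $y\ne t$, and any coloring realizing $R_k$ works.
\end{itemize}

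The only delicate step is justifying the recursion, specifically that the attainable set for edge $i+1$ depends only on $R_i$. This holds because the path constraints are purely consecutive.
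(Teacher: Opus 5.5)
Your proof is correct and follows the paper's argument essentially step for step: the same recurrence for the attainable sets, the persistence of two-element sets, the forced chain of singletons with $A_\phi(v_iv_{i+1})=\{y_i,y_{i+1}\}$, and the invariance of $y_i+\phi(v_i)$, which follows because the four elements of $B$ sum to zero in $\mathbb F_2^2$. The only blemish is an indexing slip: the recurrence should be stated for $i\ge1$, so that it determines $R_2$ from $R_1=\{x\}$, and that is how you in fact use it later.
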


\begin{proof}
The recurrence for the attainable sets is
\[
 R_{i+1}=\{a\in A_\phi(v_iv_{i+1}):
                   a\neq b\text{ for some }b\in R_i\}.
\]
A two-element list always contains a color different from a given
one, so these sets are nonempty. If $|R_i|=2$, then for every
$a\in A_\phi(v_iv_{i+1})$ we have
$|R_i\setminus\{a\}|\geq |R_i|-1=1$.
Thus there exists $b\in R_i$ with $b\neq a$, and the recurrence
gives $a\in R_{i+1}$. Since this holds for every color in the
next list, $R_{i+1}=A_\phi(v_iv_{i+1})$, and the two-element
property persists.

If $R_k$ is a singleton, every preceding $R_i$ must therefore also
be a singleton, say $R_i=\{x_i\}$. For $1\leq i<k$, the recurrence
then gives $R_{i+1}=A_\phi(v_iv_{i+1})\setminus\{x_i\}$.
Since $A_\phi(v_iv_{i+1})$ has two members and
$R_{i+1}=\{x_{i+1}\}$, we must have
$A_\phi(v_iv_{i+1})=\{x_i,x_{i+1}\}$.
By the definition of $A_\phi$, these are precisely the two colors
in $B$ other than $\phi(v_i)$ and $\phi(v_{i+1})$.
Thus $x_i,x_{i+1},\phi(v_i),\phi(v_{i+1})$ are the four distinct
elements of $B$. Their sum in $\mathbb F_2^2$ is zero, so
$x_i+x_{i+1}=\phi(v_i)+\phi(v_{i+1})$.
Adding $x_i$ to both sides and using $x_i+x_i=0$ gives the first
identity below; adding $\phi(v_{i+1})$ gives the second:
\[
 x_{i+1}=x_i+\phi(v_i)+\phi(v_{i+1}),\qquad
 x_{i+1}+\phi(v_{i+1})=x_i+\phi(v_i).
\]
Iteration gives \eqref{ext:transport}; it also covers the case $k=1$.
Finally, if $R_k$ has two members, at least one differs from $t$.
If it has one member, \eqref{ext:transport} and
\eqref{ext:avoidance} show that this member differs from $t$. The
definition of attainability supplies a path coloring realizing the
chosen last color.
\end{proof}

At a vertex $u$ of a cycle whose edges are colored from the lists $A_\phi$, the two incident edge
colors are distinct members of $B\setminus\{\phi(u)\}$. We call
the remaining member of that three-set the \emph{missing color at
$u$}. Lemma~\ref{ext:cycle-missing} describes exactly the flexibility that will
be used at the attachment of a path.

\begin{lemma}\label{ext:cycle-missing}
Let $C$ be a cycle and let
$\phi:V(C)\to B$ be a proper vertex four-coloring.  Let $u$
be a vertex of $C$.
\begin{enumerate}
\item If the two neighbors of $u$ on $C$ have the same color $b$,
then every proper $A_\phi$-edge-coloring of $C$ has missing color $b$
at $u$.
\item If their colors $b,d$ differ, there is a proper
    $A_\phi$-edge-coloring with missing color $b$ at $u$, and another with
missing color $d$ there.
\end{enumerate}
\end{lemma}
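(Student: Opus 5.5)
The plan is to reduce everything to the two edges at $u$ and then apply Lemma~\ref{ext:propagation} to the rest of the cycle. Write $w_1,w_2$ for the two neighbours of $u$ on $C$, with $\phi(w_1)=b$ and $\phi(w_2)=d$ in the second case. By definition $A_\phi(uw_i)=B\setminus\{\phi(u),\phi(w_i)\}$. The missing color at $u$ is the element of $B\setminus\{\phi(u)\}$ that is not used on $uw_1$ or $uw_2$.

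For part~1, both spoke-like edges $uw_1$ and $uw_2$ avoid $\phi(u)$ and $b$. Properness gives $b\ne\phi(u)$, so their two distinct colors must be exactly the two elements of $B\setminus\{\phi(u),b\}$. Hence the missing color is $b$, in every proper $A_\phi$-edge-coloring. Nothing more is needed for this part.

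For part~2, by symmetry I only construct a coloring with missing color $b$. Let $e$ be the fourth color, so $B=\{\phi(u),b,d,e\}$. The edges at $u$ must then carry $d$ and $e$. Since $uw_2$ cannot carry $\phi(w_2)=d$, the assignment is forced: $uw_1\mapsto d$ and $uw_2\mapsto e$. Both colors lie in the respective lists.

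It remains to color the path $P=w_1v_1\cdots w_2$ formed by the other $k=|C|-2\ge1$ edges of $C$. The constraints are that the first edge $w_1v_1$ avoids $d$ and the last edge avoids $e$; the case $k=1$, where $v_1=w_2$, is included. I apply Lemma~\ref{ext:propagation} with $t=e$. It suffices to choose $x\in A_\phi(w_1v_1)\setminus\{d\}$ with $x+\phi(v_1)\ne e+d$. Because the four colors sum to zero, $e+d=\phi(u)+b$.

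The main obstacle is this choice of $x$, which I handle by a short case analysis on $\phi(v_1)\in B\setminus\{b\}$. The only \emph{bad} value is $x^*=\phi(u)+b+\phi(v_1)$.
\begin{itemize}
\item If $\phi(v_1)=\phi(u)$, then $x^*=b$, which is not in the list. The list minus $d$ is nonempty, so any remaining element works.
\item If $\phi(v_1)=d$, then $d$ is not in the two-element list, so at most one element is excluded and one choice remains.
\item If $\phi(v_1)=e$, the list is $\{\phi(u),d\}$ and $x^*=\phi(u)+b+e=d$. Taking $x=\phi(u)$ avoids both $d$ and $x^*$.
\end{itemize}
In every case a valid $x$ exists. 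Lemma~\ref{ext:propagation} then colors $P$ so that the last edge differs from $e$. Adjacent edges of $P$ differ by construction, and the two junctions at $w_1$ and $w_2$ are proper. This gives a proper $A_\phi$-edge-coloring of $C$ with missing color $b$ at $u$. Exchanging the roles of $w_1$ and $w_2$ gives missing color $d$.
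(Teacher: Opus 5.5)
Your proof is correct and takes essentially the paper's route: both force $d,e$ onto the two edges at $u$ and apply the transport identity of Lemma~\ref{ext:propagation} to the rest of the cycle. The paper differs only in that it opens the cycle at $u$ itself, keeping $uw_1$ as the prescribed first edge of the path. A single application of \eqref{ext:transport}, which would force $\phi(u)=d$ (respectively $\phi(u)=b$), then shows that the required last color is attainable, so your three-case choice of $x$ is not needed.
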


\begin{proof}
Existence of a proper $A_\phi$-edge-coloring follows from
Lemma~\ref{ext:degree-two}. In the first case, the two incident
edge lists both equal $B\setminus\{\phi(u),b\}$. Their edges
must use its two members, leaving precisely $b$ missing.

In the second case, put $a=\phi(u)$ and let $e$ be the fourth
color, so that $a,b,d,e$ are distinct. Open the cycle at $u$ into
a path $v_0v_1\cdots v_k$, regarding the two copies
$v_0=v_k=u$ as different endpoints. Here
$\phi(v_1)=b$, $\phi(v_{k-1})=d$, and the first and last lists are
$\{d,e\}$ and $\{b,e\}$.

Prescribe $d$ on the first edge. If $e$ were unattainable on the
last edge, the last attainable set would be $\{b\}$. Equation
\eqref{ext:transport} would give $b+a=d+b$, contradicting
$a\neq d$. Thus the endpoint pair $(d,e)$ is
attainable. Its two entries differ, so identifying the endpoints
closes the cycle properly, and the missing color at $u$ is $b$.

Next prescribe $e$ on the first edge. If $b$ were unattainable at
the last edge, its attainable set would be $\{e\}$, whence
\eqref{ext:transport} would give $e+a=e+b$, contradicting
$a\neq b$. The pair $(e,b)$ is therefore attainable
and closes to a cycle coloring missing $d$ at $u$. The temporary
duplication of $u$ introduced no additional condition: closing the
path makes its first and last edges adjacent, so they must differ
in color, as they do in both constructions.
\end{proof}

\subsection{Matching states in bridgeless graphs}

We write $V_3(D)=\{v\in V(D):d_D(v)=3\}$. In this section, a
``cubic vertex of $D$'' means a vertex in $V_3(D)$, and a graph is
called cubic when every one of its vertices has degree three. An edge is a
\emph{bridge} if its deletion increases the number of connected
components. A graph is \emph{bridgeless} if it has no bridge.

\begin{lemma}\label{ext:bridgeless-matching}
Let $D$ be a connected bridgeless graph whose vertex
degrees are two or three. Then $D$ has a matching covering $V_3(D)$.
Consequently every prescribed proper vertex four-coloring
$\phi:V(D)\to B$ extends to a total coloring with $B\cup\{c\}$.
\end{lemma}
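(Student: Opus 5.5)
We plan to obtain the matching from Lemma~\ref{core:required} with $F=D$ and $U=V_3(D)$, and then derive the coloring from Lemma~\ref{ext:degree-two}. By Lemma~\ref{core:required}, it suffices to show $q_U(D-X)\le|X|$ for every $X\subseteq V(D)$.

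For $X=\emptyset$, the only component of $D$ is $D$ itself. It is counted only if every vertex has degree three and $|V(D)|$ is odd, which is impossible by the handshake lemma. Now let $X\neq\emptyset$, and let $C$ be a counted component, so $C$ has odd order and all its vertices lie in $V_3(D)$. We count the edges of $D$ between $C$ and $X$. Since $D$ is connected and $X\ne\emptyset$, every component of $D-X$ sends at least one edge to $X$, and all edges leaving $C$ go to $X$. Their number is $3|V(C)|-2|E(C)|$, which is odd, so it is at least $1$. It cannot equal $1$, because that single edge would be a bridge of $D$. Hence $C$ sends at least three edges to $X$. Each vertex of $X$ has degree at most three, so at most $3|X|$ edges join $X$ to the rest of the graph. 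Therefore $3q_U(D-X)\le 3|X|$, which is the required condition. This parity-and-bridge count is the only substantive step; it is Petersen's argument adapted to required vertices.

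For the coloring, let $M$ be a matching covering $V_3(D)$ and color every edge of $M$ with $c$. This is proper, because the edges of $M$ are pairwise nonadjacent and $c\notin B$ differs from every vertex color. In $D-M$, each vertex of $V_3(D)$ loses one edge, and each vertex of degree two loses at most one, so $\Delta(D-M)\le2$. Lemma~\ref{ext:degree-two} then colors the edges of $D-M$ from the lists $A_\phi$. These colors avoid the endpoint colors, are mutually proper within $D-M$, and differ from $c$. Together with $\phi$, this gives a total coloring with colors in $B\cup\{c\}$ that extends $\phi$.
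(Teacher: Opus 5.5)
Your proof is correct and follows the paper's argument essentially step for step. You use the same application of Lemma~\ref{core:required} with required set $V_3(D)$ and the same parity-plus-bridgelessness count, which gives at least three cut edges per counted odd component. You then color the matching with $c$ and color the residual graph of maximum degree at most two by Lemma~\ref{ext:degree-two}, as the paper does.
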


\begin{proof}
Apply Lemma~\ref{core:required} with required set $V_3(D)$. For
$X=\varnothing$, its count is zero: if some vertex has degree two,
the unique component contains an optional vertex; otherwise $D$ is
cubic and has even order.

Now let $X\neq\varnothing$, and let $C$ be an odd component of
$D-X$ lying wholly in $V_3(D)$. Write $\delta_D(C)$ for its edge
cut. The degree sum gives
\[
 |\delta_D(C)|=3|V(C)|-2|E(C)|,
\]
which is odd. The cut is nonempty by connectedness, and cannot
have size one because $D$ is bridgeless. It has at least three
edges, all ending in $X$. Summing over the counted components gives
\[
 3q_{V_3(D)}(D-X)\leq3|X|.
\]
Lemma~\ref{core:required} supplies a matching $N$ covering
$V_3(D)$.

Color the edges of $N$ with $c$. The graph $D-N$ has maximum
degree at most two, so Lemma~\ref{ext:degree-two} colors its edges
from $A_\phi$. Each vertex meets at most one edge of color $c$, and
that color occurs on neither the other edges nor the vertices.
The two edge colorings together extend the prescribed $\phi$.
\end{proof}

For attachment to an already colored graph, a slightly more
precise statement is needed. Lemma~\ref{ext:two-states} distinguishes between
leaving a particular degree-two vertex uncovered and covering it.

\begin{lemma}\label{ext:two-states}
Let $D$ satisfy the hypotheses of
Lemma~\ref{ext:bridgeless-matching}, and let $r\in V(D)$ be a vertex
of degree two.
\begin{enumerate}
\item There is a matching avoiding $r$ and covering $V_3(D)$.
\item If $D$ has a degree-two vertex other than $r$, there is a
matching covering $V_3(D)\cup\{r\}$.
\end{enumerate}
\end{lemma}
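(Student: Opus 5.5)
Both parts reduce to Lemma~\ref{core:required}, applied to suitably modified graphs and reusing the parity and cut count from the proof of Lemma~\ref{ext:bridgeless-matching}.

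\emph{Part 1.} Apply Lemma~\ref{core:required} to $F=D-r$ with required set $U=V_3(D)$. Since $r$ has degree two, $U\subseteq V(F)$, and a matching of $D-r$ covering $U$ is exactly a matching of $D$ avoiding $r$ and covering $V_3(D)$. So it suffices to show $q_U(F-X)\le|X|$ for all $X\subseteq V(D)\setminus\{r\}$, equivalently $q_U(D-X')\le|X'|-1$ where $X'=X\cup\{r\}$. Take an odd component $C$ of $D-X'$ with $V(C)\subseteq V_3(D)$. Its cut $\delta_D(C)$ has odd size $3|V(C)|-2|E(C)|$. Since $D$ is connected and bridgeless, the cut has size at least three, and all its edges end in $X'$. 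Vertices of $X$ absorb at most three cut edges each, while $r$ absorbs at most two. Summing over counted components gives
\[
3q\le 3|X|+2 ,
\]
so $q\le|X|$ as required. For $X=\varnothing$ the same count gives $3q\le2$, hence $q=0$. The only care needed is the degenerate case where $D$ is a cycle, but then $U=\varnothing$ and the empty matching works.

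\emph{Part 2.} Let $s\ne r$ be another degree-two vertex and take $U=V_3(D)\cup\{r\}$ in $D$ itself. Now $s$ is optional, so a component containing $s$ is never counted. Let $C$ be a counted odd component of $D-X$. If $r\notin V(C)$, the previous computation gives $|\delta(C)|\ge3$. If $r\in V(C)$, the degree sum gives $|\delta(C)|=3|V(C)|-1-2|E(C)|$, which is even. It is nonempty unless $C=D$, which is impossible because $s\notin C$. Bridgelessness then gives $|\delta(C)|\ge2$. At most one counted component contains $r$, so
\[
3q-1\le 3|X| .
\]
Since both sides are integers, $q\le|X|$. When $X=\varnothing$, the only component is $D$, which contains $s$ and so is not counted.

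\emph{Main obstacle.} The step I expect to need the most care is this bookkeeping: checking that the single deficient term (the vertex $r$ in Part~1, the component containing $r$ in Part~2) costs at most one unit and is absorbed by integrality. I also need that removing $X$ cannot disconnect things so that $C$ has an empty cut. This cannot happen: $X$ is nonempty whenever $C\neq D$, and because $D$ is connected, every component of $D-X$ has a cut edge into $X$.
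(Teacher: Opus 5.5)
Your proposal is correct and follows the paper's proof essentially step for step. Like the paper, you apply Lemma~\ref{core:required} to $D-r$ with required set $V_3(D)$, and to $D$ with required set $V_3(D)\cup\{r\}$. You use the same odd/even $D$-cut bounds (at least three, or at least two for the component containing $r$), the same totals $3|X|+2$ and $3q-1\le 3|X|$, and the same integrality finish. One small point: in part~2, evenness and nonemptiness of the cut already give the bound $2$, so bridgelessness is not needed for that step.
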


\begin{proof}
For a set $C\subseteq V(D)$, write
$\delta_D(C)=\{xy\in E(D):x\in C,\ y\notin C\}$; this is the
\emph{$D$-cut} of $C$.  Recall from the definition of
$q_U$ that a counted odd component is an odd-order component whose
vertex set is contained in the required set $U$.

For the first assertion apply Lemma~\ref{core:required} to $D-r$
with required set $V_3(D)$. For $X\subseteq V(D-r)$, a counted
component $C$ is a component of $(D-r)-X=D-(X\cup\{r\})$.
We use its cut $\delta_D(C)$ in the original graph $D$, including
any edges from $C$ to $r$. Every vertex of $C$ has degree three
in $D$, so this cut is odd,
nonempty and cannot have size one because $D$ is bridgeless; hence
it has size at least three.  Every edge of this $D$-cut has its
endpoint outside $C$ in $X\cup\{r\}$, whose total degree in $D$ is
at most $3|X|+2$. Thus
\[
 3q_{V_3(D)}((D-r)-X)\leq3|X|+2.
\]
Since the count is an integer, it is at most $|X|$. The resulting
matching in $D-r$ has the required properties.

For the second assertion use $D$ itself and required set
$V_3(D)\cup\{r\}$. If $X=\varnothing$, the other degree-two
vertex is optional, so the count is zero. If $X\neq\varnothing$,
each counted component not containing $r$ consists entirely of
vertices of degree three in the original graph $D$ and has an odd
$D$-cut of size at least three.  At most one counted component
contains $r$; its $D$-cut is nonempty and even, and has size at least
two. Consequently
\[
 3q_{V_3(D)\cup\{r\}}(D-X)-1\leq3|X|.
\]
Integrality again gives the required inequality.
\end{proof}

\begin{proposition}
\label{ext:marked-port}
Let $D$ be connected and bridgeless, with degrees
two or three. Let $r$ have degree two and suppose there is another
degree-two vertex. Add a new vertex $z$ and the edge $rz$, and
prescribe a proper vertex four-coloring
$\phi:V(D+rz)\to B$. For each $t\in L_\phi(rz)$, there is a proper
$L_\phi$-edge-coloring of $D+rz$ in
which $rz$ has color $t$.
\end{proposition}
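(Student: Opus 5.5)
We split according to whether the prescribed color is the fifth color: either $t=c$ or $t\in A_\phi(rz)$. In both cases we use one of the two matchings of Lemma~\ref{ext:two-states}, color its edges with $c$, and color the remaining paths and cycles from the two-element lists $A_\phi$ by Lemmas~\ref{ext:degree-two}--\ref{ext:cycle-missing}. The only constraint coupling $rz$ to $D$ sits at $r$: the edge $rz$ must differ from the colors of the two edges of $D$ at $r$. The vertex $z$ imposes nothing beyond $L_\phi(rz)$, since $rz$ is its only edge.

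\emph{Case $t=c$.} Take the matching $N$ of Lemma~\ref{ext:two-states}(1), which avoids $r$ and covers $V_3(D)$. Color $N\cup\{rz\}$ with $c$; this is still a matching. Then $D-N$ has maximum degree at most two, and Lemma~\ref{ext:degree-two} colors it from $A_\phi$. Since no edge of $D$ at $r$ receives $c$, the coloring is proper.

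\emph{Case $t\in A_\phi(rz)$.} Take the matching $N$ of Lemma~\ref{ext:two-states}(2), which covers $V_3(D)\cup\{r\}$, and color it $c$. In $D-N$ the vertex $r$ has degree one. Let $P=v_0v_1\cdots v_k$ be the component of $D-N$ containing $r$, with $v_0=r$. This component is a path, since $r$ has degree one in it. All other components are paths or cycles and are colored by Lemma~\ref{ext:degree-two} without constraint. The only requirement is that the edge $v_0v_1$ avoid $t$.
\begin{enumerate}
\item If $t\notin A_\phi(v_0v_1)$, any coloring works.
\item Otherwise $A_\phi(v_0v_1)$ has a second member $x\ne t$. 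Prescribe $x$ on the first edge via the path statement of Lemma~\ref{ext:degree-two}, starting the path at $r$.
\end{enumerate}
Either way the edge of $D$ at $r$ avoids $t$. The other edge of $D$ at $r$ is in $N$ and has color $c\neq t$. So the coloring is proper.

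The step to check most carefully is that the path containing $r$ has $r$ as an endpoint and that the freedom at its first edge suffices. Only the first edge is adjacent to $rz$, so the free choice of its color, which Lemma~\ref{ext:degree-two} provides, resolves the constraint. The propagation machinery of Lemma~\ref{ext:propagation} should not be needed here; it is presumably reserved for Proposition~\ref{ext:single-port}, where $r$ may have degree three.
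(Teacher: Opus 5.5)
Your proof is correct and follows the paper's argument essentially verbatim: for $t=c$ you color the matching of Lemma~\ref{ext:two-states}(1) with $c$, and for $t\in A_\phi(rz)$ you use the matching of Lemma~\ref{ext:two-states}(2) and prescribe the first edge of the residual path at $r$ from $A_\phi\setminus\{t\}$ via Lemma~\ref{ext:degree-two}. One small correction to your closing aside: in Proposition~\ref{ext:single-port} the vertex $r$ still has degree two, and the propagation lemma is needed there for a different reason. With no second degree-two vertex, the residual path from $r$ must end at the exposed degree-three vertex $u$ on a cycle, and that cycle constrains the color of the path edge at $u$.
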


\begin{proof}
If $t=c$, choose the matching in
Lemma~\ref{ext:two-states}(1), color its edges $c$, and color
the residual graph from $A_\phi$ using
Lemma~\ref{ext:degree-two}. No internal edge of color $c$ meets
$r$, so adding $rz$ with color $c$ causes no conflict.

If $t\in A_\phi(rz)$, choose the matching in
Lemma~\ref{ext:two-states}(2) and again color it $c$. In the
residual graph, $r$ has degree one. Its component is a path, and
its first edge $e$ has a color in $A_\phi(e)\setminus\{t\}$. Prescribe
such a color and extend along the path by
Lemma~\ref{ext:degree-two}, and color the other components by the
same lemma. The three edge colors at $r$ are now $c$, $t$, and a
member of $B$ different from $t$. They are distinct. Membership
of $t$ in $L_\phi(rz)$ also excludes the colors of both endpoints.
\end{proof}

The outside vertex $z$ has degree one in the local graph of
Proposition~\ref{ext:marked-port}. In an application to a larger
graph, we first choose $t$ to avoid the colored edges at $z$.

\subsection{One degree-two vertex}

It remains to handle a bridgeless block with only one degree-two
vertex. The missing second optional vertex is replaced by the
freedom to choose which degree-three vertex a near-perfect matching
leaves uncovered.

\begin{lemma}\label{ext:near-cubic-structure}
Let $D$ be a connected bridgeless graph, and let
$r\in V(D)$ satisfy $d_D(r)=2$ and $d_D(v)=3$ for every
$v\in V(D)\setminus\{r\}$.
Then $D-r$ is connected, $D$ is not bipartite, and $D-u$ has a
perfect matching for every $u\in V(D)$.
\end{lemma}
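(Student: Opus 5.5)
The proof splits into three parts, proved in order: parity, then connectivity and non-bipartiteness by cut counting, then the perfect matchings via Lemma~\ref{core:required}.

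\emph{Parity.} The degree sum of $D$ is $3(|V(D)|-1)+2$, and it must be even. Hence $|V(D)|$ is odd.

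\emph{Connectivity of $D-r$.} Suppose $D-r$ is disconnected. Let $C$ be one of its components. Since $D$ is connected, $C$ is joined to $r$ by at least one edge, and $r$ has only two edges. So some component $C$ receives exactly one edge from $r$, and that edge is a bridge of $D$. This contradicts bridgelessness. Hence $D-r$ is connected.

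\emph{Non-bipartiteness.} Suppose $D$ is bipartite with sides $P$ and $P'$, and say $r\in P$. Counting edges from each side gives
\[
 3|P|-1=|E(D)|=3|P'|.
\]
Reducing modulo three gives a contradiction.

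\emph{Perfect matchings of $D-u$.} Since $|V(D)|$ is odd, $D-u$ has even order. Apply Lemma~\ref{core:required} to $F=D-u$ with $U=V(F)$, so that every odd component is counted. We must show that
\[
 o(D-(X\cup\{u\}))\le |X| \qquad\text{for all } X\subseteq V(D)\setminus\{u\}.
\]
Put $Y=X\cup\{u\}$; it is nonempty.

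Let $C$ be an odd component of $D-Y$. If $r\notin C$, then $|\delta_D(C)|=3|C|-2|E(C)|$ is odd. It is nonempty because $D$ is connected, and it is not a single edge because $D$ has no bridge. Hence $|\delta_D(C)|\ge3$. If $r\in C$, then $|\delta_D(C)|=3|C|-1-2|E(C)|$ is even and nonzero, so $|\delta_D(C)|\ge2$. At most one odd component contains $r$.

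The edges of all these cuts end in $Y$. The total degree of $Y$ is at most $3|Y|$, and it equals $3|Y|-1$ if $r\in Y$. Combining these counts gives $3\,o-1\le 3|Y|$, so $o\le|Y|=|X|+1$.

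The remaining step, which I expect to be the main obstacle, is removing the $+1$. I would use parity: $|V(D)|$ is odd, so $o(D-Y)\equiv |V(D)|-|Y|\equiv |Y|+1\pmod 2$. Therefore $o(D-Y)\ne|Y|$, and $o\le|Y|$ would already force $o\le|Y|-1=|X|$.

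So it suffices to exclude $o=|Y|+1$. In that case the counting inequality is tight, which forces three things:
\begin{itemize}
\item $r\notin Y$, and $r$ lies in an odd component whose cut has exactly two edges;
\item every other odd component has a cut of exactly three edges;
\item every edge at $Y$ goes to an odd component, with none inside $Y$ and none to even components.
\end{itemize}
I would check that this configuration is still consistent with parity, and that it really is tight: $o=|Y|+1$ with degree sum $3|Y|$ at $Y$ and $3(|Y|+1)-1$ cut edges.

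Then $3|Y|=3|Y|+2$, which is false. So the tight case is impossible, and the bound holds, provided the degree accounting is correct. If the constants do not close this way, I would fall back on the parity argument above, together with a direct count of the form $3o-1\le 3|Y|$, hence $o\le|Y|+\tfrac13$. Integrality then gives $o\le|Y|$, and parity gives $o\le|X|$.
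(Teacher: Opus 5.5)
Your proof is correct and follows essentially the paper's argument. The paper rules out bipartiteness by the same mod-three edge count, gets connectivity of $D-r$ from the same bridge observation (it proves more generally that $D$ has no cut vertex), and obtains the perfect matchings from the same cut count $3o-1\le 3|X|+3$ combined with integrality and the parity $o(D-Y)\equiv|X|\pmod 2$.

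The ``tight case'' you set aside is not needed. Integrality already turns $3o-1\le3|Y|$ into $o\le|Y|$, so $o=|Y|+1$ is excluded outright: it would give $3|Y|+2\le3|Y|$, a contradiction rather than a tight configuration. Your final paragraph is the complete argument.
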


\begin{proof}
If a vertex $v$ were a cut vertex, every component of $D-v$ would
have at least two edges to $v$: a component with just one such
edge would make that edge a bridge. Two components would require
$d_D(v)\geq4$, a contradiction. Thus $D$ has no cut vertex, and
in particular $D-r$ is connected.

If $D$ were bipartite with parts $U,W$ and $r\in U$, counting
edges from the two parts would give $3|U|-1=|E(D)|=3|W|$,
which is impossible modulo three.

The degree sum $3|V(D)|-1$ is even, so $|V(D)|$ is odd. Fix
$u\in V(D)$ and $X\subseteq V(D)\setminus\{u\}$. Let $q$ be
the number of odd components of $D-(X\cup\{u\})$. Every such
component has a nonempty cut of size at least two. Moreover,
\[
 |\delta_D(C)|=3|V(C)|-\mathbf1_{r\in V(C)}-2|E(C)|.
\]
Unless $C$ contains $r$, its cut is odd and therefore has at
least three edges. There is at most one exception. All edges in
these $D$-cuts have their endpoint outside the corresponding
component in $X\cup\{u\}$, giving
\[
 3q-1\leq\sum_C|\delta_D(C)|\leq3|X|+3.
\]
Hence $q-|X|\leq4/3$. The graph $D-u$ has even order, so
$q-|X|$ is even, and it follows that $q\leq |X|$. This verifies
Tutte's criterion for $D-u$ for every $X$, proving the final
assertion.
\end{proof}

Let $D,r$ satisfy the hypotheses of
Lemma~\ref{ext:near-cubic-structure}, and fix a proper vertex
four-coloring $\phi:V(D)\to B=\{1,2,3,4\}$.
Use the identification from Section~\ref{prelim:coloring} of $B=\{1,2,3,4\}$ with
$\mathbb F_2^2$ that assigns the vectors
$(0,0),(0,1),(1,0),(1,1)$ to the four colors in that order.
Addition of color vectors is coordinatewise modulo two.
Fix a nonzero vector $\kappa\in\mathbb F_2^2$. The pairs
$\{a,a+\kappa\}$, for $a\in\mathbb F_2^2$, partition the four
colors into two disjoint pairs, since $(a+\kappa)+\kappa=a$.
For example, if $\kappa=(0,1)$, the two pairs are
\[
 \{(0,0),(0,1)\},\qquad \{(1,0),(1,1)\},
\]
which correspond to $\{1,2\}$ and $\{3,4\}$ in the original
color labels. For two distinct colors $b,d$, belonging to the
same pair is equivalent to $d=b+\kappa$, or equivalently
$b+d=\kappa$; this is the relation used in Lemma~\ref{ext:witness}.
An edge will be
called \emph{crossing for this partition} if its endpoint colors
belong to different pairs. This use of ``crossing'' concerns only
a partition of the colors, and has no geometric meaning.

\begin{lemma}\label{ext:witness}
Let $D$ be a connected bridgeless graph with exactly
one vertex $r$ of degree two and with every other vertex of degree
three.
Fix a proper vertex
four-coloring $\phi:V(D)\to B=\{1,2,3,4\}$, and use the above
identification of $B$ with $\mathbb F_2^2$.
For each nonzero $\kappa\in\mathbb F_2^2$, there is a vertex
$u\in V(D)\setminus\{r\}$ with one of the following neighborhood
color patterns under this fixed coloring $\phi$:
\begin{enumerate}
\item its three neighbors receive pairwise distinct colors under $\phi$;
\item the colors assigned by $\phi$ to its neighbors are exactly
two distinct colors $b,d\in B$, with
$b+d\neq\kappa$.
\end{enumerate}
\end{lemma}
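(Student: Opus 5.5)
The plan is to argue by contradiction and show that if no such $u$ exists, then $D$ is bipartite. That contradicts Lemma~\ref{ext:near-cubic-structure}. Fix a nonzero $\kappa$ and the partition of $B$ into the two pairs $\{a,a+\kappa\}$. Suppose every $u\in V(D)\setminus\{r\}$ fails both patterns. Its three neighbors then carry either a single color, or exactly two colors $b,d$ with $b+d=\kappa$. In either case all neighbors of $u$ lie in one pair of the partition.

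The key observation comes next. At such a vertex $u$, either all three incident edges are crossing for the partition, or none of them is. This is because $\phi(u)$ lies in a fixed pair, and all neighbor colors lie in a common pair. Call $u$ of \emph{crossing type} or of \emph{noncrossing type} accordingly. An edge joining two vertices of $V(D)\setminus\{r\}$ is itself either crossing or not, so its two ends have the same type. By Lemma~\ref{ext:near-cubic-structure}, $D-r$ is connected, so all vertices other than $r$ share one type. Every edge of $D$ is incident with some vertex other than $r$, since $D$ is simple and has no loop at $r$. Hence every edge of $D$, including the two edges at $r$, is crossing in the first case and noncrossing in the second.

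If every edge is crossing, the two color classes $\phi^{-1}(\text{pair})$ form a bipartition of $D$. If every edge is noncrossing, each edge joins colors $a$ and $a+\kappa$ in the same pair; these are distinct because $\phi$ is proper. By connectedness all of $V(D)$ is colored from a single pair $\{a,a+\kappa\}$, and $\phi$ itself is then a proper two-coloring. Either way $D$ is bipartite, contradicting Lemma~\ref{ext:near-cubic-structure}. So some $u\neq r$ realizes pattern 1 or pattern 2.

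The only delicate point is the type-propagation step. It needs connectivity of $D-r$, not just of $D$, because $r$ has degree two and is exempt from the hypothesis. This is exactly what the earlier lemma supplies. The rest is routine bookkeeping with the pair partition.
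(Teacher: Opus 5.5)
Your proof is correct and is essentially the paper's argument. Assuming failure at every vertex other than $r$, each such vertex has a uniform crossing status; connectivity of $D-r$ from Lemma~\ref{ext:near-cubic-structure} spreads that status to all edges, and either status makes $D$ bipartite, contradicting that lemma. Your observation that all neighbor colors lie in a single pair gives the uniform status in one step, where the paper treats the monochromatic and two-color neighborhoods separately, but this is only a compression of the same reasoning.
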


\begin{proof}
Fix $\kappa\neq0$ and suppose that neither alternative occurs
under $\phi$ at any vertex other than $r$. Every vertex other than $r$
then has a monochromatic neighborhood, or its neighborhood uses
exactly two colors $b,d$ satisfying $b+d=\kappa$.

Write $P_0,P_1$ for the two pairs determined by $\kappa$, and fix
$v\in V(D)\setminus\{r\}$. If the neighborhood of $v$ is
monochromatic with color $b$, every incident edge has endpoint
colors $\phi(v)$ and $b$. All three edges are noncrossing if
these colors lie in the same pair, and crossing otherwise.

In the other case, the neighbors of $v$ receive exactly two
distinct colors $b,d$ with $b+d=\kappa$. Then
$d=b+(b+d)=b+\kappa$, so $\{b,d\}=\{b,b+\kappa\}$ is one
of the two pairs $P_0,P_1$.
Since $\phi$ is a proper vertex coloring, $\phi(v)$ differs from
both $b$ and $d$. It therefore belongs to the other pair, and
all three edges incident with $v$ are crossing.
In either case, each vertex $v\neq r$ has a common crossing
status for its incident edges.

For an edge $xy$ of $D-r$, the common crossing statuses at $x$
and $y$ agree, since both equal the status of $xy$.
Since $D-r$ is connected by
Lemma~\ref{ext:near-cubic-structure}, the status is common to all
vertices other than $r$. Every edge of $D$ meets such a vertex,
so all edges have that same status. If all are crossing, the
two pairs give a bipartition of $D$. If all are noncrossing,
connectedness forces all vertex colors to lie in a single pair,
and properness of the vertex coloring again makes $D$ bipartite.
Both conclusions contradict Lemma~\ref{ext:near-cubic-structure}.
\end{proof}

The hypothesis that $r$ is the only degree-two vertex cannot simply
be replaced by the condition that all degrees are two or three.
For example, consider $K_{2,3}$ with parts $\{x_1,x_2\}$ and
$\{s,y_1,y_2\}$, and give these vertices the coloring
\[
 \psi(x_1)=1,\quad \psi(x_2)=2,\quad
 \psi(s)=\psi(y_1)=3,\quad \psi(y_2)=4.
\]
This is a proper vertex four-coloring of a connected
bridgeless graph of maximum degree three. For $\kappa=(0,1)$,
every neighborhood uses either $\{1,2\}$ or $\{3,4\}$; both pairs
have color-vector sum $\kappa$. Thus neither alternative of
Lemma~\ref{ext:witness} occurs. In the proof above, the stronger
degree assumption supplies the non-bipartiteness of $D$ through
Lemma~\ref{ext:near-cubic-structure}.

Choose $u\neq r$ and a perfect matching $N$ of $D-u$. In the
residual graph $F=D-N$,
\begin{equation}\label{ext:lollipop-degrees}
 d_F(u)=3,\qquad d_F(r)=1,\qquad d_F(v)=2
       \quad(v\notin\{u,r\}).
\end{equation}
The vertices $u,r$ lie in the same component, since each component
has an even number of vertices of odd degree. Every other component
is a cycle. The component containing $u,r$ consists of a simple
cycle $C$ through $u$ and a nontrivial simple path $P$ from $u$ to $r$,
meeting that cycle only at $u$.

To verify the cycle-and-path description, start at
$r$ along its only edge and continue through vertices of degree
two. The walk cannot revisit a vertex before reaching $u$: a
closed portion attached to the preceding path would give a
degree-three vertex other than $u$, and $r$ itself has degree one.
Finiteness therefore makes the walk reach $u$ along a simple path.
The other two edges at $u$ continue through degree-two vertices
and return to $u$, forming a simple cycle. All degrees in the
component are now exhausted. Write $P$ as
$u=v_0,v_1,\ldots,v_k=r$, where $k\geq1$.
Figure~\ref{fig:port} illustrates this cycle-and-path component.
It also shows the edge $rz$ to a new outside vertex $z$; this edge
and its prescribed color $t$ will be considered in
Proposition~\ref{ext:single-port}.

\begin{figure}[!htbp]
\centering
\begin{tikzpicture}[scale=1.05, every node/.style={font=\small}]
 \coordinate (u) at (0,0);
 \coordinate (a) at (-1,1);
 \coordinate (b) at (-2,0);
 \coordinate (d) at (-1,-1);
 \coordinate (vone) at (1,0);
 \coordinate (r) at (3,0);
 \coordinate (z) at (4.3,0);
 \draw[thick] (u)--(a)--(b)--(d)--(u)--(vone);
 \draw[thick] (vone)--(1.35,0);
 \draw[densely dotted,thick] (1.35,0)--(2.4,0);
 \draw[thick] (2.4,0)--(r);
 \draw[dashed,thick] (r)--node[above] {$t$}(z);
 \foreach \pt in {u,a,b,d,vone,r,z} \fill (\pt) circle (1.8pt);
 \node at (0.5,0.45) {$u=v_0$};
 \node[below] at (vone) {$v_1$};
 \node[below] at (r) {$r=v_k$};
 \node[below] at (z) {$z$};
 \node at (-1,0) {$C$};
 \node[above] at (1.8,0.12) {path};
\end{tikzpicture}
\caption{The component of $D-N$ containing $u$ and $r$, shown
schematically.  Its cycle and path can have other lengths.  The dashed
edge is the outgoing bridge, whose color $t$ is prescribed.  Edges of
the matching $N$ and the other cycle components are omitted.
Proposition~\ref{ext:single-port} chooses the exposed vertex $u$
so that the last path-edge color can be made different from $t$.}
\label{fig:port}
\end{figure}
\FloatBarrier

\begin{lemma}
\label{ext:initial-colour}
Let $C$ be a simple cycle and let $P$ be a simple path with vertex
sequence $u=v_0,v_1,\ldots,v_k=r$, where $k\geq1$ and
$V(C)\cap V(P)=\{u\}$.
Fix a proper vertex four-coloring
$\phi:V(C\cup P)\to B=\{1,2,3,4\}$, and for each edge $xy$ of
$C\cup P$ put $A_\phi(xy)=B\setminus\{\phi(x),\phi(y)\}$.
Use the above identification of $B$ with $\mathbb F_2^2$ for
sums of colors. All references to the neighbors of $u$ below
mean its three neighbors in $C\cup P$, with their colors under
this fixed $\phi$. Then the following hold.
\begin{enumerate}
\item If the three neighbors of $u$ have distinct colors, either
member of $A_\phi(uv_1)$ can be assigned to $uv_1$ in a proper
$A_\phi$-edge-coloring of $C\cup\{uv_1\}$.
\item If those neighbors use exactly two distinct colors $b,d\in B$,
there is a proper $A_\phi$-edge-coloring of $C\cup\{uv_1\}$
such that the color $x$ of $uv_1$ satisfies
\begin{equation}\label{ext:first-difference}
 x+\phi(v_1)=b+d.
\end{equation}
\end{enumerate}
\end{lemma}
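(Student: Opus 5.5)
The plan rests on one observation: in any proper $A_\phi$-edge-coloring of $C\cup\{uv_1\}$, the color of $uv_1$ is forced to be the missing color at $u$ of the restriction to $C$. Indeed, the two cycle edges at $u$ use two distinct members of $B\setminus\{\phi(u)\}$. The edge $uv_1$ needs a color in $A_\phi(uv_1)\subseteq B\setminus\{\phi(u)\}$ that differs from both of them, so it must receive the third member, namely the missing color $m$.

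Conversely, given any proper $A_\phi$-edge-coloring of $C$ with missing color $m$ at $u$, it extends to $C\cup\{uv_1\}$ exactly when $m\neq\phi(v_1)$, by assigning $m$ to $uv_1$. The only constraints on $uv_1$ are its endpoint colors and the two adjacent cycle edges at $u$, since $v_1$ has no other edge in $C\cup\{uv_1\}$. So both parts reduce to choosing the missing color via Lemma~\ref{ext:cycle-missing}.

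Write $a=\phi(u)$, let $p,q$ be the neighbors of $u$ on $C$, and note that all three neighbors have colors in $B\setminus\{a\}$.

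For part~1, the three neighbor colors are distinct, so $\{\phi(p),\phi(q),\phi(v_1)\}=B\setminus\{a\}$. Hence $A_\phi(uv_1)=\{\phi(p),\phi(q)\}$. Because $\phi(p)\neq\phi(q)$, Lemma~\ref{ext:cycle-missing}(2) gives cycle colorings with missing color $\phi(p)$ and with missing color $\phi(q)$. Each of these differs from $\phi(v_1)$, so each extends, and either member of $A_\phi(uv_1)$ is realized.

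For part~2, I would split into two cases according to how the colors $b,d$ are distributed among $p,q,v_1$.

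First, suppose $\phi(p)=\phi(q)$; call this color $b$, so $\phi(v_1)=d\neq b$. Lemma~\ref{ext:cycle-missing}(1) forces missing color $b$. Since $b\notin\{a,d\}$, the coloring extends with $x=b$, and then $x+\phi(v_1)=b+d$.

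Second, suppose $\phi(p)\neq\phi(q)$. Then $\{\phi(p),\phi(q)\}=\{b,d\}$ and $\phi(v_1)$ equals one of them; relabel so that $\phi(v_1)=d$. Apply Lemma~\ref{ext:cycle-missing}(2) to obtain missing color $b$. Since $b\neq a,d$, we may take $x=b$, and again $x+\phi(v_1)=b+d$, which is \eqref{ext:first-difference}.

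I expect no real obstacle. The only point needing care is the forcing observation in the first paragraph, which shows that prescribing the missing color is exactly the same as prescribing the color of $uv_1$. The case split in part~2 is then routine.
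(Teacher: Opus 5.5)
Your proof is correct and matches the paper's argument: part~1 via Lemma~\ref{ext:cycle-missing}(2), and part~2 split by whether the two cycle-neighbors of $u$ share a color, using Lemma~\ref{ext:cycle-missing}(1) or (2) and taking $x$ to be the neighbor color different from $\phi(v_1)$; your forcing observation is true but not needed for these existence claims. One small point: in the equal-color subcase, Lemma~\ref{ext:cycle-missing}(1) only constrains every coloring of $C$, so you should first invoke Lemma~\ref{ext:degree-two}, as the paper does, to know that a coloring of the cycle exists.
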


Lemma~\ref{ext:initial-colour} colors the cycle and the first path
edge; propagation will color the remaining path edges.

\begin{proof}
In the first case, let $b,d$ be the two cycle-neighbor colors and
$e=\phi(v_1)$ the path-neighbor color. Since these are distinct
and all differ from $\phi(u)$, the list of $uv_1$ is $\{b,d\}$.
Apply Lemma~\ref{ext:cycle-missing}(2) to the restricted coloring
$\phi|_{V(C)}$: either $b$ or $d$ can be made missing at $u$
on the cycle. Give $uv_1$ this missing color.

In the second case, suppose first that the cycle-neighbors have
the same color. Interchanging the names $b,d$ if necessary, call
that color $b$; the path-neighbor then has color $d$.
Choose a proper $A_\phi$-edge-coloring of the cycle using
Lemma~\ref{ext:degree-two}. By Lemma~\ref{ext:cycle-missing}(1),
it misses $b$ at $u$, so set $x=b$.
Otherwise the cycle-neighbors have
different colors $b,d$, and the path-neighbor has one of them,
say $b$. Use Lemma~\ref{ext:cycle-missing}(2) to color the cycle
with $d$ missing at $u$, and set $x=d$.
In either situation $x$ is the neighbor color different from
$\phi(v_1)$, proving \eqref{ext:first-difference}. The chosen
color belongs to the edge list and differs from the colors of both
cycle edges incident with $u$, by construction.
\end{proof}

\begin{proposition}
\label{ext:single-port}
Let $D$ have one vertex $r$ of degree two and all other vertices
of degree three, and suppose that $D$ is connected
and bridgeless. Add a new vertex $z$ and the edge $rz$, and
prescribe a proper vertex four-coloring
$\phi:V(D+rz)\to B$. For every $t\in L_\phi(rz)$, there is a proper
$L_\phi$-edge-coloring of $D+rz$
assigning color $t$ to $rz$.
\end{proposition}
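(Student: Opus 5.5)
Split according to whether $t=c$ or $t\in A_\phi(rz)$.

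\emph{Case $t=c$.} Choose any $u\neq r$ and, by Lemma~\ref{ext:near-cubic-structure}, a perfect matching $N$ of $D-u$. Colour $N$ with $c$. Then $r$ is covered by $N$, since $r\neq u$, so colour $c$ already occurs at $r$; this choice does not work. Instead use Lemma~\ref{ext:two-states}(1): it gives a matching avoiding $r$ and covering $V_3(D)$, and its hypotheses are only those of Lemma~\ref{ext:bridgeless-matching}. Colour that matching with $c$. Colour the residual graph, which has maximum degree two, from $A_\phi$ by Lemma~\ref{ext:degree-two}. Finally put $c$ on $rz$. No internal $c$-edge meets $r$, so this case is immediate.

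\emph{Case $t\in A_\phi(rz)$.} Now $r$ must be covered by the $c$-matching, or else receive two $B$-colours both different from $t$. Use the near-perfect matching structure. Let $\kappa=t+\phi(r)$, which is nonzero because $t\neq\phi(r)$. Apply Lemma~\ref{ext:witness} with this $\kappa$ to obtain a vertex $u\neq r$ whose neighbourhood is either rainbow, or uses exactly two colours $b,d$ with $b+d\neq\kappa$. Take a perfect matching $N$ of $D-u$ (Lemma~\ref{ext:near-cubic-structure}) and colour it $c$. The residual graph $F=D-N$ consists of cycles together with one cycle-plus-path component $C\cup P$, as in \eqref{ext:lollipop-degrees} and Figure~\ref{fig:port}, with $P=v_0\cdots v_k$ running from $u$ to $r$. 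Colour the other cycles by Lemma~\ref{ext:degree-two}. Note that $u$ has the same three neighbours in $C\cup P$ as in $D$, since $u$ is uncovered by $N$, so the witness pattern applies to $C\cup P$.

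It remains to colour $C\cup P$ from $A_\phi$ so that the last path edge $v_{k-1}r$ avoids $t$. At $r$ the colours are then $c$ (on the $N$-edge), the colour of $v_{k-1}r$, and $t$ on $rz$; these are distinct. If $u$'s neighbourhood is rainbow, Lemma~\ref{ext:initial-colour}(1) lets us choose either colour $x\in A_\phi(uv_1)$. If $k=1$, choose $x\neq t$. Otherwise, of the two choices $x,x'$ we have $x+\phi(v_1)\neq x'+\phi(v_1)$, so at least one of them satisfies \eqref{ext:avoidance}, namely $x+\phi(v_1)\neq t+\phi(r)$. Lemma~\ref{ext:propagation} then gives a last colour different from $t$. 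In the two-colour case, Lemma~\ref{ext:initial-colour}(2) gives $x$ with $x+\phi(v_1)=b+d\neq\kappa=t+\phi(r)$, which is exactly \eqref{ext:avoidance}, and Lemma~\ref{ext:propagation} again finishes. When $k=1$, the propagation lemma still applies to the one-edge path: $R_1=\{x\}$, and \eqref{ext:transport} together with the inequality shows $x\neq t$.

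The main care point is checking that the witness lemma is applied with the right $\kappa$, so that the avoidance condition exactly matches $t+\phi(v_k)$ with $v_k=r$. One must also confirm that the first edge colour from Lemma~\ref{ext:initial-colour} is compatible with both cycle edges at $u$, which that lemma guarantees. Finally, every edge colour lies in $L_\phi$, so all incidences with vertex colours are respected.
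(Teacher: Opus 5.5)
Your proof is correct and follows essentially the paper's route. It uses the same split into $t=c$ and $t\in A_\phi(rz)$, the same witness vertex $u$ for $\kappa=t+\phi(r)$, a perfect matching of $D-u$, and Lemmas~\ref{ext:initial-colour} and~\ref{ext:propagation} on the cycle-and-path component. For $t=c$, citing Lemma~\ref{ext:two-states}(1) gives the same perfect matching of $D-r$ as the paper's argument; you should delete the abandoned first attempt there.

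The only real difference is the rainbow subcase. The paper colors the path backwards greedily from $a_k\neq t$. You instead choose the first color so that $x+\phi(v_1)\neq t+\phi(r)$ and propagate forward. Both work, and your version treats the two subcases uniformly.
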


\begin{proof}
First suppose $t=c$. By
Lemma~\ref{ext:near-cubic-structure}, choose a perfect matching
$N$ of $D-r$ and give it color $c$. The graph $D-N$ has degree
two everywhere, including at $r$, so it is a union of cycles.
Color these cycles from $A_\phi$ by Lemma~\ref{ext:degree-two}, and
give $rz$ color $c$. There is no internal matching edge at $r$,
so this is a proper extension.

Now suppose $t\in A_\phi(rz)$, and put
$\kappa=\phi(r)+t\neq0$.
Choose a vertex $u\neq r$ supplied by
Lemma~\ref{ext:witness}, applied to the restricted coloring
$\phi|_{V(D)}$ and this $\kappa$. Write
$T_\phi(v)=\{\phi(w):w\in N_D(v)\}$ for the set of neighbor
colors of a vertex $v$. The selected vertex $u$ satisfies
\begin{equation}\label{ext:chosen-witness}
\begin{aligned}
 &|T_\phi(u)|=3,\\
 &\text{or}\quad T_\phi(u)=\{b,d\}
       \quad\text{with }b\neq d\text{ and }b+d\neq\kappa.
\end{aligned}
\end{equation}
Choose a perfect matching $N$ of $D-u$, which exists for this
selected vertex by Lemma~\ref{ext:near-cubic-structure}.
The component of $D-N$ containing
$u,r$ has the cycle-and-path description above. All three original
edges at $u$ remain, so the neighborhood color pattern used to
choose $u$ is unchanged.
In applying Lemma~\ref{ext:initial-colour} below, we use the
restriction of the prescribed $\phi$ to this component $C\cup P$.

Suppose first that the three neighbors of $u$ have pairwise
distinct colors. Write $e_i=v_{i-1}v_i$ and
$A_i=A_\phi(e_i)$ for $1\leq i\leq k$. Each $A_i$ has exactly two
members, since $\phi$ gives distinct colors to the endpoints of
$e_i$. By Lemma~\ref{ext:initial-colour}(1), for each choice
$x\in A_1$ there is a proper $A_\phi$-edge-coloring of $C$
in which neither of the two cycle edges at $u$ has color $x$.
The first path-edge color can therefore range over all of $A_1$;
the compatible cycle coloring may depend on that choice.

Construct the path coloring backwards. First choose
$a_k\in A_k\setminus\{t\}$. Then, successively for
$i=k-1,k-2,\ldots,1$, choose
\begin{equation}\label{ext:backward-choice}
 a_i\in A_i\setminus\{a_{i+1}\}.
\end{equation}
Each choice excludes at most one member of a two-element list. Thus
\[
 a_i\in A_i\quad(1\leq i\leq k),\qquad
 a_i\neq a_{i+1}\quad(1\leq i<k),\qquad a_k\neq t.
\]
Define $\psi(e_i)=a_i$. Membership in $A_i$ excludes both
endpoint vertex colors, and the inequalities between consecutive
edge colors make $\psi$ a proper $A_\phi$-edge-coloring of the
path. Its first color $x=a_1$ belongs to $A_1$, so
Lemma~\ref{ext:initial-colour}(1) supplies a proper
$A_\phi$-edge-coloring $\gamma$ of $C$ in which both cycle
edges incident with $u$ have colors different from $x$.
Since $V(C)\cap V(P)=\{u\}$, these are exactly the conditions
needed to combine $\gamma$ and $\psi$ into a proper
$A_\phi$-edge-coloring of $C\cup P$. Its last path-edge color
differs from $t$. This also covers $k=1$: choose
$a_1\in A_1\setminus\{t\}$ and then choose the compatible cycle
coloring.

The same construction works for each $s\in A_k$: a proper path
coloring $\psi_s$ with $\psi_s(e_k)=s$ and a compatible cycle
coloring can be chosen together. Fixing earlier edges may restrict
the last color; for example, if $A_{k-1}=A_k=\{1,2\}$ and the
preceding edge has color $1$, the last edge must have color $2$.

Suppose now that the selected vertex $u$ has exactly two neighbor
colors $b,d$. By its choice in \eqref{ext:chosen-witness},
$b+d\neq\kappa=\phi(r)+t$. By
Lemma~\ref{ext:initial-colour}(2), color the cycle and first path
edge so that its color $x$ satisfies
\[
 x+\phi(v_1)=b+d\neq\kappa=t+\phi(r).
\]
Lemma~\ref{ext:propagation} extends this fixed first color along
the path with final color different from $t$, leaving the cycle
coloring unchanged. Thus both cases give an $A_\phi$-edge-coloring
of the entire component with the desired avoidance at $r$.

Color all other components of $D-N$, which are cycles, from $A_\phi$.
Give $N$ color $c$ and $rz$ color $t$. At every vertex other
than $r$, the residual edges are properly colored from $A_\phi$ and
the possible matching edge has the distinct color $c$. At $r$
there is exactly one matching edge and one residual edge. Their
colors are $c$ and a member of $B$ different from $t$, respectively;
adding the bridge of color $t$ is therefore proper. Its color
avoids both endpoint vertex colors by the definition of $A_\phi(rz)$,
and $z$ has no other incident edge in the local graph. Every
total-coloring condition is satisfied.
\end{proof}

The choice of $u$ depends on the desired bridge color. This is
essential: the color sum transported along a path may be forced,
but Lemma~\ref{ext:witness} supplies an exposed vertex so that the
forced value, if there is one, cannot obstruct that bridge color.
For example, under the fixed identification with $\mathbb F_2^2$,
$1+2=3+4$. If $t=3$ and $\phi(r)=4$, a vertex whose neighbor
color set is exactly $\{1,2\}$ cannot serve as the selected $u$.
Lemma~\ref{ext:witness} guarantees another vertex satisfying
one of the two alternatives in \eqref{ext:chosen-witness}.

\subsection{Removing terminal blocks}

Let $Q$ be a connected graph, and let
$\mathcal B(Q)$ denote its set of bridges. Delete these edges but
keep every vertex; thus the resulting spanning subgraph is
$Q^\circ=\bigl(V(Q),E(Q)\setminus\mathcal B(Q)\bigr)$.
Write $D_1,\ldots,D_m$ for the connected components of $Q^\circ$,
including isolated vertices. We call these components the
\emph{bridge blocks} of $Q$.

We first show that every $D_i$ is bridgeless. Let $e=xy$ be any
edge of $D_i$. Since $e$ is not a bridge of $Q$, there is a simple
$x$--$y$ path in $Q-e$. Together with $e$, this path forms a
cycle $C$ in $Q$. No edge of $C$ can be a bridge of $Q$: after
any one of its edges is removed, the rest of $C$ still connects
the two endpoints of that edge. Consequently every edge of $C$
survives in $Q^\circ$. Since $C$ is connected and contains $e$,
it lies entirely in the component $D_i$. Hence $e$ lies on a
cycle in $D_i$ and is not a bridge of $D_i$. This holds for
every edge of $D_i$. A block with at least two vertices therefore
has minimum degree at least two, since an edge incident with a
degree-one vertex would be a bridge.

To define the \emph{block tree} $T$, replace each
block $D_i$ by a single vertex $d_i$. For each original bridge
$e=xy$, with $x\in V(D_i)$ and $y\in V(D_j)$, put one edge
$\bar e=d_i d_j$ in $T$, keeping a distinct edge for each bridge
at this stage. The indices $i$ and $j$ are different: otherwise
a path from $x$ to $y$ inside $D_i$ would avoid $e$, contradicting
the assumption that $e$ is a bridge. Thus $T$ has no loops.
This contraction discards internal block edges and retains the
original bridges between the contracted vertices.
A path in $Q$ gives a walk between the corresponding
vertices of $T$, so $T$ is connected.

To prove that $T$ is a tree, fix an original bridge $e=xy$ and
its corresponding edge $\bar e=d_i d_j$. Suppose that
$T-\bar e$ contains a path from $d_i$ to $d_j$. Each edge of
this path represents a bridge of $Q$ other than $e$.
In the first block, join $x$ to the endpoint of the first such
bridge; in each intermediate block, join the endpoint of the
arriving bridge to the endpoint of the departing bridge; and
in the last block, join the final endpoint to $y$. All these
internal paths exist because the blocks are connected. A path
of length zero suffices if its two endpoints coincide.
Concatenating these internal paths with the intervening bridges
gives an $x$--$y$ walk in $Q-e$, and hence an $x$--$y$ path in
$Q-e$. This contradicts the fact that $e$ is a bridge of $Q$.
Thus every edge of $T$ is a bridge. In particular, $T$ has no
parallel edges (one would provide an alternative path for the
other), and it has no cycle. Being connected, $T$ is a tree.

For example, if two vertex-disjoint triangles are joined by a
single edge, that joining edge is the only bridge. The two
triangles are the bridge blocks, and $T$ consists of two
vertices joined by one edge. More generally, a \emph{leaf block}
is a block $D_i$ for which $d_i$ is a leaf of $T$. Such a block
has exactly one edge of $Q$ joining it to the remaining blocks;
this edge is its unique outgoing bridge.

\begin{proof}[Proof of Theorem~\ref{ext:main}]
Treat connected components independently. For a connected graph
$Q$, induct on the number of bridge blocks. If there is one block,
then $Q$ is an isolated vertex or a nontrivial bridgeless graph.
In the latter case its minimum degree is at least two, and
Lemma~\ref{ext:bridgeless-matching} applies.

Suppose there are at least two bridge blocks, and let $D$ be a
leaf block of the block tree. Write $rz$ for its unique outgoing
bridge, with $r\in V(D)$ and $z\notin V(D)$. Delete all vertices
of $D$, so $Q'=Q-V(D)$. The tree obtained from $T$ by deleting
the vertex corresponding to $D$ is still connected. The
remaining blocks and their joining bridges therefore form a
connected graph $Q'$. Every internal edge of a remaining block
still lies on a cycle in that block, and every surviving bridge
of $Q$ is still a bridge in $Q'$: a path avoiding it in $Q'$
would also be such a path in $Q$. Thus the bridge blocks of
$Q'$ are exactly those of $Q$ other than $D$. In particular,
$Q'$ has one fewer bridge block. It is also simple and
subcubic, since it is a subgraph of $Q$. The restriction of
$\phi$ is still a proper vertex four-coloring of $Q'$, so
induction gives a total coloring of $Q'$ using the
color set $B\cup\{c\}$ while preserving this vertex coloring.

At most two edges of $Q'$ meet $z$, whereas $L_\phi(rz)$ has three
members. Choose a color $t\in L_\phi(rz)$ different from their edge
colors. If $D$ is a singleton, assign $t$ to $rz$ and we are
done. Otherwise $D$ is connected and bridgeless with minimum
degree at least two. Because $r$ has the outgoing bridge and
$Q$ is subcubic, $d_D(r)=2$. Every other vertex has degree two
or three inside $D$. If some other vertex has degree two,
Proposition~\ref{ext:marked-port} extends the chosen bridge color
$t$ over $D$; if not, Proposition~\ref{ext:single-port} does so.

These extensions preserve every vertex color and change no edge
of $Q'$. The only edge between the two pieces is $rz$. At $z$
its color was chosen to avoid the exterior edges, and at $r$
its color is compatible with the internal edges by the applicable
extension proposition. Its two endpoint colors are excluded by its
list. All other adjacency and incidence constraints involve elements
within one colored piece. The two colorings therefore give the
required coloring of $Q$.
\end{proof}

Deleting a leaf block preserves the class of subcubic graphs,
including the lower degrees created at the outside endpoint. Each
restoration prescribes a color on only one outgoing bridge.

\begin{corollary}\label{ext:three-matchings}
Let $H$ be a graph with a proper vertex coloring
$\phi:V(H)\to\{1,2,3,4\}$. Suppose $M_1,M_2,M_3$ are pairwise
edge-disjoint matchings such that
\[
 \Delta\bigl(H-(M_1\cup M_2\cup M_3)\bigr)\leq3.
\]
Then $\phi$ extends to a total coloring of $H$ with eight colors.
\end{corollary}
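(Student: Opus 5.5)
The plan is to color the remainder with Theorem~\ref{ext:main} and then give each matching its own new color. Put $Q=H-(M_1\cup M_2\cup M_3)$; it keeps every vertex of $H$ and satisfies $\Delta(Q)\le3$. The restriction of $\phi$ to $V(Q)=V(H)$ is still a proper vertex coloring of $Q$, since $Q$ is a spanning subgraph of $H$ and so has no adjacency that $H$ lacks. Take the eight colors to be $\{1,\dots,8\}$ with $B=\{1,2,3,4\}$ and $c=5$. Theorem~\ref{ext:main} then gives a total coloring $\psi_0$ of $Q$ that extends $\phi$ and uses colors in $\{1,\dots,5\}$. Finally, define $\psi$ on $H$ by $\psi=\psi_0$ on $V(H)\cup E(Q)$ and $\psi(e)=5+i$ for $e\in M_i$, $i=1,2,3$. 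This covers every element, because the $M_i$ are pairwise edge-disjoint and disjoint from $E(Q)$.

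Next, verify that $\psi$ is a proper total coloring by checking each kind of conflict.
\begin{enumerate}
\item Two adjacent vertices of $H$: their colors come from $\phi$, which is proper on $H$ by hypothesis. This is the point where the conflicts coming from deleted edges are handled; it is why the extension must preserve all of $\phi$ rather than only its restriction to $Q$.
\item A vertex and an incident edge: an edge of $Q$ is handled by $\psi_0$. An edge of some $M_i$ has color $5+i\ge6$, while vertex colors lie in $B$.
\item Two adjacent edges, both in $Q$: handled by $\psi_0$.
\item Two adjacent edges, one in $Q$ and one in some $M_i$: the colors lie in $\{1,\dots,5\}$ and $\{6,7,8\}$ respectively, so they differ.
\item Two adjacent edges in $M_i$ and $M_j$ with $i\neq j$: they get the distinct colors $5+i$ and $5+j$.
\item Two adjacent edges in the same $M_i$: impossible, since $M_i$ is a matching.
\end{enumerate}

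There is no real obstacle; the argument is bookkeeping on top of Theorem~\ref{ext:main}. The only point needing care is the first one: a vertex pair joined only by a matching edge is no longer adjacent in $Q$, so properness there comes from the original $\phi$ being proper on $H$. This holds because Theorem~\ref{ext:main} keeps $\phi$ unchanged.
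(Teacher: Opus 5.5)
Your proposal is correct and follows the paper's proof: apply Theorem~\ref{ext:main} to the residual subcubic graph while keeping $\phi$ unchanged, then give each matching its own new color. The only difference is labeling: the paper colors $M_1,M_2,M_3$ with $5,6,7$ and takes $c=8$, whereas you take $c=5$ and use $6,7,8$ for the matchings.
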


\begin{proof}
Give every edge of $M_i$ color $4+i$, for $i=1,2,3$. Apply
Theorem~\ref{ext:main} to the residual graph with the restricted
vertex coloring and with $c=8$. Its edges use only
$\{1,2,3,4,8\}$. The matching colors are different from all
residual-edge and vertex colors; within each matching the edges
are nonadjacent. The vertex coloring was proper on all of $H$
and has not changed. Hence all adjacency and incidence conditions
hold in the combined coloring.
\end{proof}

\section{Three matchings in the core}
\label{core:section}

We now construct the decomposition required by
Corollary~\ref{ext:three-matchings}. The first two matchings come
from a spanning path. The remaining matching must cover a
specified set of degree-four vertices. The boundary-walk inequality
in Lemma~\ref{core:curvature} identifies certain two-edge cuts as
pairs of bridges in Lemma~\ref{core:two-cut}. Those bridges yield
the count required by Lemma~\ref{core:required}, even when an
induced region has repeated vertices or edges on its boundary.
The connectivity hypothesis in the following theorem uses the
definition of $k$-vertex-connectivity in Section~\ref{prelim:graphs}
with $k=4$.

\begin{theorem}\label{core:main}
Let $H$ be a four-connected sphere triangulation in
which every vertex has degree five or six. Every proper vertex
coloring $\phi:V(H)\to\{1,2,3,4\}$ extends to a total coloring
with colors $\{1,2,3,4,5,6,7,8\}$.
\end{theorem}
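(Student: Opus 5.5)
By Corollary~\ref{ext:three-matchings}, it suffices to find three pairwise edge-disjoint matchings $M_1,M_2,M_3$ in $H$ whose removal leaves a subcubic graph. The prescribed $\phi$ is then extended automatically, with colors $5,6,7$ on the matchings and $8$ as the fifth color $c$ of Theorem~\ref{ext:main}.

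By \eqref{prelim:defect}, $H$ has exactly twelve vertices of degree five. Choose two of them, say $p$ and $q$. Theorem~\ref{core:hamiltonian-connected} (Thomassen, with the Chiba--Nishizeki correction) supplies a Hamiltonian $p$--$q$ path $P$ in the four-connected planar graph $H$. Split $P$ into $M_1$ and $M_2$ by taking alternate edges. Every interior vertex of $P$ loses two edges and each endpoint loses one. Thus in $H'=H-E(P)$ the degree-six vertices and the endpoints $p,q$ have degree four, and the other ten degree-five vertices have degree three. We then need a matching $M_3$ of $H'$ covering the set $U=V_4(H')$.

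To obtain $M_3$, apply Lemma~\ref{core:required} to $H'$ with required set $U$. Let $X\subseteq V(H)$ and let $C$ be an odd component of $H'-X$ with $V(C)\subseteq U$. In $H'$, the cut $\delta_{H'}(C)$ has size $4|V(C)|-2|E_{H'}(C)|$, which is even, and all of its edges end in $X$. It therefore suffices to show $|\delta_{H'}(C)|\ge4$ for every counted $C$. Since each vertex of $X$ has degree at most four in $H'$, this gives $4q_U(H'-X)\le4|X|$. The case $X=\varnothing$ is separate: $H'$ is connected, and it contains degree-three vertices, so the whole graph is not a counted component.

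The main obstacle is excluding counted components with $|\delta_{H'}(C)|\in\{0,2\}$. Size zero is impossible because $H'$ is connected, or we can simply choose the path so that this holds. For size two, I would use the region $S=V(C)$ in the triangulation $H$. Its edge cut in $H$ consists of the two $H'$-cut edges together with the path edges leaving $S$. Since $P$ is a path, $P$ restricted to $S$ splits into $k$ subpaths, contributing at most $2k$ edges of $\delta_H(S)$. Every vertex of $S$ has degree six in $H$, because it lies in $U$ and is not $p$ or $q$, unless $p$ or $q$ lies in $S$. The region is then flat, and a Gauss--Bonnet count on its boundary walk (Lemma~\ref{core:curvature}) bounds $|\delta_H(S)|$ below in terms of the boundary length. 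Four-connectivity rules out short separating boundaries. I would use these facts to show that a two-edge $H'$-cut forces both edges to be bridges of a specific form (Lemma~\ref{core:two-cut}). That configuration can then be eliminated, either by parity, since $C$ is odd and all its vertices have even $H'$-degree, or by a counting contradiction with the defect of $S$. If this fails, the fallback is to exploit the freedom in choosing the endpoints $p,q$ and the path $P$, so that no such bad odd region survives.
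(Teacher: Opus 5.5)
Your setup is the paper's: a Hamiltonian $p$--$q$ path between degree-five vertices, alternate edges as $M_1,M_2$, and Lemma~\ref{core:required} for $M_3$. (Connectivity of $H'$, which you assume, is Lemma~\ref{core:linear-forest}.) The proof breaks at the step you flag as the main obstacle.

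You reduce everything to the claim that every counted odd component $C$ of $H'-X$ has $|\delta_{H'}(C)|\ge4$. None of your proposed ways of excluding $|\delta_{H'}(C)|=2$ works.
\begin{itemize}
\item Parity gives nothing. The equation $4|V(C)|-2|E(C)|=2$ only says $|E(C)|=2|V(C)|-1$, which is compatible with $|V(C)|$ odd.
\item The defect count gives nothing either. Lemma~\ref{core:two-cut} shows that in this case $H[V(H)\setminus S]$ is disconnected, which is exactly where Lemma~\ref{core:curvature} is unavailable. Suppose $t$ faces of $H[S]$ contain outside vertices and their boundary walks have total length $L$. Euler's formula then gives $|\delta_H(S)|=12-6t+2L-p(S)$. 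For $t=2$ this is $2L-p(S)$, while $P$ may delete two cut edges at each of up to $L$ boundary vertices. So no positive lower bound on $|\delta_{H'}(S)|$ survives.
\item Re-choosing $p,q$ or $P$ is not available as stated. Theorem~\ref{core:hamiltonian-connected} only guarantees some Hamiltonian path with prescribed ends, with no control over how it meets such annular regions.
\end{itemize}

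The missing idea is that these components need not be excluded. By Lemma~\ref{core:two-cut}, their two cut edges are bridges of $H'$, and that is enough for Tutte's condition. If no counted component has a two-edge cut, your count already works, so fix $X\ne\varnothing$ with at least one such component. Let $C_1,\dots,C_{q_2}$ be all counted components with two-edge cuts. Delete them, and let $D_1,\dots,D_m$ be the components of the remaining graph, which contains $X$.

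Contract every $C_j$ to a vertex $c_j$ and every $D_i$ to a vertex $d_i$. The resulting graph's edges are exactly the edges of the cuts $\delta_{H'}(C_j)$, all of which are bridges. Hence it is a tree in which each $c_j$ has degree two and every edge joins some $c_j$ to some $d_i$. Counting edges gives $2q_2=q_2+m-1$, that is, $m=q_2+1$.

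Each $D_i$ receives at least one of these bridges, at a vertex of $X\cap V(D_i)$. Put $x_i=|X\cap V(D_i)|$, and let $q_i$ count the other counted components inside $D_i$. Each of these sends at least four edges into $X\cap V(D_i)$, and every vertex of $X$ has degree at most four. This gives $4q_i\le4x_i-1$, hence $q_i\le x_i-1$. Summing, $q_U(H'-X)\le q_2+\sum_i(x_i-1)=|X|-1$.

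This tree count is the content of Theorem~\ref{core:third-matching}. Without it, or a genuine proof that two-edge components never occur, your argument does not produce $M_3$.
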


\begin{corollary}\label{core:four-palette}
Every graph $H$ satisfying the hypotheses of
Theorem~\ref{core:main} has a total eight-coloring in which all
vertices receive colors from one fixed four-element color set.
\end{corollary}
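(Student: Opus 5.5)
The plan is to combine the Hamiltonian-dual vertex coloring with the extension theorem. Let $H$ satisfy the hypotheses of Theorem~\ref{core:main}. Then $H$ is a finite simple sphere triangulation, and its maximum degree is $6$, so Corollary~\ref{dual:vertex-four} applies. It supplies a proper vertex coloring $\phi:V(H)\to\{1,2,3,4\}$ without appeal to the Four-Color Theorem. Concretely, this comes from the partition of $V(H)$ into two induced trees in Lemma~\ref{dual:two-trees}, which uses Kardo\v{s}'s Theorem~\ref{dual:kardos}. The dual of $H$ is cubic and three-connected, and its faces have lengths $5$ and $6$.

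Next, feed this specific $\phi$ into Theorem~\ref{core:main}. That theorem extends \emph{every} proper vertex four-coloring of $H$ to a total coloring with colors $\{1,\ldots,8\}$, and the extension leaves the vertex colors unchanged. In the resulting total coloring, every vertex therefore still has its color in the fixed four-element set $\{1,2,3,4\}$. This is exactly the assertion of the corollary.

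The deduction is immediate once Theorem~\ref{core:main} is available. No step is genuinely difficult, but two points need checking. First, the hypotheses of Corollary~\ref{dual:vertex-four} must hold. A four-connected triangulation has at least five vertices, so it is a sphere triangulation in the paper's convention, and it has maximum degree at most six. Second, "extends" must mean that the vertex colors are preserved and not merely that some vertex coloring is used. This holds by the construction via Corollary~\ref{ext:three-matchings}, which fixes $\phi$ and places the edge colors $5,6,7,8$ only on the matchings and the residual subcubic graph.

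If one wanted the statement for an arbitrary prescribed four-element color set instead of $\{1,2,3,4\}$, one would simply relabel the colors by a permutation of $\Omega$.
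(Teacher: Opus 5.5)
Your proposal is correct and follows the paper's proof: Corollary~\ref{dual:vertex-four}, via Kardo\v{s}'s theorem and Lemma~\ref{dual:two-trees}, gives a proper vertex coloring into $\{1,2,3,4\}$, and Theorem~\ref{core:main} extends that same coloring without changing any vertex color. Two small imprecisions do not affect the argument: the maximum degree is only guaranteed to be at most six (the icosahedron has all degrees five), and in Corollary~\ref{ext:three-matchings} the residual subcubic edges are colored from $\{1,2,3,4,8\}$, so colors $1,2,3,4$ can also occur on edges; the corollary only needs the vertex colors to stay fixed.
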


\begin{proof}
Corollary~\ref{dual:vertex-four} supplies the proper vertex coloring
required by Theorem~\ref{core:main}, using Kardo\v{s}'s theorem and
the explicit construction in Lemma~\ref{dual:two-trees}.
\end{proof}

Theorem~\ref{core:main} takes a supplied coloring as input. Its
matching and extension proof is independent of the method used to
produce that coloring. In this revision, Corollary~\ref{core:four-palette}
depends on the Hamiltonian-dual construction of Section~\ref{dual:section},
and hence on Kardo\v{s}'s published computer-assisted theorem, not on the
Four-Color Theorem. None of the arguments below requires a new computation.

\subsection{Deleting a spanning path}

A \emph{linear forest} is a graph each of whose components is a
path, possibly an isolated vertex. We first record a connectivity
property of triangulations.

\begin{lemma}\label{core:linear-forest}
If $T$ is a sphere triangulation on at least four
vertices and $R$ is a linear forest contained in $T$, then
$T-E(R)$ is connected.
\end{lemma}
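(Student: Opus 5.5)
The plan is to argue by contradiction using the structure of a linear forest together with the faces of $T$. Suppose $T-E(R)$ is disconnected. Then there is a nonempty proper vertex set $S$ with $\delta_T(S)\subseteq E(R)$; every edge of $T$ with exactly one endpoint in $S$ belongs to $R$. We will show that this cut is too large for a linear forest to contain, using only that each vertex meets at most two edges of $R$ and that $R$ has no cycle.

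First, use triangularity locally. Take any edge $xy\in\delta_T(S)$ with $x\in S$, $y\notin S$. The two faces of $T$ at $xy$ are triangles $xyw$ and $xyw'$ with $w\neq w'$, since $T$ is simple on at least four vertices. Whichever side of $S$ the vertex $w$ lies on, exactly one of $xw$ or $yw$ also crosses the cut. Hence cut edges come in chains around the faces. The standard consequence is that the dual edges of $\delta_T(S)$ form a nonempty union of cycles in the dual $T^*$. In particular, the cut edges, ordered around one such dual cycle, are $e_1,e_2,\ldots,e_\ell$ with $\ell\ge3$ (the dual is simple and cubic). Consecutive edges $e_i,e_{i+1}$ share a face, hence share an endpoint, and that common endpoint meets both.

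Next, derive the contradiction. Around a dual cycle, consecutive cut edges share an endpoint, so at each shared vertex two $R$-edges meet. Consider a maximal run of consecutive cut edges sharing a common endpoint $v$ (a fan at $v$). If the run has at least three edges, then $v$ has degree at least three in $R$, which is impossible. So every run has length at most two, and consecutive runs alternate between a vertex in $S$ and a vertex outside $S$. The cut edges then form a closed walk $v_1v_2\cdots$ in which each $e_i$ joins consecutive pivot vertices. Concretely, if $e_i$ and $e_{i+1}$ pivot at $v$, and $e_{i+1}$ and $e_{i+2}$ pivot at the other end of $e_{i+1}$, the pivots trace a closed walk in $R$ using each edge once. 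Each pivot has $R$-degree exactly two from this walk, so the walk is a closed trail whose vertices all have degree two in the trail. It therefore contains a cycle in $R$, contradicting that $R$ is a linear forest.

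The main obstacle is making the middle step rigorous: showing that around the dual cycle the shared endpoints really trace a closed trail in $R$ rather than, for instance, a back-and-forth pattern. I would handle it as follows. Since each vertex meets at most two $R$-edges, at each pivot exactly two cut edges meet, and those two edges are consecutive in the dual order. The "pivot" map then pairs the edges around the dual cycle, and following $e_1$, pivot, $e_2$, pivot, and so on returns to $e_1$. This gives a closed walk in $R$ of positive length in which no edge repeats, and such a walk contains a cycle.
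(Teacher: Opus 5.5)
Your argument is correct and is essentially the paper's proof. The paper also takes $S$ with $\delta_T(S)\subseteq E(R)$, and uses the two triangular faces at a cut edge, together with the bound of two $R$-edges per vertex, to show that the cut edges continue at both endpoints of every cut edge. It then extracts a cycle of $R$ from the cut edges.

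The only difference is packaging. You order the cut edges along a cycle of $T^*$ and trace the common endpoints $p_i$ of consecutive edges $e_i,e_{i+1}$: distinct consecutive pivots give $e_i=p_{i-1}p_i$, so the pivots trace a closed walk that is a cycle in $R$. The paper instead shows that $J=(V(T),\delta_T(S))$ has all degrees $0$ or $2$, and finds a cycle in $J$ by a longest-path argument.
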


\begin{proof}
Suppose, for a contradiction, that $T-E(R)$ is disconnected,
and let $S$ be a nonempty proper union of its component vertex
sets. Every edge of $\delta_T(S)$ joins $S$ to its
complement and must belong to $R$; otherwise it would remain
in $T-E(R)$ and join two of its components. The set
$\delta_T(S)$ is nonempty because $T$ is connected. Define
the spanning subgraph $J=\bigl(V(T),\delta_T(S)\bigr)$.
Thus $J$ retains all vertices of $T$ and only the edges joining
$S$ to $V(T)\setminus S$; some vertices may be isolated.
Since $E(J)\subseteq E(R)$ and $R$ is a linear forest,
$d_J(w)\leq2$ for every $w\in V(T)$. This degree counts only
the cut edges incident with $w$.

Take an arbitrary edge $uv\in\delta_T(S)$, with $u\in S$
and $v\notin S$. Write $uvx$ and $uvy$ for the two
triangular faces incident with $uv$. Their third vertices
$x,y$ are distinct.
Indeed, if their third vertices coincided, the same triangle
would bound both faces; the sphere would then contain no further
vertex, contrary to $|V(T)|\geq4$.

If $x,y$ both belonged to $S$, the three edges $vu,vx,vy$ would
belong to $\delta_T(S)$, giving $d_J(v)\geq3$. If both lay
outside $S$, the three edges $uv,ux,uy$ would belong to
$\delta_T(S)$, giving $d_J(u)\geq3$. Both possibilities
contradict the degree bound for $J$. Thus exactly one of $x,y$
belongs to $S$. Interchanging their names if necessary, assume
$x\in S$ and $y\notin S$. The two triangular faces then give
$vx,uy\in\delta_T(S)$ and $ux,vy\notin\delta_T(S)$.
The vertex $u$ is therefore incident with the two distinct cut
edges $uv,uy$, and $v$ with the two distinct cut edges $uv,vx$.
Since both degrees in $J$ are at most two,
$d_J(u)=d_J(v)=2$.

For any non-isolated vertex $w$ of $J$, choose an edge of $J$
incident with $w$ and apply the preceding argument to that edge.
It follows that $d_J(w)=2$. Consequently every vertex of $J$
has degree either zero or two, and at least one edge is present.

Choose a simple path $w_0w_1\cdots w_k$ in $J$ with the maximum
possible number of vertices. Such a path exists because $J$ is
finite, and $k\geq1$ because $J$ has an edge. Since
$d_J(w_0)=2$, the vertex $w_0$ has a neighbor $z$ different
from $w_1$. If $z$ were not on the path, then
$zw_0w_1\cdots w_k$ would be a longer simple path in $J$.
Hence $z=w_j$ for some $j\geq2$; here $z\neq w_0$ because
$J$ has no loops, and $z\neq w_1$ by its choice. The edges
of $w_0w_1\cdots w_j$ together with $w_jw_0$ therefore form
a cycle in $J$. All these edges belong to $R$, since
$E(J)\subseteq E(R)$. This contradicts the fact that every
component of the linear forest $R$ is a path and therefore
contains no cycle. The assumption that $T-E(R)$ is disconnected
is thus impossible.
\end{proof}

A \emph{Hamiltonian path} in a graph $F$ is a simple path
containing every vertex of $F$, also called a \emph{spanning path}.
The graph $F$ is \emph{Hamiltonian-connected} if, for every pair
of distinct vertices $a,b\in V(F)$, there is a Hamiltonian path
whose endpoints are $a$ and $b$; the endpoints need not be adjacent.
We use the following theorem of Thomassen~\cite{thomassen1983},
with the correction to its proof by Chiba and
Nishizeki~\cite{chiba1986}.

\begin{theorem}\label{core:hamiltonian-connected}
Every four-connected planar graph is
Hamiltonian-connected. Explicitly, if $F$ is such a graph
and $a,b\in V(F)$ are distinct, then $F$ contains a simple
path $P$ with endpoints $a,b$ and with $V(P)=V(F)$.
\end{theorem}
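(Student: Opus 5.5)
This statement is not proved within the paper; it is quoted from Thomassen~\cite{thomassen1983}, with the gap in the original argument repaired by Chiba and Nishizeki~\cite{chiba1986}. The plan is therefore to cite it rather than rebuild it. If a self-contained argument were wanted, I would follow the Tutte-path method, which reduces Hamiltonian-connectedness to a stronger inductive statement about bridges of a path.

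For a subgraph $P$ of a graph $F$, a \emph{$P$-bridge} is either an edge outside $P$ with both ends on $P$, or a component of $F-V(P)$ together with its attaching edges. Its \emph{attachments} are its vertices lying on $P$. The strengthened statement, proved by induction on $|V(F)|$, is the following:
\begin{quote}
Let $F$ be a two-connected plane graph with outer cycle $C$. Let $a\in V(C)$, $e\in E(C)$ and $b\in V(F)\setminus\{a\}$. Then $F$ has an $a$--$b$ path $P$ through $e$ such that every $P$-bridge has at most three attachments, and every $P$-bridge containing an edge of $C$ has at most two.
\end{quote}

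The induction splits $C$ at the relevant vertices. I would decompose $F$ along two-cuts lying on the outer cycle into smaller two-connected pieces, each bounded by a cycle. I would apply the hypothesis to each piece with suitably chosen endpoints and prescribed edges, and then concatenate the resulting paths. The bridge conditions must be checked for bridges lying inside a piece and for those straddling a cut.

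The main obstacle is the case in which $b$ lies in the interior rather than on $C$. Here the prescribed edge and the endpoints interact badly. This is exactly the case whose treatment in Thomassen's paper needed the Chiba--Nishizeki correction. I would handle it as they do: take the block containing $b$ after deleting a suitable outer segment, and reroute through an extra outer vertex so that the three-attachment bound survives.

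Given the strengthened statement, the theorem follows directly. Let $F$ be four-connected and planar, and let $a,b$ be distinct vertices. Embed $F$ so that $a$ lies on the outer face; the outer face is a cycle because $F$ is two-connected. Let $P$ be the path provided by the strengthened statement. A $P$-bridge containing a vertex off $P$ has its attachments forming a vertex cut of $F$ of size at most three, unless $V(P)$ consists of those attachments alone. Four-connectivity excludes the first possibility. In the second, $|V(P)|\le3$, so $F$ would have at most four vertices, contrary to four-connectivity. Hence every $P$-bridge is a single chord, and $V(P)=V(F)$.
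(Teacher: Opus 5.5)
Your main plan, to cite Thomassen's theorem with the Chiba--Nishizeki correction, is exactly what the paper does: the paper imports the statement and does not prove it. Your middle paragraphs describe a strategy for the Tutte-path induction rather than give an argument. So, as in the paper, everything rests on the citation.

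The one step you actually argue, the passage from the Tutte-path statement to a spanning path, is wrong as written. When a nontrivial bridge has attachment set $A=V(P)$ with $|A|\le3$, nothing bounds the number of vertices of $F$, because that one bridge may contain the entire rest of the graph.

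For a concrete case, draw the octahedron with outer face $C=axb$ and inner face $pqr$, and take $e=ax$. The path $P=axb$ passes through $e$. Its bridges are the chord $ab$, with two attachments, and the inner triangle with its six attaching edges. The latter has the three attachments $a,x,b$ and contains no edge of $C$. Thus $P$ meets every requirement of your strengthened statement without being spanning. If $ab\in E(C)$ and you take $e=ab$, even the one-edge path $ab$ qualifies. Such outputs are not exotic in the paper's application, where the graph is a triangulation and the two prescribed degree-five ends may be adjacent.

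The fix is to choose the data so that $|V(P)|\ge4$.
\begin{enumerate}
\item Let $C$ be a facial cycle through $a$ that avoids $b$, re-embed with $C$ as the outer cycle, and take $e\in E(C)$ not incident with $a$.
\item Such a face exists. The vertex $a$ lies on $d_F(a)\ge4$ faces, and two faces at $a$ that are not consecutive around $a$ cannot both contain $b$. Otherwise a closed curve through those two faces, meeting $F$ only in $a$ and $b$, would have neighbors of $a$ other than $b$ on both sides, so $\{a,b\}$ would separate $F$. Hence at most two faces at $a$ contain $b$.
\item With this choice, $a$, $b$ and the two ends of $e$ are four distinct vertices of $P$. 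No attachment set of size at most three can contain $V(P)$.
\item Your separation argument then shows that every $P$-bridge is a chord, so $V(P)=V(F)$.
\end{enumerate}
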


For any maximal planar graph $F$ with $|V(F)|\geq3$, the degree
identity \eqref{prelim:defect}, also valid for $K_3$, gives
\begin{equation}\label{core:twelve-five}
 \sum_{v\in V(F)}(6-d_F(v))
 =6|V(F)|-2|E(F)|=12.
\end{equation}

Now fix a graph $H$ satisfying the hypotheses of
Theorem~\ref{core:main}: $H$ is a four-connected
sphere triangulation, and $d_H(v)\in\{5,6\}$ for every
$v\in V(H)$. In particular, $H$ is maximal planar, so
\eqref{core:twelve-five} applies with $F=H$. Each degree-five
vertex contributes one to the sum, and each degree-six vertex
contributes zero. Hence $|V_5(H)|=12$, and we may choose
two distinct vertices $a,b\in V_5(H)$.
Since $H$ is four-connected and planar, applying
Theorem~\ref{core:hamiltonian-connected} to $H$ and this
specified pair $a,b$ therefore gives a spanning path
\[
 P=(v_1,v_2,\ldots,v_n),\qquad
 v_1=a,\quad v_n=b,\quad n=|V(H)|,
\]
where the vertices $v_1,\ldots,v_n$ are distinct and exhaust
$V(H)$, and $v_iv_{i+1}\in E(H)$ for $1\leq i<n$.
Thus the two ends of $P$ are the chosen degree-five vertices,
and every other vertex of $H$ is internal on $P$.

Partition the edges of $P$ alternately into matchings $M_1,M_2$,
and put $K=H-E(P)$. By Lemma~\ref{core:linear-forest}, $K$ is
connected. Every internal vertex of $P$ loses two edges, and its
two ends lose one. Consequently
\begin{equation}\label{core:degree-classes}
 U=\{v:d_K(v)=4\}=V_6(H)\cup\{a,b\},\qquad
 W=\{v:d_K(v)=3\}=V_5(H)\setminus\{a,b\}.
\end{equation}
In particular $|W|=10$, and only two vertices of $U$ had degree
five in $H$. Our task is to find a matching of $K$ covering $U$.

\subsection{Curvature and boundary walks}

In a triangulation $H$ with degrees five and six, put
$p(S)=|S\cap V_5(H)|$. The following inequality uses only the
maximum degree of the deleted subgraph, so it applies more generally
than to the spanning path chosen above.

\begin{lemma}\label{core:curvature}
Let $H$ be a sphere triangulation with degrees five
and six, let $R$ be a subgraph of maximum degree at most two, and
set $K=H-E(R)$. Suppose that $S$ is a nonempty proper vertex set,
$|S|\geq2$, and both $H[S]$ and $H[V(H)\setminus S]$ are
connected. Then
\begin{equation}\label{core:cut-curvature}
 p(S)+|\delta_K(S)|\geq6.
\end{equation}
\end{lemma}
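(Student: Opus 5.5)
The plan is to compute $p(S)$ by a discrete Gauss--Bonnet count on the $S$-side of the cut, and then bound the number of cut edges that $R$ can remove.

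\emph{Step 1: the cut is a single cyclic sequence.} Since $H[S]$ and $H[V(H)\setminus S]$ are both connected, $\delta_H(S)$ is a minimal edge cut, that is, a bond. In the plane dual, a bond corresponds to a simple cycle. So the \emph{mixed faces} (those meeting both $S$ and its complement) and the cut edges alternate around one cyclic sequence. Write $k=|\delta_H(S)|$; there are then exactly $k$ mixed faces. Every mixed face has exactly two cut edges. Call a mixed face of type $\alpha$ if it has two vertices in $S$, and of type $\beta$ if it has one. Let $j$ and $j'$ be their numbers, so $j+j'=k$.

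\emph{Step 2: Euler count.} Let $e_S=|E(H[S])|$ and let $f_S$ be the number of faces with all three vertices in $S$. The dual cycle splits the sphere into two open disks. The one on the $S$-side deformation retracts onto the complex formed by $S$, its internal edges and its internal faces. Hence
\[
 |S|-e_S+f_S=1 .
\]
I expect this to be the main obstacle: the complex may be pinched along repeated boundary vertices or edges, so I would justify the Euler characteristic through the retraction argument rather than by assuming a simple boundary.

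Counting edge-face incidences on the $S$-side gives $3f_S+j=2e_S$, since each type-$\alpha$ face contributes exactly one $S$-edge. Also $\sum_{v\in S}d_H(v)=2e_S+k$. Eliminating $f_S$ gives $e_S=3|S|-3-j$. Therefore
\[
 p(S)=6|S|-\sum_{v\in S}d_H(v)=6+2j-k=6+j-j'.
\]

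\emph{Step 3: runs of type-$\beta$ faces.} It now suffices to show $|\delta_K(S)|\geq j'-j$. Group the cyclic sequence into maximal runs of consecutive type-$\beta$ faces.

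Within a run, consecutive cut edges share their $S$-endpoint, because a type-$\beta$ face has both cut edges at its unique $S$-vertex. So a run of $c-1$ such faces corresponds to one vertex $w\in S$ together with $c$ consecutive cut edges at $w$; here $c\geq1$, and $c=1$ is an empty run.

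Consecutive runs are separated by exactly one type-$\alpha$ face. Not all faces are of type $\beta$: otherwise a single $S$-vertex would carry every cut edge, and connectivity of $H[S]$ with $|S|\geq2$ forbids this. Hence the number of runs is $j$, and $j'=\sum_{\text{runs}}(c-1)$, so
\[
 j'-j=\sum_{\text{runs}}(c-2).
\]

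\emph{Step 4: bounding the loss from $R$.} Edges of $R$ in the cut are deleted, and at each vertex $w$ at most two of them, since $\Delta(R)\leq2$. A vertex $w$ may occur in several runs if the boundary is pinched. Its total loss is still at most $2\leq2\cdot\#\{\text{runs at }w\}$. Hence
\[
 |\delta_K(S)|\geq\sum_{\text{runs}}c-\sum_{w}\min\bigl(2,\text{cut edges at }w\bigr)\geq\sum_{\text{runs}}(c-2)=j'-j.
\]
Combining this with Step 2 gives $p(S)+|\delta_K(S)|\geq6$, which is \eqref{core:cut-curvature}.
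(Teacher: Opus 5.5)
Your proof is correct and is essentially the paper's argument written in terms of mixed faces. Your $j$ equals the length $\ell$ of the boundary walk of the face $\Omega_S$ of $H[S]$ containing $H[V(H)\setminus S]$, and your runs are the $\ell$ corners of that walk, which is why you recover the paper's bound $|\delta_R(S)|\le 2\ell$. Your relation $|S|-e_S+f_S=1$ is simply Euler's formula for the connected plane graph $H[S]$, whose faces other than $\Omega_S$ are exactly your $f_S$ triangles, so the retraction argument is unnecessary. The step excluding $j=0$ needs one more line: $w$ would have a neighbour in $S$ and a neighbour outside $S$, so two consecutive vertices of its link would span a face with two vertices in $S$.
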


\begin{proof}
Work in the fixed embedding of $H$ on $\mathbb S^2$, and put
$\bar S=V(H)\setminus S$. Let $\Gamma_S$ and
$\Gamma_{\bar S}$ be the drawings of $H[S]$ and $H[\bar S]$,
including their vertex points and edge arcs. These drawings are
disjoint: their vertex sets are disjoint, and embedded edges meet
other edges or vertices only at common endpoints. Since
$\Gamma_{\bar S}$ is nonempty and connected, it lies in a unique
connected component $\Omega_S$ of $\mathbb S^2\setminus\Gamma_S$.
Thus $\Omega_S$ is the face of $H[S]$ containing all vertices and
edges of $H[\bar S]$. It need not be a face of $H$, and its boundary
need not be a simple cycle.

For example, in Figure~\ref{fig:face-containment}, $H$ is the
icosahedral graph and $S=\{a,b,c\}$ is a facial triangle. The
remaining nine vertices induce a connected graph drawn in the
exterior face $\Omega_S$ of the black triangle. This face is defined
by $H[S]$ alone; the overlaid orange edges do not subdivide it.

% BEGIN_FACE_CONTAINMENT_FIGURE
\begin{figure}[!htbp]
\centering
\begin{tikzpicture}[x=1cm,y=1cm,line cap=round,line join=round]
\definecolor{faceInk}{HTML}{17212B}
\definecolor{faceOrange}{HTML}{C66D16}
\definecolor{faceGrey}{HTML}{929BA4}
\definecolor{faceBlue}{HTML}{DCEFF7}
\draw[faceGrey,dash pattern=on 1.6pt off 1.4pt,line width=.45pt] (2.60000,1.40000) -- (3.00000,0.80000);
\draw[faceGrey,dash pattern=on 1.6pt off 1.4pt,line width=.45pt] (2.60000,1.40000) -- (3.00000,1.60000);
\draw[faceGrey,dash pattern=on 1.6pt off 1.4pt,line width=.45pt] (2.60000,1.40000) -- (2.20000,2.00000);
\draw[faceGrey,dash pattern=on 1.6pt off 1.4pt,line width=.45pt] (1.80000,1.40000) -- (2.20000,2.00000);
\draw[faceGrey,dash pattern=on 1.6pt off 1.4pt,line width=.45pt] (1.80000,1.40000) -- (1.40000,1.60000);
\draw[faceGrey,dash pattern=on 1.6pt off 1.4pt,line width=.45pt] (1.80000,1.40000) -- (1.40000,0.80000);
\draw[faceGrey,dash pattern=on 1.6pt off 1.4pt,line width=.45pt] (2.20000,0.80000) -- (1.40000,0.80000);
\draw[faceGrey,dash pattern=on 1.6pt off 1.4pt,line width=.45pt] (2.20000,0.80000) -- (2.20000,0.40000);
\draw[faceGrey,dash pattern=on 1.6pt off 1.4pt,line width=.45pt] (2.20000,0.80000) -- (3.00000,0.80000);
\draw[faceOrange,line width=.55pt] (2.20000,3.20000) -- (0.20000,0.20000);
\draw[faceOrange,line width=.55pt] (0.20000,0.20000) -- (4.20000,0.20000);
\draw[faceOrange,line width=.55pt] (4.20000,0.20000) -- (2.20000,3.20000);
\draw[faceOrange,line width=.55pt] (3.00000,1.60000) -- (2.20000,2.00000);
\draw[faceOrange,line width=.55pt] (2.20000,2.00000) -- (1.40000,1.60000);
\draw[faceOrange,line width=.55pt] (1.40000,1.60000) -- (1.40000,0.80000);
\draw[faceOrange,line width=.55pt] (1.40000,0.80000) -- (2.20000,0.40000);
\draw[faceOrange,line width=.55pt] (2.20000,0.40000) -- (3.00000,0.80000);
\draw[faceOrange,line width=.55pt] (3.00000,0.80000) -- (3.00000,1.60000);
\draw[faceOrange,line width=.55pt] (2.20000,3.20000) -- (3.00000,1.60000);
\draw[faceOrange,line width=.55pt] (2.20000,3.20000) -- (2.20000,2.00000);
\draw[faceOrange,line width=.55pt] (2.20000,3.20000) -- (1.40000,1.60000);
\draw[faceOrange,line width=.55pt] (0.20000,0.20000) -- (1.40000,1.60000);
\draw[faceOrange,line width=.55pt] (0.20000,0.20000) -- (1.40000,0.80000);
\draw[faceOrange,line width=.55pt] (0.20000,0.20000) -- (2.20000,0.40000);
\draw[faceOrange,line width=.55pt] (4.20000,0.20000) -- (2.20000,0.40000);
\draw[faceOrange,line width=.55pt] (4.20000,0.20000) -- (3.00000,0.80000);
\draw[faceOrange,line width=.55pt] (4.20000,0.20000) -- (3.00000,1.60000);
\draw[faceInk,line width=1pt] (2.60000,1.40000) -- (1.80000,1.40000);
\draw[faceInk,line width=1pt] (1.80000,1.40000) -- (2.20000,0.80000);
\draw[faceInk,line width=1pt] (2.20000,0.80000) -- (2.60000,1.40000);
\fill[faceOrange] (2.20000,3.20000) circle (1.2pt);
\fill[faceOrange] (0.20000,0.20000) circle (1.2pt);
\fill[faceOrange] (4.20000,0.20000) circle (1.2pt);
\fill[faceOrange] (3.00000,1.60000) circle (1.2pt);
\fill[faceOrange] (2.20000,2.00000) circle (1.2pt);
\fill[faceOrange] (1.40000,1.60000) circle (1.2pt);
\fill[faceOrange] (1.40000,0.80000) circle (1.2pt);
\fill[faceOrange] (2.20000,0.40000) circle (1.2pt);
\fill[faceOrange] (3.00000,0.80000) circle (1.2pt);
\fill[faceInk] (2.60000,1.40000) circle (1.2pt);
\fill[faceInk] (1.80000,1.40000) circle (1.2pt);
\fill[faceInk] (2.20000,0.80000) circle (1.2pt);
\node[font=\scriptsize,text=faceInk] at (2.38000,1.26000) {$a$};
\node[font=\scriptsize,text=faceInk] at (2.02000,1.26000) {$b$};
\node[font=\scriptsize,text=faceInk] at (2.20000,0.97200) {$c$};
\node[font=\small] at (2.20000,3.69000) {(a) $H$};
\fill[faceBlue] (4.75000,0) rectangle (9.15000,3.4);
\fill[white] (8.05000,2.20000) -- (5.85000,2.20000) -- (6.95000,0.55000) -- cycle;
\draw[faceInk,line width=1pt] (8.05000,2.20000) -- (5.85000,2.20000);
\draw[faceInk,line width=1pt] (5.85000,2.20000) -- (6.95000,0.55000);
\draw[faceInk,line width=1pt] (6.95000,0.55000) -- (8.05000,2.20000);
\fill[faceInk] (8.05000,2.20000) circle (1.2pt);
\fill[faceInk] (5.85000,2.20000) circle (1.2pt);
\fill[faceInk] (6.95000,0.55000) circle (1.2pt);
\node[font=\scriptsize,text=faceInk] at (8.27000,2.25500) {$a$};
\node[font=\scriptsize,text=faceInk] at (5.63000,2.25500) {$b$};
\node[font=\scriptsize,text=faceInk] at (6.95000,0.36300) {$c$};
\node[text=blue!55!black] at (6.95000,2.88200) {$\Omega_S$};
\node[font=\small] at (6.95000,3.69000) {(b) $H[S]$ (enlarged)};
\fill[faceBlue] (9.50000,0) rectangle (13.90000,3.4);
\fill[white] (12.10000,1.40000) -- (11.30000,1.40000) -- (11.70000,0.80000) -- cycle;
\draw[faceOrange,line width=.55pt] (11.70000,3.20000) -- (9.70000,0.20000);
\draw[faceOrange,line width=.55pt] (9.70000,0.20000) -- (13.70000,0.20000);
\draw[faceOrange,line width=.55pt] (13.70000,0.20000) -- (11.70000,3.20000);
\draw[faceOrange,line width=.55pt] (12.50000,1.60000) -- (11.70000,2.00000);
\draw[faceOrange,line width=.55pt] (11.70000,2.00000) -- (10.90000,1.60000);
\draw[faceOrange,line width=.55pt] (10.90000,1.60000) -- (10.90000,0.80000);
\draw[faceOrange,line width=.55pt] (10.90000,0.80000) -- (11.70000,0.40000);
\draw[faceOrange,line width=.55pt] (11.70000,0.40000) -- (12.50000,0.80000);
\draw[faceOrange,line width=.55pt] (12.50000,0.80000) -- (12.50000,1.60000);
\draw[faceOrange,line width=.55pt] (11.70000,3.20000) -- (12.50000,1.60000);
\draw[faceOrange,line width=.55pt] (11.70000,3.20000) -- (11.70000,2.00000);
\draw[faceOrange,line width=.55pt] (11.70000,3.20000) -- (10.90000,1.60000);
\draw[faceOrange,line width=.55pt] (9.70000,0.20000) -- (10.90000,1.60000);
\draw[faceOrange,line width=.55pt] (9.70000,0.20000) -- (10.90000,0.80000);
\draw[faceOrange,line width=.55pt] (9.70000,0.20000) -- (11.70000,0.40000);
\draw[faceOrange,line width=.55pt] (13.70000,0.20000) -- (11.70000,0.40000);
\draw[faceOrange,line width=.55pt] (13.70000,0.20000) -- (12.50000,0.80000);
\draw[faceOrange,line width=.55pt] (13.70000,0.20000) -- (12.50000,1.60000);
\draw[faceInk,line width=1pt] (12.10000,1.40000) -- (11.30000,1.40000);
\draw[faceInk,line width=1pt] (11.30000,1.40000) -- (11.70000,0.80000);
\draw[faceInk,line width=1pt] (11.70000,0.80000) -- (12.10000,1.40000);
\fill[faceOrange] (11.70000,3.20000) circle (1.2pt);
\fill[faceOrange] (9.70000,0.20000) circle (1.2pt);
\fill[faceOrange] (13.70000,0.20000) circle (1.2pt);
\fill[faceOrange] (12.50000,1.60000) circle (1.2pt);
\fill[faceOrange] (11.70000,2.00000) circle (1.2pt);
\fill[faceOrange] (10.90000,1.60000) circle (1.2pt);
\fill[faceOrange] (10.90000,0.80000) circle (1.2pt);
\fill[faceOrange] (11.70000,0.40000) circle (1.2pt);
\fill[faceOrange] (12.50000,0.80000) circle (1.2pt);
\fill[faceInk] (12.10000,1.40000) circle (1.2pt);
\fill[faceInk] (11.30000,1.40000) circle (1.2pt);
\fill[faceInk] (11.70000,0.80000) circle (1.2pt);
\node[font=\scriptsize,text=faceInk] at (11.88000,1.26000) {$a$};
\node[font=\scriptsize,text=faceInk] at (11.52000,1.26000) {$b$};
\node[font=\scriptsize,text=faceInk] at (11.70000,0.97200) {$c$};
\node[text=blue!55!black] at (12.96000,2.98000) {$\Omega_S$};
\node[font=\small] at (11.70000,3.69000) {(c) $\Gamma_{\bar S}$ overlaid};
\end{tikzpicture}
\caption{A concrete example for Lemma~\ref{core:curvature}.
The graph $H$ is the icosahedral graph, with all twelve vertices
of degree five, and $S=\{a,b,c\}$ is the vertex set of its central
facial triangle. Black shows $H[S]$, orange shows $H[\bar S]$,
and the gray dashed edges join the two vertex sets.
In (b), only $H[S]$ is retained and the view is enlarged:
the white triangular interior is one face, and the blue exterior
is the other face, $\Omega_S$.
In (c), the orange drawing is overlaid only to locate
$\Gamma_{\bar S}$ inside $\Omega_S$; the orange edges are not
part of $H[S]$ and do not subdivide its face.
The blue face continues beyond the picture, including beyond
the outer orange triangle.}
\label{fig:face-containment}
\end{figure}
\FloatBarrier
% END_FACE_CONTAINMENT_FIGURE

Choose a point of $\Omega_S$ outside the drawing of $H$ as the
point at infinity. In the resulting plane representation,
$\Omega_S$ is the outer face of $H[S]$.

Every face of $H[S]$ other than $\Omega_S$ is a facial triangle of $H$.
Indeed, consider an edge $e=uw$ with $w\in\bar S$ and a point
$z$ in its interior. The subarc from $z$ to $w$ avoids
$\Gamma_S$: embedded edges cannot cross or pass through other
vertices, and the subarc omits $u$, the only endpoint that
could lie in $S$. Since $w\in\Omega_S$, this puts $z$ in
$\Omega_S$ as well. Thus the interiors of all edges having
an endpoint in $\bar S$ lie in $\Omega_S$.
Every other edge of $H$ already belongs to the induced graph
$H[S]$, and all vertices outside $S$ lie in $\Omega_S$.
No additional vertex or edge of $H$ therefore subdivides any
other face of $H[S]$. Such a face remains a face of $H$ and
has a triangular boundary.

Write $n=|S|$, $e=|E(H[S])|$, and let $\ell$ be the length of
the boundary walk of $\Omega_S$, the face containing
$\Gamma_{\bar S}$. Repeated edge occurrences are counted separately.
In particular we do not assume that this boundary is a simple
cycle. If $f$ is the total number of faces of the connected plane
graph $H[S]$, including its outer face $\Omega_S$, then its
$f-1$ bounded faces each have boundary length three, whereas
the outer boundary walk has length $\ell$. Euler's formula
and face-edge counting therefore give
\[
 f=e-n+2,\qquad 2e=3(f-1)+\ell.
\]
Hence
\begin{equation}\label{core:boundary-count}
 e=3n-3-\ell,\qquad
 |\delta_H(S)|=6n-p(S)-2e=6+2\ell-p(S).
\end{equation}

Every edge of $R$ leaving $S$ starts at a vertex on this boundary
walk: its interior lies in $\Omega_S$. There are
at most $\ell$ distinct boundary vertices, and each is incident
with at most two edges of $R$. Therefore
$|\delta_R(S)|\leq2\ell$. Since the $R$-edges have been deleted,
\[
 |\delta_K(S)|=|\delta_H(S)|-|\delta_R(S)|
 \geq 6-p(S),
\]
as claimed. The use of the boundary walk, rather than a boundary
cycle, includes bridges and repeated vertices in $H[S]$.
\end{proof}

\begin{lemma}\label{core:two-cut}
Let $H$ be a sphere triangulation with degrees
five and six, let $R$ be a subgraph of $H$ with
$\Delta(R)\leq2$, and put $K=H-E(R)$. Suppose that $K$
is connected, every vertex of $K$ has degree three or four,
and $|V_4(K)\cap V_5(H)|\leq3$.
Let $S$ be a nonempty proper subset of $V(H)$ such that
$K[S]$ is connected, every vertex of $S$ has degree four
in $K$, and $|\delta_K(S)|=2$.
Then $H[V(H)\setminus S]$ is disconnected, and $K-S$ has
exactly two connected components, each joined to $S$ by
exactly one edge. In particular, the two edges of
$\delta_K(S)$ are bridges of $K$.
\end{lemma}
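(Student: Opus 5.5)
The plan is to combine the curvature inequality of Lemma~\ref{core:curvature} with the hypothesis $|V_4(K)\cap V_5(H)|\leq3$ to show that $H[V(H)\setminus S]$ cannot be connected. After that, connectivity of $K$ alone will pin down the structure of $K-S$. Throughout, write $\bar S=V(H)\setminus S$. Since $S$ is a proper subset, $\bar S$ is nonempty.

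First I check the hypotheses of Lemma~\ref{core:curvature} that do not involve $\bar S$. If $|S|=1$, say $S=\{v\}$, then $\delta_K(S)$ consists of all edges of $K$ at $v$, so $|\delta_K(S)|=d_K(v)=4\neq2$. Hence $|S|\geq2$. Moreover $K[S]$ is a spanning subgraph of $H[S]$, so $H[S]$ is connected.

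Next, suppose for a contradiction that $H[\bar S]$ is connected. Then Lemma~\ref{core:curvature} applies to $S$ with the given $R$, and it yields $p(S)+|\delta_K(S)|\geq6$, that is, $p(S)\geq4$. On the other hand, every vertex of $S$ has degree four in $K$. Therefore $S\cap V_5(H)\subseteq V_4(K)\cap V_5(H)$, and so $p(S)\leq3$. This contradiction shows that $H[\bar S]$ is disconnected. This is the step I expect to carry the weight of the lemma. The only subtlety is making sure Lemma~\ref{core:curvature} is applied legitimately, which requires $|S|\geq2$ and connectivity of both sides. That is why I dispose of $|S|=1$ first.

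Finally I turn to $K-S$. Since $K$ is a subgraph of $H$, every component of $K-S=K[\bar S]$ lies inside a single component of $H[\bar S]$. Each component of $H[\bar S]$ is therefore a union of components of $K-S$. Because $H[\bar S]$ has at least two components, $K-S$ also has at least two components. Because $K$ is connected, each component of $K-S$ is joined to $S$ by at least one edge of $\delta_K(S)$, and distinct components use distinct cut edges. Since $|\delta_K(S)|=2$, there are exactly two components, each joined to $S$ by exactly one edge.

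Deleting either of these edges leaves the corresponding component with no edge to the rest of $K$: its only possible neighbours lie in $S$, because there are no edges between distinct components of $K-S$. Hence each of the two edges is a bridge of $K$.
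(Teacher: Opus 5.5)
Your proposal is correct and follows the paper's proof essentially step for step. You rule out $|S|=1$, use $p(S)\le|V_4(K)\cap V_5(H)|\le3$ with Lemma~\ref{core:curvature} to force $H[V(H)\setminus S]$ to be disconnected, and then count the two edges of $\delta_K(S)$ against the components of $K-S$ to obtain exactly two components joined to $S$ by bridges.
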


\begin{proof}
If $S=\{v\}$, then $|\delta_K(S)|=d_K(v)=4$, because
$K$ is simple. This contradicts $|\delta_K(S)|=2$, so
$|S|\geq2$. Also $K[S]$ is a connected spanning subgraph
of $H[S]$, and hence $H[S]$ is connected. Since every
vertex of $S$ belongs to $V_4(K)$, the global bound on
$V_4(K)\cap V_5(H)$ gives
$p(S)=|S\cap V_5(H)|\leq |V_4(K)\cap V_5(H)|\leq3$.

Suppose, for a contradiction, that $H[V(H)\setminus S]$ is
connected. Both induced sides are then connected and $|S|\geq2$,
so Lemma~\ref{core:curvature} applies to $H,R,K,S$ and gives
\[
 6\leq p(S)+|\delta_K(S)|\leq3+2,
\]
a contradiction. Therefore $H[V(H)\setminus S]$ is
disconnected. The graph $K-S$ is obtained from this induced
subgraph by deleting edges while keeping all its vertices,
so $K-S$ is disconnected as well.

Let $C_1,\ldots,C_t$ be the connected components of $K-S$,
where $t\geq2$. For each $i$, let $m_i$ be the number of
edges of $K$ joining $C_i$ to $S$. The connectedness of
$K$ gives $m_i\geq1$, and every edge of $\delta_K(S)$ is
counted in exactly one $m_i$. Thus
\[
 2=|\delta_K(S)|=\sum_{i=1}^t m_i\geq t\geq2.
\]
It follows that $t=2$ and $m_1=m_2=1$. Deleting the
unique edge joining either $C_i$ to $S$ separates $C_i$
from the rest of $K$, since no edge joins different
components of $K-S$. Hence both edges of $\delta_K(S)$
are bridges of $K$.
\end{proof}

\subsection{Covering the degree-four vertices}

\begin{theorem}\label{core:third-matching}
Let $H$ be a sphere triangulation with degrees five
and six. Let $R$ be a subgraph of maximum degree at most two, and
suppose that $K=H-E(R)$ is connected with degrees three and four.
Put $U=\{v:d_K(v)=4\}$. If $K$ has a degree-three vertex and
at most three vertices of $U$ have degree five in $H$, then $K$
has a matching covering $U$.
\end{theorem}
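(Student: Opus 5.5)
The plan is to verify the criterion of Lemma~\ref{core:required} for $F=K$ and required set $U$. That is, for every $X\subseteq V(K)$ we show $q_U(K-X)\le|X|$.

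\textbf{The case $X=\varnothing$.} Here $K$ is connected and has a degree-three vertex, which lies outside $U$. So the unique component is not counted, and the count is zero.

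\textbf{Setup for $X\neq\varnothing$.} Let $C$ be a counted component, i.e.\ an odd component of $K-X$ with $V(C)\subseteq U$. Every vertex of $C$ has degree four in $K$, so
\[
|\delta_K(C)|=4|V(C)|-2|E(C)|,
\]
which is even. The cut is nonempty: $K$ is connected, and $C$ misses the degree-three vertex. All edges of $\delta_K(C)$ end in $X$. Call $C$ \emph{heavy} if $|\delta_K(C)|\ge4$ and \emph{light} if $|\delta_K(C)|=2$. If every counted component is heavy, the degree budget finishes the proof:
\[
4q_U(K-X)\le\sum_{x\in X}d_K(x)\le4|X|.
\]

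\textbf{Light components.} Lemma~\ref{core:two-cut} applies to a light $C$, with $S=V(C)$. Its hypotheses hold: $K[C]$ is connected, all vertices of $C$ have degree four, and the bound $|V_4(K)\cap V_5(H)|\le3$ is assumed. It gives two facts. The two cut edges of $C$ are bridges of $K$, and $K-V(C)$ has exactly two components, each attached by one of these bridges.

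Thus the light components are strung along the bridge tree of $K$. I would then bound the leaves of that bridge tree. Consider a leaf piece $D$ of $K$, attached to the rest of $K$ by a single bridge. Arrange, by choosing the maximal side, that $H[D]$ and $H[V(H)\setminus D]$ are both connected; if necessary, first pass through the argument of Lemma~\ref{core:two-cut}. Lemma~\ref{core:curvature} then gives $p(D)+1\ge6$, so $D$ contains at least five vertices of $V_5(H)$. Since $|V_5(H)|=12$ by \eqref{core:twelve-five}, the bridge tree has at most two leaves, so it is a path. Consequently the light components lie in a linear order, each separating the earlier pieces from the later ones.

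\textbf{Combined count and main obstacle.} Let $q_2$ and $q_4$ be the numbers of light and heavy components, so the budget gives $2q_2+4q_4\le4|X|$. On its own this only yields $q\le|X|+q_2/2$, so it must be sharpened using the linear order. Between two consecutive light components $C_i,C_{i+1}$, the piece of $K$ separating them must contain a vertex of $X$: each light component is a whole component of $K-X$, and the bridges leaving $C_i$ end in $X$. I expect to extract one private vertex of $X$ per light component, whose remaining degree is also charged against adjacent heavy components. The cleanest way to organize this is a charging scheme.
\begin{itemize}
\item Each light component sends weight $1$ along each of its two bridges.
\item Each heavy component sends weight $\tfrac14$ along each of its cut edges.
\item A vertex $x\in X$ receives total weight at most $1$. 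This is clear when $x$ meets at most one bridge from a light component.
\item When $x$ meets two such bridges, it sits between two light components. There the path structure of the bridge tree and the parity of $|V(C)|$ must be used to show that $x$ has no further counted neighbours, or that some component is not counted at all.
\end{itemize}
Making this charging rigorous, including the case where both bridges at $x$ come from light components, is the step I expect to be hardest. If it resists, I would fall back on applying Lemma~\ref{core:curvature} directly to the union of $C$ with an adjacent $X$-vertex, to rule out such configurations.
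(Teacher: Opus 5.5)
There is a genuine gap: your proposal stops at the decisive counting step, and the charging scheme you sketch cannot be completed as stated.

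\textbf{What is correct.} Your setup agrees with the paper: the case $X=\varnothing$, the parity of $|\delta_K(C)|$, the all-heavy case, and the use of Lemma~\ref{core:two-cut} to show that the two cut edges of a light component are bridges of $K$. Your side claim that the bridge tree of $K$ is a path is also correct, and no ``maximal side'' adjustment is needed. A leaf piece $D$ has $K[D]$ and $K-V(D)$ connected, and $|D|\geq2$ because $K$ has minimum degree three. Lemma~\ref{core:curvature} then gives $p(D)\geq5$, and twelve five-vertices allow at most two leaves. However, the paper's argument does not need this linear order.

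\textbf{Why the charging fails.} Your scheme overpays, since it charges each light component $2$. Even the case you call clear fails. Nothing in your setup prevents a degree-four vertex $x\in X$ from meeting one light bridge and three cut edges of heavy components. Such an $x$ receives $1+\tfrac34$, or $\tfrac12+\tfrac34$ if you halve the bridge weight. A vertex meeting two light bridges already receives $2$. No vertex-local bound of $1$ is available; the compensation must be global.

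\textbf{The missing idea: count per piece, not per vertex.}
\begin{enumerate}
\item Delete all light components $C_1,\dots,C_{q_2}$ from $K$, and let $D_1,\dots,D_m$ be the components of the remainder, which contains $X$.
\item No edge joins two distinct $C_j$, and none joins two distinct $D_i$. So contracting each part to a vertex leaves exactly the edges $\bigcup_j\delta_K(C_j)$, all of which are bridges. The contracted graph is therefore a tree in which each $c_j$ has degree two. Hence $2q_2=|E(T)|=q_2+m-1$, that is, $m=q_2+1$.
\item Each $D_i$ receives $s_i\geq1$ of these bridges, and their endpoints in $D_i$ lie in $X$. Thus $x_i=|X\cap V(D_i)|\geq1$.
\item Every heavy counted component lies inside a single $D_i$, with all its cut edges ending in $X\cap V(D_i)$. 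This gives $4q_i\leq4x_i-s_i$, hence $q_i\leq x_i-1$ by integrality.
\item Summing, $q_U(K-X)\leq q_2+\sum_i(x_i-1)=|X|-1$.
\end{enumerate}
Your remark that each piece between consecutive light components contains a vertex of $X$ is the seed of this argument. What pays for the light components is the exact count $m=q_2+1$ together with the per-piece deficit $s_i\geq1$.
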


\begin{proof}
We verify Lemma~\ref{core:required}. For $X=\varnothing$, the
unique component contains an optional degree-three vertex, and
$q_U(K)=0$.

Fix $X\neq\varnothing$. A counted odd component $C$ of $K-X$
consists entirely of degree-four vertices. Its cut is nonempty
by connectedness and has even size, since
\begin{equation}\label{core:even-cut}
 |\delta_K(C)|=4|V(C)|-2|E(C)|.
\end{equation}
Thus each such cut has size two or at least four. Let $q_2$ be
the number of counted components with cut size two.
For each such component $C$, apply Lemma~\ref{core:two-cut}
with $S=V(C)$. This is a nonempty proper subset of $V(K)$,
because $C$ is nonempty and $X\neq\varnothing$.
Moreover, $K[S]=C$ is connected, every vertex of $S$ has
degree four in $K$, and $|\delta_K(S)|=2$ by the choice
of $C$. The hypotheses of the present theorem include all
the required assumptions on $H,R,K$.
Lemma~\ref{core:two-cut} therefore shows that the two edges
of $\delta_K(C)$ are bridges of $K$.

If $q_2=0$, every counted component sends at least four edges
to $X$. These edge incidences are distinct, and vertices of
$X$ have degree at most four, so $4q_U(K-X)\leq4|X|$.
This proves the required inequality in that case.

Suppose $q_2>0$, and denote the $q_2$ counted components of
$K-X$ with cut size two by $C_1,\ldots,C_{q_2}$.
Define
\[
 Z=\bigcup_{j=1}^{q_2}V(C_j),\qquad
 K_0=K-Z=K[V(K)\setminus Z].
\]
The graph $K_0$ deletes the selected component vertex sets from
$K$ and retains $X$. Let $D_1,\ldots,D_m$
be all the connected components of $K_0$, including any
isolated vertices.

Every $C_j$ is a component of $K-X$, so $X\cap Z=\varnothing$.
Thus $\varnothing\neq X\subseteq V(K_0)$, and $m\geq1$.
Each $D_i$ has a nonempty vertex set.

If the $C_j$ are all the components of $K-X$, then
$Z=V(K)\setminus X$ and $K_0=K[X]$. This induced graph
retains the edges within $X$ and need not be connected;
the $D_i$ are precisely its connected components.

The vertex sets of
$C_1,\ldots,C_{q_2},D_1,\ldots,D_m$ partition $V(K)$.
Define the contracted graph $T$, initially as a multigraph
without loops.
For each part introduce a distinct new vertex, and put
$V(T)=\{c_1,\ldots,c_{q_2}\}\cup\{d_1,\ldots,d_m\}$.
Define the map $\pi:V(K)\longrightarrow V(T)$ by
\[
 \pi(v)=
 \begin{cases}
 c_j,&v\in V(C_j),\\
 d_i,&v\in V(D_i).
 \end{cases}
\]
For every edge $uv\in E(K)$ with $\pi(u)\neq\pi(v)$,
include a distinct edge $e_{uv}$ of $T$ whose endpoints
are $\pi(u)$ and $\pi(v)$. Discard the edges $uv$ with
$\pi(u)=\pi(v)$. Thus
$E(T)=\{e_{uv}:uv\in E(K),\ \pi(u)\neq\pi(v)\}$.
The contraction is performed in $K$, with the $C_j$ still present.
It identifies each connected part to its representative, discards
its internal edges, and retains distinct edges between parts.
Parallel edges are allowed at this stage.

There is no edge of $K$ between two distinct $C_j$, since
they are distinct components of $K-X$. There is also no edge
of $K$ between two distinct $D_i$: such an edge would survive
in $K_0$ and join two of its components.
Consequently, every edge of $T$ joins a vertex $c_j$ to a
vertex $d_i$. More precisely, the original edges retained
in this contraction are exactly
\[
 \{uv\in E(K):\pi(u)\neq\pi(v)\}
   =\bigcup_{j=1}^{q_2}\delta_K(C_j),
\]
where the sets on the right are pairwise disjoint.
Thus each retained edge belongs to $\delta_K(C_j)$ for a
unique selected component $C_j$. Applying
Lemma~\ref{core:two-cut} with $S=V(C_j)$, as verified above,
shows that this edge is a bridge of $K$.
In particular, even if $\delta_K(D_i)$ has four or more edges,
each joins $D_i$ to a selected $C_j$ and is a bridge by the
application of the lemma to that $C_j$.

There may also be components $C$ of $K-X$ with
$|\delta_K(C)|\geq4$. None is among the selected $C_j$.
Such a component survives intact in $K_0$ and is contained
in one $D_i$. Every edge $uv\in\delta_K(C)$, with
$u\in V(C)$, has $v\in X$, because $C$ is a component
of $K-X$. Both endpoints survive in $K_0$, so the edge
$uv$ also survives there and places $v$ in the same $D_i$
as $u$. Consequently, $\delta_K(C)\subseteq E(D_i)$.
These edges are internal to $D_i$ and are discarded in
the contraction; their individual bridge status is immaterial.

The graph $T$ is connected because $K$ is connected.
To prove that every edge $e_{uv}$ of $T$ is a bridge,
suppose that a path in $T-e_{uv}$ joins $\pi(u)$ to $\pi(v)$.
Replace each edge of this path by its corresponding edge of
$K$. Within each part visited by the path, join the necessary
endpoints by a path in that connected part; in the first
and last parts, also join to $u$ and $v$, respectively.
The resulting walk joins $u$ to $v$ in $K$ without using
$uv$. Indeed, its edges between parts correspond to edges
other than $e_{uv}$, and its edges within parts cannot be $uv$.
This contradicts the fact that $uv$ is a bridge of $K$.
Hence every edge of $T$ is a bridge. In particular, $T$
has neither parallel edges nor a cycle, so it is a tree.

Each $c_j$ has degree two because its incident edges
correspond exactly to the two edges of $\delta_K(C_j)$.
Since every edge of $T$ has exactly one endpoint among the
$c_j$, we have
\[
 |E(T)|=\sum_{j=1}^{q_2}d_T(c_j)=2q_2,
 \qquad |V(T)|=q_2+m.
\]
The tree identity $|E(T)|=|V(T)|-1$ therefore gives
\begin{equation}\label{core:tree-count}
 m=q_2+1.
\end{equation}

If $K_0=K[X]$, its $m$ nonempty components partition $X$, so
$m\leq|X|$ and $q_U(K-X)=q_2=m-1\leq|X|-1$.
The following count also covers this case.

For $1\leq i\leq m$, put $x_i=|X\cap V(D_i)|$, and let $s_i$
be the number of edges of $K$ joining $D_i$ to one of
$C_1,\ldots,C_{q_2}$. Every such edge has its endpoint in
$D_i$ in $X$, since each $C_j$ is an entire component of
$K-X$. The contracted tree
has more than one vertex, so $s_i\geq1$ and hence $x_i\geq1$.
The sets $X\cap V(D_i)$ partition $X$.

Let $q_i$ be the number of the other counted odd components of
$K-X$ that lie in $D_i$. For each such component $C$,
the set $\delta_K(C)$ has at least four edges.
All of those edges end in $X\cap V(D_i)$: none can lead
to a removed component of $K-X$, and otherwise it would connect
two different components of $K_0$. The $s_i$ edges of $K$
joining $D_i$ to the selected components $C_j$ already
consume $s_i$ incidences at its
$x_i$ vertices of $X$. Therefore $4q_i\leq4x_i-s_i$.
Since $s_i\geq1$ and $q_i$ is integral, $q_i\leq x_i-1$.
Summing and using \eqref{core:tree-count} yields
\begin{align*}
 q_U(K-X)
   &=q_2+\sum_{i=1}^m q_i\\
   &\leq q_2+\sum_{i=1}^m(x_i-1)\\
   &=q_2+|X|-(q_2+1)=|X|-1.
\end{align*}
Thus the required inequality holds for every $X$, and
Lemma~\ref{core:required} supplies the matching covering $U$.
\end{proof}

The point of the tree count is that a two-edge component costs
less than four incidences at $X$, but its two bridge edges split
the remaining graph into additional connected components.
Every such component has at least one incidence at $X$ used by
an edge to a removed component. The resulting bound
$q_i\leq x_i-1$ offsets the deficiency of all two-edge components.

\begin{proof}[Proof of Theorem~\ref{core:main}]
Choose the spanning path $P$ with degree-five ends, and its
alternating matchings $M_1,M_2$, as above. The graph $K=H-E(P)$
is connected by Lemma~\ref{core:linear-forest}, and its degree
classes satisfy \eqref{core:degree-classes}. In particular it
has ten degree-three vertices, and only two of its degree-four
vertices had degree five in $H$. Apply
Theorem~\ref{core:third-matching} with $R=P$ to obtain a matching
$M_3$ of $K$ covering every degree-four vertex.

The matchings $M_1,M_2,M_3$ are pairwise edge-disjoint. In
$Q=H-(M_1\cup M_2\cup M_3)=K-M_3$,
every former degree-four vertex has degree three. Every former
degree-three vertex has degree two or three. Therefore
$\Delta(Q)\leq3$. Apply Corollary~\ref{ext:three-matchings} to
the originally supplied proper coloring $\phi$ of $H$. It
colors the three matchings with $5,6,7$, respectively, and the
residual edges with colors in $\{1,2,3,4,8\}$, preserving all
vertex colors. This is the asserted total eight-coloring.
\end{proof}

To apply Theorem~\ref{core:main} to a general triangulation, we
need a reduction preserving simplicity, the degree bound and
four-connectivity, followed by the restoration of removed vertices
in Theorem~\ref{restore:main}. Corollary~\ref{core:four-palette}
supplies the four-element vertex color set required for that
restoration.

\section{Reduction to four-connectivity}
\label{red:section}

The induction hypothesis asserts the existence of a proper total
eight-coloring, without prescribing vertex colors.
Theorem~\ref{core:main} supplies the stronger extension property for the
reduced core. Separating triangles will be handled by a structural
classification and an explicit coloring.

\subsection{Small graphs and degree-three reduction}

The empty graph, $K_1$, and $K_2$ are immediate.  For $K_3$, give the
vertices the colors $0,1,2$, and give each edge the color of its opposite
vertex.  For $K_4$, label the vertices $0,1,2,3$ and use the five colors
in $\mathbb Z/5\mathbb Z$, the residue classes of integers modulo
five, represented by $0,1,2,3,4$. Arithmetic on these color names
is evaluated modulo five.

Define the total coloring by
\[
 \varphi(i)=2i\pmod5,\qquad
 \varphi(ij)=i+j\pmod5
 \quad (i,j\in\{0,1,2,3\},\ i\neq j).
\]
The four vertex colors, in vertex order $0,1,2,3$, are
$0,2,4,1$, so they are distinct.  Edges $ij$ and $ik$ incident
with the same vertex have distinct colors: equality would give
$i+j\equiv i+k\pmod5$, hence $j\equiv k\pmod5$, which for these
labels implies $j=k$.  Finally, the color of $ij$ cannot equal
the color of either endpoint: either $i+j\equiv2i\pmod5$ or
$i+j\equiv2j\pmod5$ would imply $i=j$.  Thus this is a proper
total coloring using five colors.

\begin{lemma}\label{red:degree-three}
Let $G$ be a graph with maximum degree at most six.  Suppose
that $v$ has three neighbors $a,b,c$ and that $ab\in E(G)$.  Every total
eight-coloring of $G-v$ extends to a total eight-coloring of $G$ after
changing, if necessary, only the old edge color on $ab$.  All old vertex
colors can be retained.
\end{lemma}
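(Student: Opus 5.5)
The plan is to fix every old vertex color and every old edge color except possibly the color of $ab$. We then choose the four new elements $v,va,vb,vc$ by a short counting argument, and use a single swap with $ab$ when the counting runs out.

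Let $\psi$ be the given total eight-coloring of $G-v$, and write $y=\psi(ab)$. In $G$ each of $a,b,c$ has degree at most six, so in $G-v$ each is incident with at most five colored edges. For a tentative color $\alpha$ of $v$, put
\[
 L_\alpha(va)=\Omega\setminus\bigl(\{\psi(a),\alpha\}\cup\{\psi(e):e\ni a\}\bigr),
\]
and define $L_\alpha(vb)$ and $L_\alpha(vc)$ in the same way. Without the exclusion of $\alpha$, each of these spoke lists has at least two colors. The color $\alpha$ itself must be chosen from $\Omega\setminus\{\psi(a),\psi(b),\psi(c)\}$, which has at least five colors.

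First I will exploit the freedom in $\alpha$. For each spoke $vw$, let $F_w$ be the set of colors forbidden on $vw$ by the neighborhood of $w$; it has at most six elements. If some admissible $\alpha$ lies in some $F_w$, then that spoke keeps a list of size at least two. More generally, I will choose $\alpha$ to maximize the spoke lists and then apply Hall's theorem (Theorem~\ref{prelim:hall}) to the three lists $L_\alpha(va),L_\alpha(vb),L_\alpha(vc)$.

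Each list has size at least one. Hall's condition for a single list is therefore automatic, and only the pair and triple conditions need checking. If Hall's condition holds for some admissible $\alpha$, we color $v$ with $\alpha$, color the spokes by the system of distinct representatives, and leave $ab$ unchanged. I will also check that the number of admissible $\alpha$ (at least five) beats the number of ways the spoke lists can collapse. For a fixed spoke, $\alpha$ shrinks its list only when $\alpha\notin F_w$.

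The residual case is the one in which every admissible $\alpha$ produces a Hall violation. Typically this means $L_\alpha(va)=L_\alpha(vb)=\{x\}$, or a similar collision involving $vc$. Here the edge $ab$ is used. We set $\psi(va):=y$ and recolor $ab$ with $x$, keeping $\psi(vb)=x$ where appropriate, or swap the roles of $va$ and $vb$. The verifications are as follows.
\begin{enumerate}
 \item The color $y$ is already absent from the other edges at $a$ and from $\psi(a)$, because it was the color of $ab$. It remains to ensure $y\neq\alpha$, and this is where the choice of $\alpha$ among at least five candidates is used.
 \item The color $x$ avoids $\psi(a),\psi(b)$ and the edges at $a$ and $b$, since it lies in both spoke lists. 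Hence $x$ is legal on $ab$ once $ab$ is uncolored and the spoke at the common endpoint takes a different color.
\end{enumerate}
The spoke $vc$ is then colored from its list, which has at least two colors before the exclusions by $va$ and $vb$ are imposed.

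The main obstacle is this last case analysis. We must show that when both $va$ and $vb$ are squeezed to the same singleton and $vc$ also interferes, some admissible $\alpha$ still avoids $y$ and leaves $vc$ a color. I would handle it by enumerating the few configurations of $F_a,F_b,F_c$ relative to the at least five admissible values of $\alpha$. If needed, I would also use the alternative of giving $\alpha=y$ to $v$ and uncoloring $ab$ entirely, then recoloring $ab$ last from the colors missing at both $a$ and $b$. Throughout, only $ab$ changes among the old elements, so all old vertex colors are retained, as required.
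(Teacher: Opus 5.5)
Your proposal has a genuine gap: the decisive residual case is never proved. You say you ``would handle it by enumerating the few configurations.'' You also assert that the five admissible values of $\alpha$ beat the ways the spoke lists can collapse. Neither step is carried out, and the swap you do sketch fails as written.

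Recoloring $ab$ with $x$ while keeping $vb$ colored $x$ puts one color on two edges meeting at $b$; your own item~2 requires the spoke at the common endpoint to differ. The swap also cannot be rescued for an arbitrary $\alpha$. Suppose the lists before excluding $\alpha$ are $L_a=L_b=\{x,\alpha\}$, so that $L_\alpha(va)=L_\alpha(vb)=\{x\}$. After $ab$ is recolored $x$, the only color available for either $va$ or $vb$ is the released color $y$, whichever roles you assign. You would have to change $\alpha$, and that is exactly the analysis you left open.

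The difficulty comes from your ordering. Fixing $\alpha$ first shrinks the spoke lists to size $\ge1$, so even the pair conditions in Hall's theorem are no longer automatic.

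The missing idea is to color $v$ last. Since $d_G(v)=3$, once the spokes are colored, $v$ conflicts with only six elements: its three neighbors and its three spokes. One of the eight colors therefore remains for $v$, whatever the spokes received. Only the unrestricted lists $L_a,L_b,L_c$ then matter, each of size $8-d_G(w)\ge2$. Hall's condition holds unless all three lists equal a single two-set $\{p,q\}$.

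In that case the paper's proof recolors $ab$ with $p$, which is missing at both $a$ and $b$. It then gives $va$ the color $y=\psi(ab)$, $vb$ the color $q$, and $vc$ the color $p$. This works because $y\notin\{p,q\}$, $y$ has just been released at $a$, and the two edges colored $p$, namely $ab$ and $vc$, are disjoint. With this ordering, your concerns about ensuring $y\neq\alpha$ and about collapsed lists disappear entirely.
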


\begin{proof}
Let $\Omega$ be the set of eight colors and let $\varphi$ be the
given coloring.  For $x\in\{a,b,c\}$, put
\[
 L_x=\Omega\setminus
 \bigl(\{\varphi(x)\}\cup
       \{\varphi(e):e\in E(G-v),\ x\in V(e)\}\bigr).
\]
The forbidden colors at $x$ are its vertex color and the colors of
its $d_G(x)-1$ old incident edges.  They are pairwise distinct, so
$|L_x|=8-d_G(x)\ge2$.  If
$|L_a\cup L_b\cup L_c|\ge3$, Theorem~\ref{prelim:hall} supplies three distinct
representatives: the inequalities for one or two lists are automatic
from their sizes.  These representatives properly color the three
new edges at their old endpoints and at their common endpoint $v$.

Otherwise the three lists are one two-set $\{\alpha,\beta\}$.  Let
$\gamma=\varphi(ab)$; this color is outside that set.  Recolor $ab$
with $\alpha$, which was missing at both its endpoints, and set
\[
 \varphi(va)=\gamma,\qquad
 \varphi(vb)=\beta,\qquad
 \varphi(vc)=\alpha.
\]
The color $\gamma$ has just been released at $a$, while $\beta$ remains
available at $b$ and $\alpha$ remains available at $c$.  The two edges
now colored $\alpha$ are disjoint.  Thus this assignment is proper.

In either case the neighbors of $v$ and its three incident edges use
at most six colors.  Any color outside those colors can be assigned
to $v$.
\end{proof}

If $v$ has degree three in a sphere triangulation, its link is a
triangle.  Deleting $v$ replaces its three incident faces by that
triangle and introduces neither an edge nor an identification.  Except
for the already handled transition $K_4\to K_3$, the result is a smaller
sphere triangulation.  Its degrees only decrease.  Consequently
Lemma~\ref{red:degree-three} gives a valid induction step within
$\mathcal T_6$.

\subsection{Rigidity of a triangular disk}

Each closed side of a separating triangle is a triangulated disk.
A vertex strictly inside such a disk has all its incident edges
in the disk. We begin by classifying the disks that arise when
their boundary degrees are at most four.

For $k\ge2$, define the \emph{octahedral chain} $O_k$ as follows.  Its
vertices are $v_{i,j}$, where $0\le i<k$ and $j\in\mathbb Z/3\mathbb Z$.
Each set $\{v_{i,0},v_{i,1},v_{i,2}\}$ spans a triangle, called a layer.
Between consecutive layers $i,i+1$, the edge $v_{i,j}v_{i+1,\ell}$ is
present precisely when $j\ne\ell$.  There are no other edges.  The
successive pairs of layer triangles bound triangulated annuli with faces
\[
 v_{i,j}v_{i,j+1}v_{i+1,j+2},\qquad
 v_{i,j}v_{i+1,j+1}v_{i+1,j+2}
 \quad (j\in\mathbb Z/3\mathbb Z).
\]
Capping the two end triangles gives a sphere triangulation.  Its end
vertices have degree four, and all its other vertices have degree six.
In particular $O_2$ is the octahedron.  Removing an end face, without
removing its vertices or edges, gives an \emph{octahedral-chain disk}.

\begin{lemma}\label{red:disk}
Let $D$ be a triangulated topological disk with boundary $abc$
and at least one interior vertex.  Suppose
\[
 d_D(a),d_D(b),d_D(c)\le4,
 \qquad 4\le d_D(x)\le6
 \quad\text{for every interior vertex }x.
\]
Then $D$ is an octahedral-chain disk.  In particular each of its three
boundary vertices has degree exactly four in $D$.
\end{lemma}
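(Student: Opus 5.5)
Throughout, write $N$ for the number of vertices of $D$; I argue by induction on $N$. The first step is an exact degree count. Capping $D$ with the face $abc$ gives a sphere triangulation $\hat D$, and the degree of each vertex is the same in $D$ and in $\hat D$. By \eqref{prelim:defect},
\[
 \bigl(6-d_D(a)\bigr)+\bigl(6-d_D(b)\bigr)+\bigl(6-d_D(c)\bigr)+\sum_{x\ \mathrm{interior}}\bigl(6-d_D(x)\bigr)=12.
\]
Every interior summand is nonnegative, and each boundary summand is at least $2$. Hence the boundary contributes at least $6$, so the interior deficits sum to at most $6$.

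Next I look at the interior neighbors of the boundary vertices. A boundary vertex of degree at most four in $D$ has its two boundary neighbors together with at most two interior neighbors. Its link inside $D$ is a path between the two boundary neighbors. If it had no interior neighbor, that path would be the single edge $bc$ (say, at $a$), so $abc$ would itself be a face of $D$. Since $D$ is a disk with an interior vertex, this is impossible. So each of $a,b,c$ has one or two interior neighbors.

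Suppose some boundary vertex, say $a$, has exactly one interior neighbor $x$. Then $abx$ and $acx$ are faces, and $x$ is adjacent to all three boundary vertices. If $D$ has no other interior vertex, then $D$ is $K_4$ and $d(x)=3$, contradicting $d_D(x)\ge4$. Otherwise $bcx$ is a separating triangle. The sub-disk $D'$ it bounds (on the side away from $a$) contains an interior vertex, and in $D'$ the vertex $x$ has degree $d(x)-1$. To apply induction I would need $d_{D'}(x)\le4$, i.e.\ $d(x)\le5$, which the hypotheses do not give directly. So I expect this case to be excluded by the degree count instead. The degrees of $b$ and $c$ are then forced by $D'$, and if $D'$ is a chain disk, the corner degrees force a deficit contradiction. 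Getting this right is fiddly, and I would organize the whole proof to avoid this branch if possible.

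So the main case is that every boundary vertex has exactly two interior neighbors, which makes all three boundary degrees exactly four. The boundary deficit is then exactly $6$, so the interior deficits sum to $6$. Write $a',b',c'$ for the interior vertices of the three faces of $D$ that lie on the edges $bc$, $ca$, $ab$ respectively. The two interior neighbors of $a$ are then $b'$ and $c'$, joined by an edge inside the link of $a$; similarly for $b$ and $c$. Thus $a'b'c'$ is a triangle and the annulus between $abc$ and $a'b'c'$ is exactly an octahedral layer.

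There are two possibilities for the inner triangle. If $a'b'c'$ is a face, then $D$ is the octahedral-chain disk $O_2$. Otherwise $a'b'c'$ bounds a smaller disk $D''$ containing an interior vertex. Each of $a',b',c'$ has four neighbors in the layer annulus, two of them on $abc$ and two on $a'b'c'$. Its degree in $D''$ is therefore its degree in $D$ minus $2$, which is at most $4$. Induction then makes $D''$ an octahedral-chain disk, and gluing the layer onto it gives a longer chain.

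The main obstacle is the exclusion of the single-interior-neighbor case. My first attempt there would be to use the separating triangle $bcx$ together with the total deficit bound of $6$. I would also want to check that the two interior neighbors of each boundary vertex are distinct from one another and are correctly identified as the third vertices of the faces on the boundary edges. That check is what makes the three neighbor pairs close up into the inner triangle $a'b'c'$ rather than some other configuration.
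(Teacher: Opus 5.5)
Your main case matches the paper's argument and is fine. If every corner has two interior neighbors, you peel off the octahedral layer and recurse on the inner triangle, whose vertices lose exactly two incident edges. The distinctness you want to check is immediate. Each corner link is a simple path such as $b,c',b',c$, and its two interior entries are the third vertices of the faces on the two incident boundary edges.

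The genuine gap is the case you leave open: a corner $a$ with a single interior neighbor $x$. Neither of your suggested routes closes it. The Euler count cannot. Here $d_D(b)=d_D(c)=4$ is forced, so the boundary deficit is $3+2+2=7$. That leaves an interior deficit of $5$, which is consistent with interior degrees in $[4,6]$, for instance two vertices of degree four and one of degree five. Induction on the disk $D'$ bounded by $bcx$ does give a contradiction when $d_D(x)\le5$, because a chain disk has corner degree $4$ while $d_{D'}(b)\le3$. As you observe, however, it is unavailable when $d_D(x)=6$. Saying that "the corner degrees force a deficit contradiction" does not cover that case.

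The missing idea is to examine $b$ and $c$ inside $D'$, rather than $x$. In $D'$ the vertex $b$ has lost its neighbor $a$, so $d_{D'}(b)\le3$. Since $D'$ has an interior vertex, the face of $D'$ on $bc$ has an interior third vertex $y$. Hence $b$ has exactly one interior neighbor in $D'$, and its link path in $D'$ is $c,y,x$. The same reasoning at $c$ gives the link path $b,y,x$. Therefore $bcy$, $bxy$ and $cxy$ are faces of $D'$. Their interior edges $by,cy,xy$ are each shared by two of them, and their union has boundary $bcx$, so these three faces are all of $D'$. Then $y$ is an interior vertex of $D$ with only the neighbors $b,c,x$, contradicting $d_D(y)\ge4$. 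This short local argument is how the paper excludes the case. With it your proof is complete, and the preliminary Euler count becomes unnecessary.
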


\begin{proof}
Every boundary edge of the triangulated disk belongs to exactly one
triangular face in $D$. Consider the face containing $ab$, and write
it as $abx$. Its third vertex $x$ is distinct from $a,b$.
If $x=c$, this face has the entire boundary $abc$ of $D$ as its
boundary, so its closure is all of $D$. This would leave no interior
vertex, contrary to the hypothesis. Hence $x$ lies strictly inside
$D$, and both $a$ and $b$ have an interior neighbor.
The same argument applies to the faces containing $bc$ and $ca$.
Thus each boundary vertex has at least one interior neighbor.
It has at most two, because it already has its two boundary
neighbors and its degree in $D$ is at most four.

Suppose first that $a$ has only one interior neighbor $x$.  Its link
path is $b,x,c$, and $abx,acx$ are faces.  Removing those two faces and
$a$ leaves the disk bounded by $bcx$.  If that disk is a face, $x$ has
exactly the three neighbors $a,b,c$ in $D$, a contradiction.  Otherwise
let $y$ be the interior third vertex of its face on $bc$.  In $D$, the
vertex $b$ already has the three neighbors $a,c,x$ and can have at
most one further neighbor.  This further neighbor must be $y$.
Likewise $c$ has $y$ as its only neighbor strictly inside $bcx$.
The boundary links in the smaller disk therefore force the three faces
$bcy,bxy,cxy$.  Their union is the entire disk bounded by $bcx$:
their interior edges are paired and their only boundary is $bcx$.
It follows that $y$ has only the three neighbors $b,c,x$, again a
contradiction.  Every boundary vertex therefore has exactly two interior
neighbors.

Let $x_{ab},x_{bc},x_{ca}$ be the third vertices of the faces on the
three boundary edges.  They are pairwise distinct.  For example,
$x_{ab}=x_{bc}$ would identify the first and last interior entries in
the simple link path at $b$; that path could then have only one interior
entry, contrary to the preceding conclusion.  The links are consequently
\[
 \begin{array}{c|c}
 \text{boundary vertex}&\text{link path in }D\\ \hline
 a&b,x_{ab},x_{ca},c\\
 b&c,x_{bc},x_{ab},a\\
 c&a,x_{ca},x_{bc},b.
 \end{array}
\]
These links force the inner triangle $x_{ab}x_{bc}x_{ca}$ and the six faces of
an octahedral annulus between that triangle and $abc$.  The listed
paths are the complete boundary fans, so everything not in this annulus
lies in the disk bounded by the inner triangle.  The new boundary is
thus a genuine simple triangle, proved from the links.

Each new boundary vertex has exactly two neighbors on the preceding
boundary, and hence degree at most $6-2=4$ in the remaining disk.
Strictly interior vertices lose no incident edge.  If the remaining
disk has an interior vertex, it satisfies the same hypotheses and the
argument repeats.  Each step replaces the boundary by three distinct
interior vertices, decreasing the number of strictly interior vertices
by three.  The process therefore terminates at a face.  The successive
annuli form exactly an octahedral chain: after a layer is labeled, label
the next by pairing each vertex with its unique nonadjacent vertex in
the preceding layer.  The initial disk has an interior layer, so the
chain has at least two layers.
\end{proof}

\begin{proposition}\label{red:separator}
If a sphere triangulation $G$ satisfies $4\le d_G(v)\le6$ for every
vertex and has a separating triangle, then $G\cong O_k$ for some $k\ge3$.
\end{proposition}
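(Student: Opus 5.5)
Take a separating triangle $abc$ of $G$ chosen so that one closed side, call it $D$, is minimal: it contains no other separating triangle in its interior. Let $D'$ be the other closed side. Both are triangulated disks with boundary $abc$, and each contains at least one strictly interior vertex. The plan is to apply Lemma~\ref{red:disk} to both sides, which forces a degree count at the boundary.

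First, the degree constraints at the boundary. Every vertex of $abc$ has two boundary neighbours. As in the first paragraph of the proof of Lemma~\ref{red:disk}, the face of $D$ on each boundary edge has an interior third vertex. Hence each of $a,b,c$ has at least one interior neighbour in $D$ and at least one in $D'$. Also
\[
 d_G(a)=d_D(a)+d_{D'}(a)-2\le6 ,
\]
so $d_D(a)+d_{D'}(a)\le8$, and the same holds at $b$ and $c$.

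Second, apply Lemma~\ref{red:disk} to a side whose boundary degrees are all at most four. If some side, say $D$, has $d_D(a),d_D(b),d_D(c)\le4$, then Lemma~\ref{red:disk} applies, since interior vertices of $D$ have degree in $[4,6]$. So $D$ is an octahedral-chain disk with all boundary degrees exactly four. Then $d_{D'}(x)=d_G(x)-2\le4$ for $x\in\{a,b,c\}$. Interior vertices of $D'$ have full degree, so Lemma~\ref{red:disk} applies to $D'$ as well, and $D'$ is also an octahedral-chain disk.

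Third, glue the two sides. Gluing two octahedral-chain disks along their common end triangle $abc$ gives a sphere triangulation. At that triangle, the pairing of each vertex with its unique nonadjacent vertex in the neighbouring layer on each side continues the layer labelling. Each of $a,b,c$ then has degree six, with two neighbours in each adjacent layer and two in its own layer. The result is exactly $O_k$, where $k$ is the total number of layers. Since both sides have an interior layer in addition to $abc$, we get $k\ge3$.

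The main obstacle is the case in which neither side satisfies the hypothesis of Lemma~\ref{red:disk}. Then some boundary vertex has $d_D\ge5$ and some has $d_{D'}\ge5$. Because of the sums $d_D+d_{D'}\le8$ together with the lower bound of $3$ on each, these must be different vertices. For example, $d_D(a)=5$, $d_{D'}(a)=3$, and $d_{D'}(b)\ge5$. I would handle this as follows. Since $d_{D'}(a)=3$, vertex $a$ has a single interior neighbour $x$ in $D'$, so $bcx$ is a triangle. It must be separating: otherwise $x$ would have degree three in $G$, contradicting $d_G(x)\ge4$. Replace $abc$ by $bcx$, which removes $a$'s side from $D'$ and moves the configuration. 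Then argue by an extremal choice, for instance taking the separating triangle whose smaller side has the fewest vertices, or taking minimal $D$ and re-examining. Either the degree sums become balanced so that the second step applies, or we reach a contradiction with $d_G\ge4$, using the same link-forcing argument as in the first case of Lemma~\ref{red:disk}. Making this descent terminate cleanly is the step I expect to need the most care.
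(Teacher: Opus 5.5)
Your gluing step and your reduction ``if one side has all boundary degrees at most four, then both sides are chain disks'' are correct and match the paper. The proof is incomplete, though: the case you flag as the main obstacle is left open. The descent you sketch, passing to the triangle $bcx$ inside $D'$, moves into the side you did \emph{not} choose to be minimal, so nothing guarantees it terminates. The missing idea is that the minimality of $D$ already excludes this case, and you never used it.

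Suppose some boundary vertex, say $b$, had only one interior neighbour $x'$ in $D$, so that $bax'$ and $bcx'$ are faces of $D$.
\begin{itemize}
\item If $x'$ were the only interior vertex of $D$, all its edges would lie in $D$. Then $d_G(x')=3$, contradicting $d_G\ge4$.
\item Otherwise the triangle $acx'$ has an interior vertex on its side inside $D$ and has $b$ on its other side. So it is a separating triangle with a side having strictly fewer interior vertices than $D$.
\end{itemize}

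Hence, if $D$ is chosen to minimise the number of interior vertices over all sides of all separating triangles, every $t\in\{a,b,c\}$ satisfies $d_D(t)\ge4$. Therefore
\[
 d_{D'}(t)=d_G(t)+2-d_D(t)\le 4 .
\]
Your Step~2 now applies to $D'$ directly. Lemma~\ref{red:disk} gives $d_{D'}(t)=4$, hence $d_D(t)=d_G(t)-2\le4$, and the lemma applies to $D$ as well. In your own example, $d_{D'}(b)\ge5$ forces $d_D(b)=3$, which is exactly the configuration just excluded. So the obstacle case never occurs. This is how the paper argues.

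Your minimality condition also needs adjusting. ``$D$ contains no other separating triangle in its interior'' does not exclude $acx'$, because that triangle meets the boundary of $D$. You need minimality over the number of interior vertices of sides, or inclusion-minimality among all sides.
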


\begin{proof}
Among the closed disks on both sides of all separating triangles,
choose one, denoted $D$, with as few interior vertices as possible.
Write its boundary as $abc$; its interior is nonempty.

Each of $a,b,c$ has a neighbor strictly inside $D$. Indeed, the face
on $ab$ has a third vertex $x\ne c$: otherwise its closure would be
all of $D$, leaving no interior vertex. Thus $x$ is an interior
neighbor of $a,b$, and the face on $bc$ gives one for $c$.
In fact each boundary vertex has at least two interior neighbors.
If $a$ had just one such neighbor $x$, its faces in $D$ would be
$abx,acx$.  If $x$ were the only interior vertex, it would have ambient
degree three, since no edge at an interior vertex can leave the disk.
Otherwise the remaining disk $bcx$ would have an interior vertex.  Its
other side contains $a$, so $bcx$ would be a separating triangle with a
smaller nonempty side.  Both alternatives are impossible.

For a boundary vertex $t$, let $r_t$ and $s_t$ count its neighbors in
the two open sides, with $r_t$ referring to $D$.  We have
\[
 2+r_t+s_t=d_G(t)\le6,\qquad r_t\ge2.
\]
Thus $s_t\le2$.  The opposite closed side has boundary degrees at most
four, and its interior vertices have degrees between four and six.
By Lemma~\ref{red:disk} it is a chain disk, in particular $s_t=2$ at
each boundary vertex.  The displayed inequality then forces $r_t=2$.
Lemma~\ref{red:disk} applies also to $D$.

The two chain disks concatenate along their common layer. The labels of
a new layer are determined by the unique nonadjacent pairing with the
preceding layer,
starting with any labeling of the common triangle.  Each side has at
least one interior layer, so their union has at least three layers.
\end{proof}

\begin{proposition}
\label{red:chain-colouring}
Every $O_k$ has a total coloring with at most seven colors.  For
$O_2$, five colors suffice.
\end{proposition}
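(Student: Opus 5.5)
The plan is to write down one explicit total coloring of $O_k$, built from a few simple pieces, and check it directly. The key structural point is that vertex $v_{i,j}$ is \emph{not} adjacent to $v_{i+1,j}$, since interlayer edges join $v_{i,j}$ and $v_{i+1,\ell}$ only when $j\neq\ell$. So the vertex color may depend on $j$ alone. I would set
\[
 \varphi(v_{i,j})=j\in\{0,1,2\}=\mathbb Z/3\mathbb Z
\]
for all $i$. Two vertices in one layer get distinct colors, and interlayer neighbors have $j\neq\ell$, so this vertex coloring is proper.

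Next I would color each layer triangle exactly as $K_3$ was colored in the small cases: the edge $v_{i,j}v_{i,j+1}$ gets the color $j+2$ of the opposite vertex. At $v_{i,j}$ the two layer edges then get $j+1$ and $j+2$. These are distinct from each other and from the vertex color $j$. All layer elements use only $\{0,1,2\}$.

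The interlayer edges between layers $i$ and $i+1$ form a six-cycle $v_{i,0}v_{i+1,1}v_{i,2}v_{i+1,0}v_{i,1}v_{i+1,2}v_{i,0}$. Each vertex of either layer has exactly two neighbors in the other layer, so the six edges form a $2$-regular bipartite graph on six vertices, hence one even six-cycle. I would color this cycle alternately:
\begin{itemize}
\item with $\{3,4\}$ when $i$ is even;
\item with $\{5,6\}$ when $i$ is odd.
\end{itemize}
A vertex of layer $i$ meets at most the two consecutive cycles $(i-1,i)$ and $(i,i+1)$. Their color pairs are disjoint, and both are disjoint from $\{0,1,2\}$. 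So every vertex sees distinct colors on its at most six incident edges, and none equals its vertex color. This gives a proper total coloring with the seven colors $\{0,\dots,6\}$.

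For $O_2$ there is only one six-cycle, the one with $i=0$, so only colors $\{3,4\}$ appear beyond $\{0,1,2\}$. That gives five colors. No step is a real obstacle. The only point needing care is confirming that the interlayer edges form a single even cycle, or at least an even $2$-regular graph, which the listed cyclic adjacency makes clear.
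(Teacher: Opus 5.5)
Your proposal is correct and is essentially the paper's proof. The vertex colors $j$ and the layer-edge colors are the same as the paper's. Your alternating coloring of each interlayer six-cycle by $\{3,4\}$ or $\{5,6\}$ is exactly the paper's rule $\varphi(v_{i,j}v_{i+1,\ell})=3+2(i\bmod 2)+(d-1)$ with $d\equiv\ell-j\pmod 3$; you describe it through the cycle structure instead of the difference $d$.
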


\begin{proof}
Use representatives $j\in\{0,1,2\}$ and give $v_{i,j}$ color $j$.
Give a layer edge $v_{i,j}v_{i,\ell}$ color $3-j-\ell$, the unique
member of $\{0,1,2\}$ different from its endpoint colors.  For a cross
edge $v_{i,j}v_{i+1,\ell}$, let $d\in\{1,2\}$ represent
$\ell-j$ modulo three, and put
\[
 \varphi(v_{i,j}v_{i+1,\ell})
   =3+2(i\bmod2)+(d-1).
\]
Successive annuli thus alternate between the pairs $\{3,4\}$ and
$\{5,6\}$.  Adjacent vertices have different second indices, so their
colors differ.  At $v_{i,j}$, the vertex and the two layer edges have
the three distinct colors $0,1,2$.  The two outgoing cross edges have
the two distinct colors of their annulus, because their differences
$d$ are $1$ and $2$; the same holds for the two incoming edges.  When
both pairs are present, their color sets are disjoint.  All cross-edge
colors avoid $0,1,2$.  This verifies every vertex and edge conflict.
If $k=2$ there is only one annulus and only colors $0,1,2,3,4$ occur.
\end{proof}

Combining Propositions~\ref{red:separator} and~\ref{red:chain-colouring},
every sphere triangulation $G$ with $4\le d_G(v)\le6$ for all vertices
and a separating triangle satisfies $\chit(G)=7$.
Indeed, it is $O_k$ with $k\ge3$, so it has a vertex of degree six.
That vertex and its six incident edges require seven distinct colors,
and the displayed construction attains this bound.
Neither the structural classification nor the explicit coloring uses
the Four-Color Theorem.

\subsection{Connectivity and minimality}

\begin{lemma}\label{red:connectivity}
A sphere triangulation on at least four vertices is
three-connected.  On at least five vertices it is four-connected if and
only if it has no separating triangle.
\end{lemma}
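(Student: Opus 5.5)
I would prove the two assertions separately, using only the fixed sphere embedding, the Jordan curve theorem, and the face structure of a triangulation.

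\emph{Three-connectivity.} Let $T$ be a sphere triangulation with $|V(T)|\ge4$. Suppose some set $S$ with $|S|\le2$ disconnects $T$, and let $x,y$ lie in different components of $T-S$. First I would show that the link of every vertex is a cycle. This holds because the faces around a vertex $v$ are triangles $vu_iu_{i+1}$, and consecutive faces share an edge. Consequently, deleting $v$ leaves its neighbors joined along the link cycle.

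Next I would show that $T-S$ is connected. Take a path from $x$ to $y$ in $T$ and reroute it around each deleted vertex $s\in S$. The path enters and leaves $s$ through two link neighbors, and the link cycle of $s$ offers two arcs between them. These two arcs are internally disjoint, so at most one of them meets the other vertex of $S$. The exception is when that other vertex lies on both arcs, which is impossible since they are internally disjoint, or is one of the endpoints. In the endpoint case I would instead choose the entry or exit neighbor differently along the path.

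This rerouting is the step I expect to be the main obstacle. To organize it cleanly, I would argue by induction on the number of vertices of $S$ visited by the path, choosing a shortest $x$--$y$ path in $T-S'$ for a suitable $S'$. A cleaner alternative would also suffice: a minimal separating set $S$ must contain two nonadjacent link neighbors of some vertex. If $|S|\le2$, the separating curve through $S$ would have to pass through faces, and in a triangulation every face has all three vertices pairwise adjacent, so no two-point separating curve exists.

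\emph{Four-connectivity and separating triangles.} Suppose first that $abc$ is a separating triangle. Then each open side contains a vertex. Any path joining vertices on opposite sides crosses the closed curve $abc$, and since edges cross only at vertices, it passes through $a$, $b$ or $c$. Hence $\{a,b,c\}$ separates $T$, and $T$ is not four-connected.

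Conversely, suppose $|V(T)|\ge5$ and $T$ is not four-connected. By the first part there is a separating set $S$ with $|S|=3$, and I would take $S$ minimal. Pick components $A$ and $A'$ of $T-S$. For each $s\in S$, minimality gives neighbors of $s$ in both $A$ and $A'$. Walking around the link cycle of $s$, the neighbors in $A$ and those in $A'$ are separated by link vertices in $S$. Hence the other two vertices of $S$ are link neighbors of $s$, and therefore adjacent to $s$. This shows that $S=\{a,b,c\}$ spans a triangle. The triangle has vertices of $A$ and of $A'$ on opposite sides, so it is separating.

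The delicate point in this converse is justifying that the link of $s$ must contain both other vertices of $S$. I would argue this as follows. The link is a cycle containing vertices of both $A$ and $A'$. Deleting $S$ from it must disconnect those vertices, which requires removing at least two link vertices from $S\setminus\{s\}$, and that set has exactly two elements.
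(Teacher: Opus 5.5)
Your outline follows the paper's proof closely: rerouting through links gives three-connectivity. For a three-cut $S=\{a,b,c\}$, every component of $T-S$ has a neighbor at each vertex of $S$, and a cycle minus at most one vertex is connected, so both $b$ and $c$ lie on the link of $a$. However, the last sentence of your converse is asserted without proof, and it is the step that carries the remaining content. Each of $A$ and $A'$ is connected in $T-S$, so each lies in a single open side of $abc$. Nothing you have said prevents both from lying on the same side while the other side contains no vertex. In that case $abc$ would be a face rather than a separating triangle.

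You already have the ingredient to exclude this. In the link cycle of $a$, deleting $b$ and $c$ must leave the $A$-neighbors and the $A'$-neighbors of $a$ in different pieces. Hence both arcs of the link between $b$ and $c$ are nonempty, so $b,c$ are not consecutive in the link of $a$ and $abc$ is not facial. Since a side of a three-cycle containing no vertex is a face, a nonfacial triangle is separating. Equivalently, the edges at $a$ in the two rotation sectors cut out by $ab$ and $ac$ start into opposite sides of $abc$ and cannot cross it, which puts an $A$-vertex and an $A'$-vertex on opposite sides. This nonfaciality argument is how the paper finishes.

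For three-connectivity, your rerouting works once you delete the two vertices sequentially, which removes the troublesome endpoint case. First reroute every passage through $a$ along its full link, showing that $T-a$ is connected. Then, in $T-a$, reroute each passage through $b$ along the link of $b$ with $a$ removed; this is still a connected path or cycle. The alternative via a separating curve through faces is not an argument as stated, and you do not need it.
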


\begin{proof}
To avoid a vertex $a$ in a path, replace every passage through $a$ by
a path in its cyclic link.  This proves that $G-a$ is connected.  To
avoid a second vertex $b$, use the link of $b$ after deleting $a$ if
necessary.  A cycle with at most one vertex removed is connected, so
the same rerouting proves that $G-\{a,b\}$ is connected.

Suppose $S=\{a,b,c\}$ is a cut.  Every component of $G-S$ has a
neighbor at each vertex of $S$; otherwise its neighborhood would lie
in a set of at most two vertices, contradicting what was just proved.
In particular the link of $a$ contains vertices from at least two
components.  If either $b$ or $c$ were absent from that link, deleting
$b,c$ would remove at most one link vertex, leaving a connected path
or cycle joining those components.  Hence $a$ is adjacent to both $b$
and $c$.  Repeating at $b$ and $c$ proves that $abc$ is a triangle.
It cannot be facial: if it were, $b,c$ would be consecutive in the
link of $a$, and deleting those two consecutive vertices would again
leave a connected path containing neighbors from distinct components.
An embedded triangle with an empty side bounds a face; hence this
nonfacial triangle is separating.  Conversely, deletion of the three
vertices of a separating triangle disconnects its two nonempty sides.
\end{proof}

\begin{corollary}\label{red:first-minimal}
If some member of $\mathcal T_6$ has no total eight-coloring, a
counterexample of minimum order is four-connected, has degrees in
$\{4,5,6\}$, and has every triangle facial.
\end{corollary}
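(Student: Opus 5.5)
Let $G$ be a member of $\mathcal T_6$ of minimum order with no total eight-coloring. I would establish the three asserted properties in turn, each by contradiction with minimality or by an explicit coloring.

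\textbf{Size and degrees.} First dispose of small orders. $K_3$ and the graphs of order below three were colored directly, so $G$ is a sphere triangulation with $|V(G)|\ge4$. For $K_4$ the explicit five-coloring above applies, so $|V(G)|\ge5$. A sphere triangulation is three-connected by Lemma~\ref{red:connectivity}, hence has minimum degree at least three. Suppose some vertex $v$ has degree three. Its link is a triangle $abc$, so $ab\in E(G)$. Deleting $v$ yields a sphere triangulation $G-v$ on at least four vertices, with degrees not increased. It therefore lies in $\mathcal T_6$ and has smaller order. By minimality it has a total eight-coloring, and Lemma~\ref{red:degree-three} extends this coloring to $G$, a contradiction. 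Hence every degree lies in $\{4,5,6\}$.

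\textbf{Separating triangles.} Suppose $G$ has a separating triangle. Since the degrees lie between four and six, Proposition~\ref{red:separator} gives $G\cong O_k$ for some $k\ge3$. Proposition~\ref{red:chain-colouring} then gives a total coloring with at most seven colors, hence with eight, again a contradiction. So every triangle of $G$ is facial, in the sense that no three-cycle is separating; in a sphere triangulation, nonseparating triangles are faces, as noted in the preliminaries.

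\textbf{Four-connectivity.} Since $|V(G)|\ge5$ and $G$ has no separating triangle, Lemma~\ref{red:connectivity} shows that $G$ is four-connected.

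The only point needing care is the order-four case: deleting a degree-three vertex of $K_4$ gives $K_3$. This case is harmless because $K_4$ was colored explicitly, and in any event $K_3\in\mathcal T_6$ is colorable, so the lemma applies. Otherwise the argument is a direct assembly of the earlier results, and I expect no step to present a real obstacle.
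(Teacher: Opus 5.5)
Your proposal is correct and follows essentially the same route as the paper. You exclude orders at most four by the explicit colorings and remove a degree-three vertex via Lemma~\ref{red:degree-three}. You rule out separating triangles by Propositions~\ref{red:separator} and~\ref{red:chain-colouring}, and conclude four-connectivity from Lemma~\ref{red:connectivity}, with the same observation that nonfacial triangles in a sphere triangulation are separating.
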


\begin{proof}
The small colorings above exclude order at most four.  A three-vertex
could be deleted within $\mathcal T_6$, and the resulting coloring
could be restored by Lemma~\ref{red:degree-three}, contradicting
minimality.  Thus the minimum degree is at least four.  A separating
triangle would make the graph an octahedral chain by
Proposition~\ref{red:separator}, and Proposition~\ref{red:chain-colouring}
would color it directly.  There is therefore no separating triangle.
Lemma~\ref{red:connectivity} gives four-connectivity.  Finally a
nonfacial triangle in a sphere triangulation is separating.
\end{proof}

Agreement between two total colorings on the three boundary vertices
and three boundary edges would not control collisions between edges on
opposite sides at a common boundary vertex. The explicit chain coloring
resolves the separating case without this compatibility requirement.

\subsection{A degree-four vertex with a low-degree neighbor}

We first give the exact Hall conditions for completing a four-star.
Suppose $\Delta(G)\le6$, the neighborhood of $v$ is exactly
$\{u_0,u_1,u_2,u_3\}$ with these four vertices distinct, and a proper
total coloring $\varphi$ of $G-v$ uses colors from $\Omega$, where $|\Omega|=8$.
For each $i\in\{0,1,2,3\}$, define
\begin{align*}
 L_i&=\Omega\setminus\bigl(\{\varphi(u_i)\}\cup
             \{\varphi(e):e\in E(G-v),\ u_i\in V(e)\}\bigr),\\
 R&=\Omega\setminus\{\varphi(u_0),\varphi(u_1),
                        \varphi(u_2),\varphi(u_3)\}.
\end{align*}
The list $L_i$ consists of the colors allowed on the new edge $vu_i$
by the already colored vertex $u_i$ and its old incident edges.
The list $R$ consists of the colors allowed on the restored vertex
$v$ by its four already colored neighbors. Define
$U=L_0\cup L_1\cup L_2\cup L_3=\bigcup_{i=0}^3L_i$.
The union of all five lists, including the list for $v$, is $R\cup U$.
Each $L_i$ has size $8-d_G(u_i)\ge2$, and $|R|\ge4$.

The five uncolored elements $v,vu_0,vu_1,vu_2,vu_3$ are pairwise
conflicting: the four edges share the endpoint $v$, and $v$ is
incident with each edge. Restoring them while keeping all old
colors fixed is therefore equivalent to choosing five pairwise
distinct colors
\[
 c_\ast\in R,\qquad c_i\in L_i\quad(i=0,1,2,3).
\]
Indeed, every proper extension provides such choices. Conversely,
assigning $c_\ast$ to $v$ and $c_i$ to $vu_i$ avoids conflicts
with old elements by the definitions of $R,L_i$, and avoids
conflicts among the five new elements by their pairwise distinctness.

Apply Hall's theorem (Theorem~\ref{prelim:hall}) with
\[
 \Lambda=\{\ast,0,1,2,3\},\qquad
 A_\ast=R,\qquad A_i=L_i\quad(i=0,1,2,3).
\]
The symbol $\ast$ is an additional index for the vertex list $R$.
The theorem says that the required five distinct colors exist if
and only if
\[
 \left|\bigcup_{\lambda\in J}A_\lambda\right|\ge |J|
 \qquad\text{for every }J\subseteq\{\ast,0,1,2,3\}.
\]
Here $J$ selects a subfamily of the five indexed lists. If
$\ast\notin J$, write $J=I$; if $\ast\in J$, write
$J=\{\ast\}\cup I$, where in both cases $I\subseteq\{0,1,2,3\}$.
Thus the complete set of Hall conditions consists of the following
two requirements for every $I\subseteq\{0,1,2,3\}$:
\[
 \left|\bigcup_{i\in I}L_i\right|\ge |I|,
 \qquad
 \left|R\cup\bigcup_{i\in I}L_i\right|\ge |I|+1.
\]
Under the bounds $|L_i|\ge2$ and $|R|\ge4$, these Hall conditions
reduce to
\begin{equation}\label{red:hall}
 \begin{gathered}
 \left|\bigcup_{i\in I}L_i\right|\ge3
 \quad\text{for every }I\subseteq\{0,1,2,3\}\text{ with }|I|=3,\\
 |U|\ge4,\qquad |R\cup U|\ge5.
 \end{gathered}
\end{equation}
Here $I$ is a set of indices specifying which spoke lists are
selected; it is not a set of colors. The first inequality must hold
for all four choices
\[
 I=\{0,1,2\},\quad I=\{0,1,3\},\quad
 I=\{0,2,3\},\quad I=\{1,2,3\}.
\]
For example, the choice $I=\{0,1,3\}$ requires
$|L_0\cup L_1\cup L_3|\ge3$.
To justify the reduction, a subfamily consisting of one or two
spoke lists satisfies Hall's condition automatically, since each
list has at least two colors. A subfamily containing $R$ and at
most three spoke lists has at most four members and a union of size
at least four, so it also satisfies Hall's condition automatically.
The empty subfamily satisfies it trivially. The remaining subfamilies
are precisely three spoke lists, all four spoke lists, and all
five lists; their conditions are exactly \eqref{red:hall}.

If an old edge $u_i u_j$ of color $q$ is recolored with
$p\in L_i\cap L_j$, the operation is proper and the two affected lists
become $(L_i\setminus\{p\})\cup\{q\}$ and
$(L_j\setminus\{p\})\cup\{q\}$.  All such operations below use the
lists in the current coloring.

\begin{lemma}\label{red:low-neighbour}
Let $G\in\mathcal T_6$, and let $v$ have degree four and a neighbor of
degree at most five.  Deleting $v$ and inserting an absent diagonal of
its link gives a smaller graph $H\in\mathcal T_6$.  Every total
eight-coloring of $H$ can be restored to $G$ by discarding the inserted
diagonal and recoloring at most one edge of the link.  All old vertex
colors remain fixed.
\end{lemma}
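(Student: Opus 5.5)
The plan is to choose the diagonal so that degrees stay bounded, then verify the Hall conditions \eqref{red:hall} for the restored four-star, using the freed diagonal color and at most one interchange on a rim edge.

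\textbf{Choice of diagonal.} Let the link of $v$ be $u_0u_1u_2u_3$ with $d_G(u_0)\le5$.
\begin{itemize}
\item Insert $u_0u_2$ if it is absent. Otherwise $u_1u_3$ is absent, since a triangulation cannot contain both diagonals of a four-cycle link, and we insert $u_1u_3$.
\item Deleting $v$ lowers each $d(u_i)$ by one, and the diagonal restores one unit at each of its two ends. Hence $d_H(x)\le d_G(x)\le6$ for all $x$.
\item The inserted edge was absent, so $H$ is simple. It is a sphere triangulation on $|V(G)|-1\ge4$ vertices, so $H\in\mathcal T_6$.
\end{itemize}

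\textbf{Lists after discarding the diagonal.} Take a total eight-coloring of $H$ and discard the diagonal; let $p$ be its color. With the notation preceding the lemma:
\begin{itemize}
\item $|L_i|=8-d_G(u_i)\ge2$, and $|L_0|\ge3$.
\item $|R|\ge4$.
\item $p$ lies in the lists of both ends of the discarded diagonal, because $p$ differed from their vertex colors and from all their other edge colors.
\end{itemize}
We check \eqref{red:hall} condition by condition.
\begin{itemize}
\item \emph{Three-list conditions.} Any triple containing index $0$ has union at least $|L_0|\ge3$. So only $I=\{1,2,3\}$ can fail, and only when $L_1=L_2=L_3=\{\alpha,\beta\}$.
\item \emph{Condition $|U|\ge4$.} It fails only if $|L_0|=3$, so $d_G(u_0)=5$, and every $L_i\subseteq L_0$.
\item \emph{Condition $|R\cup U|\ge5$.} It fails only if the four rim colors are distinct, $|R|=4$, and $U\subseteq R$.
\end{itemize}
In each failing configuration the inclusion $p\in L_i\cap L_j$ pins $p$ down relative to these sets. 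That gives a concrete handle on which colors are missing where.

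\textbf{Repair by one rim recoloring.} Each failure is repaired by recoloring a single rim edge $u_iu_{i+1}$ of current color $q$ with some color $r\in L_i\cap L_{i+1}$. By the remark before the lemma this is proper, and it replaces $r$ by $q$ in the two affected lists.
\begin{itemize}
\item If $L_1=L_2=L_3=\{\alpha,\beta\}$, recolor $u_1u_2$ with $\alpha$. The old color $q\notin\{\alpha,\beta\}$ enters $L_1$ and $L_2$, so the union over $\{1,2,3\}$ becomes $\{\alpha,\beta,q\}$.
\item If $U\subseteq L_0$ with $|L_0|=3$, choose a rim edge at $u_0$ whose color lies outside $L_0$. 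It enters both endpoint lists, so $|U|$ grows to $4$.
\item For $|R\cup U|\ge5$, choose a rim edge $u_iu_{i+1}$ whose current color $q$ is a vertex color of a rim vertex, necessarily $u_{i+2}$ or $u_{i+3}$. Recoloring it from $L_i\cap L_{i+1}$ puts $q\notin R$ into $U$.
\end{itemize}
After each repair, recheck the other two conditions. A single recoloring enlarges the lists it touches by exchanging $r$ for $q$, so it should not create a new three-list failure, but $|U|$ or $|R\cup U|$ could drop if $r$ left $U$. Wherever possible I will choose $r=p$ or another color that survives in the remaining lists.

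\textbf{Main obstacle.} The hardest step is the simultaneous-failure analysis for $|R\cup U|\ge5$. Here the four rim colors are distinct and all free spoke colors lie in the four colors of $R$. I would count the colors around the rim: each $u_i$ sees its vertex color and $d_G(u_i)-1$ edge colors. Combining this count with $p\in L_0\cap L_2$ (or $p\in L_1\cap L_3$) should show that some rim edge carries a rim-vertex color and that its two endpoint lists intersect. The recoloring from the previous step then applies, and it is the only modification made. No vertex color changes at any point, which gives the final assertion of the lemma.
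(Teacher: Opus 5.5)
\textbf{There is a genuine gap.} Your framework is the paper's: reduce restoration to \eqref{red:hall} and repair a failure by recoloring one rim edge via the update rule. But the proposal does not establish the lemma's ``at most one edge'' claim.

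\textbf{What goes wrong.} The three conditions in \eqref{red:hall} can fail simultaneously. You give a separate repair for each failing condition and then only promise to ``recheck'' the others and to pick a surviving color ``wherever possible''. Your individual repairs do not work unconditionally.
\begin{itemize}
\item \emph{Triple repair.} Suppose $L_1=L_2=L_3=\{\alpha,\beta\}$ and $L_0=\{\alpha,\beta,\gamma\}$. The old color of $u_1u_2$ may be $\gamma$, since that edge misses $u_0$. Recoloring it with $\alpha$ then leaves $U=\{\alpha,\beta,\gamma\}$, so $|U|\ge4$ still fails.
\item \emph{The $|U|$ repair.} The claim that $|U|$ ``grows to $4$'' is false if the new color lies in no third list. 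For example, with $L_1=\{\alpha,\beta\}$ and $L_2=L_3=\{\beta,\gamma\}$, recoloring $u_0u_1$ with $\alpha$ removes $\alpha$ from $U$. The qualifier ``whose color lies outside $L_0$'' is also vacuous, since every rim edge at $u_0$ has such a color. What matters is whether that color lies outside $R$.
\item \emph{The $|R\cup U|$ repair.} You assert, without proof, that an $\mathcal H$-colored rim edge with intersecting endpoint lists exists. A rim edge away from $u_0$ can have complementary endpoint lists in $R$, so the edge must be chosen.
\end{itemize}

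\textbf{The missing idea.} Organize the cases so that one well-chosen edge at the low-degree vertex $u_0$ repairs everything at once, as the paper does.
\begin{itemize}
\item \emph{Case $|U|\le3$.} Then $L_0=U=S$ is a three-set. Both rim edges at $u_0$ carry colors outside $S$. If $|R\cup S|=4$, at most one of them can be the single color of $R\setminus S$. So choose $u_0u_j$ whose old color $q$ gives $|R\cup S\cup\{q\}|\ge5$. Recolor it with a color in $L_j\cap(L_h\cup L_\ell)$, where $h,\ell$ are the other two indices; this set is nonempty because both are subsets of $S$ of size at least two. The new color survives in $U$, and $q\notin S$ repairs the triple $\{j,h,\ell\}$ simultaneously.
\item \emph{Case $|U|\ge4$ but $|R\cup U|\le4$.} Then $U=R$ and $|R\setminus L_0|\le1$. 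This forces a rim edge at $u_0$ to carry a color $q\in\mathcal H$. Moreover $L_0\cap L_j\ne\varnothing$, because $|L_0|+|L_j|\ge5>|R|$. This settles your ``main obstacle'' directly from $|L_0|\ge3$.
\item \emph{Case $|U|\ge4$ and $|R\cup U|\ge5$.} Only now use your triple repair. The copy of $\alpha$ surviving in $L_3$ gives $U'=U\cup\{q\}$, so the other two conditions are preserved.
\end{itemize}
In every case, triples containing index $0$ are safe because recoloring preserves $|L_0|\ge3$.

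Finally, the color of the discarded diagonal plays no role. Basing the argument on it would also tie the proof to your particular choice of diagonal, whereas the lemma allows either absent one.
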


\begin{proof}
The simple cyclic link is $u_0u_1u_2u_3u_0$.  Its two diagonals cannot
both be edges: together with $v$ they would give a $K_5$, contradicting
planarity.  Delete $v$ and insert an absent diagonal in the resulting
quadrilateral face.  This introduces neither a loop nor a parallel edge
and restores a triangulation.  A diagonal endpoint loses its edge to
$v$ and gains the diagonal, so its degree is unchanged; the other two
rim vertices lose one edge. Thus $H\in\mathcal T_6$ and $|V(H)|<|V(G)|$.

Restrict the supplied coloring of $H$ to $G-v$, and use the lists just
defined. Choose an index $k$ for which $d_G(u_k)\le5$, as guaranteed
by the hypothesis. Then $|L_k|=8-d_G(u_k)\ge3$. We will ensure
\eqref{red:hall} by at most one recoloring.

In each of the following cases, $L_i$ and $U$ refer to the
lists and their union before the proposed recoloring. We write
$L'_i$ for the resulting lists and $U'=\bigcup_{i=0}^3L'_i$
for their union. The list $R$ is unchanged, since only an edge
is recolored and all old vertex colors remain fixed.

First suppose $|U|\le3$. Since $L_k\subseteq U$, we have
$3\le |L_k|\le |U|\le3$, so $|U|=|L_k|=3$ and $L_k=U$.
Define $S:=U=L_0\cup L_1\cup L_2\cup L_3$.
Thus $S$ is a fixed three-element color set, with $L_i\subseteq S$
for every $i$ and $L_k=S$ before the recoloring.
Choose one of the two rim edges $u_ku_j$ whose old color $q$ satisfies
$|R\cup S\cup\{q\}|\ge5$.  Such an edge exists: if $|R\cup S|\ge5$
either works; otherwise this union has size four and the two distinct
old edge colors, both outside $S$, cannot both belong to its single
element outside $S$.  Let $h,\ell$ be the other two indices.
The sets $L_j$ and $L_h\cup L_\ell$ each have at least two elements in
the three-set $S$, so choose
$p\in L_j\cap(L_h\cup L_\ell)$ and recolor $u_ku_j$ with $p$.
After this recoloring, the lists are
\[
 \begin{aligned}
 L'_k&=(S\setminus\{p\})\cup\{q\},&
 L'_j&=(L_j\setminus\{p\})\cup\{q\},\\
 L'_h&=L_h,& L'_\ell&=L_\ell.
 \end{aligned}
\]
To check the first inequality of \eqref{red:hall}, consider each
subfamily of three updated lists.
If $I\subseteq\{0,1,2,3\}$ has $|I|=3$ and $k\in I$, then
\[
 \left|\bigcup_{i\in I}L'_i\right|\ge |L'_k|=3.
\]
The only three-element index set not containing $k$ is
$I=\{j,h,\ell\}$. For this choice, $q$ belongs to $L'_j$,
whereas $q\notin S$ and $L_h\cup L_\ell\subseteq S$.
Moreover, $|L_h\cup L_\ell|\ge2$, since each unchanged list
has at least two colors. Therefore
\[
 |L'_j\cup L'_h\cup L'_\ell|
 \ge |\{q\}\cup L_h\cup L_\ell|
 =1+|L_h\cup L_\ell|\ge3.
\]
Finally, $p$ survives in an unchanged list by its choice in
$L_h\cup L_\ell$, while $L'_k$ contains $S\setminus\{p\}$
and $q$. Thus the union of all four updated lists is
$U':=\bigcup_{i=0}^3L'_i=S\cup\{q\}$.
It has size four, and $|R\cup U'|\ge5$ by the choice of $q$.

Next suppose $|U|\ge4$ but $|R\cup U|\le4$.  Then $R=U$ is a
four-set.  At least one of the two rim edges at $u_k$ has old color
$q\notin R$: their distinct old colors avoid $L_k$, whereas
$|R\setminus L_k|\le1$.  Write this edge as $u_ku_j$.  Since
$|L_k|\ge3$ and $|L_j|\ge2$ inside the four-set $R$, choose
$p\in L_k\cap L_j$ and recolor that edge with $p$.
Let $h,\ell$ be the other two indices. The resulting lists are
\[
 \begin{aligned}
 L'_k&=(L_k\setminus\{p\})\cup\{q\},&
 L'_j&=(L_j\setminus\{p\})\cup\{q\},\\
 L'_h&=L_h,& L'_\ell&=L_\ell.
 \end{aligned}
\]
Since $p\in L_k\subseteq R$ and $q\notin R$, we have
$|L'_k|=|L_k|\ge3$. Every subfamily of three updated edge
lists containing $L'_k$ therefore has a union of size at least
three. For the other three updated lists, $q\in L'_j\setminus R$
and $L_h\cup L_\ell\subseteq R$, so
\[
 |L'_j\cup L'_h\cup L'_\ell|
 \ge 1+|L_h\cup L_\ell|\ge3.
\]
Furthermore, the only old color that can disappear from the
union is $p$, and $q$ is added. Hence
\[
 U'\supseteq (U\setminus\{p\})\cup\{q\},
 \qquad |U'|\ge4.
\]
Every updated list is contained in $R\cup\{q\}$, and
$q\in U'$, so $R\cup U'=R\cup\{q\}$ has size five.

Finally suppose $|U|\ge4$ and $|R\cup U|\ge5$, but the union of some
three edge lists has size at most two. Those three lists cannot
include $L_k$, so the other three lists are all one two-set $T$.
Among their vertices two are consecutive on the four-cycle.
Name their indices $j,h$, and let $\ell$ be the remaining index
other than $k$, so that $L_j=L_h=L_\ell=T$.
Recolor the edge $u_j u_h$, whose old color is $q\notin T$,
with any $p\in T$. The resulting lists are
\[
 L'_j=L'_h=(T\setminus\{p\})\cup\{q\},\qquad
 L'_\ell=T,\qquad L'_k=L_k.
\]
Thus $L'_j\cup L'_h\cup L'_\ell=T\cup\{q\}$ has size three.
Every subfamily of three updated edge lists containing $L'_k$
has a union of size at least three because $|L'_k|=|L_k|\ge3$.
The old colors in $T$ all remain in $L'_\ell$, and $L_k$ is
unchanged. Therefore $U'=U\cup\{q\}$, and consequently
\[
 |U'|\ge |U|\ge4,\qquad
 |R\cup U'|\ge |R\cup U|\ge5.
\]

These cases exhaust the possible failures of \eqref{red:hall}.
The criterion now completes the center and its four spokes, with no
recoloring needed if the inequalities already held.
\end{proof}

\begin{proposition}
\label{red:minimal}
If a graph in $\mathcal T_6$ admits no total eight-coloring, a
counterexample $G$ of minimum order is four-connected, every triangle
of $G$ is facial, every degree is four, five, or six, and every
four-vertex has only six-vertices as neighbors.
\end{proposition}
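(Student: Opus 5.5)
The plan is to combine Corollary~\ref{red:first-minimal} with Lemma~\ref{red:low-neighbour}. Let $G$ be a counterexample in $\mathcal T_6$ of minimum order. By Corollary~\ref{red:first-minimal}, $G$ is four-connected, every triangle of $G$ is facial, and every degree lies in $\{4,5,6\}$. In particular $G$ is a sphere triangulation on at least five vertices; the small graphs $K_1,K_2,K_3,K_4$ were colored explicitly and so are excluded. So only the last assertion remains: every degree-four vertex has only degree-six neighbors.

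Suppose instead that some vertex $v$ with $d_G(v)=4$ has a neighbor of degree at most five. Since $G$ is a sphere triangulation, the link of $v$ is a simple four-cycle $u_0u_1u_2u_3u_0$ with distinct vertices. This is exactly the setting of Lemma~\ref{red:low-neighbour}. That lemma deletes $v$ and inserts an absent diagonal of the link. Not both diagonals can be present, since together with $v$ they would give a $K_5$. The result is a smaller graph $H\in\mathcal T_6$.

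By the minimality of $G$, the graph $H$ has a total eight-coloring. Lemma~\ref{red:low-neighbour} converts it into a total eight-coloring of $G$: discard the diagonal, recolor at most one rim edge, and complete the four-star using Hall's theorem through the conditions \eqref{red:hall}. This contradicts the choice of $G$ as a counterexample, so every four-vertex has only six-vertices as neighbors.

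The only point that needs checking is that $H$ really lies in $\mathcal T_6$ and has smaller order, that is, that it is simple with maximum degree at most six. This was verified inside the proof of Lemma~\ref{red:low-neighbour}: the endpoints of the inserted diagonal keep their degree and the other two rim vertices lose one. So the proposition follows directly, with no real obstacle beyond citing the earlier results in the right order.
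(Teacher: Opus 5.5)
Your proposal is correct and follows the paper's proof. The first three properties come from Corollary~\ref{red:first-minimal}, and the last one follows because a degree-four vertex with a neighbor of degree at most five admits the smaller reduction of Lemma~\ref{red:low-neighbour}, whose coloring (supplied by minimality) the lemma restores to $G$, giving a contradiction.
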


\begin{proof}
All but the final assertion are Corollary~\ref{red:first-minimal}.
A four-vertex with a neighbor of degree at most five would have the
smaller reduction in Lemma~\ref{red:low-neighbour}.  Minimality would
color that reduction and Lemma~\ref{red:low-neighbour} would restore $G$, a contradiction.
\end{proof}

% LOCAL_WHEEL_SECTION
\section{Extension at a degree-four vertex}
\label{wheel:section}

Restoring a deleted vertex of degree four requires distinct colors
for its four incident edges and a further color for the vertex itself.
A system of distinct representatives for the four spoke lists solves
only the first requirement.
Theorem~\ref{wheel:local} characterizes the obstruction when only
the rim edges may be recolored. It forces the structure
used in Section~\ref{shell:section} and supplies the local step in
Theorem~\ref{restore:main}.

Throughout this section, $G$ is a graph with maximum
degree at most six. A vertex $v$ has degree four, with distinct
neighbors $u_0,u_1,u_2,u_3$, and the edges $u_i u_{i+1}$ form a cycle.
We use the wheel terminology of Section~\ref{prelim:graphs},
with this cycle as the specified rim. Subscripts are read modulo
four unless a path order is specified. Initially, a proper total
coloring $\psi$ of $G-v$ is given from an eight-element color set
$\Omega$. The center and all four spokes are uncolored.
No planarity assumption is needed here.

\subsection{Spoke lists and Hall repair}
\label{wheel:lists}

Write $h_i=\psi(u_i)$ and define the \emph{spoke list}
\[
 L_i=\Omega\setminus
 \bigl(\{h_i\}\cup\{\psi(e):e\in E(G-v),\ u_i\in V(e)\}\bigr).
\]
Thus $L_i$ consists exactly of the colors available for $vu_i$ before
the color of $v$ and those of the other spokes are prescribed.  Since
$\psi$ is a proper total coloring,
\begin{equation}
 \lvert L_i\rvert=8-d_G(u_i)\geq2.
 \label{wheel:list-size}
\end{equation}
Let $\mathcal{H}$ denote the set of colors on the rim vertices
in the supplied coloring, and put
\[
 \mathcal{H}=\{h_0,h_1,h_2,h_3\},\qquad
 R=\Omega\setminus\mathcal{H},\qquad c_i=\psi(u_i u_{i+1}).
\]
The set $\mathcal{H}$ may have fewer than four elements when
some rim vertices have the same color. In each local argument,
$\mathcal{H}$ and $R$ refer to these initial sets, even when a
later restoration changes a rim-vertex color.

A colored edge $e$ is called \emph{$\mathcal{H}$-colored} if its current color
belongs to $\mathcal{H}$, and \emph{$R$-colored} if its current
color belongs to $R$. For the supplied coloring $\psi$, these
conditions mean $\psi(e)\in\mathcal{H}$ and $\psi(e)\in R$,
respectively. After a recoloring, the terminology refers to the
new edge color, while the sets $\mathcal{H}$ and $R$ remain fixed.

To separate the fixed restrictions from the current rim-edge colors,
put
\[
 F_i=\{h_i\}\cup
 \{\psi(e):e\in E(G-v),\ u_i\in V(e),\ e\text{ is nonrim}\}.
\]
Thus $F_i$ records the colors forbidden by $u_i$ and its incident
nonrim edges. Define the \emph{rim color set} $A_i=\Omega\setminus F_i$.
Properness gives $c_{i-1},c_i\notin F_i$ and $c_{i-1}\ne c_i$, so
\[
 L_i=A_i\setminus\{c_{i-1},c_i\},\qquad
 A_i=L_i\cup\{c_{i-1},c_i\}.
\]
If $\psi'$ is a proper recoloring of rim edges only, write
$c'_j=\psi'(u_j u_{j+1})$ and use primes for the corresponding
color sets. All vertex and nonrim-edge colors remain fixed, giving
$F'_i=F_i$, $A'_i=A_i$, and
$L'_i=A_i\setminus\{c'_{i-1},c'_i\}$.
Thus $A_i$ is invariant under such recolorings, although $L_i$ may
change. The identity $A'_i=L'_i\cup\{c'_{i-1},c'_i\}$ must use
the list and rim-edge colors from the same coloring. If a vertex
or nonrim edge is also recolored, $A_i$ need not remain fixed.

When $d_G(u_i)=6$, there are three incident
nonrim edges, so four fixed forbidden colors altogether: the color
$h_i$ and those three edge colors.  In particular, in this case
$\lvert L_i\rvert=2$ and $\lvert A_i\rvert=4$.

For example, suppose $\Omega=\{1,\ldots,8\}$,
$F_i=\{1,2,3,4\}$, and $(c_{i-1},c_i)=(5,6)$.
Then $L_i=\{7,8\}$ and $A_i=\{5,6,7,8\}$.
If $u_{i-1}u_i$ can be properly recolored with $7$, then
$L'_i=\{5,8\}$, but $A'_i=\{5,8\}\cup\{7,6\}=A_i$.

The following update rule governs the rim recolorings. Suppose the
current color of a rim edge $u_i u_j$ is $x$, and
$p\in L_i\cap L_j$ for the \emph{current} lists.  Recoloring that edge
with $p$ is proper: $p$ differs from the colors of both endpoints and
all other edges incident with either endpoint.  At its endpoints the
lists change by
\begin{equation}
 L_i\longmapsto (L_i\setminus\{p\})\cup\{x\},\qquad
 L_j\longmapsto (L_j\setminus\{p\})\cup\{x\}.
 \label{wheel:update}
\end{equation}
The other lists remain unchanged.  All sequences below use this rule
with the current lists, rather than the lists at the beginning of the
sequence.

\begin{lemma}
\label{wheel:path}
Let $P=u_0u_1u_2u_3$ be the path consisting of three consecutive
rim edges. At most two edges of $P$ can be recolored so that the
four resulting spoke lists have distinct representatives.
Every vertex retains its color, as does every edge of $G-v$
outside $E(P)$, including the remaining rim edge $u_3u_0$.
If two path edges change, they are adjacent and receive different
new colors.
\end{lemma}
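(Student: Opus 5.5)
The plan is a case analysis on which Hall condition fails, using only the update rule \eqref{wheel:update} on the three edges $u_0u_1,u_1u_2,u_2u_3$ of $P$. Since every $|L_i|\ge2$, Hall's conditions (Theorem~\ref{prelim:hall}) for the four spoke lists alone can fail in only two ways. Either (A) some three lists coincide with a single two-set $T$, or (B) $|L_0\cup L_1\cup L_2\cup L_3|\le3$. In case (B) the union is a three-set $S$, and all four lists are two-subsets of $S$.

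The basic observation is that recoloring a rim edge $u_iu_j$ of current color $x$ with $p\in L_i\cap L_j$ inserts $x$ into both endpoint lists. This $x$ is fresh: it was forbidden at $u_i$ and $u_j$, so it lay in neither list. In both cases the added color $x$ lies outside the offending set $T$ or $S$, because those sets consist of colors then available at the relevant vertices.

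For case (A), the three indices with $L_i=T$ contain a consecutive pair on the rim. The only consecutive pair not joined by an edge of $P$ is $\{3,0\}$, and every three-subset of $\{0,1,2,3\}$ contains some other consecutive pair. So we choose the $P$-edge $u_iu_j$ joining such a pair, with old color $x\notin T$, and recolor it with any $p\in T$. The two affected lists become $(T\setminus\{p\})\cup\{x\}$ and the third keeps $T$. Every triple of lists now has union of size at least $3$: any triple either contains the fourth list with its own colours, or contains both a modified list and an unmodified one, or contains two modified lists and the fourth. The color $p$ survives in the unmodified $T$, so the four-list union grows by $x$. If this union still has size $3$, we are in a subcase of (B) and proceed as there with one more move.

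For case (B), recolor one $P$-edge $u_iu_{i+1}$ with $p\in L_i\cap L_{i+1}$. Such a $p$ exists because two two-subsets of a three-set meet. Any three lists then include an endpoint of this edge, so their union contains $x$ together with at least two colors of $S$. The four-list union contains $(S\setminus\{p\})\cup\{x\}$, and it contains $p$ as well whenever $p$ lies in one of the two untouched lists. So we try to choose the edge and $p$ so that $p$ survives; if some choice works, one move suffices.

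The main obstacle is the residual configuration in which every admissible choice deletes $p$ from the union. Pinning down exactly when no single move works will be the delicate bookkeeping. In that configuration a second move is made on an adjacent $P$-edge sharing the endpoint that now holds $x$. That edge's old color $y$ differs from $x$ and from the colors of $S$, so it adds a second fresh color, giving a union of size at least $4$. The triple conditions persist because $x$ or $y$ lies in every triple.

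Finally we check three things. First, the new color of the second edge can be taken in the current intersection and distinct from the first new color; this also covers a second move arising from case (A). Second, the two recolored edges are adjacent, as the lemma requires. Third, no vertex, nonrim edge, or the edge $u_3u_0$ is touched, which is automatic since only edges of $P$ are recolored.
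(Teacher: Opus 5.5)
\textbf{There is a genuine gap, mainly in case (B), plus an error in case (A).}

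\textbf{Case (B).} Your argument rests on the assertion that the released color $x$ lies outside $S$, and that is false. The color $x$ is excluded only from the two endpoint lists, so when $L_i=L_{i+1}$ it may be the third color of $S$, available at another rim vertex. For instance, let $L_0=L_1=\{a,b\}$, $L_2=\{a,s\}$, $L_3=\{b,s\}$, so every triple already has union $S=\{a,b,s\}$, and let $c_0=s$. Recoloring $u_0u_1$ with $a$ keeps $a$ in the union, yet gives $L_0'=L_1'=L_3=\{b,s\}$, so Hall's condition still fails. Survival of the new color is therefore not the right criterion.

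The missing idea is to choose a path edge $u_iu_j$ with $L_i\cup L_j=S$, which forces its old color outside $S$, together with a color of $L_i\cap L_j$ that also lies in a third list $L_k$. Such a choice exists whenever every triple has union of size at least three:
\begin{enumerate}
\item If some list equals $S$, pair it with any path neighbor. This happens when a rim vertex has degree at most five, so not all lists need be two-subsets of $S$.
\item Otherwise connectedness of $P$ yields adjacent unequal lists with $L_i\cap L_j=\{a\}$. If $a$ lies in neither remaining list, both remaining lists equal $S\setminus\{a\}$, and any path edge joining $\{u_i,u_j\}$ to the other two vertices works.
\end{enumerate}
Hence case (B) needs exactly one move, and the residual configuration you describe never occurs. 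As written, you defer this to unfinished bookkeeping, and your fallback second move is unsupported. The old color $y$ of the adjacent edge is forbidden only at that edge's own endpoints, so it need not be fresh, and you do not exhibit a legal new color in the current intersection. A second move inside (B), after a move in (A), would also exceed the bound of two.

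\textbf{Case (A).} Here ``any $p\in T$'' does not work. Write $T=\{p,q\}$. The triple formed by the two modified lists $\{q,x\}$ and the fourth list $L_\ell$ has union of size two when $L_\ell=\{q,x\}$. This can happen because $x$ is not forbidden at $u_\ell$; for example $L_0=L_1=L_3=T$, $L_2=\{q,x\}$ and $x=c_0$. You must take $p\in T\cap L_\ell$ whenever this intersection is nonempty. The paper's subcase $L_2\subseteq T\cup\{x\}$, in its normalization, contains this situation and resolves it by a second move on the adjacent edge $u_1u_2$.

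\textbf{Adjacency.} When a move in (A) is followed by one in (B), you assert that the two recolored edges are adjacent. Your rule for (B) could, however, select $u_2u_3$ after $u_0u_1$. An adjacent choice always exists, and it makes the two new colors automatically distinct, but this needs an argument. With these repairs your (A)-then-(B) scheme does yield the lemma.
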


\begin{proof}
By Hall's theorem (Theorem~\ref{prelim:hall}), applied to the indexed
family $(L_i)_{i=0}^3$, the four lists have distinct
representatives if and only if the union of every subfamily has size
at least the size of that subfamily.  Equation~\eqref{wheel:list-size}
already gives the inequalities for one or two lists.  Hence a failure
is witnessed either by three lists whose union has size two, or by
the union of all four lists having size at most three.

First suppose that three lists have union $T=\{a,b\}$.  Each of these
lists equals $T$.  If their positions are consecutive on the path,
reverse the order if necessary and write
$L_0=L_1=L_2=T$.  Let $x$ and $y$ be the old colors of $u_0u_1$ and
$u_1u_2$.  They are distinct, and both lie outside $T$.  Recolor
$u_0u_1$ with $a$, then $u_1u_2$ with $b$.  The second change is legal
because the first removes only $a$, not $b$, from the list at $u_1$.
The first three lists become $\{b,x\}$, $\{x,y\}$, and
$\{a,y\}$.
Every pair of these lists has union of size at least three, and their
total union has size four.  Every triple among all four lists contains
at least two of these three lists.  All the Hall inequalities therefore
hold, independently of the unchanged fourth list.

If the three positions are not consecutive, reversal reduces the
configuration to $L_0=L_1=L_3=T$.  Let $x$ be the old color of
$u_0u_1$.  Suppose first that $L_2$ contains a color outside
$T\cup\{x\}$.  Recolor $u_0u_1$ with $a$.  The new lists at its
endpoints are both $\{b,x\}$.  A triple containing both of these
lists also contains either $T$, supplying $a$, or $L_2$, supplying a
color outside $\{a,b,x\}$.  A triple containing only one changed
list contains both $L_2$ and $T$, and already contains
$\{a,b,x\}$.  The full union has size at least four.  This gives
all Hall inequalities.

It remains in this configuration to consider
$L_2\subseteq T\cup\{x\}$.  Since $\lvert L_2\rvert\geq2$, choose
$q\in L_2\cap T$, and write $T=\{p,q\}$.  Recolor $u_0u_1$ with
$p$.  If $z$ was the color of $u_1u_2$, then
$z\notin T\cup\{x\}$: it was forbidden at $u_1$, and differed from
the adjacent edge color $x$.  Recolor $u_1u_2$ with $q$, which is
still available at both endpoints.  The four lists are now
\[
 \{q,x\},\qquad \{x,z\},\qquad
 (L_2\setminus\{q\})\cup\{z\},\qquad \{p,q\}.
\]
Their full union contains the four different colors $p,q,x,z$.
The first and second lists together contain $q,x,z$, so either triple
containing them satisfies the corresponding Hall inequality.
The triple omitting the second list
contains $p,q,x$ from the first and fourth lists.  The triple omitting
the first list contains $p,q,x,z$ from the second and fourth lists.
Again all inequalities hold.

We may now assume that every triple initially has union of size at
least three. If Hall's conditions still fail, the union of all four
lists is a three-element set $Q$. We claim that there are three
distinct indices $i,j,k\in\{0,1,2,3\}$ such that
\[
 u_i u_j\in E(P),\qquad L_i\cup L_j=Q,\qquad
 L_i\cap L_j\cap L_k\neq\varnothing.
\]
Thus the first two lists correspond to adjacent vertices of the
path, and a common color of these lists also belongs to a third
list whose index is neither $i$ nor $j$.

If some $L_i$ equals $Q$, choose any neighbor $u_j$ of $u_i$ in
$P$, and choose any index $k\notin\{i,j\}$. The sets $L_j,L_k$
are subsets of the three-element set $Q$, each of size at least
two. Hence $L_j\cap L_k\neq\varnothing$; since $L_i=Q$, this
intersection is contained in $L_i$ as well. The claim follows.

Otherwise all four lists have size two. They cannot all be equal,
since their union is $Q$. Because $P$ is connected, there are
adjacent vertices $u_i,u_j$ with $L_i\neq L_j$: equality along
every edge of $P$ would force all four lists to be equal.
These two distinct two-element subsets of $Q$ satisfy
$L_i\cup L_j=Q$ and $L_i\cap L_j=\{a\}$ for some color $a$.
Let $\{k,\ell\}=\{0,1,2,3\}\setminus\{i,j\}$
be the other two indices. If $a\in L_k\cup L_\ell$, the claim
already holds. If $a\notin L_k\cup L_\ell$, then
$L_k=L_\ell=Q\setminus\{a\}$, since each list has two elements.

The two nonempty vertex sets $\{u_i,u_j\}$ and $\{u_k,u_\ell\}$
partition $V(P)$. Since $P$ is connected, it has an edge $u_pu_q$
with $p\in\{i,j\}$ and $q\in\{k,\ell\}$. Let $r$ be the unique
index in $\{k,\ell\}\setminus\{q\}$. As $p\in\{i,j\}$, we can
write $L_p=\{a,b\}$ for some $b\in Q\setminus\{a\}$. We also
have $L_q=L_r=Q\setminus\{a\}$. Consequently
\[
 L_p\cup L_q=Q,\qquad L_p\cap L_q=\{b\},\qquad b\in L_r.
\]
Thus $p,q,r$ are distinct indices satisfying the claim, even though
$L_q=L_r$ as color sets.

Choose indices $i,j,k$ satisfying the claim and a color
$a\in L_i\cap L_j\cap L_k$. The old color $z$ of $u_i u_j$
belongs to neither endpoint list. Since $L_i\cup L_j=Q$,
we have $z\notin Q$.
Recolor $u_i u_j$ with $a$, which is available at both endpoints.
The list $L_k$ is unchanged because $k\notin\{i,j\}$, and hence
still contains $a$. The updated endpoint lists together contain
$(Q\setminus\{a\})\cup\{z\}$, so the union of all four updated
lists contains $Q\cup\{z\}$. Every subfamily of three updated lists
contains a changed list with the color $z$ and an unchanged list
with at least two colors of $Q$, giving at least three colors.
Theorem~\ref{prelim:hall} therefore supplies distinct representatives
for the four updated lists.
All constructions make at most two changes; the only constructions
with two changes use adjacent edges and different new colors.
\end{proof}

\begin{corollary}
\label{wheel:four-neighbours}
If $\lvert \mathcal{H}\rvert\leq3$, the coloring extends to $G$ after changing
only rim-edge colors and assigning colors to $v$ and its spokes.
\end{corollary}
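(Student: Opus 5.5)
Apply Lemma~\ref{wheel:path} to the path $P=u_0u_1u_2u_3$. After recoloring at most two rim edges of $P$, the four current spoke lists $L'_0,\dots,L'_3$ have a system of distinct representatives. All vertex colors are untouched, so $\mathcal{H}$ and $R=\Omega\setminus\mathcal{H}$ are unchanged. The hypothesis $\lvert\mathcal{H}\rvert\le3$ gives $\lvert R\rvert\ge5$.

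The remaining task is to choose a color for $v$ from $R$ that is distinct from the four spoke colors. By the Hall reduction preceding Lemma~\ref{red:low-neighbour}, the five lists $R,L'_0,\dots,L'_3$ have a system of distinct representatives if and only if every subfamily satisfies Hall's condition. Subfamilies consisting only of spoke lists hold by the choice made through Lemma~\ref{wheel:path}. Any subfamily containing $R$ has at most five members, and its union contains $R$, which has at least five elements, so its condition holds automatically. Theorem~\ref{prelim:hall} therefore supplies five pairwise distinct colors $c_\ast\in R$ and $c_i\in L'_i$. Assigning $c_\ast$ to $v$ and $c_i$ to $vu_i$ is proper, by the equivalence recorded before Lemma~\ref{red:low-neighbour}.

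A more direct way to finish is to take any SDR of the spoke lists: its four colors exclude at most four elements of $R$, so some element of $R$ remains for $v$. The only point needing care is that the recolorings in Lemma~\ref{wheel:path} change rim edges only and so preserve $\mathcal{H}$. There is no real obstacle here; the substantive case $\lvert\mathcal{H}\rvert=4$ is deferred to the characterization in Theorem~\ref{wheel:local}.
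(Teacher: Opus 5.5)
Your proposal is correct and matches the paper's proof. Like the paper, you apply Lemma~\ref{wheel:path} to a rim path, take distinct representatives for the spokes, and note that the four spoke colors together with the at most three colors of $\mathcal{H}$ exclude at most seven of the eight colors at $v$. Your direct finish, phrased as $|R|\ge5$, is exactly this count, and the five-list Hall argument is a valid but unnecessary repackaging of it.
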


\begin{proof}
Apply Lemma~\ref{wheel:path} to any path obtained by deleting one edge
of the rim.  Assign distinct representatives to the four spokes.
Their four colors, together with the at most three colors in $\mathcal{H}$,
exclude at most seven colors at $v$.
\end{proof}

\subsection{The two obstructions}

For the remainder of this section, assume that each of the four
neighbors $u_0,u_1,u_2,u_3$ of the degree-four vertex $v$
has degree six in $G$.

Write $C=u_0u_1u_2u_3u_0$ for the specified rim. A
\emph{local restoration of $\psi$} is a proper total coloring
$\widehat{\psi}$ of $G$ from the same color set $\Omega$ such that
\[
 \begin{aligned}
 \widehat{\psi}(x)&=\psi(x)
   &&\text{for every }x\in V(G)\setminus\{v\},\\
 \widehat{\psi}(e)&=\psi(e)
   &&\text{for every }e\in E(G-v)\setminus E(C).
 \end{aligned}
\]
Thus only the new vertex and spokes and the four rim edges may
receive new colors. All old vertices and nonrim edges remain fixed,
including any chord of $C$. Failure of this restricted restoration
does not preclude a total coloring obtained by other recolorings.

\begin{lemma}
\label{wheel:boundary}
If $\lvert \mathcal{H}\rvert=4$ and $L_i\cap \mathcal{H}\neq\varnothing$ for some $i$,
then a local restoration exists.
\end{lemma}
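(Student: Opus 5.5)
The plan is to check the Hall conditions \eqref{red:hall} for the five-member family $(R,L_0,L_1,L_2,L_3)$, recoloring only rim edges. We are in the setting where all four rim vertices have degree six, so $|L_i|=2$ and $|A_i|=4$ for every $i$. The hypothesis $|\mathcal H|=4$ gives $|R|=4$. Hence the last condition $|R\cup U|\ge5$ says exactly that
\begin{equation}\label{plan:hcolour}
 U\cap\mathcal H\neq\varnothing ,
\end{equation}
that is, some spoke list contains a rim-vertex color. The hypothesis of the lemma supplies \eqref{plan:hcolour} for the initial coloring $\psi$. So the task is to repair the first two conditions of \eqref{red:hall} without destroying \eqref{plan:hcolour}. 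Once all three hold, Hall's theorem gives five distinct colors $c_\ast\in R$ and $c_j\in L_j$, and the resulting coloring is a local restoration.

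The first step is to apply Lemma~\ref{wheel:path} to a suitably chosen rim path $P$. This secures the triple condition and $|U'|\ge4$ for the updated lists, and it alters at most two adjacent rim edges. By the update rule \eqref{wheel:update}, a recoloring with $p$ in place of $x$ removes only $p$ from the two endpoint lists and inserts $x$. So a color $a\in L_i\cap\mathcal H$ can leave $U$ only if $a$ itself is the new color $p$ and $a$ lies in no unchanged list. I would choose the deleted rim edge, and use the freedom in the case distinctions of Lemma~\ref{wheel:path}, so as to avoid such a choice. In the cases with $|T|=2$ or $|Q|=3$ the proof there offers two candidate colors, or a choice of the edge $u_iu_j$. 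One of these choices should keep $a$, or else the inserted old color $x$ is itself in $\mathcal H$, which restores \eqref{plan:hcolour}.

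The remaining case is the one I expect to be the main obstacle: every admissible repair gives $U'=R$ exactly, so the four updated lists lie inside the four-set $R$. Here I would use the invariance of the rim color sets $A_i=L_i\cup\{c_{i-1},c_i\}$. Since $a\in A_i\cap\mathcal H$ and $a$ is no longer in any list, $a$ must now appear as a rim-edge color at $u_i$. The proposed fix is one further rim recoloring. The plan is to swap that edge to a color of $R$ that is common to its two endpoint lists; such a color should exist because all the lists sit inside the four-set $R$ and each has two elements. This swap returns $a$ to both endpoint lists. Then I would recheck the triple condition and $|U|\ge4$, using that every list is a two-subset of $R\cup\{a\}$. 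The case analysis should mirror the final case in the proof of Lemma~\ref{red:low-neighbour}. If this fails for some configuration, I would instead apply Lemma~\ref{wheel:path} to the path starting at $u_i$, with the rim edge carrying $a$ deleted, so that $a$ is never chosen as a new color at $u_i$.
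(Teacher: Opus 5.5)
Your reduction is sound: since $|R|=4$, the condition $|R\cup U|\ge5$ is exactly $U\cap\mathcal H\neq\varnothing$. The whole lemma therefore reduces to carrying out the repair of Lemma~\ref{wheel:path} without expelling every $\mathcal H$-color from the union of the spoke lists. That is precisely the step you leave open (``one of these choices should keep $a$''), and your argument for the ``remaining case'' fails.

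\textbf{The remaining-case argument.} You assert that the two endpoint lists of the edge now colored $a$ share a color ``because all the lists sit inside the four-set $R$ and each has two elements.'' This is false: $\{5,6\}$ and $\{7,8\}$ are disjoint two-subsets of $R=\{5,6,7,8\}$, and this complementary pattern is exactly the one in state (D). Where a common color does exist, it is typically the old color $x$ released when $a$ was placed on that edge. Recoloring with $x$ merely undoes that step and can recreate the original Hall failure, so the recheck you postpone is the entire difficulty. Your last fallback is not justified either: a path starting at $u_i$ does not stop $a$ from being put on the path edge at $u_i$ when $a$ is available at its other end.

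\textbf{The missing idea.} Arrange that $h\in L_i\cap\mathcal H$ is never a legal new color. By \eqref{wheel:update}, a list loses $h$ only when an incident edge is recolored with $h$, which needs $h$ in both current endpoint lists.

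If $h$ is available at two opposite rim vertices, any rim path works. Lemma~\ref{wheel:path} gives the color $h$ to at most one edge, and no edge meets both of those vertices.

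Otherwise, because $|\mathcal H|=4$, exactly one rim vertex is colored $h$ and at most one rim edge is colored $h$; if one is, $h$ is available only at the fourth rim vertex. Omit that edge, or else the edge joining the two consecutive positions where $h$ is available (any edge if there is just one). The resulting path carries no $h$-colored edge, and its $h$-available positions are independent. So before any recoloring with $h$, no path edge has $h$ at both ends, and no such recoloring ever happens. Thus $h$ survives, and your five-list Hall step finishes the proof. Equivalently, take an SDR with $h$ swapped in, which leaves a center color outside the seven used.

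\textbf{Completing your own route.} You could instead inspect the constructions in the proof of Lemma~\ref{wheel:path}. In each step the new color remains in a list untouched by that step: $L_2$ or $L_0$ in the consecutive case, $L_3=T$ in the nonconsecutive cases, and $L_k$ in the final case. So the union never shrinks, and your remaining case cannot occur. This relies on that lemma's proof rather than its statement, whereas the path-choice argument uses only the statement and the update rule.
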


\begin{proof}
Choose $h\in L_i\cap \mathcal{H}$.  We will choose a rim path and perform the
repair of Lemma~\ref{wheel:path} in such a way that $h$ remains in at
least one spoke list.

Suppose first that $h$ is available at two opposite rim vertices. Use any
rim path.  At most one edge in the repair receives the color $h$,
because if two edges change, their new colors are different.  That
edge cannot meet both opposite rim vertices. By
\eqref{wheel:update}, a list loses $h$ only when an incident edge is
recolored with $h$.  Therefore $h$ survives at one of the opposite
rim vertices. This also treats availability at three or more rim vertices.

In the remaining case, $h$ is available at only one rim vertex or at two
consecutive rim vertices. Since $\lvert \mathcal{H}\rvert=4$, exactly
one rim vertex has color $h$. An edge of rim color $h$ cannot be incident
with that vertex. The other two rim edges meet, so at most one rim
edge has color $h$.  If such an edge exists, its endpoints and the
$h$-colored rim vertex exclude $h$ from three lists. Thus only the fourth
rim vertex can have $h$ available; omit the $h$-colored edge when choosing
the path. If no rim edge has color $h$ and two consecutive rim vertices
have $h$ available, omit their joining edge. If just one rim vertex has
$h$ available and no rim edge has color $h$, omit any rim edge.
In every case the chosen path has no edge of color $h$, and the
positions at which $h$ is available form an independent set in that
path.

There cannot be a first edge of the chosen path recolored with $h$.
Before that first change, no path edge has carried or released $h$,
so \eqref{wheel:update} shows that the set of positions with $h$
available has not changed.  Its independence prevents an edge from
having $h$ available at both endpoints.  Consequently the path repair
retains $h$ in this case as well.

Choose distinct representatives of the repaired lists.  If none is
$h$, replace the representative of a list containing $h$ by $h$;
distinctness is preserved, since $h$ was unused.  The four spoke
colors now meet $\mathcal{H}$.  Their union with $\mathcal{H}$ has size at most seven,
leaving a color for $v$.
\end{proof}

\begin{theorem}
\label{wheel:local}
The supplied coloring $\psi$ has no local restoration if and only
if one of the following states holds:
\begin{enumerate}
\item[\textup{(U)}] $\lvert \mathcal{H}\rvert=4$ and $A_i=R$ for every $i$;
\item[\textup{(D)}] $\lvert \mathcal{H}\rvert=4$, every rim color $c_i$ lies
in $\mathcal{H}$, and there are disjoint two-element sets $P,Q$ such that
\[
 P\cup Q=R,\qquad L_0=L_2=P,\qquad L_1=L_3=Q.
\]
\end{enumerate}
In state \textup{(D)}, the four rim colors are distinct and exhaust
$\mathcal{H}$, and
\begin{equation}
 A_i\cap A_{i+1}=\{c_i\}\qquad(i\in\mathbb Z/4\mathbb Z).
 \label{wheel:forced-rim}
\end{equation}
\end{theorem}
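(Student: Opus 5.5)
The plan is to separate the two directions and to work throughout with the invariant sets $A_i$ and the identity $L'_i=A_i\setminus\{c'_{i-1},c'_i\}$. All four rim vertices have degree six, so $|L_i|=2$ and $|A_i|=4$.

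\emph{Reduction.} A local restoration consists of a proper recoloring of the rim edges, four distinct spoke colors $s_i\in L'_i$, and a color for $v$ in $R$ avoiding all $s_i$. By Corollary~\ref{wheel:four-neighbours} we may assume $|\mathcal H|=4$, so $|R|=4$. By Lemma~\ref{wheel:boundary} we may assume $L_i\subseteq R$ for every $i$.

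The basic counting observation is this. If after any rim recoloring all four lists still lie in $R$, then the four distinct spoke colors exhaust $R$ and nothing is left for $v$. Conversely, if some current list meets $\mathcal H$, Lemma~\ref{wheel:boundary} applied to that recolored state finishes. Hence a restoration exists if and only if some proper rim recoloring puts a color of $\mathcal H$ into some $L'_i$.

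\emph{Sufficiency of (U) and (D).}
\begin{itemize}[itemsep=2pt]
\item In (U), every $A_i=R$, so every $L'_i\subseteq R$ for every rim recoloring, and no restoration exists.
\item In (D), I first derive the extra structure. Each $c_i\in\mathcal H$ avoids $h_i,h_{i+1}$, so $c_i\in\{h_{i+2},h_{i+3}\}$. Therefore $c_0\neq c_2$, and likewise $c_1\neq c_3$. Adjacent rim colors differ, so the four rim colors are distinct and exhaust $\mathcal H$.
\item Then $A_0=P\cup\{c_3,c_0\}$ and $A_1=Q\cup\{c_0,c_1\}$. Since $P\cap Q=\varnothing$ and $P,Q\subseteq R$, this gives $A_0\cap A_1=\{c_0\}$, and similarly for each $i$, proving \eqref{wheel:forced-rim}.
\item Any proper rim recoloring has $c'_i\in A_i\cap A_{i+1}$, so it is the identity. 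The lists therefore remain $P,Q,P,Q\subseteq R$, and no restoration exists.
\end{itemize}

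\emph{Necessity.} Assume there is no restoration, with $|\mathcal H|=4$ and all $L_i\subseteq R$.
\begin{itemize}[itemsep=2pt]
\item \emph{Step 1.} Suppose a rim edge $u_iu_{i+1}$ has $c_i\in\mathcal H$ and $L_i\cap L_{i+1}\neq\varnothing$. Recolor it by \eqref{wheel:update} with a common color. Then $c_i\in\mathcal H$ enters $L'_i$, and Lemma~\ref{wheel:boundary} gives a restoration, a contradiction. So every $\mathcal H$-colored rim edge has $L_i\cap L_{i+1}=\varnothing$, that is, $L_i\sqcup L_{i+1}=R$.
\item \emph{Step 2: all rim colors in $\mathcal H$.} Step 1 along all four edges forces $L_0=L_2=P$ and $L_1=L_3=Q$ with $P\sqcup Q=R$. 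This is (D).
\item \emph{Step 2: all rim colors in $R$.} Then $A_i=L_i\cup\{c_{i-1},c_i\}\subseteq R$ has four elements, so $A_i=R$. This is (U).
\end{itemize}

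\emph{Mixed case (main obstacle).} The remaining case has some $c_j\in R$ and some $c_i\in\mathcal H$. Choose them adjacent, say $c_j\in R$ and $c_{j+1}\in\mathcal H$.

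By Step 1, $L_{j+1}\sqcup L_{j+2}=R$. At $u_{j+1}$, the colors $c_j$, $c_{j+1}$ and $L_{j+1}$ are distinct. Since $c_j\in R$ and $L_{j+1}\subseteq R$, we get $c_j\in R\setminus L_{j+1}=L_{j+2}$.

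I would first try to recolor $u_ju_{j+1}$ with some $p\in L_j\cap L_{j+1}\subseteq R$. The new list at $u_{j+1}$ is then $(L_{j+1}\setminus\{p\})\cup\{c_j\}$. This meets $L_{j+2}$ in $c_j$, so Step 1 applied to the $\mathcal H$-colored edge $u_{j+1}u_{j+2}$ yields a restoration.

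If instead $L_j\cap L_{j+1}=\varnothing$, then $L_j=L_{j+2}$. I would then look at the edge $u_{j-1}u_j$ and repeat the argument around the rim, counting inside the four-set $R$. I expect a short case analysis on the number of $\mathcal H$-colored rim edges (one, two adjacent, two opposite, three) to yield either such a releasing recoloring or a contradiction with $|A_i|=4$ and the disjointness relations. This case analysis is the step I expect to need the most care.
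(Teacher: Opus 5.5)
Your reduction, both sufficiency directions, Step~1 and the two pure cases are correct and follow the paper's proof. Your distinctness argument for the rim colors in (D), via $c_i\in\{h_{i+2},h_{i+3}\}$, differs slightly from the paper's but is valid. The gap is the mixed case. You treat only the subcase $L_j\cap L_{j+1}\neq\varnothing$ and defer the complementary subcase to a case analysis around the rim that you never carry out. So necessity is not yet proved.

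The missing observation is that the complementary subcase cannot occur. The edge $u_ju_{j+1}$ carries $c_j\in R$, and this color is excluded from both endpoint lists. So $L_j$ and $L_{j+1}$ are two-element subsets of the three-element set $R\setminus\{c_j\}$, and they must intersect. Equivalently, your derived equality $L_j=L_{j+2}$ would give $L_j\sqcup L_{j+1}=R$, contradicting $c_j\in R\setminus(L_j\cup L_{j+1})$.

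Hence your first subcase covers every mixed configuration:
\begin{enumerate}
\item Recolor $u_ju_{j+1}$ with some $p\in L_j\cap L_{j+1}$; this releases $c_j$ into the list at $u_{j+1}$.
\item Since $c_j\in L_{j+2}$ by complementarity, recolor the $\mathcal H$-colored edge $u_{j+1}u_{j+2}$ with $c_j$. This releases $c_{j+1}\in\mathcal H$.
\item Lemma~\ref{wheel:boundary} then restores the center.
\end{enumerate}
No analysis by the number of $\mathcal H$-colored rim edges is needed. This is exactly the paper's argument, with the roles of the two adjacent edges reflected; the paper states the key point as ``$L_1,L_2$ both avoid $r$, so they intersect in the three-set $R\setminus\{r\}$.''
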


\begin{proof}
In state (U), each rim color set remains $R$ under all
permitted changes.  Every spoke must therefore have a color in $R$;
the center must also have a color in $R$ because its four neighbors
use all of $\mathcal{H}$.  The center and its four spokes are five pairwise
conflicting elements, which cannot be colored from the four-element
set $R$.

In state (D), the color $h_i$ cannot occur on either rim edge
incident with $u_i$. The other two rim edges share $u_{i+2}$,
so at most one of them has color $h_i$. Thus each color of
$\mathcal{H}$ occurs on at most one rim edge. Since all four rim
colors belong to $\mathcal{H}$, they are distinct and exhaust it.
Now $A_i=L_i\cup\{c_{i-1},c_i\}$, with $L_i\subseteq R$,
$c_{i-1},c_i\in\mathcal{H}$, and $R\cap\mathcal{H}=\varnothing$.
Since consecutive lists are disjoint,
\[
 A_i\cap A_{i+1}
 =(L_i\cap L_{i+1})\cup
   (\{c_{i-1},c_i\}\cap\{c_i,c_{i+1}\})=\{c_i\},
\]
proving \eqref{wheel:forced-rim}. In any proper recoloring of the
rim, the color of $u_i u_{i+1}$ must lie in the fixed intersection
$A_i\cap A_{i+1}$. Every rim edge therefore retains its color,
and the spoke lists remain $P$ or $Q$. The center and four spokes
must again receive five distinct colors from $R$, which is impossible.

Conversely, suppose that no local restoration of $\psi$ exists. Corollary
\ref{wheel:four-neighbours} gives $\lvert \mathcal{H}\rvert=4$.
Lemma~\ref{wheel:boundary} gives $L_i\subseteq R$ for every $i$.
All these lists have size two.  If an edge of rim color in $\mathcal{H}$ has
intersecting endpoint lists, recolor that edge with a common color.
Its old color is then released into both lists and lies in $\mathcal{H}$.
Lemma~\ref{wheel:boundary} restores the center, a contradiction.
Therefore the endpoint lists of every $\mathcal{H}$-colored rim edge are
disjoint, and hence complementary in the four-set $R$.

Suppose that both $\mathcal{H}$-colored and $R$-colored rim edges occur.  An
adjacent pair of the two types exists; reindex so that $u_0u_1$ has
color $b\in \mathcal{H}$ and $u_1u_2$ has color $r\in R$.  The lists $L_0$
and $L_1$ are complementary.  The two-element lists $L_1,L_2$ both
avoid $r$, so they intersect in the three-set $R\setminus\{r\}$.
Choose $p\in L_1\cap L_2$ and recolor $u_1u_2$ with $p$.  This
releases $r$ into the current list at $u_1$.  The original $L_1$
avoided $r$, so complementarity gives $r\in L_0$, whose list has
not changed.  We can now recolor $u_0u_1$ with $r$, releasing
$b\in \mathcal{H}$.  Lemma~\ref{wheel:boundary} again gives a restoration.
This contradiction excludes a mixture of the two types.

If all rim edges have colors in $R$, then at each rim vertex the two
rim colors and the two list colors are four distinct elements of
$R$.  Thus $A_i=R$ for every $i$, giving (U).  If all rim colors
belong to $\mathcal{H}$, consecutive two-element lists are complementary in
$R$; they alternate around the rim, giving (D).  These exhaust the
possibilities.
\end{proof}

For example, a state (U) may have rim-vertex colors $1,2,3,4$,
rim-edge colors $5,6,7,8$ in cyclic order, and
$A_i=R=\{5,6,7,8\}$. Distinct representatives for the four
spoke lists then use all of $R$, leaving no color for the center.
In state (D), \eqref{wheel:forced-rim} imposes the additional
restriction that no rim edge can change color while the vertex
and nonrim-edge colors stay fixed.

Lemma~\ref{wheel:boundary} shows that a proper recoloring that
keeps all old vertex colors fixed and achieves
$L_i\cap\mathcal{H}\ne\varnothing$ for some $i$ permits restoration.
The next two propositions instead allow one rim vertex to change color.

\subsection{Releasing a rim vertex}

Here the rim is required to be \emph{induced}: no edge joins two
nonconsecutive rim vertices. Since every rim vertex has degree six
in $G$, it has exactly three neighbors outside the rim and the
center, called its \emph{exterior neighbors}. Different rim vertices
may have exterior neighbors in common.

\begin{proposition}
\label{wheel:uniform}
If the rim is induced, a state \textup{(U)} in
Theorem~\ref{wheel:local} can be restored by changing one
rim-vertex color and the rim-edge colors. Every
nonrim edge and every other old vertex retains its color.
\end{proposition}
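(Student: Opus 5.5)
The plan is to exploit the rigidity of state (U) directly. In state (U) we have $\lvert\mathcal{H}\rvert=4$ and $A_i=R$ for every $i$. Since $\lvert L_i\rvert=2$ and $A_i=\Omega\setminus F_i$, this gives $F_i=\mathcal{H}$. Because the rim is induced and $d_G(u_i)=6$, the set $F_i$ consists of $h_i$ together with the colors of the three exterior edges at $u_i$. Hence those three exterior edges carry exactly the colors $\mathcal{H}\setminus\{h_i\}$. All rim-edge colors then lie in $R$. I will recolor $u_0$ with a color $x\in R$, give the center the released color $h_0$, and recolor the whole wheel, rim and spokes, from $R$ by an explicit pattern.

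\emph{Step 1: choice of $x$.} The vertex $u_0$ has exactly three exterior neighbors. Since $\lvert R\rvert=4$, some $x\in R$ differs from all of their colors. It also differs from $h_1,h_3\in\mathcal{H}$. The exterior edges at $u_0$ have colors in $\mathcal{H}$, so they do not conflict with $x$. Thus the only possible conflicts for the new color $x$ at $u_0$ come from the rim edges and the spoke at $u_0$, which we are free to choose.

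\emph{Step 2: the wheel pattern.} Label $R=\{s_0,s_1,s_2,s_3\}$ with $s_3=x$. Assign
\[
 \widehat\psi(vu_i)=s_i,\qquad \widehat\psi(u_iu_{i+1})=s_{i+2}\qquad(i\in\mathbb Z/4\mathbb Z),
\]
and $\widehat\psi(u_0)=x$, $\widehat\psi(v)=h_0$.

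\emph{Verification of the wheel edges.} At $u_i$ the three wheel edges receive $s_i,s_{i+1},s_{i+2}$, which are distinct. At $v$ the four spokes are distinct. The exterior edges at $u_i$ use colors in $\mathcal{H}$, disjoint from $R$. Every wheel edge avoids its endpoint colors: the colors $h_1,h_2,h_3$ lie in $\mathcal{H}$, and the edges at $u_0$ have colors $s_0,s_1,s_2$, all different from $x=s_3$. The color $h_0$ on $v$ is not a color of any spoke, since spokes use $R$.

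\emph{Verification of $u_0$ and $v$.} The vertex $v$ is adjacent to $u_0$ (color $x\neq h_0$) and to $u_1,u_2,u_3$, whose colors $h_1,h_2,h_3$ differ from $h_0$ because $\lvert\mathcal{H}\rvert=4$. The recolored $u_0$ avoids its neighbors by Step 1, and it avoids its incident edges by the pattern. No nonrim edge and no other old vertex changes. This yields the required restoration.

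The only delicate point I anticipate is the bookkeeping showing that the exterior edges at each $u_i$ are exactly $\mathcal{H}$-colored. This is what allows the entire wheel to be recolored from $R$ without touching nonrim edges. The induced-rim hypothesis is used precisely there, and also in Step 1, since it guarantees that $u_0$ has three exterior neighbors rather than a chord neighbor. With a chord, the edge count in $F_i$ and the neighbor count would change, and the argument above would need to be revisited.
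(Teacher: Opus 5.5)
Your proof is correct and follows the paper's route: deduce $F_i=\mathcal{H}$, recolor $u_0$ with a color of $R$ absent from its three exterior neighbors, and recolor the rim, spokes and center explicitly. The only difference is cosmetic. The paper puts the released color $h_0$ on the spoke $vu_0$ and colors the center from $R$, whereas you put $h_0$ on the center and use all four colors of $R$ on the spokes; both assignments verify.
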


\begin{proof}
Since $A_i=\Omega\setminus F_i=R=\Omega\setminus\mathcal{H}$,
we have $F_i=\mathcal{H}$. The three nonrim edges at $u_i$
therefore have exactly the colors in $\mathcal{H}\setminus\{h_i\}$.
These are the colors in the supplied coloring, before we recolor $u_0$.

There are three exterior neighbors of $u_0$, while $\lvert R\rvert=4$.
Choose $p\in R$ absent from their \emph{current} vertex colors, and
write $R=\{p,r,s,t\}$.  Erase the rim-edge colors and make the
following assignments:
\begin{equation}
 \begin{aligned}
 \psi(u_0)&=p,&\qquad \psi(v)&=r,\\
 (\psi(vu_0),\psi(vu_1),\psi(vu_2),\psi(vu_3))
   &=(h_0,p,s,t),\\
 (\psi(u_0u_1),\psi(u_1u_2),\psi(u_2u_3),\psi(u_3u_0))
   &=(t,r,p,s).
 \end{aligned}
 \label{wheel:uniform-assignment}
\end{equation}
Here $h_0$ denotes the former color of $u_0$.

At $u_0$, the three exterior edge colors are exactly
$\mathcal{H}\setminus\{h_0\}$, so they avoid both the new vertex color $p$
and the new spoke color $h_0$.  The two rim neighbors have colors
in $\mathcal{H}$, and the exterior neighbors avoid $p$ by its choice.  The
three wheel edges incident with $u_0$ have different colors
$h_0,t,s$, none equal to $p$.
At $u_1,u_2,u_3$, the vertex color and all exterior edge colors
belong to $\mathcal{H}$.  The three wheel edges at these vertices have colors,
respectively, $p,t,r$; $s,r,p$; and $t,p,s$.  Each triple is distinct
and lies in $R$, so it avoids all fixed forbidden colors there.
The center has color $r$, whereas the four spokes have different
colors $h_0,p,s,t$, and its four neighbors have colors
$p,h_1,h_2,h_3$.  Thus it conflicts with none of them.  No other
constraint has changed.
\end{proof}

\begin{proposition}
\label{wheel:witness}
Suppose the rim is induced and the coloring is in state
\textup{(D)} of Theorem~\ref{wheel:local}.
If some $p\in L_i$ is absent from the vertex colors of the three
exterior neighbors of $u_i$, the center can be restored by changing
only $u_i$ among the old vertices and keeping every old edge fixed.
\end{proposition}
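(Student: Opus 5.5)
The plan is an explicit assignment, in the spirit of the proof of Proposition~\ref{wheel:uniform}. The idea is that the old color $h_i$ of $u_i$ becomes available for the spoke $vu_i$ once $u_i$ is given the color $p$. This breaks the deficit of state (D): the five wheel elements no longer all need colors from $R$.

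First I would fix the data supplied by Theorem~\ref{wheel:local}. In state (D), the rim colors $c_0,\dots,c_3$ are distinct and exhaust $\mathcal{H}$. The spoke lists are $L_i=L_{i+2}$, equal to one of $P,Q$, and $L_{i\pm1}$ equal to the other. Write $L_i=\{p,p'\}$ and $L_{i+1}=L_{i+3}=\{q,q'\}$, so that $R=\{p,p',q,q'\}$. Since $L_i=A_i\setminus\{c_{i-1},c_i\}$ and $|A_i|=4$, the colors at $u_i$ in $G-v$ are as follows. The vertex has color $h_i$. The three nonrim edges carry the colors of $F_i\setminus\{h_i\}$. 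The two rim edges carry $c_{i-1},c_i\in\mathcal{H}\setminus\{h_i\}$; they avoid $h_i$ by properness at $u_i$.

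Next I would recolor $u_i$ with $p$ and verify that this is proper. The color $p$ lies in $L_i$, so it avoids every old edge at $u_i$. The rim neighbors $u_{i\pm1}$ have colors in $\mathcal{H}$, and $p\in R$. The rim is induced, so the remaining neighbors of $u_i$ are its three exterior neighbors and $v$. The exterior neighbors avoid $p$ by hypothesis, and $v$ will be colored outside $\{p\}$. After this change, $h_i$ is no longer forbidden at $u_i$ by any old element. Every old incident edge has a color in $F_i\setminus\{h_i\}$ or in $\{c_{i-1},c_i\}$, and neither set contains $h_i$. The lists at the other three rim vertices are unchanged, because no edge at them changes.

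Then I would set
\[
 \psi(vu_i)=h_i,\quad \psi(vu_{i+2})=p,\quad
 \{\psi(vu_{i+1}),\psi(vu_{i+3})\}=\{q,q'\},\quad \psi(v)=p'.
\]
The checks are all short. Each spoke color lies in the current list at its rim endpoint: $h_i$ at $u_i$ as just argued, $p\in P=L_{i+2}$, and $q,q'\in Q$. The five wheel colors $h_i,p,q,q',p'$ are distinct. The spoke $vu_i$ avoids both of its endpoint colors $p$ and $p'$, and so does every other spoke. The center color $p'$ differs from the new neighbor color $p$ and from $h_{i+1},h_{i+2},h_{i+3}\in\mathcal{H}$. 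Only $u_i$ changes among the old vertices, and no old edge changes, as required.

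The main obstacle is choosing the center color. The natural first attempt puts $p'$ on $vu_{i+2}$, but that leaves no color for $v$, since $p$ is now blocked at $v$ by its neighbor $u_i$. The fix is to spend $p$ on the opposite spoke $vu_{i+2}$, which does not meet $u_i$, and to give $p'$ to the center.
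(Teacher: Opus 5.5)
Your proof is correct and follows the paper's construction. Both recolor $u_i$ with $p$, put $p$ on the opposite spoke $vu_{i+2}$, and use the two colors of $Q$ on the remaining spokes; the only difference is that the paper gives the center the freed color $h_i$ and the spoke $vu_i$ the color $p'$, while you swap these two, which works equally well since $h_i$ lies on no old edge at $u_i$ and $p'\in R\setminus\{p\}$ avoids every rim-vertex color.
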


\begin{proof}
Recolor $u_i$ with $p$.  Its incident edges avoid $p$ because
$p\in L_i$.  Its two rim neighbors have colors in $\mathcal{H}$, disjoint
from $R$, and its exterior neighbors avoid $p$ by hypothesis.
Hence this change is proper. Give the center the former color $h_i$
of $u_i$. Give $vu_i$ the other element of $L_i$, give the
opposite spoke $vu_{i+2}$ the color $p$, and give the two remaining
spokes the two colors of $R\setminus L_i$.  All four spoke colors
are different.  At each unchanged rim vertex the assigned color lies
in its original spoke list; at $u_i$ it also differs from its new
vertex color $p$.  The center color $h_i$ avoids all spoke colors,
which lie in $R$, and all its neighbor colors: the old four
neighbor colors were distinct, and $h_i$ has been replaced by $p$.
\end{proof}

Suppose the rim is induced and the coloring is in state (D).
If no completion is possible by recoloring only $u_i$ among the old
vertices and keeping all old edges fixed, the contrapositive of
Proposition~\ref{wheel:witness} implies that every color $p\in L_i$ must occur
on an exterior neighbor of $u_i$. Such a neighbor is called
an \emph{exterior witness}. This gives a necessary
condition for failure; no converse is asserted. Witnesses for
different rim vertices need not be distinct: one exterior vertex
adjacent to two rim vertices may witness the same color for both.

\section{The degree-four shell}
\label{shell:section}

Exterior witness colors and two proper boundary colorings will show
that the four-vertices of a minimum counterexample have disjoint
neighborhoods. The \emph{shell} is
the wheel centered at $v$ together with the exterior vertices
labeled $x_i,y_i$ below and the triangular faces incident with
the four rim vertices $u_i$. The eight exterior labels are not
initially assumed to denote distinct vertices. Lemma~\ref{shell:distinct}
establishes their distinctness; Figure~\ref{fig:shell} depicts the
resulting configuration.

\begin{figure}[!htbp]
\centering
\begin{tikzpicture}[scale=1.02, every node/.style={font=\small}]
 \coordinate (v) at (0,0);
 \foreach \i in {0,1,2,3} {
   \pgfmathsetmacro{\ang}{90-90*\i}
   \pgfmathsetmacro{\xa}{45-90*\i}
   \coordinate (u\i) at (\ang:1.08);
   \coordinate (y\i) at (\ang:2.65);
   \coordinate (x\i) at (\xa:2.65);
 }
 \foreach \i in {0,1,2,3} {
   \pgfmathtruncatemacro{\nxt}{mod(\i+1,4)}
   \pgfmathtruncatemacro{\prv}{mod(\i+3,4)}
   \draw[thick] (v)--(u\i)--(u\nxt);
   \draw (u\i)--(x\prv) (u\i)--(y\i) (u\i)--(x\i);
   \draw[thick] (y\i)--(x\i)--(y\nxt);
   \fill (u\i) circle (1.7pt);
   \fill (x\i) circle (1.7pt);
   \fill (y\i) circle (1.7pt);
   \pgfmathsetmacro{\ang}{90-90*\i}
   \pgfmathsetmacro{\xa}{45-90*\i}
   \node at (\ang:2.97) {$y_{\i}$};
   \node at (\xa:2.96) {$x_{\i}$};
 }
 \fill (v) circle (1.7pt);
 \node[below left] at (v) {$v$};
 \node at (-0.45,1.05) {$u_0$};
 \node at (1.05,0.45) {$u_1$};
 \node at (0.45,-1.05) {$u_2$};
 \node at (-1.05,-0.45) {$u_3$};
\end{tikzpicture}
\caption{The shell configuration established in Lemma~\ref{shell:distinct}.
Its eight exterior vertices are pairwise distinct. The three exterior
neighbors of $u_i$ are $x_{i-1},y_i,x_i$.  The drawing records the
actual facial disk forced by the links; the remainder of the sphere
triangulation lies outside it.  Additional edges in that remainder
are not excluded by this figure.}
\label{fig:shell}
\end{figure}
\FloatBarrier

Throughout this section, suppose $G\in\mathcal T_6$ has no total
eight-coloring and has the fewest vertices among all such graphs.
Proposition~\ref{red:minimal} applies. For the local arguments,
fix any vertex $v\in V_4(G)$, if one exists, and write its cyclic link as
$u_0u_1u_2u_3u_0$, with indices modulo four.  Every $u_i$ has degree
six.  The link is induced: a diagonal between opposite link vertices
would form with $v$ a triangle having the other two link vertices on
opposite sides, contradicting that every triangle is facial.
The final simultaneous reduction concerns the whole set $V_4(G)$
and also allows this set to be empty.

Delete $v$ and insert either diagonal into its link disk.  Because the
diagonal was absent, the result is a smaller simple sphere
triangulation.  Its diagonal endpoints retain degree six and its other
two rim vertices lose one incident edge, so it is in $\mathcal T_6$.
Minimality gives a total eight-coloring.  Discard the added diagonal,
and denote the resulting proper total coloring of $G-v$ by $\psi$.
Set
\begin{align*}
 h_i&=\psi(u_i),&
 c_i&=\psi(u_i u_{i+1}),\\
 \mathcal{H}&=\{h_0,h_1,h_2,h_3\},&
 R&=\Omega\setminus \mathcal{H},\\
 L_i&=\Omega\setminus\bigl(\{h_i\}\cup
           \{\psi(e):e\in E(G-v),\ u_i\in V(e)\}\bigr),&
 A_i&=L_i\cup\{c_{i-1},c_i\}.
\end{align*}
Each $L_i$ is a two-set. Since $G$ is a counterexample,
Theorem~\ref{wheel:local} implies that $|\mathcal{H}|=4$ and the state is
either $\mathrm U$ or $\mathrm D$. Proposition~\ref{wheel:uniform}
restores state $\mathrm U$ by allowing one old vertex to change, so
the state must be
$\mathrm D$. We retain the notation $P,Q$ of
Theorem~\ref{wheel:local} for the two alternating lists:
\begin{equation}\label{shell:D}
 \begin{gathered}
 |\mathcal{H}|=|R|=4,\qquad
 L_0=L_2=P,\quad L_1=L_3=Q,\\
 |P|=|Q|=2,\quad P\cap Q=\varnothing,\quad P\cup Q=R,\qquad
 \{c_0,c_1,c_2,c_3\}=\mathcal{H}.
 \end{gathered}
\end{equation}
The last four colors are distinct.

Let $x_i$ be the third vertex of the exterior face on the rim edge
$u_i u_{i+1}$; its other incident face has third vertex $v$.
The inducedness of the rim implies that $x_i$ is neither $v$ nor any
rim vertex.  Since $u_i$ has degree six, its complete cyclic link is
\begin{equation}\label{shell:link}
 v,\ u_{i-1},\ x_{i-1},\ y_i,\ x_i,\ u_{i+1}.
\end{equation}
This defines $y_i$.  It is exterior to the rim and different from $v$.
For a fixed $i$, the three vertices $x_{i-1},y_i,x_i$ are distinct
because the link of $u_i$ is a simple cycle. We do \emph{not} yet
assume that differently labeled exterior vertices in the links of
different $u_i$ are distinct. The faces in this link give, among
others, the edges $x_{i-1}y_i$ and $y_i x_i$.

\subsection{Exterior witnesses}

For counting purposes, call the triple $(x_{i-1},y_i,x_i)$ of
exterior neighbors of $u_i$ \emph{row $i$}. Its \emph{row index}
is $i\in\{0,1,2,3\}$, identifying the rim vertex $u_i$.
For each $p\in L_i$, the ordered pair $(i,p)$ is a \emph{demand}:
it specifies the requirement that at least one vertex in row $i$
have color $p$. A vertex $z$ in that row with $\psi(z)=p$ is an
\emph{exterior witness for $(i,p)$}. For example, since $L_0=L_2$,
each $p\in L_0$ gives two distinct demands $(0,p)$ and $(2,p)$:
the requested color is the same, but the row indices are different.

\begin{lemma}\label{shell:witnesses}
Let $G$ be the minimum counterexample fixed above, let $v\in V_4(G)$
be the chosen vertex, and use the coloring $\psi$ and shell notation
constructed above.
For every $i$ and $p\in L_i$, the demand $(i,p)$ has an exterior
witness.  Each demand has exactly one witness.  After possibly reversing
the cyclic indexing,
\begin{equation}\label{shell:orientation}
 \psi(x_i)\in L_i,
 \qquad \{\psi(x_i),\psi(y_i)\}=L_i
 \quad\text{for every }i.
\end{equation}
\end{lemma}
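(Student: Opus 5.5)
The plan is to obtain existence from the contrapositive of Proposition~\ref{wheel:witness}, and then to get uniqueness and the orientation \eqref{shell:orientation} from a counting argument over the four rows. The key observation is that each label $x_j$ belongs to exactly two rows, namely rows $j$ and $j+1$, and that the lists of those two rows are disjoint.

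\emph{Existence.} By \eqref{shell:D} the coloring $\psi$ is in state~(D), and the rim is induced. Suppose some $p\in L_i$ were absent from the colors of $x_{i-1},y_i,x_i$. Then Proposition~\ref{wheel:witness} would extend $\psi$ to a total eight-coloring of $G$, contradicting the choice of $G$. So every demand $(i,p)$ has a witness.

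\emph{Counting.} Row $i$ must contain both colors of the two-set $L_i$ among the three vertices $x_{i-1},y_i,x_i$. I will reason with the labels as positions; whether different labels denote the same vertex is never needed, because each label has a single color. The steps are as follows.
\begin{enumerate}
\item A single position supplies at most one color, so every row has at least one witness among its two $x$-positions. The vertex $y_i$ can supply at most one color, so this holds even if $y_i$ is a witness.
\item The position $x_j$ lies only in rows $j$ and $j+1$. Since $L_j\cap L_{j+1}=\varnothing$ (the lists alternate between $P$ and $Q$), the color $\psi(x_j)$ lies in at most one of these two lists. Hence $x_j$ serves as a witness for at most one row, and it serves none if $\psi(x_j)\in\mathcal H$.
\item There are four rows, each needing at least one $x$-witness, and four $x$-positions, each serving at most one row. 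So each $x_j$ serves exactly one row, and each row has exactly one $x$-witness.
\item Consequently the row's other list color cannot come from its other $x$-position, which lies outside $L_i$. It must be $\psi(y_i)$.
\item The map sending $x_j$ to the row it serves goes to row $j$ or row $j+1$ and is a bijection on $\mathbb Z/4\mathbb Z$. Therefore it is either the identity for all $j$ or the shift $j\mapsto j+1$ for all $j$. To see this, note that if $x_j$ goes to row $j+1$, then $x_{j+1}$ cannot also go to row $j+1$, so it must go to row $j+2$, and this propagates around the cycle.
\item Reversing the cyclic indexing interchanges the two cases. After that reversal, $\psi(x_i)\in L_i$ for every $i$, and together with step~4 this gives $\{\psi(x_i),\psi(y_i)\}=L_i$, which is \eqref{shell:orientation}.
\end{enumerate}

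\emph{Uniqueness.} In row $i$ the witnesses are $x_i$ and $y_i$, which carry the two distinct colors of $L_i$. The remaining position $x_{i-1}$ has color outside $L_i$, by step~2 applied to its service of row $i-1$. So each demand has exactly one witness.

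The main subtlety I expect is the bookkeeping in step~5. The reversal of the cyclic indexing must be checked to be compatible with the definitions of $x_i$ and $y_i$ in \eqref{shell:link}: under reversal, $x_i$ becomes the third vertex on the reversed edge, so the labels shift by one. I will also confirm that the argument never needs distinctness of the eight exterior vertices, which is established only later in Lemma~\ref{shell:distinct}.
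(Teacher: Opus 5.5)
Your proposal is correct and follows the paper's proof. It uses the same steps: existence from Proposition~\ref{wheel:witness}, a double count resting on the fact that each $x_j$ lies only in rows $j$ and $j+1$ with $L_j\cap L_{j+1}=\varnothing$, propagation around the rim (the paper's $\epsilon_i=\epsilon_{i-1}$), and a relabeling reversal. The differences are cosmetic: you pigeonhole on the four $x$-positions instead of bounding the total number $W$ of demand--witness pairs, and you get $\psi(x_i)\neq\psi(y_i)$ from the two demands of row $i$ instead of from the edge $x_iy_i$.
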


\begin{proof}
The rim is induced, and $\psi$ is in state \textup{(D)} by
\eqref{shell:D}. If some $p\in L_i$ were absent from the vertex colors
of the three exterior neighbors of $u_i$, Proposition~\ref{wheel:witness}
would restore $v$ and give a proper total eight-coloring of $G$,
contrary to the choice of $G$. Thus every demand has an exterior witness.

There are eight demands, two in each of the four rows. For each
$i\in\{0,1,2,3\}$ and $p\in L_i$, let
\[
 n_{i,p}=
 \bigl|\{z\in\{x_{i-1},y_i,x_i\}:\psi(z)=p\}\bigr|.
\]
The total number of demand--witness pairs is
\[
 W=\sum_{i=0}^{3}\ \sum_{p\in L_i} n_{i,p}\geq8,
\]
because every demand has at least one witness.

To bound $W$ from above, group its contributions according
to the eight labels $x_0,\ldots,x_3,\allowbreak y_0,\ldots,y_3$. The label $y_i$
appears only in row $i$ and has one color, so it contributes at most
one. The label $x_i$ appears in rows $i$ and $i+1$. Since
$L_i\cap L_{i+1}=\varnothing$, its color can meet at most one demand
in these two rows combined. Thus $W\leq4+4=8$. Equality follows:
each of the eight integers $n_{i,p}$ equals one, and each of the
eight labels contributes exactly one.

The row index is retained in this count. Different labels may still
denote the same vertex in different rows; a witness in each such row
is counted for that row's demand. Within a single row, the three
vertices are distinct by \eqref{shell:link}, so no demand--witness
pair is counted twice. The argument therefore does not assume
distinctness of all eight exterior vertices.

In particular $\psi(y_i)\in L_i$, and precisely one of the two
$x$-positions in row $i$ supplies its other color.  Put
$\epsilon_i=1$ if $\psi(x_i)\in L_i$ and $\epsilon_i=0$ if
$\psi(x_i)\in L_{i+1}$. The equality $W=8$ shows these are the
only possibilities.  Row $i$ receives
$\epsilon_i+(1-\epsilon_{i-1})$ witnesses from its $x$-positions, and
this number is one. Hence $\epsilon_i=\epsilon_{i-1}$ for every $i$,
so either all $\epsilon_i$ equal one or all equal zero.

If all equal one, then $\psi(x_i)\in L_i$ already holds for every $i$.
If all equal zero, reverse the direction of numbering around the rim.
More explicitly, with all subscripts taken modulo four, put
\[
 \widetilde u_i=u_{-i},\qquad
 \widetilde x_i=x_{-i-1},\qquad
 \widetilde y_i=y_{-i},\qquad
 \widetilde L_i=L_{-i}.
\]
The vertex $\widetilde x_i$ is the exterior third vertex on the new
rim edge $\widetilde u_i\widetilde u_{i+1}$, as required. Since the
old $\epsilon_{-i-1}$ equals zero,
\[
 \psi(\widetilde x_i)=\psi(x_{-i-1})
 \in L_{-i}=\widetilde L_i.
\]
Thus every new $\epsilon_i$ equals one. This changes only the labels,
not the graph or its coloring; henceforth we drop the tildes and use
the new indexing for all indexed quantities.

In either case, both $\psi(x_i)$ and $\psi(y_i)$ belong to $L_i$.
The edge $x_i y_i$ belongs to $G-v$ by \eqref{shell:link}, so the
proper coloring $\psi$ gives $\psi(x_i)\ne\psi(y_i)$.
As $|L_i|=2$, these two colors exhaust $L_i$, proving
\eqref{shell:orientation}.
\end{proof}

\subsection{Distinct exterior vertices}

\begin{lemma}\label{shell:distinct}
Consider the shell of the chosen vertex $v\in V_4(G)$ in the minimum
counterexample $G$, with the labels defined in \eqref{shell:link}.
The eight vertices $x_0,x_1,x_2,x_3,y_0,y_1,y_2,y_3$ are pairwise
distinct.  The rim neighbors of $x_i$ are precisely $u_i,u_{i+1}$;
the only rim neighbor of $y_i$ is $u_i$.
\end{lemma}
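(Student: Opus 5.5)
We combine the coloring information of Lemma~\ref{shell:witnesses} with the structural facts from Proposition~\ref{red:minimal}: every triangle of $G$ is facial, $G$ is four-connected, and each $u_i$ has degree six with link \eqref{shell:link}. Colors settle most coincidences quickly. By \eqref{shell:orientation}, $\psi(x_i),\psi(y_i)\in L_i$. Since $L_0=L_2=P$ and $L_1=L_3=Q$ are disjoint, any label of row-type $i$ differs in color from any label of row-type $i\pm1$. So a coincidence between labels attached to adjacent indices is impossible; this covers $x_i=x_{i+1}$, $x_i=y_{i+1}$, $y_i=y_{i+1}$, and so on. What remains are coincidences between labels whose indices differ by two, namely $x_i=x_{i+2}$, $y_i=y_{i+2}$ and $x_i=y_{i+2}$, where the color sets coincide. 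These I would exclude by planarity.

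\textbf{The $y$-coincidences.} Suppose $y_i=y_{i+2}=:y$. Then $y$ is adjacent to the opposite rim vertices $u_i$ and $u_{i+2}$. The cycle $u_i\,y\,u_{i+2}\,v\,u_i$ is a four-cycle through $v$ separating $u_{i+1}$ from $u_{i+3}$. So $\{u_i,u_{i+2},y,v\}$ separates, and $\{u_i,u_{i+2},y\}$ would then separate $u_{i+1}$ from $u_{i+3}$ in $G-v$. To make this a genuine contradiction with four-connectivity, I would instead look at the sides of the closed curve $u_i\,y\,u_{i+2}\,u_{i+1}\,u_i$. It contains the triangles $u_iu_{i+1}y$ and $u_{i+1}u_{i+2}y$ only if $y$ is adjacent to $u_{i+1}$, and that is excluded because $u_{i+1}$ has exactly the link \eqref{shell:link}, whose exterior members are $x_i,y_{i+1},x_{i+1}$. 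A cleaner route is the following. The vertex $y$ lies on one side of the rim. The curve $y\,u_i\,v\,u_{i+2}\,y$ splits the sphere, and $u_{i+1}$ and $u_{i+3}$ lie on different sides. The separating set $\{y,u_i,u_{i+2}\}$ then disconnects $G-v$. Since $v$ is adjacent to both sides, this only shows $\{y,u_i,u_{i+2},v\}$ is a cut, so I would sharpen it to a separating triangle. The rim is induced, so $u_iu_{i+2}\notin E$, and a triangle is not available directly.

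\textbf{The main obstacle.} Instead I would use the faces. In the link of $u_i$, $y$ is flanked by $x_{i-1}$ and $x_i$. On the side of the curve containing $u_{i+1}$, the vertex $u_{i+1}$ together with $x_i$, $y_{i+1}$ and $x_{i+1}$ lies in a disk bounded by $y\,u_i\,u_{i+1}\,u_{i+2}\,y$ whose interior contains $y_{i+1}$. That disk's boundary vertex $y$ is joined to $x_i$ and $x_{i+1}$, so $\{y,u_{i+1}\}$ together with the path $x_i y_{i+1} x_{i+1}$ forms a small disk. The three-cycle $y\,x_i\,y_{i+1}$, or the four-cycle $y\,x_i\,y_{i+1}\,x_{i+1}$, together with degree counts would then force either a separating triangle or a degree contradiction. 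Ruling this out carefully is the step I expect to need case work. An alternative for this step is color-based. The vertex $y=y_i$ is adjacent to $x_{i-1},x_i,x_{i+1},x_{i+2}$, whose colors include both elements of $Q$, both of $P$, and $\psi(y)\in P$. The edges $yu_i$ and $yu_{i+2}$ have colors in $\mathcal H$, distinct from $c_j$. I would first try counting the colors forced at $y$ to get a contradiction with $d(y)\le6$ and properness; failing that, I would fall back on the planar argument.

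\textbf{The $x$-coincidences and the rim neighbors.} The cases $x_i=x_{i+2}$ and $x_i=y_{i+2}$ go the same way. Such a vertex is adjacent to both $u_i$ and $u_{i+2}$, or to three rim vertices, and yields the same separation. Finally, the rim neighbors of $x_i$ and $y_i$ are read off from the links. A vertex adjacent to $u_j$ must appear in the link of $u_j$, whose exterior part is $\{x_{j-1},y_j,x_j\}$. Once the eight labels are known to be distinct, $x_i$ appears exactly in the links of $u_i$ and $u_{i+1}$, and $y_i$ appears only in that of $u_i$.
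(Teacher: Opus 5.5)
\textbf{There is a genuine gap: none of the three coincidences at index distance two is actually excluded.}

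What you have is correct as far as it goes. Your color argument for coincidences between adjacent indices is sound. By \eqref{shell:orientation} the colors of $x_i$ and $y_i$ lie in $L_i$, and $L_i\cap L_{i+1}=\varnothing$, so no label of index $i$ can denote the same vertex as a label of index $i\pm1$. This is a more uniform route than the paper's, which uses the links and facial triangles for these cases. Reading off the rim neighbors at the end is also fine.

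The gap is in $y_i=y_{i+2}$, $x_i=x_{i+2}$ and $x_i=y_{i+2}$, which are the real content of the lemma. For $y_i=y_{i+2}$ you explicitly leave the step open. For the two $x$-cases you appeal to ``the same separation''. But, as you noticed yourself, that separation only exhibits the four-vertex cut $\{y,u_i,u_{i+2},v\}$, which is compatible with four-connectivity. Your fallback color count at $y$ also starts from a false premise. Since $y_i=y_{i+2}$ and $L_i=L_{i+2}$, \eqref{shell:orientation} forces $\psi(x_i)=\psi(x_{i+2})$, so the two $x$-neighbors of $y$ with colors in $L_i$ share a single color.

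The missing ingredient is the facial-triangle property of Proposition~\ref{red:minimal}, applied to triangles through a rim edge. A vertex adjacent to both $u_j$ and $u_{j+1}$ spans a triangle with that edge. That triangle must be one of the two faces on the edge, so the vertex is $v$ or $x_j$.

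\emph{The $x$-cases.} If $x_i=x_{i+2}$ or $x_i=y_{i+2}$, the common vertex is adjacent to $u_{i+1}$ and $u_{i+2}$. It therefore equals $x_{i+1}$, which your color argument already excludes.

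\emph{The case $z=y_i=y_{i+2}$.}
\begin{enumerate}
\item The now-distinct vertices $u_i,u_{i+2},x_{i-1},x_i,x_{i+1},x_{i+2}$ are all the neighbors of $z$.
\item Your curve $z\,u_i\,v\,u_{i+2}\,z$ puts $x_i,x_{i+1}$ on the side of $u_{i+1}$ and $x_{i-1},x_{i+2}$ on the other side.
\item Hence the arc of the link of $z$ from $u_i$ to $u_{i+2}$ on the side of $u_{i+1}$ is $u_i,x_i,x_{i+1},u_{i+2}$, and so $x_ix_{i+1}\in E(G)$.
\item Then $u_{i+1}x_ix_{i+1}$ is a triangle that is not facial, because $y_{i+1}$ separates $x_i$ from $x_{i+1}$ in the link \eqref{shell:link} of $u_{i+1}$. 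This contradicts Proposition~\ref{red:minimal}.
\end{enumerate}
The paper reaches the same edge differently. The two remaining link edges at $z$ must cross-pair the endpoints of the paths $x_{i-1}u_ix_i$ and $x_{i+1}u_{i+2}x_{i+2}$. The pairing containing $x_ix_{i+2}$ is excluded by $\psi(x_i)=\psi(x_{i+2})$, and the other pairing contains $x_ix_{i+1}$.
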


\begin{proof}
Consecutive $x$-vertices occur in a common simple link and are distinct.
If $x_i=x_{i+2}=z$, then $z$ is adjacent to all four rim vertices.
The triangle $zu_{i+1}u_{i+2}$ is facial, since every triangle of $G$
is facial.  Its third vertex is not $v$, so it must be the unique
exterior third vertex $x_{i+1}$ on that rim edge.  This identifies
consecutive $x$-vertices, a contradiction.  All four $x_i$ are distinct.

The two links containing $x_i$ already show that $x_i\ne y_i,y_{i+1}$.
If $x_i=y_{i+2}$, its adjacency to $u_{i+1}$ and $u_{i+2}$ and the same
facial-triangle argument give $x_i=x_{i+1}$, impossible.  If
$x_i=y_{i-1}$, use the edge $u_{i-1}u_i$ to obtain the equally
impossible $x_i=x_{i-1}$.  Hence no $x$-vertex equals a $y$-vertex.

If $y_i=y_{i+1}$, adjacency to the consecutive rim vertices forces it
to be $x_i$, which is excluded.  It remains to rule out
$y_i=y_{i+2}=z$.  In that case $z$ is adjacent to the six distinct
vertices
\[
 u_i,\ u_{i+2},\ x_{i-1},\ x_i,\ x_{i+1},\ x_{i+2}.
\]
They exhaust its neighbors because its degree is at most six.
Its simple cyclic link contains the two disjoint three-vertex paths
\[
 x_{i-1},u_i,x_i
 \qquad\text{and}\qquad
 x_{i+1},u_{i+2},x_{i+2},
\]
since the indicated triangles are faces. In this link, the middle
vertices $u_i,u_{i+2}$ already have their two incident link edges.
Each of the four endpoints needs exactly one further link edge,
so the remaining two link edges must pair these endpoints.

If each path's own endpoints were joined, those two edges would be
$x_{i-1}x_i$ and $x_{i+1}x_{i+2}$. The link would then consist of
the two vertex-disjoint triangles
\[
 x_{i-1}u_i x_i x_{i-1}
 \qquad\text{and}\qquad
 x_{i+1}u_{i+2}x_{i+2}x_{i+1}.
\]
All six vertices would already have degree two in this union,
so no further link edge could connect the triangles. This contradicts
the fact that the link of $z$ is a single simple six-cycle.
Consequently, each remaining link edge must join an endpoint of
one path to an endpoint of the other. The two possibilities for
these link edges are
\[
 \{x_{i-1}x_{i+1},\,x_i x_{i+2}\}
 \qquad\text{or}\qquad
 \{x_{i-1}x_{i+2},\,x_i x_{i+1}\}.
\]

By \eqref{shell:orientation}, the equality $y_i=y_{i+2}$, together with
$L_i=L_{i+2}$, implies $\psi(x_i)=\psi(x_{i+2})$: both are the other
member of the same two-set.  The cross-pairing containing
$x_i x_{i+2}$ is impossible because it would join identically colored
vertices. The other pairing contains $x_i x_{i+1}$. By
\eqref{shell:link}, the two paths from $x_i$ to $x_{i+1}$ in the
complete cyclic link of $u_{i+1}$ are
\[
 x_i-y_{i+1}-x_{i+1}
 \qquad\text{and}\qquad
 x_i-u_i-v-u_{i+2}-x_{i+1}.
\]
Both paths have internal vertices, so $x_i$ and $x_{i+1}$ are not
consecutive in this link. A triangular face incident with $u_{i+1}$
must have its other two vertices consecutive in that link.
Hence the triangle $u_{i+1}x_i x_{i+1}$ would not be facial.
This contradicts Proposition~\ref{red:minimal}
and excludes the final identification.

Finally, \eqref{shell:link} gives the entire neighbor set of each
rim vertex:
\[
 N_G(u_j)=\{v,u_{j-1},x_{j-1},y_j,x_j,u_{j+1}\}.
\]
Since all eight exterior vertices are pairwise distinct and lie outside
the rim, $x_i\in N_G(u_j)$ is possible only as $x_{j-1}$ or $x_j$,
giving $j=i+1$ or $j=i$. Similarly, $y_i\in N_G(u_j)$ forces
$y_i=y_j$, hence $j=i$. All these specified adjacencies do occur.
Since $N_G(v)=\{u_0,u_1,u_2,u_3\}$, we have proved
\[
 N_G(x_i)\cap N_G(v)=\{u_i,u_{i+1}\},
 \qquad
 N_G(y_i)\cap N_G(v)=\{u_i\}.
\]
\end{proof}

\begin{corollary}\label{shell:no-four-cycle}
Let $G$ be the minimum counterexample fixed above and let $v\in V_4(G)$.
There is no induced four-cycle of $G$ containing $v$.
\end{corollary}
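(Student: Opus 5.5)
The plan is a direct case check using the neighborhood description at the end of Lemma~\ref{shell:distinct}; no coloring argument should be needed. Suppose, for a contradiction, that $C'$ is an induced four-cycle of $G$ through $v$, and write it as $v\,a\,w\,b\,v$. Then $a,b\in N_G(v)=\{u_0,u_1,u_2,u_3\}$. Inducedness of $C'$ gives two further facts: $a$ and $b$ are nonadjacent, and $w$ is not adjacent to $v$.

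First I will pin down $a$ and $b$. Consecutive rim vertices $u_j,u_{j+1}$ are adjacent along the rim, so $a,b$ cannot be consecutive. Hence $\{a,b\}=\{u_i,u_{i+2}\}$ for some $i$, which is consistent with the rim being induced, as shown at the start of this section. Next, $w$ is a common neighbor of $u_i$ and $u_{i+2}$ with $w\ne v$ and $w\notin N_G(v)$. This rules out $w\in\{u_{i-1},u_{i+1}\}$, since each of these is adjacent to $v$ and would give the chord $vw$ of $C'$.

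It remains to treat exterior $w$. By \eqref{shell:link}, $N_G(u_i)=\{v,u_{i-1},x_{i-1},y_i,x_i,u_{i+1}\}$, so $w$ is one of $x_{i-1},y_i,x_i$. Lemma~\ref{shell:distinct} records the rim neighbors of these vertices. For $x_{i-1}$ they are exactly $u_{i-1},u_i$. For $x_i$ they are exactly $u_i,u_{i+1}$. For $y_i$ the only one is $u_i$. None of these sets contains $u_{i+2}$, so $w$ is not adjacent to $u_{i+2}$, a contradiction. Hence no induced four-cycle contains $v$.

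The only step of any substance is this last one, excluding an exterior vertex adjacent to two opposite rim vertices. It is exactly the distinctness content of Lemma~\ref{shell:distinct}, which already excludes identifications such as $x_i=x_{i+2}$ and $y_i=y_{i+2}$. So I expect no real obstacle beyond quoting the final displayed identities of that lemma.
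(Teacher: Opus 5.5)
Your proof is correct and follows the paper's own argument. Inducedness forces the two cycle-neighbors of $v$ to be opposite rim vertices and the fourth vertex to be exterior, and Lemma~\ref{shell:distinct} then rules out a common exterior neighbor of opposite rim vertices. The paper phrases this last step as disjointness of the two exterior-neighbor triples, and you phrase it through the listed rim neighbors of $x_{i-1},y_i,x_i$; these are the same fact.
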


\begin{proof}
Such a cycle would have the form $vu_i z u_j v$.  If $u_i,u_j$ were
consecutive in the rim, their rim edge would be a chord.  Thus they
must be opposite.  If $z$ were another rim vertex, $vz$ would be a
chord.  Otherwise $z$ would belong to the exterior-neighbor triples
of two opposite rim vertices.  Those triples are disjoint by
Lemma~\ref{shell:distinct}, which is impossible.
\end{proof}

\subsection{Boundary states and degree bounds}

The witness count gives color information on the exterior vertices.
To turn that information into degree bounds, we change the rim
coloring while keeping the entire exterior fixed.  The two states
below have the same missing lists, so the witness argument applies
in both states.

\begin{lemma}\label{shell:states}
Let $G$ be the minimum counterexample fixed above, let $v\in V_4(G)$
be the chosen vertex, and let $\psi$ be the coloring constructed above.
There is a second proper total coloring $\psi'$ of $G-v$ which changes
only the four rim vertices and four rim edges of $\psi$, has the same lists $L_i$,
and satisfies
\begin{equation}\label{shell:halfturn}
 h_i'=\psi'(u_i)=h_{i+2}\quad\text{for every }i.
\end{equation}
\end{lemma}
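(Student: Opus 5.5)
The plan is to keep every nonrim edge and every exterior vertex fixed and to recolor only the rim. I would first read off from state (D) in \eqref{shell:D} exactly which colors the fixed elements at $u_i$ forbid. Then I would choose new rim-vertex colors $h'_i=h_{i+2}$ and new rim-edge colors so that each rim color set $A'_i$ again equals $L_i\cup\{c'_{i-1},c'_i\}$ with the \emph{same} $L_i$.

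\textbf{Step 1 (fixed forbidden colors at $u_i$).} In state (D), $A_i=L_i\cup\{c_{i-1},c_i\}$ with $L_i\subseteq R$ and $c_{i-1},c_i\in\mathcal H$, and $F_i=\Omega\setminus A_i$ has four elements. Hence
\[
 F_i=\bigl(\mathcal H\setminus\{c_{i-1},c_i\}\bigr)\cup(R\setminus L_i).
\]
Since $h_i\in F_i$, we may write $\mathcal H\setminus\{c_{i-1},c_i\}=\{h_i,g_i\}$. So the three nonrim edges at $u_i$ carry $g_i$ and the two colors of $R\setminus L_i$.

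The rim colors $c_0,\dots,c_3$ are distinct and exhaust $\mathcal H$. The color $h_{i+2}$ cannot appear on $c_{i+1}$ or $c_{i+2}$, the two rim edges at $u_{i+2}$. Therefore $h_{i+2}\in\{c_{i-1},c_i\}$, and in particular $h_{i+2}\ne g_i$.

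\textbf{Step 2 (new vertex colors are proper).} Set $\psi'(u_i)=h_{i+2}$. This color differs from $g_i$ and lies in $\mathcal H$, so it avoids every nonrim edge color at $u_i$. The exterior neighbors $x_{i-1},y_i,x_i$ have colors in $L_{i-1}\cup L_i\subseteq R$ by \eqref{shell:orientation}, so they cause no conflict. The new colors on the rim neighbors are $h_{i+1}$ and $h_{i+3}$, which are distinct from $h_{i+2}$.

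\textbf{Step 3 (rim edges).} The new fixed forbidden set becomes
\[
 F'_i=\{h_{i+2},g_i\}\cup(R\setminus L_i).
\]
To keep $L'_i=L_i$, the two rim edges at $u_i$ must receive exactly the two colors of
\[
 \mathcal H\setminus\{h_{i+2},g_i\}=\{h_i,\;o_i\},
\]
where $o_i$ is the member of $\{c_{i-1},c_i\}$ other than $h_{i+2}$. Colors from this set automatically avoid the new endpoint colors and all fixed forbidden colors, so properness at $u_i$ reduces to distinctness.

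I would try the two uniform candidates $c'_i=h_i$ for every $i$, or $c'_i=h_{i+1}$ for every $i$. Each assigns to the rim edges at $u_i$ the set $\{h_{i-1},h_i\}$ or $\{h_i,h_{i+1}\}$, respectively. Either candidate is a proper rim coloring: the edge $u_iu_{i+1}$ avoids its new endpoint colors $h_{i+2},h_{i+3}$, and consecutive edges differ. If these fail, I would search among all bijections from rim edges to $\mathcal H$ subject to the pair constraints.

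\textbf{Main obstacle.} The main obstacle is consistency: I must show that one choice satisfies $\{c'_{i-1},c'_i\}=\{h_i,o_i\}$ simultaneously at all four $i$. I would do this by a short case analysis of the cyclic pattern relating the permutation $i\mapsto c_i$ to $i\mapsto h_i$.

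The constraints $h_i\in\{c_{i+1},c_{i+2}\}$ for all $i$ leave essentially two patterns up to symmetry: the rotation $h_i=c_{i+1}$ for all $i$, or $h_i=c_{i+2}$ for all $i$, plus possibly mixed ones. In each pattern, $o_i$ and $g_i$ are computed explicitly, and I would exhibit the matching rim coloring, expecting $c'_i=c_{i+2}$ (the half-turn of the old rim coloring) to work.

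Once the rim coloring is in place, $L'_i=A'_i\setminus\{c'_{i-1},c'_i\}=L_i$ follows by construction. No other element changes, so $\psi'$ is proper.
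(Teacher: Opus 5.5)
Your Steps 1--3 are sound and follow the paper's setup: your $g_i$ is the paper's $f_i$, and requiring $\{c'_{i-1},c'_i\}=\{h_i,o_i\}$ at every $i$ is exactly what keeps $L'_i=L_i$. The gap is that the content of the lemma is the existence of rim-edge colors meeting these four conditions at once, and you leave this as a promised case analysis whose predicted outcome is wrong.

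The half-turn $c'_i=c_{i+2}$ never works. It puts $\{c_{i+1},c_{i+2}\}=\mathcal H\setminus\{c_{i-1},c_i\}=\{h_i,g_i\}$ on the two rim edges at $u_i$. One of them therefore receives $g_i$, which is the color of a fixed nonrim edge at $u_i$. You also leave open whether ``mixed'' patterns occur, and you do not say which of your two uniform candidates succeeds when.

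The missing step is short.
\begin{itemize}
\item \textbf{Uniformity.} If $h_i=c_{i+1}$, then $h_{i-1}\in\{c_i,c_{i+1}\}$ together with $h_{i-1}\ne h_i$ forces $h_{i-1}=c_i$. Iterating gives $h_j=c_{j+1}$ for all $j$. Symmetrically, $h_i=c_{i+2}$ forces $h_{i+1}=c_{i+3}$, and hence $h_j=c_{j+2}$ for all $j$. So mixed patterns cannot occur.
\item \textbf{First pattern, $h_j=c_{j+1}$.} Here $h_{i+2}=c_{i-1}$, so $o_i=c_i$ and the required set at $u_i$ is $\{c_i,c_{i+1}\}$. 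This is realized by $c'_i=c_{i+1}=h_i$.
\item \textbf{Second pattern, $h_j=c_{j+2}$.} Here $h_{i+2}=c_i$, so $o_i=c_{i-1}$ and the required set is $\{c_{i-1},c_{i+2}\}$. This is realized by $c'_i=c_{i-1}=h_{i+1}$.
\end{itemize}
In each pattern your other candidate coincides with the half-turn and fails. With this case split inserted (it is precisely the paper's configurations (I) and (II)), your verification of properness and of $L'_i=L_i$ goes through.
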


\begin{proof}
Recall that $h_i=\psi(u_i)$ and $c_i=\psi(u_i u_{i+1})$, with all
indices taken modulo four. The four $h_i$ are pairwise distinct,
and so are the four $c_i$, with both sets equal to $\mathcal H$.
Since $u_i$ is incident with the two rim edges of colors $c_{i-1},c_i$,
properness gives $h_i\in\{c_{i+1},c_{i+2}\}$.
We first show that the same one of these two formulas holds at every
index.

Suppose $h_i=c_{i+1}$ for some $i$. At the preceding index $i-1$,
the two possible formulas are $h_{i-1}=c_i$ and $h_{i-1}=c_{i+1}$.
The latter would give $h_{i-1}=h_i$, contradicting the distinctness
of the rim-vertex colors. Hence $h_{i-1}=c_i$.
Repeating this implication at indices $i-1,i-2,i-3$ proves
$h_j=c_{j+1}$ for every $j$.

If instead $h_i=c_{i+2}$, then at the next index $i+1$ the possibilities
are $h_{i+1}=c_{i+2}$ and $h_{i+1}=c_{i+3}$.
The former would give $h_{i+1}=h_i$, so $h_{i+1}=c_{i+3}$.
Repeating in the increasing-index direction gives
$h_j=c_{j+2}$ for every $j$. Thus the two possible configurations are
\[
 \begin{aligned}
 \textup{(I)}\quad &h_i=c_{i+1} &&\text{for every }i,\\
 \textup{(II)}\quad &h_i=c_{i+2} &&\text{for every }i.
 \end{aligned}
\]

We next determine the colors of the three edges from $u_i$ to its
exterior neighbors, namely $u_i x_{i-1},u_i y_i,u_i x_i$.
The vertex $u_i$ and its five incident edges in $G-v$ have six distinct
colors, whose set is
$\Omega\setminus L_i=\mathcal H\cup(R\setminus L_i)$.
The vertex and the two rim edges use the three distinct colors
$h_i,c_{i-1},c_i\in\mathcal H$. Therefore the three exterior edges
use the two colors of $R\setminus L_i$ and the single color $f_i$
defined by $\{f_i\}=\mathcal H\setminus\{h_i,c_{i-1},c_i\}$.
In configuration \textup{(I)}, $f_i=c_{i+2}$; in configuration
\textup{(II)}, $f_i=c_{i+1}$.

Define $\psi'$ by changing the rim-vertex and rim-edge colors
simultaneously. Write $h_i'=\psi'(u_i)$ and
$c_i'=\psi'(u_i u_{i+1})$, and set
\[
 \begin{aligned}
 \textup{(I)}\quad &c_i'=c_{i+1},\qquad h_i'=c_{i-1}
     &&\text{for every }i,\\
 \textup{(II)}\quad &c_i'=c_{i-1},\qquad h_i'=c_i
     &&\text{for every }i.
 \end{aligned}
\]
On every other vertex and edge of $G-v$, let $\psi'$ agree with $\psi$.
In configuration \textup{(I)}, $h_{i+2}=c_{i+3}=c_{i-1}=h_i'$.
In configuration \textup{(II)}, $h_{i+2}=c_{i+4}=c_i=h_i'$.
This proves \eqref{shell:halfturn}.

To verify that $\psi'$ is proper, first consider the vertex $u_i$
and all five edges incident with it in $G-v$. Their four colors
belonging to $\mathcal H$ are given explicitly by
\[
 (h_i',c_{i-1}',c_i',f_i)=
 \begin{cases}
  (c_{i-1},c_i,c_{i+1},c_{i+2}),&\text{in configuration \textup{(I)}},\\
  (c_i,c_{i+2},c_{i-1},c_{i+1}),&\text{in configuration \textup{(II)}}.
 \end{cases}
\]
Each tuple consists of the four distinct colors of $\mathcal H$.
The other two incident edge colors are the two distinct members of
$R\setminus L_i$, and are unchanged. Thus the color of $u_i$ and
its five incident edge colors are pairwise distinct.

For vertex adjacency, the new rim-vertex colors $h_i'=h_{i+2}$
are pairwise distinct. Every exterior neighbor of a rim vertex
has its unchanged color in $R$, by Lemma~\ref{shell:witnesses},
whereas each new rim-vertex color belongs to $\mathcal H$.
Thus adjacent vertices involving a rim vertex also have different
colors. All constraints involving only unchanged elements continue
to hold because $\psi$ was proper. This proves that $\psi'$ is a
proper total coloring of $G-v$.

Finally, let $L_i'$ be the spoke list computed from $\psi'$. The six
colors on $u_i$ and its incident edges still form
$\mathcal H\cup(R\setminus L_i)$, so
\[
 L_i'=\Omega\setminus\bigl(\mathcal H\cup(R\setminus L_i)\bigr)=L_i.
\]
\end{proof}

\begin{lemma}\label{shell:saturation}
For the chosen vertex $v\in V_4(G)$ of the minimum counterexample $G$,
use the coloring $\psi$ and shell notation constructed above.
If $u_i z$ is an edge with $z$ exterior to the rim, every color of
$L_i$ occurs on $z$ or on an edge incident with $z$.  In particular
all four colors of $R$ occur on $x_i$ or its incident edges, and
$\psi(x_i)$ occurs on an edge incident with $y_i$.
\end{lemma}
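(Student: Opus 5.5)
The plan is to argue by contradiction, recoloring the single nonrim edge $u_iz$ and then applying Theorem~\ref{wheel:local} to the new coloring of $G-v$.

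Suppose some $p\in L_i$ is absent from $z$ and from every edge incident with $z$. Let $q=\psi(u_iz)$. Recolor $u_iz$ with $p$.
\begin{enumerate}
\item At $u_i$ this is legal because $p\in L_i$.
\item At $z$ it is legal by the assumption on $p$.
\end{enumerate}
The result $\psi^\ast$ is again a proper total coloring of $G-v$. It changes no vertex color and no rim edge, so $\mathcal H$, $R$ and the rim colors $c_j\in\mathcal H$ are unchanged. Only the list at $u_i$ changes, to $L_i^\ast=(L_i\setminus\{p\})\cup\{q\}$; the lists $L_j$ with $j\ne i$ are unchanged unless $z$ is itself a rim vertex, which it is not. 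Since $u_i$ has degree six, $|L_i^\ast|=2$.

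Next I will show that $\psi^\ast$ is in neither state of Theorem~\ref{wheel:local}.
\begin{enumerate}
\item State \textup{(U)} fails because $A_i^\ast=L_i^\ast\cup\{c_{i-1},c_i\}$ contains colors of $\mathcal H$ and so differs from $R$.
\item For state \textup{(D)} there are two cases, according to where the released color $q$ lies. From the proof of Lemma~\ref{shell:states}, the exterior edges at $u_i$ carry the two colors of $R\setminus L_i$ together with one color $f_i\in\mathcal H$.
\begin{enumerate}
\item If $q=f_i\in\mathcal H$, then $L_i^\ast\not\subseteq R$, which is incompatible with \textup{(D)}. Alternatively, Lemma~\ref{wheel:boundary} applies directly.
\item If $q\in R\setminus L_i$, then $q$ lies in the list of the other alternating class, $L_{i\pm1}$. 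Hence $L_i^\ast$ meets $L_{i+1}$, so consecutive lists are no longer disjoint. Also $L_i^\ast\ne L_{i+2}$. Either fact rules out the alternating pattern of \textup{(D)}.
\end{enumerate}
\end{enumerate}

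Since $|\mathcal H|=4$ and $\psi^\ast$ is in neither obstruction state, Theorem~\ref{wheel:local} yields a local restoration of $\psi^\ast$. That is a total eight-coloring of $G$, contradicting the choice of $G$. The step needing the most care is checking that the recoloring really leaves every other spoke list intact. Any other change would have to come from $z$ coinciding with a rim vertex, and this is excluded because $z$ is exterior and the rim is induced. The rest is bookkeeping.

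For the particular claims:
\begin{enumerate}
\item The vertex $x_i$ is adjacent to both $u_i$ and $u_{i+1}$ (Lemma~\ref{shell:distinct}). Applying the main statement at both rim vertices, every color of $L_i\cup L_{i+1}=P\cup Q=R$ appears on $x_i$ or on an edge at $x_i$.
\item By \eqref{shell:orientation}, $\psi(x_i)\in L_i$ and $\psi(y_i)$ is the other member of $L_i$. Applying the statement to the edge $u_iy_i$, the color $\psi(x_i)$ must appear at $y_i$. It is not the vertex color of $y_i$, so it lies on an edge incident with $y_i$.
\end{enumerate}
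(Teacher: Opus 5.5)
Your proof is correct and follows the paper's argument: recolor $u_iz$ with the absent color $p$, and conclude via Lemma~\ref{wheel:boundary} when the released color lies in $\mathcal{H}$. The only difference is the case where the released color $q$ lies in $R\setminus L_i=L_{i+1}$: the paper recolors the rim edge $u_iu_{i+1}$ with $q$ to release $c_i\in\mathcal{H}$, whereas you rule out states \textup{(U)} and \textup{(D)} and invoke the converse of Theorem~\ref{wheel:local}, whose proof performs exactly that rim-edge recoloring.
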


\begin{proof}
Suppose $p\in L_i$ were absent from $z$ and every edge at $z$.
Recolor $u_i z$ with $p$.  This is legal at $u_i$ by the definition
of $L_i$, and legal at $z$ by the assumed absence.  Let $t$ be the old
color of $u_i z$.  If $t\in \mathcal{H}$, the new missing list at $u_i$ meets
$\mathcal{H}$, and Lemma~\ref{wheel:boundary} restores $v$.
This is a contradiction.

Otherwise $t\in R\setminus L_i$.  After the change it is missing at
$u_i$, and it was already missing at $u_{i+1}$, because
$L_{i+1}=R\setminus L_i$.  Recolor the rim edge $u_i u_{i+1}$ with
$t$.  This uses current missing lists at both endpoints and is proper.
Its old color lies in $\mathcal{H}$ and now enters a missing list; again
Lemma~\ref{wheel:boundary} restores $v$.  Both alternatives are
impossible, proving the first assertion.

Apply the first assertion to the edge $u_i x_i$: every color of
$L_i$ occurs on $x_i$ or on an edge incident with $x_i$.
Apply it again to $u_{i+1}x_i$, using index $i+1$ and the same
exterior vertex $z=x_i$: every color of $L_{i+1}$ occurs on
$x_i$ or on an edge incident with $x_i$ as well. Since
$L_i\cup L_{i+1}=R$ by \eqref{shell:D}, these two applications give
\[
 R=L_i\cup L_{i+1}
 \subseteq \{\psi(x_i)\}\cup
 \{\psi(e):e\in E(G-v),\ x_i\in V(e)\}.
\]
Similarly, apply the first assertion to $u_i y_i$.
By \eqref{shell:orientation},
$L_i=\{\psi(x_i),\psi(y_i)\}$ with $\psi(x_i)\ne\psi(y_i)$.
Hence the required occurrence of $\psi(x_i)$ must be on an edge
incident with $y_i$, since $y_i$ itself has the different color
$\psi(y_i)$.
\end{proof}

Say that a vertex is \emph{frozen in a given total coloring} if its
color cannot be changed while all other elements retain their colors.
This refers only to an individual vertex change; it does not assert
that the entire coloring admits no more general recoloring.

\begin{lemma}\label{shell:freezing}
Consider the shell of the chosen vertex $v\in V_4(G)$ in the minimum
counterexample $G$. Let $\psi$ be the coloring constructed above and
let $\psi'$ be the coloring supplied by Lemma~\ref{shell:states}.
Each $x_i$ and $y_i$ is frozen in both $\psi$ and $\psi'$.
\end{lemma}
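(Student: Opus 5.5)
The plan is to argue by contradiction, using the uniqueness of exterior witnesses from Lemma~\ref{shell:witnesses} together with Proposition~\ref{wheel:witness}. I treat $\psi$ first and then observe that the same argument applies verbatim to $\psi'$.

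Suppose some $z\in\{x_i,y_i\}$ is not frozen in $\psi$. Then there is a proper total coloring $\psi^\ast$ of $G-v$ that agrees with $\psi$ everywhere except at the single vertex $z$, where $\psi^\ast(z)\neq\psi(z)$. By \eqref{shell:orientation}, $p:=\psi(z)$ belongs to $L_i$, and $z$ lies in row $i$. The equality $W=8$ in the proof of Lemma~\ref{shell:witnesses} gives $n_{i,p}=1$, so $z$ is the \emph{only} vertex among $x_{i-1},y_i,x_i$ colored $p$ in $\psi$. In $\psi^\ast$ the other two vertices of row $i$ keep their colors, and $z$ no longer has color $p$. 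Hence $p$ is absent from the vertex colors of all three exterior neighbors of $u_i$.

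Next I check that $\psi^\ast$ still satisfies the hypotheses of Proposition~\ref{wheel:witness}. The rim is induced, as established at the start of the section. The quantities $h_j$, $c_j$, $\mathcal H$, $R$ and $L_j$ depend only on the colors of the rim vertices and of the edges incident with them. By Lemma~\ref{shell:distinct}, $z$ is not a rim vertex, and no edge color has changed. So all these data coincide for $\psi^\ast$ and $\psi$, and $\psi^\ast$ is in state \textup{(D)} with the same $P,Q$ as in \eqref{shell:D}. Proposition~\ref{wheel:witness}, applied to $\psi^\ast$ with $p\in L_i$, then restores $v$ and yields a total eight-coloring of $G$. This contradicts the choice of $G$.

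For $\psi'$, Lemma~\ref{shell:states} shows that $\psi'$ changes only rim elements. Its rim-vertex colors are a permutation of $\mathcal H$, its rim-edge colors are the four distinct colors of $\mathcal H$, and its lists equal $L_i$. So $\psi'$ is again in state \textup{(D)} with the same alternating lists $P,Q$, and the exterior vertex colors are unchanged. In particular, \eqref{shell:orientation} and the uniqueness $n_{i,p}=1$ hold verbatim for $\psi'$; alternatively, rerun the counting argument of Lemma~\ref{shell:witnesses}, which uses only the state \textup{(D)} data and Proposition~\ref{wheel:witness}. The preceding argument therefore applies word for word to $\psi'$.

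The main point requiring care is that the single-vertex change really preserves state \textup{(D)} and the lists. This rests on $z$ being exterior to the rim and on no edge being recolored, both of which follow from Lemma~\ref{shell:distinct} and the definition of a frozen vertex.
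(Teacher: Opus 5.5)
Your proposal is correct and follows essentially the paper's own argument. Both use the unique witness from Lemma~\ref{shell:witnesses}, note that a single exterior-vertex recoloring leaves the rim data, the lists $L_i$ and state \textup{(D)} unchanged, and invoke Proposition~\ref{wheel:witness} for the contradiction. The case of $\psi'$ is then handled through Lemma~\ref{shell:states}, since the lists and exterior colors are unchanged there.
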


\begin{proof}
By Lemma~\ref{shell:witnesses}, each named exterior vertex is the unique
witness for a demand $(j,p)$ at some rim vertex.  If that exterior
vertex could be properly recolored, the color $p$ would disappear
from all exterior neighbors of $u_j$. Only an exterior vertex color
has changed, so the rim colors and all missing lists $L_i$ are
unchanged. The coloring therefore remains in state \textup{(D)}.
Since the rim is induced, Proposition~\ref{wheel:witness} would
restore $v$, contradicting the choice of $G$.

By Lemma~\ref{shell:states}, $\psi'$ is also a proper coloring in
state \textup{(D)}: the lists $L_j$ are unchanged, and its rim-vertex
and rim-edge colors each still form $\mathcal H$. The unchanged
exterior vertex colors give every demand the same unique witness.
The preceding argument therefore freezes each witness in $\psi'$ as well.
Both applications of Proposition~\ref{wheel:witness} concern $G-v$;
the auxiliary diagonal used to obtain $\psi$ was already deleted.
\end{proof}

\begin{theorem}\label{shell:separation}
Let $G\in\mathcal T_6$ be a counterexample of minimum order to total
eight-colorability. In the shell constructed above for any $v\in V_4(G)$,
each $x_i$ has degree six and
each $y_i$ has degree at least five.  Distinct four-vertices of $G$ are
at distance at least three.
\end{theorem}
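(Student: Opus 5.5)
The plan is to convert the freezing in Lemma~\ref{shell:freezing} into a count of colors around each exterior vertex, and to use the two colorings $\psi,\psi'$ of Lemma~\ref{shell:states} to double the number of colors that must be blocked. A vertex $z$ is frozen exactly when every color of $\Omega\setminus\{\psi(z)\}$, seven colors in all, occurs on a neighbor of $z$ or on an edge at $z$. By Lemma~\ref{shell:distinct}, the rim neighbors of $x_i$ are exactly $u_i,u_{i+1}$, and the only rim neighbor of $y_i$ is $u_i$. Passing from $\psi$ to $\psi'$ changes only the rim vertices and rim edges. So every other neighbor of $x_i$ or $y_i$, and every edge at them (including the edges to the rim vertices), keeps its color. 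The rim neighbors of $x_i$ carry $h_i,h_{i+1}$ in $\psi$ and $h_{i+2},h_{i+3}$ in $\psi'$. Freezing in $\psi$ therefore forces $h_{i+2},h_{i+3}$ onto the fixed exterior elements at $x_i$, and freezing in $\psi'$ forces $h_i,h_{i+1}$ there. Hence all four colors of $\mathcal H$ must occur on fixed non-rim neighbors of $x_i$ or on edges at $x_i$.

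For $x_i$ I would then tabulate the colors available on these fixed elements. Its non-rim neighbors include $y_i$ and $y_{i+1}$, which are its link neighbors via the faces in \eqref{shell:link}. Their colors lie in $L_i$ and $L_{i+1}$ respectively by \eqref{shell:orientation}, so both lie in $R$ and contribute nothing toward $\mathcal H$. The edges $u_ix_i$ and $u_{i+1}x_i$ have colors determined in the proof of Lemma~\ref{shell:states}: each is a member of $R\setminus L_i$ (resp.\ $R\setminus L_{i+1}$) or the single color $f_i$ (resp.\ $f_{i+1}$). Lemma~\ref{shell:saturation} adds the requirement that all of $R\setminus\{\psi(x_i)\}$ appears on edges at $x_i$ or on its neighbors. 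Suppose $d_G(x_i)=5$. Then the fixed elements are one further neighbor vertex, three non-rim edges and the two rim-side edges. I expect that fitting all four colors of $\mathcal H$ and the three remaining colors of $R$ into these, consistently with properness at $x_i$ (in particular, the two rim-side edges use at most one color of $\mathcal H$, as $f_i\ne f_{i+1}$ is forced or the other is in $R$), is impossible. This forces $d_G(x_i)=6$. Degree four is excluded even more easily, and also by Proposition~\ref{red:minimal}, since $x_i$ is adjacent to the six-vertex $u_i$ but would then have fewer elements available.

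For $y_i$ the same scheme applies with a single rim neighbor $u_i$, which carries $h_i$ in $\psi$ and $h_{i+2}$ in $\psi'$. Both colors must therefore appear on fixed elements at $y_i$, in addition to the other colors. The neighbors $x_{i-1}$ and $x_i$ are colored in $R$. Lemma~\ref{shell:saturation} also places $\psi(x_i)$ on an edge at $y_i$. If $d_G(y_i)=4$, the available fixed elements are too few to block seven colors with these constraints, which yields $d_G(y_i)\ge5$. The main obstacle is the exact bookkeeping in the case $d_G(x_i)=5$. There the raw count ($2d-2=8$ fixed elements against seven needed colors) leaves slack, so the argument must use that the colors of $y_i,y_{i+1}$ and of the two rim-side edges are pinned to $R$ or to the $f$-colors. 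I would first try to show that at most two fixed elements at $x_i$ can carry colors of $\mathcal H$ beyond the non-rim edges, giving a deficit.

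Finally, let $w\ne v$ be another four-vertex at distance at most two from $v$. By Proposition~\ref{red:minimal}, $w$ is not adjacent to $v$, since all neighbors of $v$ have degree six. So $w$ shares a neighbor $u_i$ with $v$. By \eqref{shell:link}, $w$ is then one of $x_{i-1},y_i,x_i$. Each of these has degree at least five by the first part, which is a contradiction. Hence distinct four-vertices are at distance at least three.
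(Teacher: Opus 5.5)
Your framework is the paper's. You freeze each exterior vertex in both $\psi$ and $\psi'$, and use the half-turn $h_i'=h_{i+2}$ to force colors of $\mathcal H$ onto elements common to both colorings. You then apply Lemma~\ref{shell:saturation} and finish with the distance argument via \eqref{shell:link}, which is correct. Your $y_i$ bound also works as sketched. If $d_G(y_i)=4$, the seven colors of $\Omega\setminus\{\psi(y_i)\}$ must be blocked by the $2d_G(y_i)-1=7$ elements at $y_i$ other than $u_i$. One color, $\psi(x_i)$, is repeated among them.

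The genuine gap is the bound for $x_i$, which is the heart of the statement. You only ``expect'' the case $d_G(x_i)=5$ to be impossible, and the means you propose for proving it do not work. First, you misquote Lemma~\ref{shell:saturation}. It puts every color of $R$ on $x_i$ \emph{or on an edge incident with $x_i$}, not ``on edges at $x_i$ or on its neighbors.'' Your weaker version is just the freezing condition again, which is exactly why you find slack (eight fixed elements for seven colors). Second, your proposed way to remove the slack is unsupported. You claim $u_ix_i$ and $u_{i+1}x_i$ carry at most one color of $\mathcal H$, but their colors lie in $L_{i+1}\cup\{f_i\}$ and $L_i\cup\{f_{i+1}\}$. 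Since $f_i\neq f_{i+1}$ in both configurations of Lemma~\ref{shell:states}, nothing you cite prevents both edges from being $\mathcal H$-colored.

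With the lemma read correctly, the count is immediate. Write $d=d_G(x_i)$. Since $\psi(x_i)\in R$, the other three colors of $R$ occupy three of the $d$ distinctly colored edges at $x_i$. So at most $d-3$ of these edges carry colors of $\mathcal H$. The non-rim neighbors of $x_i$ other than $y_i,y_{i+1}$ number $d-4$; the colors of $y_i,y_{i+1}$ lie in $R$ by \eqref{shell:orientation}, so they do not count. You showed that all four colors of $\mathcal H$ must appear on fixed non-rim neighbors or edges at $x_i$. Hence $4\le(d-3)+(d-4)$, forcing $d=6$, which disposes of $d=4$ and $d=5$ together. This is the paper's inequality $7-d\le d-4$: the missing set $D_{x_i}$ has size $7-d$, lies in $\mathcal H$ by Lemma~\ref{shell:saturation}, and must be supplied by the $d-4$ remaining neighbors.

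Finally, drop the claim that Proposition~\ref{red:minimal} excludes $d_G(x_i)=4$ because $x_i$ is adjacent to the six-vertex $u_i$. That proposition only forbids a four-vertex having a neighbor of degree at most five.
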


\begin{proof}
For $z\in\{x_0,x_1,x_2,x_3,y_0,y_1,y_2,y_3\}$, define
\[
 D_z=\Omega\setminus\bigl(\{\psi(z)\}\cup
               \{\psi(e):e\in E(G-v),\ z\in V(e)\}\bigr).
\]
The degree of $z$ in $G-v$ equals its degree in $G$, since $v$ has only
the four rim neighbors.  Its vertex color and incident edge colors
are pairwise distinct; hence
\begin{equation}\label{shell:missing-at-exterior}
 |D_z|=7-d_G(z).
\end{equation}
By Lemma~\ref{shell:states}, $\psi'$ changes only the four rim vertices
and four rim edges. It therefore leaves $z$ and all its incident edges
unchanged, giving the same missing-color set $D_z$.
By Lemma~\ref{shell:freezing}, $z$ is frozen in each of $\psi$ and $\psi'$.
Hence every color in $D_z$ must occur on a neighbor of $z$ in each
of these two colorings. Otherwise assigning that color just to $z$
would be a proper recoloring in the coloring where it was absent
from all neighbors.

Take $z=x_i$.  Lemma~\ref{shell:saturation} implies $D_z\subseteq \mathcal{H}$.
Four distinct neighbors are already specified:
$u_i,u_{i+1},y_i,y_{i+1}$. Under $\psi$, the two rim neighbors
have colors $\{h_i,h_{i+1}\}$; under $\psi'$, their colors are
$\{h_{i+2},h_{i+3}\}$ by \eqref{shell:halfturn}.
These two pairs are disjoint and together exhaust $\mathcal H$.
For every $\alpha\in D_z$, at least one of $\psi,\psi'$ therefore
assigns color $\alpha$ to neither rim neighbor. Freezing in that
coloring requires a nonrim neighbor of color $\alpha$, whose color
is unchanged between $\psi$ and $\psi'$.
The two displayed $y$-neighbors have colors in $R$
and cannot supply it.  There is no further rim neighbor by
Lemma~\ref{shell:distinct}.  Thus all colors in $D_z$ must be supplied
by the at most $d_G(z)-4$ remaining neighbors.  Each neighbor has one
color, and so
\[
 7-d_G(x_i)\le d_G(x_i)-4.
\]
Together with $d_G(x_i)\le6$, this gives $d_G(x_i)=6$.

Now take $z=y_i$. Its only rim neighbor is $u_i$, with color
$h_i$ under $\psi$ and the distinct color $h_{i+2}$ under $\psi'$.
For each $\alpha\in D_z$, at least one of these two colorings assigns
to $u_i$ a color different from $\alpha$. Freezing in that coloring
requires another neighbor of color $\alpha$; every such neighbor
has the same color in $\psi$ and $\psi'$. There are
$d_G(z)-1$ other neighbors, but $x_i$ cannot be such a witness.
Indeed its color occurs on an edge at $y_i$ by
Lemma~\ref{shell:saturation}, and therefore is not in $D_z$.
Consequently
\[
 7-d_G(y_i)\le d_G(y_i)-2,
\]
which implies $d_G(y_i)\ge5$.  Possible repetitions of colors among
the unchanged neighbors only reduce their capacity, and do not affect
either upper bound.

Every neighbor of $v$ has degree six by Proposition~\ref{red:minimal}.
Every vertex at distance exactly two from $v$ appears in one of the
exterior triples of its complete rim links \eqref{shell:link}, and
therefore has degree at least five by the bounds just proved.  There
can be no other four-vertex within distance two of $v$.  Since $v$ was
arbitrary, distinct four-vertices are at distance at least three.
\end{proof}

\subsection{Removing the degree-four vertices}

Let $S=V_4(G)$. For each $v\in S$, let $C_v$ be its cyclic link in $G$,
a four-cycle with vertex set $N_G(v)$. Independently choose one of the
two diagonals of each $C_v$. Delete all vertices of $S$ and their
incident edges, and insert each chosen diagonal in the disk formed
by the four triangular faces originally incident with the corresponding
vertex $v$. This disk has boundary $C_v$. The vertices and edges of each
$C_v$ are retained. Denote the resulting embedded graph by $H$.
Mark each inserted diagonal and the disk bounded by its cycle $C_v$.
A \emph{marked disk} here retains the position of a
deleted vertex; it does not impose any precoloring on the core.

\begin{theorem}\label{shell:core}
Let $G\in\mathcal T_6$ be a counterexample of minimum order to total
eight-colorability. Set $S=V_4(G)$, allowing $S=\varnothing$, and
construct $H$ by the simultaneous deletion and diagonal insertion
described above.
The graph $H$ is a simple four-connected sphere triangulation
whose degrees are five and six.  It has exactly twelve five-vertices.
The marked quadrilaterals have pairwise disjoint vertex sets.  In each
marked quadrilateral, the diagonal endpoints have degree six in $H$
and the other two boundary vertices have degree five. Deleting all marked
diagonals recovers exactly $G-S$.
\end{theorem}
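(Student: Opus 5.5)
The plan is to derive every assertion from Proposition~\ref{red:minimal} and Theorem~\ref{shell:separation}, working one marked disk at a time and then checking that the disks do not interact. By Proposition~\ref{red:minimal}, $G$ is a four-connected sphere triangulation with degrees in $\{4,5,6\}$ and every triangle facial. Each $v\in S$ has an induced four-cycle link $C_v$ whose vertices all have degree six. By Theorem~\ref{shell:separation}, distinct members of $S$ are at distance at least three. Hence the closed neighborhoods $N_G[v]$, $v\in S$, are pairwise disjoint. Moreover, no vertex of one link is adjacent to a vertex of another link, since that would give distance at most three. It would give distance exactly three, so I must be careful here. The disjointness of the vertex sets of the marked quadrilaterals needs only $N_G(v)\cap N_G(w)=\varnothing$, which distance $\ge3$ gives.

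\textbf{Simplicity, triangulation, and degrees.} Each modification happens inside the open disk formed by the four faces at $v$, and these disks are pairwise disjoint. So the result is an embedded graph all of whose faces are triangles. Each inserted diagonal joins opposite vertices of $C_v$, and these were nonadjacent because the link is induced. Two different diagonals have disjoint endpoint pairs, so no parallel edges arise, and the embedded result is a simple sphere triangulation. For degrees, a rim vertex of $C_v$ meets no other marked disk. Its degree is therefore $6-1=5$ if it is not a diagonal endpoint, and $6-1+1=6$ if it is. Every other vertex of $G-S$ keeps its degree, which lies in $\{5,6\}$ because $S$ contains all four-vertices. Equation \eqref{prelim:defect} then gives exactly twelve five-vertices. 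Deleting the marked diagonals visibly returns $G-S$.

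\textbf{Four-connectivity.} This is the main obstacle. By Lemma~\ref{red:connectivity}, it suffices to show that $H$ has at least five vertices and no separating triangle. Order is clear, since $H$ has twelve five-vertices. Suppose $T$ is a separating triangle of $H$. If $T$ uses no marked diagonal, then $T$ is a triangle of $G$, hence a face of $G$. Since $T$ is separating in $H$, neither side of $T$ in $H$ is empty. The corresponding sides in $G$ only gain vertices of $S$. So $T$ would also be separating in $G$, a contradiction.

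Otherwise $T$ contains a diagonal $u_0u_2$ of some $C_v$, together with a third vertex $z$ adjacent in $G$ to both $u_0$ and $u_2$. The common neighbors of $u_0,u_2$ in $G$ are $v,u_1,u_3$, together with any exterior vertex in both exterior triples. Lemma~\ref{shell:distinct} shows that these triples are disjoint, which is equivalent to Corollary~\ref{shell:no-four-cycle}. Since $v\notin V(H)$, we get $z\in\{u_1,u_3\}$. But $u_0u_1u_2$ and $u_0u_2u_3$ are exactly the two faces created inside the marked disk. Could $T$ use two diagonals from different disks? That is impossible, because the endpoints of different diagonals lie in disjoint links, which are not adjacent in a way that closes a triangle. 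Indeed, a triangle with two diagonal edges shares a vertex of both links, contradicting disjointness. Hence every triangle of $H$ is facial, and $H$ is four-connected.

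The delicate step to verify in detail is the claim that a nonfacial triangle of $G$ avoiding $S$ remains nonseparating, with the sides correctly tracked through the disk replacements. I would argue this via the fact that each marked disk lies entirely on one side of any triangle of $H$ that does not cross it.
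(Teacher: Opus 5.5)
Your proposal is correct and follows essentially the paper's route. Disjointness of the neighbor sets from Theorem~\ref{shell:separation} gives simplicity and the degree pattern, and four-connectivity comes from showing that every triangle of $H$ is facial, split by whether it contains an inserted diagonal. The differences are cosmetic:
\begin{itemize}
\item For old triangles, you note that the drawing of $T$ is unchanged and $V(H)\subseteq V(G)$, so a triangle separating in $H$ would separate in $G$. This already settles the step you flagged as delicate; the paper instead observes that the old face survives in $H$.
\item For a diagonal $u_0u_2$, you exclude an exterior third vertex using the disjoint exterior triples of Lemma~\ref{shell:distinct}. The paper uses the equivalent Corollary~\ref{shell:no-four-cycle}.
\end{itemize}
Your retracted claim that different links are nonadjacent is never needed.
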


\begin{proof}
By Theorem~\ref{shell:separation}, vertices of $S$ are nonadjacent and
have no common neighbor.  Their closed wheel vertex sets are therefore
pairwise disjoint. At each $v\in S$, the four incident triangular faces
form a disk bounded by the induced four-cycle $C_v$. The disks have disjoint
vertices and disjoint interiors, so replacing each by its two diagonal
triangles is a legitimate operation on the sphere triangulation.
Every inserted diagonal was absent in $G$, distinct inserted diagonals
have disjoint endpoints, and no inserted edge is a loop.  The resulting
triangulation is simple.

For each $v\in S$, the four vertices of $C_v$, namely the four
neighbors of $v$ in $G$, all have degree six by
Proposition~\ref{red:minimal}. Fix such a neighbor $u\in N_G(v)$.
The vertex $u$ is retained, and deleting $v$ removes the edge $uv$.
No other deleted vertex is adjacent to $u$, because distinct vertices
of $S$ have no common neighbor. Thus $u$ loses exactly one incident
edge. It gains one new edge precisely when it is an endpoint of the
diagonal chosen in $C_v$. Consequently,
\[
 d_H(u)=
 \begin{cases}
  6-1+1=6,&\text{if }u\text{ is an endpoint of the chosen diagonal},\\
  6-1=5,&\text{if }u\text{ is either of the other two vertices of }C_v.
 \end{cases}
\]
The sets $N_G(v)$, for $v\in S$, are pairwise disjoint, so these
local degree changes affect disjoint sets of vertices.
Every original five-vertex has no neighbor in $S$ and retains degree
five. Every original six-vertex outside these neighbor sets also
retains its degree. Thus the only degrees of $H$ are five and six,
with exactly two new five-vertices for each deleted vertex $v\in S$.
Euler's identity for $G$ gives $2n_4(G)+n_5(G)=12$, and hence
\begin{equation}\label{shell:twelve}
 n_5(H)=n_5(G)+2n_4(G)=12.
\end{equation}
This proves the asserted degrees at the vertices of each marked
quadrilateral; their vertex sets are the pairwise disjoint sets
$N_G(v)$, for $v\in S$. If $S$ is empty, the construction simply gives
$H=G$, and the same identity applies.

To prove that $H$ is four-connected, we show that every triangle of
$H$ is facial, using Corollary~\ref{shell:no-four-cycle}.
Let $T$ be any triangle of $H$. It contains at most one inserted
diagonal, because distinct inserted diagonals have disjoint endpoints.
If $T$ contains none, it is an old triangle on retained vertices.
By Proposition~\ref{red:minimal}, it bounded a face in $G$. No deleted
vertex was incident with that face, so it remains a face of $H$.

Suppose instead that $T$ contains an inserted diagonal $ac$.
Let $v\in S$ be the deleted vertex whose cyclic link is
$C_v=abcda$ and whose chosen diagonal is $ac$.
Write $V(T)=\{a,c,z\}$.
If $z=b$ or $z=d$, then $T$ is respectively the triangle $abc$ or
$acd$, one of the two new faces in the disk bounded by $C_v$.
Otherwise $z\notin\{a,b,c,d\}=N_G(v)$, since $z$ is already
distinct from $a,c$. The edges $az,cz$ are old edges because
$ac$ is the only inserted diagonal in $T$.
In $G$, the edge $ac$ is absent because $C_v$ is induced, and
$vz$ is absent because $z\notin N_G(v)$.
Thus $vazcv$ is a four-cycle of $G$ whose two possible chords,
$ac$ and $vz$, are both absent. It is therefore an induced
four-cycle containing $v$, contrary to
Corollary~\ref{shell:no-four-cycle}.

Equation~\eqref{shell:twelve} ensures that $H$ has at least five vertices.
All its triangles are facial, and Lemma~\ref{red:connectivity} now
implies that it is four-connected.  Finally, the only added edges were
the marked diagonals, while precisely the old edges incident with
$S$ were removed.  Discarding those diagonals therefore gives exactly
$G-S$.
\end{proof}

The four-connectivity of $H$ supplies a Hamilton path, while its twelve
five-vertices provide suitable endpoints and the degree deficit
used in the matching argument of Theorem~\ref{core:main}.
A coloring of $H$ whose vertices
use a fixed four-element color set restricts, on discarding the marked
diagonals, to the input for Theorem~\ref{restore:main}.
The restored graph need not retain the four-element vertex color set;
the original induction hypothesis asks only for a total coloring
with eight colors.

Only induced four-cycles of $G$ through a removed center were excluded,
to ensure that an inserted diagonal creates no nonfacial triangle.
No restriction on all induced four-cycles of $H$ is needed.

\section{Simultaneous restoration}
\label{restore:section}

Theorem~\ref{wheel:local} fixes the exterior of one wheel.  When several
deleted vertices are present, a two-color interchange may change
the exterior of more than one wheel.  We therefore organize the
restoration in two phases.  During the first phase all old vertices
retain their colors, and interchanges are allowed throughout the
already colored graph.  During the second phase every remaining
wheel is uniform, all global interchanges cease, and the explicit
release of Proposition~\ref{wheel:uniform} finishes the coloring.

\begin{theorem}
\label{restore:main}
Let $G$ be a graph with $\Delta(G)\leq6$, and let $S$
be an independent set of vertices of degree four.  Suppose that:
\begin{enumerate}
\item the sets $N_G(s)$, for $s\in S$, are pairwise disjoint;
\item every vertex in $N_G(s)$ has degree six in $G$;
\item each $N_G(s)$ induces a four-cycle.
\end{enumerate}
If $G-S$ has a proper total coloring from an eight-element color set
$\Omega$ in which every vertex receives a color from one fixed
four-element set $B_0\subseteq\Omega$, then $G$ has a proper total
coloring from $\Omega$.
\end{theorem}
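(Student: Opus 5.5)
Throughout the plan, the coloring of $G-S$ is modified by two-color interchanges in the total graph and by the local moves of Section~\ref{wheel:section}. Old vertex colors stay inside $B_0$ until the final phase. For each $s\in S$, the initial local state is classified by Theorem~\ref{wheel:local}. If $|\mathcal H_s|\le3$, or if the state is neither (U) nor (D), then $s$ admits a local restoration. That restoration changes only rim edges and the elements at $s$, so it can be performed at once. Because the closed wheels are pairwise disjoint by hypothesis (1), and rim edges of different wheels share no endpoint, these restorations do not interfere. The difficulty is therefore confined to wheels in states (U) and (D).

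Phase one removes every (D) state. At a wheel in state (D), the four rim-vertex colors exhaust $B_0$, because $|\mathcal H|=4$ and all old vertices use $B_0$. Hence $R=\Omega\setminus B_0$ is the same four-set for every such wheel. Pick $p\in L_i$, a color in $R$, and a rim color $c_{i}\in\mathcal H=B_0$. Then consider the $(p,c_i)$ Kempe chain in the total graph starting at the rim edge $u_iu_{i+1}$. Since $p\notin B_0$, no vertex has color $p$. Color $c_i$ may lie on vertices, but an element of color $c_i$ is a vertex or an edge, and a $p$-colored element is an edge. One checks that each element has at most two neighbors in the two colors: a vertex meets at most one $p$-edge, and an edge meets at most one $c_i$-edge at each end plus at most one $p$-edge there, with the vertex color forming one of those conflicts. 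If the degree count works out, the component is a path, a cycle, or a single vertex, as the paper's overview promises. Swapping along it keeps every vertex color, because $p$ never sits on a vertex, and changes the rim edge color so that the wheel leaves state (D) or acquires $L_i\cap\mathcal H\ne\varnothing$. Lemma~\ref{wheel:boundary} then applies. The swap may change other wheels' exterior edges, so I would immediately restore the current wheel locally, freezing its center and spokes, and iterate. Termination would come from a potential such as the number of unrestored (D) wheels, and I must check that a swap never turns an already restored wheel invalid. To guarantee that, I would choose the chain so that it avoids the colored spokes, or else treat restored centers as ordinary old elements. The extra care needed is at the endpoints of the chain: a swap at a restored wheel leaves it proper, since swaps preserve properness globally. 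The real issue is only whether a swap can create a new (D) state elsewhere. I expect to show that it can create only a restorable state or a (U) state, by rechecking Theorem~\ref{wheel:local} on the modified lists, and to choose $p,c_i$ to avoid the problematic case.

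Phase two handles the remaining wheels, all in state (U), one at a time by Proposition~\ref{wheel:uniform}, using hypothesis (3) for inducedness. That proposition changes one rim-vertex color $u_0$ to some $p\in R$, chosen absent from the current colors of the three exterior neighbors, and recolors rim edges only. No nonrim edge changes. The exterior neighbors of $u_0$ can be rim vertices of another wheel, but those wheels' states are determined by $F_i$, which includes neighbor vertices only through adjacency, not through the lists $A_i$. So I must verify that changing $\psi(u_0)$ keeps other (U) wheels in state (U), or at least restorable. Since $A_j$ at another wheel depends only on $u_j$'s own color and incident nonrim edges, a later wheel's state is unaffected; only the choice of $p$ there must respect the current, possibly updated, exterior colors, which the proposition already allows. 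Since Phase two keeps the colors of all spokes and centers already placed, these restorations are compatible.

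The main obstacle is Phase one: proving that the Kempe components are paths or cycles, that a suitable swap really exits state (D), and that a termination measure decreases without regenerating (D) states at other wheels.
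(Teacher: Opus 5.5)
Your initial pass and your Phase two agree with the paper. Phase one, however, has a genuine gap at the choice of interchange colors.

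\textbf{The gap.} You swap $p\in L_i\subseteq R$ against the rim color $c_i\in\mathcal H=B_0$. You then assert that the swap keeps every vertex color because $p$ never sits on a vertex. But a swap exchanges \emph{both} colors on its component, and $c_i$ does sit on old vertices. An old vertex of color $c_i$ with an incident $p$-edge lies in the $(p,c_i)$ conflict graph. Nothing prevents it from lying on the chain that starts at $u_iu_{i+1}$, and the swap then recolors it $p\notin B_0$.

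This can already happen inside the current wheel. Since $c_i\notin\{h_i,h_{i+1}\}$, either $c_i=h_{i+2}$ or $c_i=h_{i-1}$. In the second case $u_{i-1}$ carries a $p$-edge, because $p\notin L_{i-1}$. So $u_{i-1}$ is an end of a path in the conflict graph, and your chain may terminate there. After the swap, $c_i$ has left $\mathcal H$, so the released color $c_i\in L_i'$ lies in the new $R$, and Lemma~\ref{wheel:boundary} gives nothing.

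Globally, the invariant that old vertices use $B_0$ is lost. Later (D) wheels then no longer share $R=R_0$, and later conflict graphs need not have maximum degree two. The degree bound also fails once a restored center of color $p$ is adjacent to several vertices of color $c_i$.

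\textbf{The missing idea.} Interchange two colors that \emph{both} lie in $R_0=\Omega\setminus B_0$, one from each alternating list. With $L_0=L_2=\{p,q\}$ and $L_1=L_3=\{r,t\}$, use $p$ and $r$.
\begin{enumerate}
\item No old vertex enters the conflict graph. Its only vertex-elements are restored centers, each of degree at most one, so its components are paths, cycles or isolated vertices.
\item Let $e_i$ be the unique edge of $F$ at $u_i$ colored $r$ for even $i$ and $p$ for odd $i$. It is a nonrim edge, and the four $e_i$ are distinct. The other color is missing at $u_i$, so each $e_i$ has degree at most one.
\item Swap the component containing $e_0$. This flips one or two of the $e_i$, so some consecutive pair is flipped differently and $L_i'\cap L_{i+1}'\neq\varnothing$. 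This excludes (D).
\item The rim edges are colored in $B_0$ and are untouched, so each $A_i'$ meets $B_0$. This excludes (U).
\end{enumerate}
Theorem~\ref{wheel:local} then restores $s$ with every old vertex color fixed. Note that this swap acts on nonrim edges at the rim, not on a rim edge.

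\textbf{Termination.} Your termination worries are unnecessary. You need neither a potential counting unrestored (D) wheels (which is not monotone, since a swap may change the states of other unrestored wheels) nor a lemma that swaps never create new (D) states. Each swap is immediately followed by restoring that center, so the number of restored centers grows at every step. You simply re-examine all unrestored centers until each remaining one is in state (U), and then run your Phase two.
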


The conclusion does not require the final vertex colors to remain
in $B_0$.  The fixed input vertex color set is used to justify the global
interchanges in the proof.  No embedding or planarity is assumed in Theorem~\ref{restore:main}.

\begin{proof}
Call the vertices of $G-S$ \emph{old vertices}, and the vertices in
$S$ \emph{centers}, and put $R_0=\Omega\setminus B_0$.
In the first phase, let $T\subseteq S$ be the set of restored centers;
all their incident edges have also been restored. We maintain a
proper total coloring of $F=G[(V(G)\setminus S)\cup T]$ in which
every old vertex keeps its original color in $B_0$.
Initially $T=\varnothing$.

For $s\in S\setminus T$ and $u\in N_G(s)$, the hypotheses give
$u\in V(F)$ and $N_G(u)\cap S=\{s\}$. Thus $us$ is the only
edge at $u$ absent from $F$, so $d_F(u)=5$ and its spoke list has
size $8-(1+5)=2$. Adding $s$ and its four spokes to $F$ gives a
graph in which $s$ has degree four and its four neighbors have
degree six and induce a four-cycle. Theorem~\ref{wheel:local}
therefore restores $s$
whenever the current coloring is in neither state (U) nor state (D),
keeping all old vertex colors fixed and changing only rim edges
among the edges already present.

\medskip
\noindent\emph{The permissible two-color interchange.}
For distinct $p,r\in R_0$, define the \emph{two-color conflict graph}
as the subgraph of the total graph of $F$ induced by the elements
currently colored $p$ or $r$, using the terminology of
Section~\ref{prelim:coloring}.
Interchanging $p,r$ on the elements represented in a connected
component preserves properness: conflicting pairs within that
component remain differently colored, and an outside element
conflicting with one represented in the component has neither
color $p$ nor color $r$. Such an interchange keeps all old vertex
colors fixed.

Any vertex of $F$ represented in this conflict graph is a restored
center. Its neighbors in $F$ are all old, and its incident edges
have distinct colors. Its representing vertex therefore has
degree at most one in the conflict graph.
For an edge $xy\in E(F)$ currently colored $p$ or $r$, at either
endpoint the vertex and its incident edges in $F$ have pairwise
distinct colors. Thus at each endpoint, at most one of these elements
has color $r$ when $xy$ has color $p$, or color $p$ when $xy$ has
color $r$. The vertex representing $xy$ in the conflict
graph therefore has degree at most two.
Every component is consequently an isolated vertex, a path, or a cycle.

\medskip
\noindent\emph{Breaking an alternating-pair state.}
Let $s\in S\setminus T$ be in state (D), and denote its four neighbors
by $u_0,u_1,u_2,u_3$ in order around their induced four-cycle.
The four distinct colors of these rim vertices belong to $B_0$,
so the local set of vertex colors $\mathcal{H}$ equals $B_0$ and $R=R_0$.
Write
\[
 L_0=L_2=\{p,q\},\qquad L_1=L_3=\{r,t\},
 \qquad R_0=\{p,q,r,t\}.
\]
For each $i\in\{0,1,2,3\}$, let $e_i$ be the unique edge of $F$
incident with $u_i$ whose color is $r$ for even $i$ and $p$ for odd
$i$. To justify this definition for even $i$, note that $r\notin L_i$
and $u_i$ has a color in $B_0$, so $r$ must occur on an incident
edge; properness gives uniqueness. For odd $i$, use $p\notin L_i$
in the same way. Each $e_i$ is a nonrim edge, since every rim-edge
color lies in $B_0$. These edges are distinct: an edge shared by
two rim vertices would be a rim edge or a chord, and the induced
four-cycle has no chord.

Consider the conflict graph for the colors $p,r$, and write
$e_i=u_iw_i$. If $i$ is even, $e_i$ has color $r$, whereas neither
$u_i$ nor any edge of $F$ incident with $u_i$ has color $p$.
Thus any neighbor of the vertex representing $e_i$ in the conflict
graph must represent an element colored $p$: either $w_i$ itself
or an edge of $F$ incident with $w_i$.
At most one such element exists, since $w_i$ and its incident
edges have distinct colors. For odd $i$, interchange $p$ and $r$
in this argument.
Thus the vertex representing each $e_i$ has degree at most one in
the conflict graph. A path has at most two vertices of degree at
most one, an isolated component has one, and a cycle has none.
Since $e_0,e_1,e_2,e_3$ are distinct, a component containing one
of their representing vertices contains at least one and at most
two of these four vertices. Interchange $p$ and $r$ on that component.

Set $b_i=1$ if the color of $e_i$ changes and $b_i=0$ otherwise.
Exactly one or two of these four values equal $1$, so both $0$
and $1$ occur. Since $e_i$ is the only edge at $u_i$ colored
$p$ or $r$, the new spoke lists are
\[
 L_i'=
 \begin{cases}
 \{p,q\},& i\text{ even and }b_i=0,\\
 \{r,q\},& i\text{ even and }b_i=1,\\
 \{r,t\},& i\text{ odd and }b_i=0,\\
 \{p,t\},& i\text{ odd and }b_i=1.
 \end{cases}
\]
For consecutive indices modulo four, these lists satisfy
$L_i'\cap L_{i+1}'=\varnothing$ if and only if $b_i=b_{i+1}$.
Since $b_0,b_1,b_2,b_3$ include both $0$ and $1$, there is an index
$i$ with $b_i\ne b_{i+1}$. Hence $L_i'\cap L_{i+1}'\ne\varnothing$,
so state (D) no longer holds. Each rim edge $u_i u_{i+1}$ retains
its color $c_i\in B_0$, because the interchange uses only $p,r\in R_0$.
The new rim color set $A_i'=L_i'\cup\{c_{i-1},c_i\}$ therefore
contains a color in $B_0$ and cannot equal $R_0$, excluding state (U).
Theorem~\ref{wheel:local} now restores $s$ while keeping all old
vertex colors fixed. Previously restored centers may have changed
color during the interchange, but the total coloring remains proper.

\medskip
\noindent\emph{Termination of the first phase.}
While some unrestored center is not in state (U), restore it by
Theorem~\ref{wheel:local}, first using the preceding interchange
if it is in state (D). Each step increases $\lvert T\rvert$ and
keeps all old vertex colors fixed. Although the states at other
centers may change, finiteness of $S$ ensures termination, either
with all centers restored or with every remaining center in state (U).

\medskip
\noindent\emph{The final uniform phase.}
In the latter case, stop all global interchanges and apply
Proposition~\ref{wheel:uniform} to the remaining centers one at a
time. Each application changes only its rim edges, one rim-vertex
color, and the new center and spokes. For any other unrestored
center, its rim vertices and all edges incident with them keep
their colors: distinct rims have disjoint vertex sets, edges
joining them are not recolored, and no new spoke meets the other rim.
Thus each local restoration in this second phase preserves
$h_i,c_i,L_i,A_i$ at every other unrestored center: these quantities
depend only on the unchanged vertex and incident-edge colors just
described. That center remains in state (U). This preservation
applies after all global two-color interchanges have stopped;
the interchanges in the first phase may change $L_i$ and $A_i$.

The vertex recolored in one rim may be an exterior neighbor of
another. When restoring a later center, let $u_0$ be its rim vertex
to be recolored and $x_1,x_2,x_3$ its three exterior neighbors.
For the \emph{current} proper total coloring $\psi$ of $F$, choose
$p\in R\setminus\{\psi(x_1),\psi(x_2),\psi(x_3)\}$; such a
color exists because state (U) gives $\lvert R\rvert=4$.
The construction in Proposition~\ref{wheel:uniform} then restores
the center properly with $u_0$ colored $p$. Each step restores one
center and preserves state (U) at all remaining centers. Since
$S$ is finite, all vertices and edges of $G$ are eventually colored.
\end{proof}

Applying Theorem~\ref{restore:main} requires two inputs from the reduction.
The structural argument must produce
independent degree-four centers with disjoint induced rims and
degree-six rim vertices. The coloring argument on the smaller graph
must place all old vertex colors in one four-element color set.
With these inputs, Theorem~\ref{restore:main} restores all centers
simultaneously; it does not invoke an induction hypothesis requiring
intermediate graphs to retain a prescribed four-element vertex color set.

Under the structural hypotheses of Theorem~\ref{restore:main},
Corollary~\ref{ext:three-matchings} supplies the required coloring of
$G-S$ whenever $G-S$ is four-colorable and admits three pairwise
edge-disjoint matchings whose deletion leaves maximum degree at most
three. Both results hold without a planarity assumption.

\section{Proof of the main theorem}
\label{main:section}

\begin{proof}[Proof of Theorem~\ref{main:theorem}]
The empty graph and the complete graphs of order at most three have
the direct colorings described in Section~\ref{red:section}.  Every
other maximal planar graph has a sphere triangulation
embedding.  Suppose, for a contradiction, that some graph in
$\mathcal T_6$ has no total coloring with eight colors.  Choose
such a graph $G$ with the fewest vertices. Every smaller graph in
$\mathcal T_6$ therefore has a proper total eight-coloring.

Proposition~\ref{red:minimal} shows that $G$ is four-connected,
every triangle is facial, its degrees belong to $\{4,5,6\}$, and
every neighbor of a degree-four vertex has degree six. The reductions
used in that proposition preserve simplicity, triangulation and the
maximum-degree bound, so minimality applies within $\mathcal T_6$.

Put $S=V_4(G)$.  Theorem~\ref{shell:separation} proves that distinct
vertices in $S$ have distance at least three.  In particular $S$ is
independent and its neighbor sets are pairwise disjoint.  Each such
neighbor set induces its cyclic four-vertex link.  Delete $S$ and
insert one diagonal in each exposed quadrilateral.  By
Theorem~\ref{shell:core}, the resulting graph $H$ is a
four-connected sphere triangulation with degrees five and six.
This statement also covers $S=\varnothing$, when $H=G$.

Apply Corollary~\ref{dual:vertex-four} to $H$. Explicitly, its dual
$H^*$ is a simple three-connected cubic planar graph whose faces have
length five or six. Theorem~\ref{dual:kardos} supplies a Hamiltonian
cycle of $H^*$, and Lemma~\ref{dual:two-trees} supplies a proper vertex
coloring of $H$ with colors from $B_0=\{1,2,3,4\}$.
Theorem~\ref{core:main} extends this very coloring to a proper
total coloring with $\Omega=\{1,2,3,4,5,6,7,8\}$, preserving all
vertex colors in $B_0$.

Discard every inserted diagonal and its color.  The retained graph
is exactly $G-S$, and the restricted coloring is proper, with all
vertex colors in $B_0$. The set $S$ is independent; each of its
vertices has degree four and an induced rim of degree-six neighbors;
and the rims have pairwise disjoint vertex sets. Thus
Theorem~\ref{restore:main} applies and gives a proper total coloring
of $G$ from $\Omega$. If $S=\varnothing$, the coloring of $H=G$
already suffices.

This contradicts the choice of $G$.
\end{proof}

Minimality is used only in the structural reduction. The coloring of
$H$ with its restricted vertex color set is supplied independently by
Theorem~\ref{core:main}; the final coloring of $G$ need not retain
that restriction.

\medskip
\noindent\textbf{Dependency check.}
The reductions leading to $H$ are proved in
Proposition~\ref{red:minimal} and Theorems~\ref{shell:separation}
and~\ref{shell:core}. The prescribed-color extension used for $H$
is proved in Theorem~\ref{core:main}, using
Theorems~\ref{ext:main} and~\ref{core:third-matching}.
The restoration is Theorem~\ref{restore:main}. Their proofs occur
earlier in this document and do not assume the main theorem.
When $S$ is empty, $H=G$ need not be smaller; this causes no circularity,
because Theorem~\ref{core:main} is an independent extension theorem,
not the induction hypothesis. The initial vertex coloring is supplied
only by Section~\ref{dual:section}. Thus no unsupported reducibility
claim or strengthened five-palette extension is used.

\section{Further questions}
\label{scope:section}

We consider stronger extension properties and the degree-six problem
beyond triangulations. The counterexamples below delimit the local
methods, while the matching and factorization questions give sufficient
conditions for broader total-coloring results.

\subsection{Prescribed bridges}

The terminal-block induction in Theorem~\ref{ext:main} prescribes
one outgoing bridge color while preserving all vertex colors.
The corresponding statement for two bridges is false, even for
bipartite subcubic graphs.

\begin{proposition}\label{scope:two-ports}
There is a subcubic graph with a proper vertex four-coloring
and two bridges with disjoint endpoint sets, individually assigned legal colors,
for which no total five-coloring extending the prescribed vertex
coloring also extends both bridge assignments.
\end{proposition}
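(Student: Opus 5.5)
The plan is to exhibit an explicit small example, and the natural source is the obstruction already observed after Lemma~\ref{ext:witness}. Take $D=K_{2,3}$ with parts $\{x_1,x_2\}$ and $\{s,y_1,y_2\}$, colored $\psi(x_1)=1$, $\psi(x_2)=2$, $\psi(s)=\psi(y_1)=3$, $\psi(y_2)=4$. For $\kappa=(0,1)$, every neighborhood sum equals $\kappa$. In that situation the $\mathbb F_2^2$ transport identity \eqref{ext:transport} can force a relation between edge colors at different degree-two vertices. I would attach two pendant bridges at two of the degree-two vertices, say $y_1z_1$ and $y_2z_2$, with new leaves $z_1,z_2$ colored so that each bridge has a three-element list $L_\psi$. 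The graph is subcubic and bipartite, and the two bridges have disjoint endpoint sets. Each bridge color chosen in its list is individually legal, and by Proposition~\ref{ext:marked-port} each one extends by itself.

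The key step is a complete description of the proper $L_\psi$-edge-colorings of $D$. At $x_1$ the three incident edges have lists $\{2,4,c\},\{2,4,c\},\{2,3,c\}$, and at $x_2$ they have lists $\{1,4,c\},\{1,4,c\},\{1,3,c\}$. The color $c$ appears at most once at each $x_i$, which leaves only a handful of cases. I would split according to which edge, if any, at $x_1$ and at $x_2$ carries $c$. In each case I would record the pair of colors missing at $y_1$ and at $y_2$. A $y_j$ has degree two in $D$, so its bridge color must be the color of $L_\psi(y_jz_j)$ left over at $y_j$. The goal is to find one legal pair $(t_1,t_2)$ that never occurs jointly among these leftover pairs. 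The parity relation from the propagation lemma predicts that the leftovers at $y_1$ and $y_2$ are coupled, for example through the sums $t_1+\psi(y_1)$ and $t_2+\psi(y_2)$, whenever $c$ is not used on the bridges. So the first candidate to test is a pair with $t_1,t_2\in B$ that violates this coupling.

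The main obstacle is that the fifth color $c$ provides slack: a single matching edge colored $c$ may break the forced transport, as happened in the quick four-cycle check, where every pair turned out realizable. If the $K_{2,3}$ enumeration shows that every pair is realizable, I would fall back to a more rigid gadget. The idea is to make $c$ unavailable on the relevant edges by forcing it elsewhere: attach to each internal vertex on the connecting path a pendant structure whose edge into that vertex must take color $c$. One way is a leaf edge whose other endpoint carries exterior constraints that consume the two $B$-colors in its list. In the resulting gadget, only lists $A_\psi$ remain on the path between $y_1$ and $y_2$. Lemma~\ref{ext:propagation} then gives a singleton attainable set at the far end, and a bridge color equal to that forced value is legal by itself but incompatible with the prescribed color on the first bridge. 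The verification would be a short finite case check, which I would present as a table of the possible colors at the two ports.
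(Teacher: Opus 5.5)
Your proposal is not yet a proof, although the example you start from does work. You never fix the colors of $z_1,z_2$ or the bridge colors $t_1,t_2$, and you do not carry out the case analysis on $K_{2,3}$. You also explicitly allow that every pair might be realizable, in which case you would switch to a gadget that is never specified. Forcing $c$ on an edge through ``exterior constraints'' would need either further prescribed edge colors, which the statement does not allow beyond the two bridges, or an explicit subcubic forcing structure, which you do not give. Since the statement is purely existential, verifying one concrete instance is the entire proof, so this is a genuine gap.

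The heuristic you would use to guide the search would also mislead you. The transport identity of Lemma~\ref{ext:propagation} concerns paths carrying the two-element lists $A_\psi$, whereas here $c$ remains available on every internal edge. In your example, every legal pair $(t_1,t_2)\in B\times B$ with $t_1+\psi(y_1)\neq t_2+\psi(y_2)$ is realizable, and so are $(1,2)$ and $(2,1)$.

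The missing step is short. Give $z_1,z_2$ any colors outside $\{3,4\}$, say $1$, and prescribe $t_1=4$ on $y_1z_1$ and $t_2=3$ on $y_2z_2$; both are legal.
\begin{itemize}
\item The edges $x_1y_1,x_2y_1$ lose $4$ at $y_1$, and $x_1y_2,x_2y_2$ lose $3$ at $y_2$. Their effective lists become $\{2,c\},\{1,c\},\{2,c\},\{1,c\}$ respectively.
\item The adjacent edges $x_1y_1,x_1y_2$ must therefore use both $2$ and $c$. This forces $x_1s$, with list $\{2,4,c\}$, to be $4$.
\item Likewise $x_2y_1,x_2y_2$ use $1$ and $c$, forcing $x_2s$, with list $\{1,4,c\}$, to be $4$.
\end{itemize}
The two edges at $s$ then have the same color, so no extension exists. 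Checking the remaining cases shows that $(4,3)$ is the only non-realizable legal pair, so the choice of bridge colors must be targeted rather than found by the parity heuristic.

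Once completed this way, your argument is the same as the paper's. The paper colors $K_{2,3}$ so that every edge has list $\{3,4,c\}$ and prescribes $c$ on both bridges. In both versions, a pigeonhole count at the two degree-three vertices of the two-side forces the two edges at the remaining vertex of the three-side to coincide. No parity transport is involved.
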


\begin{proof}
Start with $K_{2,3}$, with parts $\{u,v\}$ and $\{r,s,w\}$,
and attach a new leaf at each of $r,s$.  Give $u,v$ vertex color
$1$, give $r,s,w$ color $2$, and give the leaves color $3$.
This is a proper coloring from $B=\{1,2,3,4\}$.  Prescribe the
extra color $c$ on both leaf edges. These edges are bridges with
disjoint endpoint sets, and their assigned colors are legal at both endpoints.

Every edge of the $K_{2,3}$ part has list $\{3,4,c\}$.  At $r$
and $s$, the bridge color $c$ forces the other two incident edges
to receive $3,4$ in some order.  Therefore $ur,us$ must receive
the two different colors $3,4$, forcing $uw$ to have color $c$.
Likewise $vr,vs$ use $3,4$, forcing $vw$ to have color $c$.
The adjacent edges $uw,vw$ would both have color $c$, a contradiction.
\end{proof}

Either bridge assignment alone can be extended by
Proposition~\ref{ext:marked-port}, followed by coloring the other
leaf edge. That edge must avoid its two endpoint colors and the
two colored edges at its internal endpoint, so at most four of
the five colors are forbidden. The contradiction comes from
demanding both prescribed assignments at once.
Consequently an extension theorem with several prescribed boundary
edges must include a compatibility condition.

\subsection{Preserving vertex colors}

Theorem~\ref{core:main} extends every proper vertex four-coloring
of the core, whereas Theorem~\ref{main:theorem} allows the vertex
colors to change during restoration. This raises the following
stronger extension problem.

\begin{question}\label{scope:fixed-four}
For which maximal planar graphs $G$ with $\Delta(G)=6$
does the following extension property hold? For every proper vertex
coloring $\phi:V(G)\to\{1,2,3,4\}$, there exists a proper total
coloring
\[
 \psi:V(G)\cup E(G)\to\{1,\ldots,8\},
 \qquad \psi(v)=\phi(v)\quad\text{for every }v\in V(G).
\]
Only the vertex colors are prescribed. The edge colors may be
chosen from all eight colors, subject to the total-coloring constraints.
\end{question}

For a fixed $\phi$, put $\Omega=\{1,\ldots,8\}$ and assign to
each edge $uv$ the list
\begin{equation}\label{scope:fixed-lists}
 \Lambda_\phi(uv)=\Omega\setminus\{\phi(u),\phi(v)\}.
\end{equation}
Question~\ref{scope:fixed-four} asks whether these lists always admit
a proper edge coloring. Indeed, membership in $\Lambda_\phi(uv)$
is precisely the condition that the edge color avoid its two
endpoint colors; properness of the edge coloring supplies the
remaining constraints. These six-element lists are correlated through
$\phi$; the question concerns this particular family of lists.

An affirmative answer would separate the choice of vertex colors
from the subsequent edge-coloring problem. In particular, any
prescription on a subset of vertices that extends to a proper
vertex four-coloring of $G$ would also extend to a total
eight-coloring. This concerns vertex prescriptions alone; the
edge-precoloring obstruction in Proposition~\ref{scope:two-ports}
illustrates the distinction. A negative answer would identify
an obstruction to preserving vertex colors, without contradicting
Theorem~\ref{main:theorem}.
Theorem~\ref{core:main} gives an affirmative answer for the
four-connected degree-five/six subclass. Proposition~\ref{scope:eulerian}
below treats every supplied three-coloring of an Eulerian
triangulation; it does not treat a supplied coloring that uses
all four colors.

There are two specific obstacles to extending the present proof.
First, if the neighbors of a deleted degree-four vertex have
alternating colors $1,2,1,2$, both diagonals of the exposed
quadrilateral join equally colored vertices. Thus adding a
diagonal need not preserve the prescribed coloring, even when
the resulting graph satisfies the structural hypotheses of the
core theorem. Second, restoration must assign the center its
prescribed color $t=\phi(v)$, as well as preserve the old vertex
colors. Since all neighbors avoid $t$, they use at most three
colors; states (U) and (D), which require four distinct rim-vertex
colors, do not occur in this setting. Nevertheless,
Corollary~\ref{wheel:four-neighbours} does not prescribe the color
of the restored center. Its conclusion is therefore insufficient.

Even allowing arbitrary changes on the rim edges alone does not
give the required stronger restoration statement for general
graphs. To see this, take a four-wheel and attach three distinct
leaves to each rim vertex $u_i$. Give the center vertex color $3$,
the rim vertices colors $1,2,1,2$ in cyclic order, and all leaves
color $4$. On the graph with the center deleted, color the rim
edges $3,4,3,4$ in cyclic order. At $u_i$, color its three leaf
edges with the three members of
$\{1,2,7,8\}\setminus\{\phi(u_i)\}$. This is a proper total
coloring. In the notation of Section~\ref{wheel:section}, all four
sets $A_i$ equal $\{3,4,5,6\}$. If the leaf-edge colors remain
fixed and the center must receive color $3$, the four spokes
must receive distinct colors from $\{4,5,6\}$, which is impossible,
regardless of the rim-edge colors. This graph is not maximal
planar, so it is not a counterexample to
Question~\ref{scope:fixed-four}, but it rules out the unrestricted
local strengthening.

One can instead allow more old edges to be recolored, or seek
restrictions on the boundary data imposed by triangulation.
Under the structural
hypotheses of Theorem~\ref{restore:main}, one can ask whether a
total eight-coloring of $G-S$ that agrees with a prescribed
proper four-coloring $\phi$ of $G$ can be replaced by a total
eight-coloring of $G$ agreeing with $\phi$ on every vertex,
including $S$, while all old edge colors are allowed to change.
Even an affirmative answer to this restoration problem would
still require a construction of the initial coloring compatible
with $\phi$; the diagonal obstruction above prevents a direct
appeal to the present core construction. Moreover, reductions
for a counterexample with prescribed vertex colors would need
their own justification: minimality in
Theorem~\ref{main:theorem} concerns unrestricted total colorings.

\subsection{External inputs and simpler subclass proofs}

The proof above does not use the Four-Color Theorem. Its initial
vertex coloring comes from Kardo\v{s}'s theorem via
Section~\ref{dual:section}. Theorems~\ref{ext:main}, \ref{core:main}
and~\ref{restore:main} are extension results independent of that
initial construction. Avoiding the computer-assisted Hamiltonicity
input as well is a different problem, not resolved by this revision.
For Eulerian triangulations there is a different construction,
based on three vertex colors and bipartite matchings. Recall that
a connected graph is Eulerian if every vertex has even degree.
The following statement preserves a supplied three-coloring and
has a proof independent of Theorem~\ref{main:theorem}.

\begin{proposition}\label{scope:eulerian}
Let $G$ be an Eulerian maximal planar graph with
$\Delta(G)\leq6$. Then $G$ has a proper vertex three-coloring,
and every proper coloring $\phi:V(G)\to\{1,2,3\}$ extends to a
proper total coloring of $G$ with colors in $\{1,\ldots,8\}$.
\end{proposition}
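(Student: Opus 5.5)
The plan is to split the edges into three bipartite subgraphs according to the vertex colors of their endpoints, and to color each one almost independently. The main step still open is making five extra colors serve six residual matchings.

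\textbf{Three-colorability and degree structure.} Since $G$ is an Eulerian triangulation, the classical Heawood argument gives a proper vertex three-coloring; this is independent of Theorem~\ref{main:theorem}. I would prove it directly by showing the dual is bipartite and then two-coloring the faces consistently. Now fix any proper $\phi:V(G)\to\{1,2,3\}$ and write $V_k=\phi^{-1}(k)$. At a vertex $v$ of color $k$, the link is a cycle of even length $d_G(v)$. Consecutive link vertices are adjacent, so the link alternates between the other two colors. Hence $v$ has exactly $d_G(v)/2\le3$ neighbors of each other color.

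\textbf{Bipartite splitting.} Let $G_{ij}$ be the bipartite graph $G[V_i\cup V_j]$ for $\{i,j,k\}=\{1,2,3\}$. By the degree count above, $\Delta(G_{ij})\le3$, and the three graphs $G_{ij}$ partition $E(G)$. Each edge of $G_{ij}$ may take the third color $k$ as well as the five extra colors $4,\dots,8$. The constraints are of two kinds. Edges of $G_{12}$ and $G_{13}$ meet only at vertices of color $1$, and similarly for the other pairs. Hence a color $x\in\{4,\dots,8\}$ can appear in two different graphs only if its edges there form a matching of $G$ overall. By K\H{o}nig's theorem, each $G_{ij}$ decomposes into three matchings. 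One of these takes the private color $k$, which cannot conflict with anything outside $G_{ij}$. This leaves six matchings, two from each $G_{ij}$, to be colored from the five colors $\{4,\dots,8\}$.

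\textbf{The main obstacle.} A direct count shows that we must save one color: some color of $\{4,\dots,8\}$ has to be shared between two graphs, say $G_{12}$ and $G_{23}$. This is possible only if the two shared matchings together avoid meeting at $V_2$. I would attack this by choosing the decompositions carefully rather than arbitrarily.
\begin{enumerate}
\item In $G_{12}$ and $G_{23}$, use Lemma~\ref{core:required} (or K\H{o}nig's covering form) to pick the private-color matching so that it covers every vertex of $V_2$ of degree $3$ in that graph, that is, every degree-six vertex of $V_2$.
\item After this choice, every vertex of $V_2$ has residual degree at most $2$ in each of $G_{12}$ and $G_{23}$, so at most $4$ incident residual edges in total.
\item The residual edges at $V_2$ then live in a bipartite-like structure of maximum degree $4$ at $V_2$. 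I would color it from $\{4,\dots,8\}$ jointly for $G_{12}\cup G_{23}$, treating the union as a bipartite graph between $V_2$ and $V_1\cup V_3$. Its maximum degree is at most $4$ on the $V_2$ side and at most $2$ on the other side, so K\H{o}nig gives four matchings.
\item The residual part of $G_{13}$ has maximum degree $2$, so it needs two further matchings. It must avoid, at $V_1\cup V_3$, the colors already used there, of which there are at most two per vertex.
\end{enumerate}
The delicate point is this last compatibility step at $V_1\cup V_3$. I would first try to settle it by a list edge-coloring argument on the paths and even cycles of residual $G_{13}$, in the spirit of Lemma~\ref{ext:degree-two}. Each of its edges has $5-|\text{used colors at its ends}|$ available colors, and I would attempt to arrange, through the choice of the K\H{o}nig coloring in the previous step, that these lists have at least two colors.
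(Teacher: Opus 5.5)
Your proposal has a genuine gap: the argument is never finished, and step 4 is not merely delicate but unsupported. Suppose you color the residual part of $G_{12}\cup G_{23}$ with four colors by K\"onig. An edge $xy$ of the residual $G_{13}$, with $x\in V_1$ and $y\in V_3$, can then see two used colors at $x$ and two different used colors at $y$. This leaves a one-element list such as $\{8\}$. Two adjacent edges on a path or cycle of the residual $G_{13}$ can have the same singleton list. Nothing in your outline shows that the K\"onig coloring can be chosen to prevent this. Lemma~\ref{ext:degree-two} requires two-element lists. As written, therefore, you have not produced a total coloring.

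The obstacle you identify comes from insisting that each of the six residual matchings be monochromatic. The missing observation is global. In a bipartite graph of maximum degree at most three, every K\"onig color class automatically covers every degree-three vertex, on both sides. So after giving one class $M_{ij}$ of each $G_{ij}$ the color $6-i-j$, every vertex has residual degree at most $2$ in each of the two pair graphs containing it. Hence $Q=G-(M_{12}\cup M_{13}\cup M_{23})$ is a simple graph with $\Delta(Q)\le4$. Vizing's theorem then colors all of $E(Q)$ at once with $4,\ldots,8$. These colors avoid every vertex color and every matching color. This completes the proof with no color sharing between pair graphs and no list-coloring step, and it is exactly the paper's argument.
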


\begin{proof}
The edgeless cases are immediate. For $K_3$, retain its three
distinct vertex colors and give each edge the color of the opposite
vertex. We may therefore assume that $|V(G)|\geq4$, so the fixed
sphere embedding is a triangulation with all degrees even.
The classical three-colorability theorem for Eulerian triangulations
provides a proper vertex three-coloring; Tsai and
West~\cite{tsaiwest2011} give a constructive proof independent of
the Four-Color Theorem.

Fix any such coloring $\phi$, and put $V_i=\phi^{-1}(i)$.
For $1\leq i<j\leq3$, let $G_{ij}$ be the bipartite graph on
$V_i\cup V_j$ consisting of all edges of $G$ between these two
sets. At a vertex $v\in V_i$, consecutive vertices of its cyclic
link are adjacent and have colors different from $i$. Their colors
therefore alternate between the other two colors. Consequently
$|N_G(v)\cap V_j|=d_G(v)/2\leq3$ for each $j\ne i$, and every
pair graph $G_{ij}$ has maximum degree at most three.

By K\"onig's edge-coloring theorem~\cite{konig1916}, the edges
of a bipartite graph of maximum degree at most three can be
partitioned into three matchings, allowing empty matchings.
Apply this to each $G_{ij}$ and choose one of the three
matchings as $M_{ij}$. At every degree-three vertex the three
incident edges belong to different matchings, so $M_{ij}$
covers all such vertices. It follows that
$\Delta(G_{ij}-M_{ij})\leq2$.

Give every edge of $M_{ij}$ the color $6-i-j$, the unique member
of $\{1,2,3\}\setminus\{i,j\}$. This assignment avoids both
endpoint colors. Each $M_{ij}$ is a matching, and at a vertex
in $V_i$ the two possible matching edges receive the two distinct
colors other than $i$. Thus all colored vertices and matching edges
form a proper partial total coloring.

Put $Q=G-(M_{12}\cup M_{13}\cup M_{23})$. Each vertex belongs
to two pair graphs, and its degree in each residual pair graph is
at most two; hence $\Delta(Q)\leq4$. Since $Q$ is simple,
Vizing's edge-coloring theorem gives a proper edge coloring of
$Q$ with at most five colors; see Misra and Gries~\cite{misragries1992}
for a constructive proof. Use the colors $4,5,6,7,8$ for these
edges. They avoid all vertex and matching-edge colors, while
properness within $Q$ excludes conflicts between its edges.
Together these assignments give the required total coloring and
preserve $\phi$.
\end{proof}

The construction uses only three-colorability of Eulerian
triangulations and the edge-coloring theorems of K\"onig and Vizing.
The alternation on each link also explains its limitation: a
triangulation containing an odd-degree vertex cannot have a proper
vertex three-coloring. In particular, the degree-five vertices of
the core prevent a direct application of this construction.

For non-Eulerian triangulations, Section~\ref{dual:section} already
supplies the initial vertex coloring without the Four-Color Theorem.
An elementary replacement for Kardo\v{s}'s Hamiltonicity input would
further reduce the external dependencies, but is not established here.
The matching decomposition itself does not supply that initial coloring.

Alternatively, one could construct a total eight-coloring of the core
directly, without restricting the number of colors on its vertices.
This alone would not suffice: Theorem~\ref{restore:main} uses the
restricted vertex color set throughout its first phase. In a state
(D), the four rim colors exhaust that set, so neither of the two
interchanged colors occurs on an old vertex. The only vertex-elements
in the corresponding conflict graph are restored centers, each of
degree at most one; edge-elements have degree at most two. Its
components are therefore paths, cycles or isolated vertices. If the
interchanged colors can occur on old vertices, a vertex-element can
have several neighboring vertices of the other color, and the degree-two bound on the
conflict graph no longer follows. The endpoint argument used to
break state (D) then fails. This route requires both a direct core
coloring and a replacement for the simultaneous restoration argument
that controls these more general two-color components.

\begin{remark}\label{scope:five-palette-research}
The common input vertex color set in Theorem~\ref{restore:main} can in
fact be enlarged to five colors. To see this, let $B$ be that set, with
$|B|\le5$. In an alternating state (D), the four distinct rim-vertex
colors form $\mathcal H\subseteq B$, so $|B\setminus\mathcal H|\le1$.
Its alternating two-element lists $P,Q$ are disjoint and lie in
$\Omega\setminus\mathcal H$. Thus choose $p\in P\setminus B$ and
$r\in Q\setminus B$. Neither color occurs on an old vertex, and the
same two-color conflict-graph argument from Theorem~\ref{restore:main}
applies. Rim-edge colors in $\mathcal H$ remain unchanged, excluding
state (U) after the interchange exactly as before. The final uniform
phase already allows arbitrary current exterior vertex colors.

This observation is not used in the completed proof. To replace the
Hamiltonian-dual input by the elementary Five-Color Theorem, one would
still need a compatible core coloring. A sufficient additional statement
would be that a prescribed proper vertex coloring
$\phi:V(Q)\to\{1,\ldots,5\}$ of the relevant subcubic residual graph
extends to a total coloring with the same five colors, or equivalently
that the edge lists
$\{1,\ldots,5\}\setminus\{\phi(u),\phi(v)\}$ admit a proper edge
coloring. No such general statement is proved or assumed here.
\end{remark}

The explicit coloring of the separating-triangle family in
Proposition~\ref{red:chain-colouring} already avoids the Four-Color
Theorem. Extending that construction to the four-connected core
would require a structural substitute for its chain order;
the count of twelve degree-five vertices alone provides no such order.

\subsection{General planar graphs}
\label{scope:planar-section}

The natural next existence problem is $\chit(G)\leq8$ for every
planar graph with $\Delta(G)=6$. This is the
remaining planar case identified in the literature cited in
the introduction; see, for example, \cite{zhuzhouxu2026}.
Since triangulation can increase the maximum degree, we seek a
matching decomposition directly in the original graph. Its first
step holds without any further structural assumption.

\begin{proposition}\label{scope:first-matching}
Every planar graph $G$ with $\Delta(G)\leq6$ has
a matching covering all vertices of degree six.
\end{proposition}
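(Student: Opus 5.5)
We verify the criterion of Lemma~\ref{core:required} for $F=G$ with required set $U=V_6(G)$. Fix $X\subseteq V(G)$ and consider the counted odd components $C$ of $G-X$, namely those with $V(C)\subseteq V_6(G)$. Every vertex of such a $C$ has degree six in $G$, so
\[
 |\delta_G(C)|=6|V(C)|-2|E(C)|,
\]
and all these cut edges end in $X$. The goal is to show that the number $q$ of such components is at most $|X|$. The plan is a double count of edges between $X$ and the counted components, using planarity of the bipartite graph obtained from them.

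Contract each counted component $C_1,\ldots,C_q$ to a single vertex $c_j$. Keep only one edge from $c_j$ to each vertex $x\in X$ that it sends edges to, and delete all other vertices and edges. The result is a simple bipartite planar graph $J$ with parts $\{c_1,\ldots,c_q\}$ and a subset of $X$. It is a minor of $G$, since the $C_j$ are connected and disjoint, so it is planar. If $J$ has at least three vertices, the bipartite planar bound gives
\[
 |E(J)|\le 2(q+|X|)-4.
\]

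Next we need a lower bound on $|E(J)|$, the number of distinct $X$-neighbours of each $C_j$. If $|V(C_j)|=1$, the vertex has six distinct neighbours, all in $X$, so $d_J(c_j)=6$. For a larger component, the cut size $6|V(C)|-2|E(C)|$ is at least $6|V(C)|-2(3|V(C)|-6)=12$ when $|V(C)|\ge3$, using planarity of $C$. When $|V(C)|=2$ the cut has size $10$. However, many cut edges may go to the same vertex of $X$, since each $x\in X$ has degree at most six, so we should count distinct neighbours more carefully.

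I would argue that every counted component has at least three distinct neighbours in $X$. Such a component is an odd component of $G-X$ lying entirely in $V_6(G)$. If its neighbourhood $N(C)\cap X$ had at most two vertices, then the total number of cut edges would be at most $2\cdot 6=12$, with each vertex of $X$ absorbing at most six. Meanwhile $C$ itself is planar with all degrees six in $G$. The hard case is a component attached to one or two vertices, and I would rule it out by a planar Euler count on $G[V(C)\cup N(C)]$: this graph is planar, and the vertices of $C$ have degree six in it, which forces too many edges. Concretely, with $n=|V(C)|+k$ for $k\le2$, the edge count is at least $3|V(C)|+(\text{cut edges})/2\ge 3|V(C)|+\ldots$, against the bound $3n-6$. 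I expect this to give a contradiction for $k\le 2$.

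With $d_J(c_j)\ge3$ we get $3q\le|E(J)|\le 2q+2|X|-4$, hence $q\le 2|X|-4$. This is not yet $q\le|X|$, so the main obstacle is sharpening the count. I would instead weight by cut edges: $\sum|\delta(C_j)|\le 6|X|$, and $|\delta(C_j)|\ge6$ for every counted component. A singleton gives exactly $6$. For $|V(C)|\ge2$ in a planar graph, $|\delta(C)|\ge 6|V(C)|-2(3|V(C)|-3)=6$ when $C$ is a tree-like small case, or $\ge12$ otherwise. This yields $6q\le6|X|$, so $q\le|X|$ directly. In fact this simpler cut count seems to suffice on its own, because a planar graph on $m\ge2$ vertices has at most $3m-3$ edges (or $m-1$ for $m=2$). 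Therefore the real work is only to check $|E(C)|\le 3|V(C)|-3$ for all $m\ge1$. This holds: it is $0$ for $m=1$, $1\le3$ for $m=2$, and $3m-6\le 3m-3$ otherwise. Lemma~\ref{core:required} then supplies the matching.
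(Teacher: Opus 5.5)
Your final cut-counting argument is correct and is essentially the paper's proof: every counted component satisfies $|\delta_G(V(C))|=6|V(C)|-2|E(C)|\ge 6$, all cut edges end in $X$ and no edge lies in two of these cuts, so $6q_U(G-X)\le 6|X|$. The paper gets the bound $\ge 6$ as $=6$ for singletons and $\ge 12$ from $|E(C)|\le 3|V(C)|-6$ when $|V(C)|\ge3$. In a write-up, drop the contraction to $J$ and the unproved claim that each counted component has three distinct neighbours in $X$, since the final count uses neither.
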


\begin{proof}
Put $U=V_6(G)$ and fix $X\subseteq V(G)$. Let $C$ be an odd
component of $G-X$ whose vertices all belong to $U$, as counted
by $q_U(G-X)$ in Lemma~\ref{core:required}. If $|V(C)|=1$,
its edge cut in $G$ has size six. Otherwise $|V(C)|\geq3$,
and the simple planar edge bound gives
\[
 |\delta_G(V(C))|
 =6|V(C)|-2|E(C)|\geq12.
\]
Every edge in this cut has its other endpoint in $X$. Distinct
components have disjoint cuts, so
$6q_U(G-X)\leq\sum_{x\in X}d_G(x)\leq6|X|$.
Lemma~\ref{core:required} now supplies the matching.
\end{proof}

Deleting this matching leaves maximum degree at most five,
but a total seven-coloring of the remainder does not by itself
complete the argument. Its vertex coloring might assign equal
colors to the endpoints of a deleted matching edge. Giving that
edge an eighth color would not repair this vertex conflict.
The prescribed-color extension theorem avoids this difficulty
if two further matching deletions leave a subcubic graph.

\begin{question}\label{scope:planar-matchings}
Does every planar graph $G$ with $\Delta(G)\leq6$
contain three pairwise edge-disjoint matchings $M_1,M_2,M_3$
such that $\Delta(G-(M_1\cup M_2\cup M_3))\leq3$?
\end{question}

An affirmative answer, combined with
Corollary~\ref{ext:three-matchings}, would extend every prescribed
proper vertex four-coloring of every such planar graph to a
total eight-coloring. A separate vertex-coloring existence theorem
for the broader class would then supply
the initial coloring for the existence result. Thus this
decomposition would settle the planar degree-six case and
Question~\ref{scope:fixed-four} at the same time. The decomposition
already exists whenever $G$ has a proper edge coloring with at
most six colors: take three color classes, allowing empty
classes. In particular, Vizing's theorem supplies it when
$\Delta(G)\leq5$. The remainder need only be subcubic, so the
question is weaker than six-edge-colorability.

The exact additional matching conditions can be stated without
a spanning-path assumption. Set $F_0=G$. At step $i\in\{1,2,3\}$,
put $U_i=V_{7-i}(F_{i-1})$, choose a matching $M_i$ covering
$U_i$, and put $F_i=F_{i-1}-M_i$. Each successful step ensures
$\Delta(F_i)\leq6-i$. By Lemma~\ref{core:required}, the
condition at that step is precisely
\begin{equation}\label{scope:successive-matchings}
 q_{U_i}(F_{i-1}-X)\leq |X|
 \qquad\text{for every }X\subseteq V(G).
\end{equation}
Conversely, a decomposition as in
Question~\ref{scope:planar-matchings}, in any order, gives these
three steps. A vertex of current degree $7-i$ must lose at
least $4-i$ incident edges in the $4-i$ remaining matchings,
so each remaining matching must cover it. Proposition~\ref{scope:first-matching}
proves the first set of inequalities unconditionally. The
unproved issue is the existence of a choice of $M_1$ and then
$M_2$ that makes the next two sets of inequalities hold.

Arbitrary choices are insufficient. For example, let $P$ be
the \emph{triangular prism graph}: it consists of two
vertex-disjoint triangles $a_1a_2a_3a_1$ and $b_1b_2b_3b_1$,
together with the three edges $a_1b_1,a_2b_2,a_3b_3$ and
no other edges. Thus $P$ is a planar three-regular graph
with six vertices and nine edges. Let $J=L(P)$ be its
line graph: the vertices of $J$ correspond to the edges
of $P$, and two vertices of $J$ are adjacent precisely
when the corresponding edges share an endpoint in $P$.
Consequently $J$ has nine vertices. Each edge of $P$
shares an endpoint with four distinct other edges, two
at each endpoint, so every vertex of $J$ has degree four.
Its planarity follows by drawing, in small neighborhoods
of the vertices of $P$, the triangles joining the three
incident edge midpoints.
The first two required sets $U_1,U_2$ are
empty, so choosing $M_1=M_2=\varnothing$ is allowed. The third
step would then require a perfect matching of the odd-order
graph $J$, which is impossible. This example concerns a failed
sequence of choices, not failure of the decomposition in
Question~\ref{scope:planar-matchings}.

The curvature calculation extends to general plane graphs. Let $H$ be a
connected plane graph of maximum degree at most
six, and let $S$ be a proper vertex set with $|S|\geq2$ such
that $H[S]$ and $H[V(H)\setminus S]$ are connected and nonempty.
As in Lemma~\ref{core:curvature}, let $\Omega_S$ be the face of
$H[S]$ containing the complementary induced subgraph.
For each face $f$ of $H[S]$, let $\partial f$ denote its
boundary walk and $|\partial f|$ its length, counting every
traversal of an edge. This walk need not be a simple cycle:
an edge traversed twice contributes two to its length.
Put $\ell=|\partial\Omega_S|$ and define
\[
 p_6(S)=\sum_{v\in S}(6-d_H(v)),\qquad
 \eta(S)=\sum_{f\ne\Omega_S}(|\partial f|-3),
\]
where the second sum is over all faces of $H[S]$ other than
$\Omega_S$. Each such
face has length at least three, so $\eta(S)\geq0$.
Writing $n=|S|$, $m=|E(H[S])|$ and $q$ for its number of
faces, Euler's formula and the edge-side count give
$n-m+q=2$ and $2m=\ell+3(q-1)+\eta(S)$. Thus
$m=3n-3-\ell-\eta(S)$, and summing degrees in $H$ over $S$
yields
\begin{equation}\label{scope:general-curvature}
 p_6(S)+|\delta_H(S)|=6+2\ell+2\eta(S).
\end{equation}
Every edge from $S$ to its complement leaves a vertex on
the boundary of $\Omega_S$. There are at most $\ell$ such
vertices, since the connected graph $H[S]$ has an edge.
Consequently, if $R\subseteq H$ has maximum degree at most
two and $K=H-E(R)$, at most $2\ell$ cut edges are deleted,
and
\[
 p_6(S)+|\delta_K(S)|\geq6+2\eta(S)\geq6.
\]

Nontriangular faces therefore strengthen this inequality;
their presence is not itself the obstruction to the matching
argument. The difficulty is that $p_6(S)$ now includes larger
contributions from vertices of degree at most four, and need
not satisfy the bounds used for the five/six-degree core.
Moreover, a general planar graph need not have a spanning path
of the kind used there, and deleting a linear forest need not
preserve connectivity. A productive next step would be a
selection rule for the first two matchings that controls
these degree deficits and the odd components in
\eqref{scope:successive-matchings}. The displayed curvature
bound applies only when both induced sides are connected;
applying it to components arising in a matching obstruction
requires a separate verification of that hypothesis.

\subsection{The general degree-six problem}

For arbitrary graphs, the total-coloring
conjecture is known through maximum degree five
\cite{kostochka1996,zhuzhouxu2026}. The bound eight at maximum
degree six remains open in general. Both four-colorability and
the matching decomposition can fail outside the planar setting.

It is enough to study six-regular graphs. Indeed, from any
graph of maximum degree at most six, take two
disjoint copies and join each vertex of degree less than six
to its corresponding vertex in the other copy. This increases
each deficient degree by one and preserves simplicity.
Repeating the construction at most six times gives a six-regular
graph containing the original graph as an
induced subgraph. A total eight-coloring restricts to every
subgraph. Hence the following is equivalent to the uniform
eight-color bound for all graphs of maximum
degree at most six.

\begin{question}\label{scope:regular-six}
Does every six-regular graph have a proper total
coloring with at most eight colors?
\end{question}

An affirmative answer would settle the degree-six case of
the Total Coloring Conjecture, including all planar graphs in
Section~\ref{scope:planar-section}. The regular completion
above does not preserve planarity and is used only for this
unrestricted question. It removes low-degree reductions from
the existence problem and isolates the compatibility
constraints at vertices with six incident edges.

Two examples delimit what one can expect from the present
method. The graph $K_7$ is not four-colorable, although it has
a total seven-coloring: label its vertices by the residues
modulo seven, give vertex $i$ color $2i$, and give edge $ij$
color $i+j$, with arithmetic modulo seven. Vertex colors are
distinct, the incident edge colors at a fixed vertex are
distinct, and $i+j$ equals neither $2i$ nor $2j$ when $i\ne j$.
Thus failure of four-colorability is a limitation of the
extension method, not a total-coloring obstruction.

Conversely, $K_{3,3,3}$ is three-colorable and six-regular,
but has no three-matchings decomposition with subcubic
remainder. Such a decomposition would require each of its
nine vertices to belong to all three matchings, making each
matching perfect, which is impossible on an odd number of
vertices. Nevertheless it has a total eight-coloring
preserving its natural three-coloring: choose a perfect
matching in each bipartite graph between two parts, give that
matching the vertex color of the third part, and color the
four-regular remainder with five new edge colors by Vizing's
theorem. At every vertex the two matching colors are distinct
and avoid its vertex color. This proves the asserted coloring
and shows that the three-matchings condition is not necessary.
Question~\ref{scope:planar-matchings} therefore cannot simply
be extended to all four-colorable graphs.

A different decomposition suggests a compatibility problem
that does not require four vertex colors. A \emph{2-factor}
is a spanning subgraph whose components are cycles.
Petersen's factorization theorem~\cite{petersen1891} states
that every $2r$-regular graph, for $r\geq1$, decomposes into
$r$ edge-disjoint 2-factors. In particular, every six-regular
graph has a 2-factor.

Deleting a 2-factor leaves a four-regular graph, for which a
total six-coloring exists by \cite[Theorem~1]{kostochka1996}.
Existence alone is insufficient:
the vertex colors must also be proper on the deleted cycle
edges, and odd cycles need an additional available color.
The following sufficient condition states these two
requirements explicitly.

\begin{proposition}\label{scope:factor-extension}
Let $G$ be a six-regular graph, let $F$ be a
2-factor, and put $Q=G-E(F)$. Suppose that $Q$ has a proper
total coloring $\psi$ with colors in $\{1,\ldots,6\}$ such
that its vertex coloring is proper on all of $G$. For each
vertex $v$, let $\mu(v)$ be the unique member of
$\{1,\ldots,6\}$ not used on $v$ or on an edge of $Q$
incident with $v$. If every odd cycle of $F$ contains an
edge $uv$ with $\mu(u)=\mu(v)$, then $G$ has a proper
total eight-coloring extending $\psi$.
\end{proposition}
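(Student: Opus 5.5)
The plan is to keep every color of $\psi$ on $V(G)\cup E(Q)$ and to color only the cycle edges of $F$. Each such edge will receive either its $\mu$-color or one of the two new colors $7,8$. Since $\psi$ is a proper total coloring of $Q$ and $Q$ is four-regular, each vertex $v$ sees exactly five distinct colors from $\{1,\ldots,6\}$: its own color and the colors of its four $Q$-edges. Thus $\mu(v)$ is well defined.

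For an edge $uv$ of $F$, the color $7$ or $8$ conflicts with nothing already colored. The color $\mu(u)$ is compatible at $u$ with all of $\psi$, because it is missing at $u$. It is compatible at $v$ exactly when $\mu(u)=\mu(v)$. The endpoint vertex colors are avoided automatically, since $\mu(u)\neq\psi(u)$ and $\mu(v)=\mu(u)\neq\psi(v)$. Vertex properness on the $F$-edges is part of the hypothesis, so no vertex condition needs checking.

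The cycles of $F$ are vertex-disjoint and each vertex lies on exactly one of them, so they can be treated independently. On a cycle we only need a proper edge coloring in which each edge $uv$ uses a color from $\{7,8\}$, or from $\{\mu(u)\}$ when $\mu(u)=\mu(v)$. At any vertex $w$, the two cycle edges could both receive a $\mu$-color only as $\mu(w)$, so they must not both take it.

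An even cycle is alternately colored $7,8$. For an odd cycle $C=w_0w_1\cdots w_{k-1}w_0$ with $k$ odd, the hypothesis supplies an edge, say $w_0w_1$, with $\mu(w_0)=\mu(w_1)$. Color that edge $\mu(w_0)$. The remaining edges $w_1w_2,\ldots,w_{k-1}w_0$ form a path of $k-1$ edges, an even number, which is colored alternately $7,8$. At $w_0$ and $w_1$ the two incident cycle edges then carry $\mu$ and one of $7,8$, which are distinct. At interior path vertices the two cycle edges carry $7$ and $8$. In every case the cycle-edge colors differ from all $Q$-edge colors and vertex colors at that vertex. For a triangle ($k=3$) the remaining path has two edges, which is still even, so the argument applies.

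The only real check is the compatibility of $\mu(u)$ with the $Q$-colors and vertex color at $v$, and this is exactly where $\mu(u)=\mu(v)$ is used. Otherwise the argument is a direct verification, and the union of $\psi$ with these cycle colorings is the required total eight-coloring extending $\psi$.
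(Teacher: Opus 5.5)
Your proposal is correct and follows the paper's own argument. Both alternate $7,8$ on even cycles of $F$, and on each odd cycle both give the hypothesized edge the common missing color $\mu(u)=\mu(v)$ and alternate $7,8$ on the remaining path. Your check that $\mu(u)$ is admissible at $v$ exactly when $\mu(u)=\mu(v)$ is the same verification the paper performs.
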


\begin{proof}
Since $Q$ is four-regular, its four edges incident with $v$
and the vertex $v$ have five distinct colors. Thus $\mu(v)$
is well-defined. Color each even cycle of $F$ alternately
with colors $7,8$. In each odd cycle, choose an edge $uv$
as in the hypothesis and give it color $\mu(u)=\mu(v)$.
The remaining edges of that cycle form a path of even
length, which can be colored alternately with $7,8$.

The selected edge in an odd cycle avoids the colors of
both endpoints and every incident edge of $Q$. Its two
neighboring cycle edges use $7,8$, so they also avoid its
color. All other new edges use colors absent from $Q$,
and adjacent new edges receive distinct colors. Different
cycles have disjoint vertex sets. Finally, the unchanged
vertex coloring is proper on $G$ by hypothesis. These
assignments form a proper total coloring extending $\psi$.
\end{proof}

\begin{question}\label{scope:compatible-factor}
Which six-regular graphs admit a 2-factor
$F$ and a total six-coloring of $G-E(F)$ satisfying the
two compatibility conditions in
Proposition~\ref{scope:factor-extension}? In particular,
under what structural hypotheses can $F$ and the coloring
be chosen so that the vertex coloring is proper on $G$
and every odd cycle of $F$ has an edge whose endpoints
have the same missing color?
\end{question}

The two choices must be coordinated. A total six-coloring
of the remainder need not be proper across the omitted
edges, and changing it to repair vertex conflicts can
alter the missing colors on several cycles. When an
even-cycle 2-factor is available, the missing-color
condition disappears, but compatibility of the vertex
coloring still has to be proved. The graph $K_7$ cannot
satisfy the first condition, since it has no proper vertex
six-coloring, but it has the explicit total seven-coloring above.
If the two compatibility conditions could be established for
every other connected six-regular graph, then applying
Proposition~\ref{scope:factor-extension} componentwise would answer
Question~\ref{scope:regular-six}. Whether such a choice always
exists is unresolved here. Establishing it for specified classes
would extend the method. A graph admitting no such compatible
decomposition would limit this method without refuting the
Total Coloring Conjecture.


\begin{thebibliography}{99}
\bibitem{behzad1965}
Behzad, M..
{Graphs and Their Chromatic Numbers}.
Doctoral thesis, Michigan State University, 1965.

\bibitem{kostochka1996}
Kostochka, A. V..
The total chromatic number of any multigraph with maximum degree five is at most seven.
\emph{Discrete Mathematics}, 162 (1996), 199--214.
DOI: \url{https://doi.org/10.1016/0012-365X(95)00286-6}.

\bibitem{sufangzhu2025}
Su, R., Fang, G., Zhu, E..
The {Total Coloring Conjecture} holds for planar graphs without three special subgraphs.
arXiv preprint, 2025.
\url{https://arxiv.org/abs/2507.12737v1}.

\bibitem{zhuzhouxu2026}
Zhu, E., Zhou, Y., Xu, J..
Total-coloring of planar graphs with maximum degree 6 and without prescribed 4-cycles.
arXiv preprint, 2026.
\url{https://arxiv.org/abs/2609.19503v1}.

\bibitem{fenglin2013}
Feng, Y., Lin, W..
A concise proof for total coloring subcubic graphs.
\emph{Information Processing Letters}, 113 (2013), 664--665.
DOI: \url{https://doi.org/10.1016/j.ipl.2013.06.003}.

\bibitem{zhuxu2017}
Zhu, E., Xu, J..
A sufficient condition for planar graphs with maximum degree 6 to be totally 8-colorable.
\emph{Discrete Applied Mathematics}, 223 (2017), 148--153.
DOI: \url{https://doi.org/10.1016/j.dam.2017.01.036}.

\bibitem{zhouetal2022}
Zhou, Y., Zhao, D., Ma, M., Xu, J..
Total coloring of recursive maximal planar graphs.
\emph{Theoretical Computer Science}, 909 (2022), 12--18.
DOI: \url{https://doi.org/10.1016/j.tcs.2022.01.024}.

\bibitem{cranstonwest2013}
Cranston, D. W., West, D. B..
A guide to the discharging method.
arXiv preprint, 2013.
Version 1; the statement cited here is Theorem~4.7 of that version.
\url{https://arxiv.org/abs/1306.4434v1}.

\bibitem{tutte1954}
Tutte, W. T..
A short proof of the factor theorem for finite graphs.
\emph{Canadian Journal of Mathematics}, 6 (1954), 347--352.
DOI: \url{https://doi.org/10.4153/CJM-1954-033-3}.

\bibitem{thomassen1983}
Thomassen, C..
A theorem on paths in planar graphs.
\emph{Journal of Graph Theory}, 7 (1983), 169--176.
See also the correction in \cite{chiba1986}.
DOI: \url{https://doi.org/10.1002/jgt.3190070205}.

\bibitem{chiba1986}
Chiba, N., Nishizeki, T..
A theorem on paths in planar graphs.
\emph{Journal of Graph Theory}, 10 (1986), 449--450.
DOI: \url{https://doi.org/10.1002/jgt.3190100404}.

\bibitem{kardos2020}
Kardo\v{s}, F..
A Computer-Assisted Proof of the Barnette--Goodey Conjecture: Not Only Fullerene Graphs Are Hamiltonian.
\emph{SIAM Journal on Discrete Mathematics}, 34 (2020), 62--100.
DOI: \url{https://doi.org/10.1137/140984737}.

\bibitem{hall1935}
Hall, P..
On representatives of subsets.
\emph{Journal of the London Mathematical Society}, s1-10 (1935), 26--30.
DOI: \url{https://doi.org/10.1112/jlms/s1-10.37.26}.

\bibitem{tsaiwest2011}
Tsai, M.-T., West, D. B..
A new proof of 3-colorability of {Eulerian} triangulations.
\emph{Ars Mathematica Contemporanea}, 4 (2011), 73--77.
DOI: \url{https://doi.org/10.26493/1855-3974.193.8e7}.

\bibitem{konig1916}
K{\"o}nig, D..
{\"U}ber {Graphen} und ihre {Anwendung} auf {Determinantentheorie} und {Mengenlehre}.
\emph{Mathematische Annalen}, 77 (1916), 453--465.
DOI: \url{https://doi.org/10.1007/BF01456961}.

\bibitem{misragries1992}
Misra, J., Gries, D..
A constructive proof of {Vizing}'s theorem.
\emph{Information Processing Letters}, 41 (1992), 131--133.
DOI: \url{https://doi.org/10.1016/0020-0190(92)90041-S}.

\bibitem{petersen1891}
Petersen, J..
Die {Theorie} der regul{\"a}ren graphs.
\emph{Acta Mathematica}, 15 (1891), 193--220.
DOI: \url{https://doi.org/10.1007/BF02392606}.

\end{thebibliography}
\end{document}